\crefname{enumi}{}{}
\crefname{section}{Section}{Sections}
\crefname{subsection}{Section}{Sections}
\theoremstyle{definition}
\newtheorem{definition}{Definition}[section]
\crefname{definition}{Definition}{Definitions}
\newtheorem{notation}[definition]{Notation}
\crefname{notation}{Notation}{Notations}
\theoremstyle{plain}
\newtheorem{theorem}[definition]{Theorem}
\crefname{theorem}{Theorem}{Theorems}
\newtheorem{proposition}[definition]{Proposition}
\crefname{proposition}{Proposition}{Propositions}
\newtheorem{lemma}[definition]{Lemma}
\crefname{lemma}{Lemma}{Lemmas}
\newtheorem{corollary}[definition]{Corollary}
\crefname{corollary}{Corollary}{Corollaries}
\theoremstyle{remark}
\newtheorem{remark}[definition]{Remark}
\crefname{remark}{Remark}{Remarks}
\numberwithin{equation}{section}
\newcommand{\boundary}{\partial}
\newcommand{\coloniff}{\;\defcolon\Longleftrightarrow\;}
\newcommand{\defcolon}{\mathrel{:}}
\DeclareMathSymbol{\mhyph}{\mathalpha}{operators}{`-}
\newcommand{\metanats}{\mathord{\mathbb{N}}}
\newcommand{\relmiddle}[1]{\mathrel{}\middle#1\mathrel{}}
\NewDocumentCommand{\set}{m o}{%
  \left\{#1\IfValueT{#2}{\relmiddle| #2}\right\}}
\NewDocumentCommand{\mtCard}{s m}{
  \mathop{\#}\IfBooleanTF{#1}{\mleft(#2\mright)}{#2}
}
\newcommand{\mtcCat}{\mathord{\mathbf{Cat}}} 
\newcommand{\mtcSimpx}{\mathord{\mathbf{\Delta}}} 
\newcommand{\mtcSimpxAug}{\mathord{\mathbf{\Delta}_a}} 
\newcommand{\mtcPoset}{\mathord{\mathbf{Poset}}} 
\newcommand{\mtcSet}{\mathord{\mathbf{Set}}} 
\newcommand{\mtcPsh}[2]{{#2}_{#1}} 
\DeclareMathOperator{\mtynd}{\mathbf{y}} 
\newcommand{\mtcSSet}{\mtcPsh{\mtcSimpx}{\mtcSet}} 
\newcommand{\mtcSSSet}{\mtcPsh{\mtcSimpx\times\mtcSimpx}{\mtcSet}} 
\newcommand{\dualCat}[1]{#1^{\mathord{\mathrm{op}}}} 
\newcommand{\overcat}[2]{#1/#2} 
\newcommand{\functCat}[2]{{#2}^{#1}} 
\DeclareMathOperator{\Ob}{Ob} 
\DeclareMathOperator{\Hom}{Hom}  
\DeclareMathOperator{\Mor}{Mor} 
\DeclareMathOperator{\dom}{dom} 
\NewDocumentCommand{\HomOf}{o m m}{%
  \Hom\IfValueT{#1}{_{#1}}\mleft(#2,#3\mright)} 
\newcommand{\deltaBra}[1]{\mleft[#1\mright]} 
\DeclareMathOperator{\mtyndSpx}{\Delta} 
\DeclareMathOperator{\mtBdrySpx}{\boundary\Delta} 
\newcommand{\mtsHorn}[2]{\Lambda_{#2}[#1]} 
\newcommand{\compos}{\mathbin{\circ}} 
\newcommand{\whiskr}{\triangleright} 
\newcommand{\whiskl}{\triangleleft} 
\newcommand{\uniqmor}{\mathord{\boldsymbol{!}}} 
\NewDocumentCommand{\forwardinduce}{s m}{%
  \IfBooleanTF{#1}{{(#2)}_*}{#2_*}}
\NewDocumentCommand{\backwardinduce}{s m}{%
  \IfBooleanTF{#1}{{(#2)}^*}{#2^*}}
\NewDocumentCommand{\idmor}{o}{\mathord{\mathrm{id}}\IfValueT{#1}{_{#1}}}
\newcommand{\catNerve}{\mathop{\mathbf{N}}\nolimits} 
\DeclareMathOperator{\nerve}{N} 
\DeclareMathOperator*{\mtColim}{colim} 
\DeclareMathOperator{\mtLan}{Lan} 
\newcommand{\mtsSk}[1]{\operatorname{Sk}_{#1}} 
\DeclareMathOperator{\Down}{Down}
\newcommand{\Downstar}{\Down_*}
\NewDocumentCommand{\upset}{o m}{%
  \left[#2,\infty\right[\IfValueT{#1}{_{#1}}%
}
\NewDocumentCommand{\cintv}{o m m}{%
  \left[#2,#3\right]\IfValueT{#1}{_{#1}}%
}
\newcommand{\proj}{\mathord{\mathrm{pr}}}
\NewDocumentCommand{\last}{o}{%
  \mathop{\mathfrak{last}}\nolimits\IfValueT{#1}{_{#1}}%
}
\NewDocumentCommand{\weqlast}{o}{%
  W^{\last}\IfValueT{#1}{_{#1}}%
}
\DeclareMathOperator{\mtsDcp}{Dcp}
\newcommand{\mtpDcp}{\mtsDcp_P}
\newcommand{\mtcDcp}{\mtsDcp_C}
\DeclareMathOperator{\mtsDcpI}{DcpI}
\newcommand{\mtcDcpI}{\mtsDcpI_C}
\DeclareMathOperator{\mtsESd}{ESd}
\DeclareMathOperator{\mtsESdI}{ESdI}
\newcommand{\mtsESdIc}{\mtsESdI_{\mtcSimpx}}
\newcommand{\mtsESdp}{\mtsESd'}
\newcommand{\mtsESdpI}{\mtsESdI'}
\newcommand{\mtpESdp}{\mtsESdp_P}
\newcommand{\mtpESdpI}{\mtsESdpI_P}
\DeclareMathOperator{\mtsDcpC}{DcpC}
\newcommand{\mtsESdpC}{\operatorname{ESdC}'}
\NewDocumentCommand{\idtype}{o m m}{%
  {{#2} =\IfValueT{#1}{_{#1}} {#3}}}
\NewDocumentCommand{\tytrunc}{o m}{\left\lVert #2 \right\rVert
  \IfValueT{#1}_{#1}}
\NewDocumentCommand{\iterSig}{o m}{%
  \mathop{\mathsf{Sig}}\mleft[#2\mright]\IfValueT{#1}{_{#1}}}
\NewDocumentCommand{\iterSigCon}{o m}{%
  \mathop{\sigma}\mleft[#2\mright]\IfValueT{#1}{_{#1}}}
\NewDocumentCommand{\iterPi}{o m m}{%
  \mleft[#2\mright]\IfValueT{#1}{_{#1}} \to #3}
\NewDocumentCommand{\iterFct}{o m m}{%
  \mathop{\lambda}\mleft[#2\mright]\IfValueT{#1}{_{#1}}.\,{#3}}
\NewDocumentCommand{\refl}{o m}
{\mathsf{refl}\IfValueT{#1}{^{#1}}_{#2}}
\DeclareRobustCommand%
\NewDocumentCommand{\prooftree@my@one}{m}{%
  \IfValueTF{#1}{\prftree[#1]}{\prftree}}
\NewDocumentCommand{\prooftree@my@two}{m m}{%
  \IfValueTF{#2}{\prooftree@my@one{#1}[l]}{\prooftree@my@one{#1}}}
\NewDocumentCommand{\prooftree@my@three}{m m m} {%
  \IfValueTF
      {#3}%
      {\prooftree@my@two{#1}{#2}[r]{#3}}%
      {\prooftree@my@two{#1}{#2}}}
\NewDocumentCommand{\prooftree@my@four}{m m m} {%
  \IfValueTF
      {#2}%
      {\prooftree@my@three{#1}{#2}{#3}{#2}}%
      {\prooftree@my@three{#1}{#2}{#3}}}
\NewDocumentCommand\prooftree{o d\l\endl d\r\endr d\l\endl m m}{
  \IfValueTF{#2}{\prooftree@my@four{#1}{#2}}
  {\prooftree@my@four{#1}{#4}}{#3}#5{#6}\relax}
\title[Finite Reedy categories as localizations of finite direct categories]{
  Presentation of finite Reedy categories as localizations of finite direct categories}
\author{Genki Sato}
\email{gettaplacetogo@gmail.com}
\subjclass[2020]{18N55 
}
\keywords{Reedy category, direct category, finite category, $(\infty,1)$-localization, simplicial set, homotopy type theory}
\begin{document}

\begin{abstract}
  In this paper, we present a construction from a Reedy category $C$
  of a direct category $\Down(C)$ and a functor $\Down(C) \to C$, 
  which exhibits $C$ as an $(\infty,1)$-categorical localization of $\Down(C)$. 
  This result refines previous constructions in the literature by ensuring 
  finiteness of the direct category $\Down(C)$ whenever $C$ is finite,
  which is not guaranteed by existing approaches.
  The finiteness property is useful when we want to embed the construction into
  the syntax of a (non-infinitary) logic: the author expects the construction may be used to
  develop a meta-theory of finitely truncated simplicial types for homotopy type theory.
\end{abstract}

\maketitle
\tableofcontents

\section{Introduction}
\label{sec:introduction}

In this paper, we prove the following result:

\begin{theorem}\label{thm:main}
  Let $C$ be a Reedy category. Then there is a concrete construction 
  (to be seen in \cref{def:down-c,def:down-fctr-last}) of
  a direct category $\Down(C)$ and a functor $\Down(C) \to C$
  which exhibits $C$ as an $(\infty, 1)$-categorical localization of $\Down(C)$.
  Furthermore, the category $\Down(C)$ is finite whenever $C$ is.
\end{theorem}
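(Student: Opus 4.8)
The plan is to separate \cref{thm:main} into its three assertions and handle them in order of increasing difficulty. First I would unwind the constructions of \cref{def:down-c,def:down-fctr-last}: an object of $\Down(C)$ records a composable string of non-identity maps of the inverse part $C^-$ together with the object of $C$ it ``decomposes'', a morphism of $\Down(C)$ is generated by refining such a string and by acting on the decomposed object via the direct part $C^+$, and $\last$ reads off the decomposed object. To see that $\Down(C)$ is a \emph{direct} category I would produce a degree function built from the length of the underlying string and the Reedy degree in $C$ of the objects occurring in it, and check that every non-identity morphism of $\Down(C)$ strictly raises it; the only subtlety is that these strings have a priori bounded length, which holds because the Reedy degree strictly decreases along a composable string of non-identity $C^-$-maps. \emph{Finiteness} is then immediate: when $C$ has finitely many objects and morphisms the Reedy degrees are bounded, so the strings are bounded in length, so $\Down(C)$ has only finitely many objects, and similarly finitely many morphisms.

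The content of the theorem is that $\last$ exhibits $C$ as the $(\infty,1)$-localization of $\Down(C)$ at the class $\weqlast$ of morphisms that $\last$ carries to isomorphisms of $C$. There are two routes I would consider. One is a \emph{comparison} argument: relate $\Down(C)$, by a functor that is itself a localization, to a larger and possibly infinite direct-category model of the same localization already available in the literature, and transport the property along that functor. The other, which I would take as primary because it stays concrete, is to verify a fibrewise recognition criterion for localization functors directly: it suffices to show, for each object $c$ of $C$, that the comma category $\Down(C) \times_C \overcat{C}{c}$, equipped with the weak equivalences induced from $\weqlast$, has a weakly contractible localization.

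I would prove this fibrewise contractibility by induction along the Reedy degree filtration, writing $C$ as the union of its full subcategories on objects of degree below a given bound. Since $\Down$ is built from $C$ compatibly with this filtration, it carries it to a corresponding exhaustion of $\Down(C)$ by direct categories, and the inductive step reduces the claim to a single latching/matching cell, where the comma category $\Down(C) \times_C \overcat{C}{c}$ is --- up to the induced weak equivalences --- a slice category and therefore has a terminal object, namely the trivial decomposition of $c$. Feeding this back into the recognition criterion shows that $\last$ is a localization at $\weqlast$, which completes the proof.

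The step I expect to be the main obstacle is the interaction of the two halves of the Reedy structure inside $\Down$. The direct part $C^+$ is carried along essentially without effort, but the inverse part $C^-$ is exactly what forces $\last$ to be a genuine localization rather than an equivalence and exactly where the naive constructions stop being finite, so the decomposition data must be arranged with care. Concretely, the matching objects governing each stage of the Reedy induction are $C^-$-indexed limits while $\Down$ is a colimit-type construction, so showing that $\Down$ nevertheless interacts correctly with that matching data --- in effect nesting a second induction over $C^-$ inside the induction over the degree of $C$ --- is where the real work lies; once that is settled, the boundedness arguments underlying directness and finiteness, together with the formal properties of $(\infty,1)$-localizations, are routine.
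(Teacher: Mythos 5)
Your reading of the construction and the degree-function argument for directness are in the right spirit, and the finiteness argument (strings are bounded because the Reedy degree strictly drops along a non-identity $C_-$-chain) matches the paper's \cref{lem:groth-cat-finite,lem:down-c-finite}. But you have glossed over a real subtlety in the directness step: $\Down(C)$ is a \emph{quotient} of $\int\catNerve^{{--},{+}}_{+}(C)$ on hom-sets, so exhibiting a degree function on the latter (which is essentially \cref{prop:groth-cat-reedy}) does not immediately give directness of $\Down(C)$; the paper has to control the quotient using the order structure on hom-sets and the characterization in \cref{lem:groth-cat-strict-ob-cond} before \cref{prop:down-c-direct}.

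The serious gap is in the localization step. The ``fibrewise recognition criterion'' you invoke --- that $\last$ is an $(\infty,1)$-localization provided each comma category $\Down(C)\times_C(\overcat{C}{c})$ has weakly contractible localization at the induced weak equivalences --- is false as stated. Take $E=[1]$, $C=\ast$, $p\colon E\to\ast$, and $W=\set{\mathrm{identities}}$. The only comma category is $E$ itself, its localization at identities is $E=[1]$, and $\nerve[1]$ is weakly contractible; yet $p$ is not a localization of $E$ at $W$, since $E[W^{-1}]\simeq[1]\neq\ast$. Theorems of roughly this shape do exist, but they carry additional hypotheses (for instance, that the functor be a cocartesian fibration and $W$ consist exactly of the cocartesian edges) which $\last$ does not satisfy, and your sketch does not supply them; nor does the inductive ``latching/matching'' step come with an argument. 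The paper instead proves the localization directly in \cref{sec:down-last-infty-loc-shape,sec:down-last-infty-loc}, by building the endofunctors $\mtsDcp,\mtsESd,\mtsESdp$ and their ``I'' versions, showing $\mtsDcp X\to\mtsESd X$ is a universal localization and $\mtsESd X\to\mtsESdp X$ is inner anodyne, constructing comparison maps $D,D^I,E,E^I$ into $\nerve(\Gamma)$ and $\nerve(C)$ in \cref{lem:down-and-orig-dcp-esdp-maps}, and deducing a lifting property (\cref{lem:down-last-infty-localiz-lift-ext-property}) which yields \cref{thm:down-last-infty-localiz}. If you want a shorter route, the comparison argument you considered but set aside --- transporting the localization property along a map to an already-known (possibly infinite) direct replacement --- is closer to something that could be made to work, but that too requires a nontrivial comparison lemma that your sketch does not supply.
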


Without the mention of finiteness, the proof of this result appears
in literature in a stronger form:

\begin{theorem}[Preceding result]\label{thm:preceding}
  Let $X$ be any simplicial set. Then there is a well-founded partial order $P$
  and a functor $P \to X$, i.e., a simplicial map $\nerve(P) \to X$ exhibiting
  $X$ as an $(\infty,1)$-localization of $P$.
\end{theorem}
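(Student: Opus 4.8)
\emph{Strategy.}
The plan is to take $P$ to be the poset underlying a sufficiently high barycentric subdivision of $X$, with $\nerve(P)\to X$ the corresponding iterated last-vertex map. Two facts are needed: (i) that the last-vertex map $\mtsSd(Y)\to Y$ always exhibits $Y$ as an $(\infty,1)$-localization of $\mtsSd(Y)$ at an explicit class of edges, and (ii) that $\mtsSd^{2}(X)$ is the nerve of a well-founded poset.

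\emph{The last-vertex map is a localization.}
On a single standard simplex, $\mtsSd(\Delta^{n})\to\Delta^{n}$ is the nerve of the order-preserving map $\max\colon\mathcal{P}_{\neq\emptyset}(\deltaBra{n})\to\deltaBra{n}$ sending a non-empty subset to its greatest element. Since $\max S\le k$ if and only if $S\subseteq\set{0,\dots,k}$, this map has the fully faithful right adjoint $k\mapsto\set{0,\dots,k}$, so it is a reflective localization of $1$-categories; as a reflective localization of $1$-categories induces an $(\infty,1)$-localization after applying the nerve, the map $\mtsSd(\Delta^{n})\to\Delta^{n}$ exhibits $\Delta^{n}$ as the $(\infty,1)$-localization of $\mtsSd(\Delta^{n})$ at the edges $S\subseteq T$ with $\max S=\max T$, that is, at precisely the edges carried to degenerate edges of $\Delta^{n}$. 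To reach an arbitrary $Y$, I would write $Y=\mtColim_{\overcat{\mtcSimpx}{Y}}\Delta^{n}$ as the (homotopy) colimit of its simplices and observe that $\mtsSd(Y)$, marked by the class $W_{Y}$ of edges carried by the last-vertex map to degenerate edges, is correspondingly a homotopy colimit of the marked objects $(\mtsSd(\Delta^{n}),W_{\Delta^{n}})$ in a model structure presenting localizations; since inverting the marking commutes with this homotopy colimit and does the expected thing simplexwise, $\mtsSd(Y)\to Y$ exhibits $Y$ as the $(\infty,1)$-localization of $\mtsSd(Y)$ at $W_{Y}$. This statement --- or the equivalent one formulated through the category of simplices $\overcat{\mtcSimpx}{X}$ --- is the ``stronger form'' already present in the literature, and may simply be cited.

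\emph{Passing to a well-founded poset.}
It is classical that a fixed finite iterate of barycentric subdivision --- concretely $\mtsSd^{2}(X)$, though a higher iterate would also do --- is isomorphic to the nerve of a poset: one lets $P$ be the set of vertices of $\mtsSd^{2}(X)$, equivalently the non-degenerate simplices of $\mtsSd(X)$, with the order generated by its $1$-skeleton (``being a face of''). Along any strictly descending chain in $P$ the dimension of the corresponding simplex strictly decreases, so $P$ has no infinite descending chain; this uses nothing about finite-dimensionality of $X$. Hence $\nerve(P)=\mtsSd^{2}(X)$ with $P$ a well-founded partial order.

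\emph{Conclusion, and the main difficulty.}
Applying the localization statement first to $Y=X$ and then to $Y=\mtsSd(X)$ produces
\[
\nerve(P)=\mtsSd^{2}(X)\;\longrightarrow\;\mtsSd(X)\;\longrightarrow\;X,
\]
a composite of two $(\infty,1)$-localizations and hence itself an $(\infty,1)$-localization (at a suitable combined class of edges); reading this simplicial map as a functor $P\to X$ gives the statement. The main obstacle is the globalization in the second step: promoting the transparent adjunction picture on each $\Delta^{n}$ to an arbitrary simplicial set requires knowing that the property ``$f$ exhibits its codomain as a localization at the marked edges'' is stable under the pushouts along boundary inclusions $\partial\Delta^{n}\hookrightarrow\Delta^{n}$ out of which $Y$ is assembled --- equivalently, that these localizations glue along a homotopy colimit in a model structure for localization (or marked) pairs. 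Since a stronger form of precisely this is available in the literature, in practice one cites it; the only work specific to the present statement is the extra subdivision, producing an honest well-founded poset, together with the verification that the localizing class survives that passage.
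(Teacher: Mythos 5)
The paper does not prove this statement itself; it cites it as \cite[Theorem~02MD]{kerodon} and only remarks that an inspection of Lurie's proof shows the poset constructed there to be well-founded. Your first step---that the last-vertex map $\mtsSd(Y)\to Y$ is an $(\infty,1)$-localization, established on each standard simplex via the reflective adjunction $\max\dashv(k\mapsto\{0,\dotsc,k\})$ and then globalized by stability of universal localizations under gluing---is a sound summary of the key input to Lurie's argument, and you are right that it may simply be cited.

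Your second step, however, has a genuine gap: it is not true that $\mtsSd^{2}(X)$ is isomorphic to the nerve of a poset for an arbitrary simplicial set $X$, and no higher iterate of $\mtsSd$ repairs this. The classical fact you have in mind concerns simplicial \emph{complexes}; for simplicial sets in which a nondegenerate simplex has a degenerate face it fails. Concretely, let $X$ be $\mtyndSpx\deltaBra{2}$ with the edge $\{0,1\}$ collapsed to a point, let $\alpha$ denote the image of the nondegenerate $2$-simplex, and let $p=\alpha(0)=\alpha(1)$. Since $\mtsSd$ preserves colimits, $\mtsSd^{n}(X)$ is the quotient of $\mtsSd^{n}(\mtyndSpx\deltaBra{2})$ by the image under $\mtsSd^{n}$ of the collapsed edge, and that quotient map is injective away from the collapsed subcomplex. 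For $n=1$, the three edges of $\mtsSd(\mtyndSpx\deltaBra{2})$ from $\{0\}$, $\{1\}$, and $\{0,1\}$ up to $\{0,1,2\}$ survive as distinct nondegenerate edges of $\mtsSd(X)$, all running from $p$ to $\alpha$, so $\mtsSd(X)$ already has parallel edges. For $n=2$, the chain $\{0\}\subsetneq\{0,1\}\subsetneq\{0,1,2\}$ is a vertex of $\mtsSd^{2}(\mtyndSpx\deltaBra{2})$ that does not lie in the collapsed part, while its three subchains $\{0\}$, $\{0,1\}$, and $\{0\}\subsetneq\{0,1\}$ do, and hence all map to $p$; so $\mtsSd^{2}(X)$ again has three distinct edges from $p$ to the class of that vertex. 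The same computation, run with the maximal iterated chain through $\{0\}\subsetneq\{0,1\}\subsetneq\{0,1,2\}$, recurs at every level $n$. Thus $\mtsSd^{n}(X)$ has parallel edges for every $n$ and is never the nerve of a poset; the poset $P$ you describe does not exist, and the well-foundedness observation has nothing to apply to. Replacing iterated barycentric subdivision by a construction that produces an honest poset with a localizing map to an arbitrary $X$ is exactly where the nontrivial content of Lurie's argument lies, and it is the essential idea missing from your sketch.
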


A direct proof of \cref{thm:preceding} may be seen, for example, in the Lurie's
online textbook \emph{Kerodon} as \cite[\href{https://kerodon.net/tag/02MD}{Theorem 02MD}]{kerodon}.
The proof of a claim ``equivalent to \cref{thm:preceding} up to homotopy'' can also
be found in an earlier literature: in 
\cite{barwick2011-relcat-model,barwick2011-thomasonlike-quilleneqv-quasicategories},
Barwick and Kan state that the cofibrant objects of a model category of relative categories,
which models all $(\infty,1)$-categories, are necessarily relative posets; therefore
any $(\infty,1)$-category is a localization of a poset.

As you can see, the statement of Barwick and Kan does not refer to well-foundedness
(or equivalently directness). Strictly speaking, Lurie's statement does not either.
However, with a closer inspection of Lurie's proof, it is easily seen that the poset
he has constructed is well-founded. I, the author, have not inspected Barwick and Kan's proof,
but I roughly expect that the poset they have constructed is well-founded as well,
as they use a modified nerve functor.

Although the preceding \cref{thm:preceding} is almost stronger than our \cref{thm:main},
there is an improvement in the latter: finiteness. If $C$ is a finite Reedy category,
the category $\Down(C)$, which has $C$ as a localization, is in fact a finite direct category.
The preceding constructions do not seem to guarantee the finiteness of the poset $P$
except in very limited cases.
For example, Lurie's construction of $P$ may be taken to be finite precisely when $X$ is a finite
simplicial set. If we wish to take $X = \nerve(C)$, the nerve of a 1-category, the simplicial set
$X$ is finite if and only if $C$ is already finite and \emph{direct}. Therefore, if you wish to
get a finite direct category from \cref{thm:preceding}, you need to start with a finite direct
category, which renders \cref{thm:preceding} useless for the purpose.

On the other hand, our \cref{thm:main} provides a finite direct category from a finite
Reedy category. Furthermore, by applying our construction and then Lurie's construction,
we obtain the following corollary:
\begin{corollary}
  Let $C$ be a finite Reedy category. Then there is a finite partial order $P$
  and a functor $P \to C$ exhibiting $C$ as an $(\infty,1)$-localization of $P$.
\end{corollary}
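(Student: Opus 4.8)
The plan is to chain \cref{thm:main} with \cref{thm:preceding}. Starting from the finite Reedy category $C$, \cref{thm:main} supplies a finite direct category $\Down(C)$ together with a functor $F \colon \Down(C) \to C$ exhibiting $C$ as an $(\infty,1)$-localization of $\Down(C)$. The first point to check is that the nerve $\nerve(\Down(C))$ is a \emph{finite} simplicial set. Indeed, any non-degenerate $n$-simplex of $\nerve(\Down(C))$ is a chain of $n$ non-identity composable morphisms $x_0 \to \cdots \to x_n$ of $\Down(C)$; since $\Down(C)$ is direct, its degree function is strictly increasing along such a chain, so $n$ is bounded by the number of distinct degrees attained, which is at most the (finite) number of objects. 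As $\Down(C)$ also has only finitely many morphisms, there are only finitely many such chains, hence finitely many non-degenerate simplices in total.

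Next I would apply \cref{thm:preceding} to the simplicial set $\nerve(\Down(C))$. By the discussion following \cref{thm:preceding}, since $\nerve(\Down(C))$ is finite the resulting poset $P$ may be taken finite, and one obtains a simplicial map $\nerve(P) \to \nerve(\Down(C))$ exhibiting $\nerve(\Down(C))$ as an $(\infty,1)$-localization of $P$. Because $\nerve \colon \mtcCat \to \mtcSSet$ is fully faithful, this simplicial map is $\nerve(G)$ for a unique functor $G \colon P \to \Down(C)$; and since being an $(\infty,1)$-localization depends only on the induced map of $\infty$-categories, $G$ itself exhibits $\Down(C)$ as an $(\infty,1)$-localization of $P$.

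It then remains to compose: I claim $F \compos G \colon P \to C$ exhibits $C$ as an $(\infty,1)$-localization of $P$. This uses the standard fact that $(\infty,1)$-localizations are stable under composition: if $G$ inverts a set $W_P$ of morphisms with $P[W_P^{-1}] \simeq \Down(C)$ and $F$ inverts a set $W'$ with $\Down(C)[W'^{-1}] \simeq C$, then $F \compos G$ inverts $W_P \cup G^{-1}(W')$, and the induced functor $P\bigl[(W_P \cup G^{-1}(W'))^{-1}\bigr] \to C$ is an equivalence by the usual iterated-localization identity $A[S^{-1}][T^{-1}] \simeq A[(S \cup T)^{-1}]$. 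Since $P$ is finite, it is the desired finite partial order.

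The construction involves no new ideas beyond \cref{thm:main,thm:preceding}; the only things to verify are the finiteness of $\nerve(\Down(C))$, which is immediate from directness as above, and the stability of $(\infty,1)$-localizations under composition, which is purely formal. The latter is the nearest thing to an obstacle here, and only at the level of bookkeeping rather than of mathematical content.
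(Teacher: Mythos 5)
Your proof is correct and takes essentially the same approach as the paper: the paper derives the corollary by exactly this two-step chaining, applying \cref{thm:main} to obtain the finite direct category $\Down(C)$ and then \cref{thm:preceding} (Lurie's construction) to replace it by a finite poset, so your argument is a faithful expansion of the paper's one-line remark. The only small imprecisions are cosmetic: a finite direct category has \emph{some} degree function rather than a canonical one (but any topological sort of the well-founded relation works, and the acyclicity is what forces the $x_i$ in a non-degenerate chain to be pairwise distinct, giving the bound $n+1\le\mtCard{\Ob\Down(C)}$); and the explicit description of the weak equivalence class after composing two localizations is slightly more delicate than $W_P\cup G^{-1}(W')$, but this does not matter for the corollary since only the \emph{existence} of a localizing class is asserted, and closure of $(\infty,1)$-localization maps under composition is standard.
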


The author expects that the result of this paper may be useful when one wishes 
to embed the construction into the syntax of a (non-infinitary) logic. 
In particular, it may have applications to the meta-theory of 
homotopy type theory (HoTT)~\cite{hottbook}. In HoTT, there is a meta-notion 
of finite direct presheaves (or finite inverse diagrams): for each finite direct category $C$, 
there is a well-defined notion of the type of presheaves on $C$ 
\cite{SHULMAN_2014,kraus2017-spacevalueddiagrams}. 
The result of this paper suggests that these notions can be extended to finite Reedy presheaves:
if $C$ is a finite Reedy category, a presheaf over $C$ may be defined as a presheaf over 
$\Down(C)$ equipped with a point of an appropriate subcontractible space. 
A key example is that of finitely truncated simplicial types. 
For each $n$, there is a theory of $n$-truncated semisimplicial types; however, 
the theory of $n$-truncated simplicial types for an arbitrary meta-level natural number $n$
has not yet been developed. Our result suggests that the theory of $n$-simplicial types can be developed 
in a straightforward way.

Kraus and Sattler have already worked along these lines in their extended abstract 
\cite{kraus2017-spacevalueddiagrams}. As a part of their work,
they have proposed a sketch of a in-HoTT theory of diagrams over a fixed Reedy category
with certain conditions via a ``direct replacement''. An important example of their work is a simple infinite
direct replacement of the untruncated simplex category, which is far simpler than the present construction.
This approach yields a definition of simplicial types in terms of countably infinite meta-series of type judgements in HoTT.
However, in private communication,
Kraus and the author have confirmed that truncated simplex categories do not satisfy the conditions 
proposed in \cite{kraus2017-spacevalueddiagrams}, contrary to the claim in it.
Our present construction applies to all Reedy categories, including truncated simplex categories.

Let us roughly outline the construction of $\Down(C)$.
Given a finite Reedy category $C=(C, C_{-}, C_{+})$, the construction of $\Down(C)$ begins with
the direct subcategory $C_{+}$. If we added morphisms from $C_{-}$ to this, we would obtain $C$, but
it would break directness. Instead, for each non-identity $f\colon x \to y$ in $C_{-}$, 
we would like to add a new object $c_f$ and two morphisms, as in:
\[
\begin{tikzcd}
  x \ar[r] & c_f & y \ar[l, "\sim"']
\end{tikzcd}
\]
If we make the morphism $c_f \leftarrow y$ a weak equivalence, we would get a ``factorization'' of $f$.
If we make $c_f$ have a higher degree than $x$ and $y$, the directness or the finiteness would not be broken. 
However, since we need to ensure coherence, just adding these objects $c_f$ and connecting morphisms is not enough.
We actually need to add an arbitraily long composable chains in $C_{-}$ as objects
and we consider any series of morphisms in $C_{+}$ that connect these chains as
morphisms; the resulting category is called $\int \catNerve^{{-},{+}}(C)$ in \cref{sec:def-cat-down}. 
However this does not guarantee finiteness, because this construction allows
arbitrarily long chains of idenitities. These arbitrary long chains actually breaks the directness
(it is merely Reedy).
By quotienting out these idenities, we obtain a finite direct
category $\Down(C)$, because $C_{-}$ is finite inverse. This is the rough idea of the construction.

Let us now give an outline of the paper. In \cref{sec:cat-prel}, we wrap up some terminology and some
results in category and $\infty$-category theory. In \cref{sec:def-cat-down}, we define the category
$\Down(C)$ and some other related categories.
In \cref{sec:groth-cat-partial-order}, we investigate the structure of hom-set of $\Down(C)$. 
From this, the directness and the finiteness of $\Down(C)$ will be shown in \cref{sec:groth-cat-direct}.
In \cref{sec:groth-cat-comparison}, we shall study some functors among $\Down(C)$ and other categories
constructed in \cref{sec:def-cat-down}. This helps us to show that $\Down(C)\to C$, among other $C$-valued
functors, are 1-localizations in \cref{sec:down-last-localiz}.
The functor $\Down(C) \to C$ itself, called $\last$, will also be constructed in \cref{sec:down-last-localiz}.
After that, \cref{sec:down-last-infty-loc-shape,sec:down-last-infty-loc} will be devoted to the proof
that $\Down(C)\to C$ is an $(\infty,1)$-localization. In \cref{sec:down-last-infty-loc-shape}, we shall
define some endofunctors on the category of simplicial sets and investigate their properties. These
endofunctors will describe the shapes of the $(\infty,1)$-diagrams in the actual proof
in \cref{sec:down-last-infty-loc} that $\Down(C)\to C$ is an $(\infty,1)$-localization.
Our main theorem, \cref{thm:main}, will be proven in \cref{sec:wrapping-up}. The content of the theorem
shall have been proven part by part in the previous sections, and the proof in \cref{sec:wrapping-up}
will be a concise summary of these results.

Throughout this paper except for \cref{sec:cat-prel}, we shall fix a Reedy category $C=(C, C_{-}, C_{+})$.

\section*{Acknowledgements}

I would like to thank my advisors, Toshitake Kohno and Nariya Kawazumi, 
for their invaluable guidance and encouragement.
I am also grateful to Yohsuke Matsuzawa for organizing regular progress meetings, 
despite not sharing my mathematical background, and to Jun Yoshida and to Satoshi Sugiyama
for their helpful conversations.
I appreciate the great previous research by Nicolai Kraus and Christian Sattler,
and I am grateful to Nicolai Kraus for the wounderful discussion on their work.
I appreciate the insights provided by Andrej Bauer, Selgei Burkin, Simon Henry,
Peter LeFanu Lamsdaine, Maxime Ramzi, Mike Shulman, Paul Taylor, and David White
through MathOverflow.
My thanks also go to my collegemates who have supported and delighted me,
during my course at the University of Tokyo. Finally,
although this paper was completed after my departure from the university,
my doctoral research was supported by a JSPS Research Fellowship
and the FMSP program at the University of Tokyo.

\section{Preliminaries}
\label{sec:cat-prel}

In this section, we review some category-theoretic preliminaries.
In \cref{sec:cat-prel-grossaries}, we list some standard category-theoretic terminology and notations,
including the weak and the strict definitions of 1-localizations.
In \cref{sec:cat-prel-foundations}, we remark on the set-theoretic foundations of this paper.
Since the first half of this paper is based on the language of constructive mathematics, we shortly discuss
the possible formalization of the part. The subsection also includes the definition of well-founded relations, which is
foundational for defining direct categories.
In \cref{sec:cat-prel-direct-reedy}, we discuss the definitions of direct categories and Reedy categories.
Those notions need explicit reconsideration under the constructive settings, we shall discuss them in detail.
In \cref{sec:cat-prel-simpx}, we recall the definition of our most important Reedy category,
the simplex category $\mtcSimpx$, and list some related notations and terminologies.
Finally in \cref{sec:cat-prel-simplicial-quasicat}, we shall cite some terminology, notation and results from
$(\infty, 1)$-category theory based on simplicial sets as models. This will be vital in
\cref{sec:down-last-infty-loc-shape,sec:down-last-infty-loc}
for formulating and proving that $\Down(C) \to C$ is an $(\infty,1)$-localization.

\subsection{Grossaries}
\label{sec:cat-prel-grossaries}

We first list some category-theoretic terminology and notations.
This is long, but it is a standard glossary in category theory,
so the readers may wish to skip this list.
See, for instance, \cite{awodey-cat,Borceux_1994,maclane-cat} for reference.

\begin{definition}[Category-theoretic glossary]
  \ \label{def:cat-glossary}
  \begin{itemize}
    \item Let $C$ be a category. Then we write $\Ob C$ for the collection of
    objects of $C$, $\HomOf[C]{x}{y}$ for the collection of morphisms
    $x \to y$ in $C$, and $\Mor C$ for the collection of all morphisms
    of $C$. If $f\colon x \to y$ and $g\colon y \to z$ are morphisms in a
    category,
    their composition $x \to z$ will be denoted by $g \compos f$.
    The identity morphism of an object $x$ in a category is written
    as $\idmor[x]$.
    \item If there is no danger of confusion, we may write $f \in C$
    to mean $f \in \Mor C$, and $x \in C$ to say $x \in \Ob C$.
    \item A \emph{functor} $F\colon C\to D$, as usual, stands for 
    a covariant one: a pair of
    a function $F\colon \Ob C \to \Ob D$ and a family of functions
    $F\colon \HomOf[C]{x}{y} \to \HomOf[D]{F(x)}{F(y)}$ preserving
    identities and compositions. If $F\colon C\to D$ and $G\colon D\to E$
    are functors, their composition $x\mapsto G(F(x))$ is denoted by
    $G\compos F\colon C\to E$. The identity functor of a category $C$ is
    denoted by $\idmor[C]$.
    \item A functor $F\colon C\to D$ is \emph{faithful} if, for any pair
    of objects $x,y$ in $C$, the function
    $F\colon \HomOf[C]{x}{y} \to \HomOf[D]{F(x)}{F(y)}$ is injective.
    The functor is \emph{full} if this function is surjective,
    and \emph{fully faithful} if it is an isomorphism of sets.
    \item A category $C'$ is said to be a \emph{subcategory} of a category $C$,
    denoted by $C' \subseteq C$, if there is a faithful functor $C' \to C$
    solely consisting of inclusions of subsets on objects and morphisms.
    A subcategory $C'$ is said to be \emph{full} if the functor is fully faithful;
    it is said to be \emph{wide} if the functor is the identity on objects.
    \item A functor $F\colon C\to D$ is said to \emph{reflect identity}
    if for any morphism $f\colon x \to y$ in $C$ with $F(x) = F(y)$ and
    $F(f) = \idmor[F(x)]$, we have $x=y$ and $f = \idmor[x]$.
    \item A \emph{natural transformation} $\alpha\colon F\Rightarrow G$
    between functors $F,G\colon C\to D$, as always, consists of a morphism
    $\alpha_x\colon F(x) \to G(x)$ in $D$ for each object $x$ in $C$ satisfying
    suitable commutativity condition.
    \item The \emph{vertical composition} of natural transformations
    $\alpha\colon F\Rightarrow G$ and $\beta\colon G\Rightarrow H$, is denoted by
    $\beta\compos\alpha\colon F\Rightarrow H$. 
    The identity natural transformation of a functor $F$ is
    denoted by $\idmor[F]\colon F\Rightarrow F$.
    \item The \emph{functor category} $\functCat{C}{D}$ has functors $C\to D$
    as objects, natural transformations between them as morphisms, and
    the vertical composition of natural transformations as composition.
    \item The \emph{left whiskering} of a natural transformation
    $\alpha\colon F\Rightarrow G\colon C \to D$ by a functor $H\colon D \to D'$
    is denoted by $H\whiskl\alpha\colon H\compos F\Rightarrow H\compos G$.
    \item The \emph{right whiskering} of a natural transformation
    $\alpha\colon F\Rightarrow G\colon C \to D$ by a functor $H\colon C' \to C$
    is denoted by $\alpha\whiskr H\colon F\compos H\Rightarrow G\compos H$.
    \item Let $C$ and $D$ be a category. If $F\colon C' \to C$ is a functor,
    precomposition with $F$ and the left whiskering by $F$ define the
    \emph{precomposition functor} $\functCat{C}{D} \to \functCat{C'}{D}$,
    denoted by $\backwardinduce{F} = ({-} \compos F)$.
    Similarly for a functor $G\colon D \to D'$, the \emph{postcomposition functor}
    $\functCat{C}{D} \to \functCat{C}{D'}$ is denoted by
    $\forwardinduce{G} = (G \compos {-})$.
    \item The \emph{dual} of a category $C$ will be denoted by $\dualCat{C}$.
    \item The category $\functCat{\dualCat{C}}{D}$ of functors
    $\dualCat{C}\to D$ may be denoted by $\mtcPsh{C}{D}$. The objects of this
    category may also be called \emph{$D$-valued presheaves} over $C$ (or
    \emph{contravaraint functors} from $C$ to $D$, but we will not use this term).
    \item If $x,y$ are objects in a category $C$ and the hom-set $\HomOf[C]{x}{y}$ is a
    singleton, then we write $\uniqmor=\uniqmor_{xy}\colon x \to y$ for the unique morphism in $\HomOf[C]{x}{y}$.
    \item Let $C$ and $D$ be categories. The \emph{join} $C \star D$ is the category whose objects are
    $\Ob(C \star D) \coloneqq \Ob(C) \amalg \Ob(D) = (\set{0} \times \Ob(C)) \cup (\set{1} \times \Ob(D))$
    and whose morphisms are
    \[
    \HomOf[C \star D]{(i,x)}{(j,y)} = \begin{cases}
      \HomOf[C]{x}{y} & \text{if\ } i=j=0,\\
      \HomOf[D]{x}{y} & \text{if\ } i=j=1,\\
      \set{\uniqmor} & \text{if\ } i=0 \text{\ and\ } j=1,\\
      \emptyset & \text{if\ } i=1 \text{\ and\ } j=0.
    \end{cases}
    \]
    The composition in $C \star D$ is given by the compositions in $C$ and $D$ when possible,
    and by the unique possible choice when composing with $\uniqmor$.
    \item We use the word \emph{semicategory} for categories without the
    assumption of identities; in other words, a quiver with associative
    compositions. We can forget the structure of identities in a
    category to get a semicategory, and can freely adjoin identities to
    a semicategory to obtain a category.
    \item A finite category is a small category
    whose sets of objects its morphisms are both finite, i.e. bijective
    to some set of the form $\set{0,1,\dotsc,n-1}$.
    \item A preordered set $\left(P,\mathord{\le}\right)$
    is canonically considered
    as a category by defining $\Ob P$ to be $P$ regarded as a set and
    setting
    \[
    \HomOf[P]{x}{y}\coloneqq
    \begin{cases}
      \set{\uniqmor_{xy}} & \text{if\ } x\le y,\\
      \emptyset    & \text{otherwise.}
    \end{cases}
    \]
    \item If a category $C$ has every hom-set a subset of a singleton, then $C$ is identified
    with the preordered set of its objects. In particular, the notation for joins of categories
    and of preordered sets are identified. Also, the notation $\functCat{P}{Q}$ for the category
    of functors $P\to Q$ is identified as the preordered set of order-preserving functions $P\to Q$.
    \item We employ some examples of \emph{large categories}: the category of
    small categories $\mtcCat$, that of sets $\mtcSet$, that of (reflexively)
    partially ordered sets $\mtcPoset$, and the category of functors from a
    small category to these three categories. The most important
    example of such functor categories in this paper is the category of simplicial
    sets $\mtcSSet$. The term ``category,'' when used without qualification or reference to
    these examples, will always mean a \emph{small} category.
    \item If $C$ is a small category, the \emph{Yoneda embedding} $C \to \mtcPsh{C}{\mtcSet}$
    is denoted by $\mtynd$: $\mtynd(c) = \HomOf[C]{-}{c}$ for $c \in \Ob C$.
    \item The \emph{colimit} of the \emph{diagram} $F\colon D\to C$ is denoted by
    $\mtColim_{d\in D} F(d)$.
    \item As a specific case of colimits, if $F\colon \Lambda\to C$ and $G\colon \Lambda\to D$
    are functors, the \emph{pointwise left Kan extension of $G$ along $F$} is denoted by
    $(\mtLan_G F)(c) = \mtColim_{F(\lambda) \to c} G(\lambda)$.
    \item If $C$ is a category and $F\colon \dualCat{C} \to \mtcCat$ is a functor,
    the \emph{Grothendieck construction} of $F$ is denoted by $\int F \to C$. We will clarify
    the actual construction in its specialized form in \cref{def:groth-cat-ladder-all}.
    \item We use some terminology from enriched category theory for the specific case of
    enrichment over $\mtcPoset$. When we endow a $\mtcPoset$-enrichment on a category,
    we do not introduce notation for hom-objects; instead, we simply endow hom-sets with
    order structures. A notable usage of the term from the theory is the change of enriching base
    by a lax monoidal functor.
  \end{itemize}
\end{definition}

We also need to revisit the weak and strict definitions of (1-)localizations of categories.
For the strict definition, we refer the reader to \cite[Section 5.2]{Borceux_1994} and
\cite[Section I.1]{gabriel1967calculus}. The definition of weak localizations may be found
in \cite[\href{https://kerodon.net/tag/01M9}{Definition 01M9}]{kerodon}.

\begin{definition}\label{def:cat-1-localization}
  Let $F\colon C\to D$ be a functor between categories, and let $W\subseteq \Mor C$.
  Consider the following conditions:
  \begin{enumerate}
    \item \label{item:def-cat-1-localization:weq-send}%
    $F$ sends morphisms in $W$ to isomorphisms in $D$.
    \item \label{item:def-cat-1-localization:strict}%
    If a functor $G\colon C\to E$ sends morphisms in $W$ to isomorphisms in $E$,
    then there exists a unique functor $H\colon D\to E$ such that $H\compos F = G$.
    \item \label{item:def-cat-1-localization:weak-ex}%
    If a functor $G\colon C\to E$ sends morphisms in $W$ to isomorphisms in $E$,
    then there exists a functor $H\colon D\to E$ and a natural isomorphism
    $\theta\colon H\compos F \Rightarrow G$.
    \item \label{item:def-cat-1-localization:weak-uniq}%
    If $H, H'\colon D\to E$ are functors and
    $\theta\colon H\compos F \Rightarrow H'\compos F$ is a natural isomorphism,
    then there is a unique
    natural isomorphism $\hat\theta\colon H\Rightarrow H'$ such that
    $\hat\theta \whiskr F = \theta$.
    \item \label{item:def-cat-1-localization:ff}%
    For any category $E$, the precomposition functor
    $\backwardinduce{F}\colon \functCat{D}{E} \to \functCat{C}{E}$
    is fully faithful.
  \end{enumerate}
  We say that $F$ \emph{exhibits} $D$ as the \emph{strict (1-)localization of} $C$ \emph{at}
  (or \emph{with respect to}) $W$ if
  it satisfies the conditions \labelcref{item:def-cat-1-localization:weq-send,%
  item:def-cat-1-localization:strict}. We may instead say that the pair $(D,F)$ is a
  strict (1-)localization of $C$ at, or with respect to, $W$.
  We say $F$ \emph{exhibits} $D$ as the
  \emph{weak (1-)localization of} $C$ \emph{at} (or \emph{with respect to}) $W$
  if it satisfies the conditions \labelcref{item:def-cat-1-localization:weq-send,%
  item:def-cat-1-localization:weak-ex,item:def-cat-1-localization:weak-uniq},
  or equivalently the conditions \labelcref{item:def-cat-1-localization:weq-send,%
  item:def-cat-1-localization:weak-ex,item:def-cat-1-localization:ff}.
  In this case, we may instead say that the pair $(D,F)$ is a weak (1-)localization of
  $C$ at, or with respect to, $W$.
\end{definition}

\begin{remark}\label{rmk:cat-1-localization-weak-strict-equiv}
  A strict (1-)localization is a weak (1-)localization. More specifically, a pair
  $(D, F\colon C \to D)$ of a category and a functor is a weak 1-localization of $C$ 
  at $W\subseteq\Mor C$ if
  and only if there is a strict 1-localization $(D', F')$ of $C$ at $W$, an equivalence
  $H\colon D'\overset{\sim}{\to} D$ of categories,
  and a natural isomorphism $H\compos F' \cong F$. On the other hand,
  the pair $(D, F)$ is a strict 1-localization of $C$ at $W$ if and only if
  it is a weak 1-localization of $C$ at $W$ and $F\colon \Ob C \to \Ob D$ is a bijection.
\end{remark}

\subsection{Remarks on foundations}
\label{sec:cat-prel-foundations}

Although we need to enter the realm of classical axiomatic set theory in
\cref{sec:down-last-infty-loc-shape,sec:down-last-infty-loc} to enable
our discussion of $(\infty,1)$-category theory, we shall use the language of
constructive mathematics in \cref{sec:def-cat-down,sec:groth-cat-partial-order,sec:groth-cat-direct,%
sec:groth-cat-comparison,sec:down-last-localiz}.
We only place restrictions on the use of non-constructive principles,
and we do not additionally assume constructive principles that contradict
classical mathematics.

If $P$ is a proposition or a property, we say ``$P$ is decidable'' to mean that
``either $P$ is true or $P$ is false.'' If $P$ is a property of some object, we may say
``We have dichotomy of $x$ into $P$ and non-$P$'' to mean that ``$x$ either satisfies $P$ or does not.'' 

In order to formalize our results in \cref{sec:def-cat-down,sec:groth-cat-partial-order,%
sec:groth-cat-direct,sec:groth-cat-comparison,sec:down-last-localiz}, it should
be sufficient to have, as the foundation, Aczel's Constructive Zermelo-Fraenkel (CZF) set theory~\cite{aczel1978}.
CZF is an intuitionistic first-order theory with equality whose unique non-logical symbol is
the membership binary relation $\in$. Its axioms (and axiom schemata) are Axioms of Extensionality, Pairing, Union,
Empty Set and Infinity, and the Axiom Schemata of $\Delta_0$-Separation, Strong Collection, Subset Collection,
and $\in$-Induction. Note that CZF does not include the dedicate notion of classes, and the large categories
should be treated through the well-known trick of metatheoretically regarding properties of sets as classes.

In fact, since our discussion in \cref{sec:def-cat-down,sec:groth-cat-partial-order,sec:groth-cat-direct,%
sec:groth-cat-comparison,sec:down-last-localiz} is almost completely finitistic, the author strongly
conjectures that the foundation for the formalization of these sections can be weakened to
Intuitionistic Kripke-Platek set theory with the Axiom of Infinity ($\mathrm{IKP\omega}$)~\cite{lubarsky-ikp}.
The theory $\mathrm{IKP\omega}$ is axiomatized by Axioms of Extensionality, Pairing, Union, Empty Set and Infinity,
as well as the Axiom Schemata of $\Delta_0$-Separation, $\Delta_0$-Collection, and $\in$-Induction.
The author has not checked the details of the formalization, but the categories and functors constructed in
\cref{sec:def-cat-down,sec:groth-cat-partial-order,%
sec:groth-cat-direct,sec:groth-cat-comparison,sec:down-last-localiz} may be built using finite-domain
function sets, which are $\Delta_1$-constructible in $\mathrm{IKP\omega}$. Since IKP admits the Theorem Scheme of
Strong $\Sigma_1$-Collection, this provides a strong indication that the entire formalization can
indeed be carried out in $\mathrm{IKP\omega}$.

The author furthermore conjectures that we may use Intuitionistic Kripke-Platke set theory
without the Axiom of Infinity (IKP) by reformulating
and omitting some of the results in \cref{sec:groth-cat-direct,sec:down-last-localiz}. Specifically, the author suspects that
\cref{sec:def-cat-down,sec:groth-cat-partial-order,sec:groth-cat-comparison} remain valid if we allow large categories in our
construction, and that the following results admit modified proofs:
\begin{itemize}
  \item the categories $\int\catNerve^{{--},{+}}_{+}(C)$ from \cref{def:groth-cat-ladder-strict} and $\Down(C)$
    from \cref{def:down-c} are small categories;
  \item the two categories are direct (\cref{cor:groth-cat-direct,prop:down-c-direct});
  \item the two categories are finite if $C$ is finite (\cref{lem:groth-cat-finite,lem:down-c-finite});
  \item the functor $\Down(C) \to C$ is a 1-localization functor (\cref{thm:down-last-localiz}).
\end{itemize}
However, without the Axiom of Infinity, the arguments cannot be executed as smoothly as in the present paper,
since we then only have the \emph{class} of natural numbers. Consequently the following categories are not necessarily \emph{small},
meaning that we need to meta-theoretically treat them using classes:
\begin{itemize}
  \item the simplex category $\mtcSimpx$;
  \item the category $\int\catNerve(C)$ in \cref{def:groth-cat-ladder-all};
  \item the category $\int\catNerve^{{-},{+}}(C)$ in \cref{def:groth-cat-ladder-weak};
  \item the category $\Downstar(C)$ in \cref{def:down-c}.
\end{itemize}
The natural definitions of Reedy structures on these large categories, their directness, or their localizations
naturally involve univeral quantification over all classes satisfying certain conditions, which is not possible in
this set theory. In the present paper, \cref{prop:groth-cat-reedy} and several from \cref{sec:down-last-localiz}
show such properties of these categories. We might be able to reformulate them as meta-theorems or treat them
with workarounds, or might need to give them up. The author has not investigated this further.

As mentioned at the beginning of this subsection, to formalize the results in
\cref{sec:down-last-infty-loc-shape,sec:down-last-infty-loc}, we need the language of classical mathematics
and a stronger set-theoretic foundation. In these sections, we shall use the language of $(\infty,1)$-category theory
to prove that $\Down(C)\to C$ is an $(\infty,1)$-localization. Since the author is not aware of any constructive theory
of $(\infty,1)$-categories, we shall not attempt to formalize these sections in a constructive setting. One possible
choice of foundation for these sections is Zermelo-Fraenkel set theory with the Axiom of Choice (ZFC), but that does
not provide an internalized notion of classes, and therefore some lemmas (especially those in
\cref{sec:cat-prel-simplicial-quasicat}) would have to be treated as meta-theorems. Alternatively, it can be more
convenient to use a foundation that includes or allows a definition of classes, such as Morse-Kelley set theory (MK),
or Tarski-Grothendieck set theory (TG). See Shulman's survey~\cite{shulman2008settheorycategorytheory} for a discussion
on this topic.

As we treat direct and Reedy categories, we shall need to use the notion of well-foundedness.
We simply recall the definition here:

\begin{definition}[Well-foundedness]\label{def:well-founded}
  Let $X$ be a set and $<$ be a binary relation on $X$. We say that $<$ is a
  \emph{well-founded relation} on $X$ if $X$ is the only subset $Y$ of $X$ satisfying
  the following property: for any $x\in X$, if $\set{y \in X}[y < x] \subseteq Y$, then $x\in Y$.
  The set $X$ equipped with such $<$ is said to be a \emph{well-founded set}.
\end{definition}

However, the treatment of well-foundedness in this paper is abstract, and we shall only need
the following facts for this paper:

\begin{itemize}
  \item If $X$ is a set and $<$ is a well-founded relation on $X$, then there is no
  $<$-cycle: $x_0 < x_1 < \dotsb < x_n < x_0$ for some $n \ge 0$.
  \item The set $\metanats$ of natural numbers is well-founded under the usual order.
  \item Let $(X, {<}_X)$ and $(Y, {<}_Y)$ be sets with relations, and assume that the latter
    is well-founded. If there is a function $f\colon X\to Y$ such that $x_1 <_X x_2$ implies
    $f(x_1) <_Y f(x_2)$, then $<_X$ is well-founded.
  \item Let $(\Lambda, <)$ be a well-founded set, and $(X_\lambda, <_\lambda)$ be a well-founded set
    for each $\lambda \in \Lambda$. Endow the disjoint union 
    \[ 
      \coprod_{\lambda\in\Lambda} X_\lambda = \set{(\lambda, x)}[\lambda\in\Lambda, x\in X_\lambda]
    \]
    with the lexicographic relation, i.e., $(\lambda_1, x_1) < (\lambda_2, x_2)$ if and only if
    either $\lambda_1 < \lambda_2$ or $\lambda_1 = \lambda_2$ and $x_1 <_{\lambda_1} x_2$.
    Then the disjoint union is well-founded.
\end{itemize}

\subsection{Direct and Reedy categories}
\label{sec:cat-prel-direct-reedy}

In this subsection, we review the definitions of direct and Reedy categories constructively.
Textbook account with classic foundation can be found, for example,
in \cite[Sections 5.1 and 5.2]{hovey2007model}.

We first begin with the definition of direct categories.
The constructive definition of direct categories suffers from ambiguity; the correct categorification
of well-founded sets would be direct \emph{semicategories}, and it is not clear what is the vaild
extension of the notion to categories with identities. The following definition is the one supported by
Shulman's helpful answer~\cite{shulman-constructive-direct-categories} to my MathOverflow question:

\begin{definition}
  Let $C$ be a semicategory. For any $x,y\in \Ob C$, let us write $x\prec_C y$ if
  there exists morphism $x\to y$ in $C$. We say that $C$ is \emph{direct} if
  $\prec_C$ is a well-founded relation on $\Ob C$.
  A category is said to be \emph{direct} if it is isomorphic to the
  one obtained by freely adjoining identities to a direct semicategory.
  A semicategory or a category is said to be \emph{inverse} if its dual is direct.
\end{definition}

This definition is good in that it allows us to construct a presheaf over any direct category by
the most intuitive form of induction. The definition is also equivalent to the inductive definition
by Shulman in \cite{shulman2015reedycategoriesgeneralizations}.
Bartels, on the other hand, raised a remarkable opposition in comments to the same question:
the definition above does not include the canonical reflexively ordered class of ordinals, even if
we see the irreflexively ordered class of ordinals as well-founded.

The following lemma proves that this constructive definition is equivalent to that from the classical
mathematics:

\begin{lemma}\label{lem:direct-cat-def-eqvt-cond}
  Let $C$ be a category. For any $x,y\in \Ob C$, let us write $x\prec y$ if
  there is a non-identity morphism $x\to y$. Assume that $\prec$ is a well-founded
  relation on $\Ob C$. Then $C$ is a direct category if and only if any morphism in
  $C$ is either an identity or non-identity.
\end{lemma}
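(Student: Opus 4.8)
The plan is to prove both implications directly from the definitions. For the ``only if'' direction, suppose $C$ is direct, so $C$ is isomorphic to the category obtained by freely adjoining identities to a direct semicategory $C_0$. In that free construction, every morphism of $C$ is \emph{either} an adjoined identity \emph{or} comes from a morphism of $C_0$, and these two cases are disjoint by construction; transporting along the isomorphism, every morphism of $C$ is either an identity or a non-identity. (One should note here that this is genuinely a constructive content statement: a priori ``identity or non-identity'' is not automatic, so the point is that the free-adjunction description hands it to us.)

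For the ``if'' direction, assume $\prec$ (the non-identity relation) is well-founded on $\Ob C$ and that every morphism of $C$ is decidably an identity or not. First I would build a semicategory $C_0$: take the same objects as $C$, and let $\HomOf[C_0]{x}{y}$ be the set of non-identity morphisms $x \to y$ in $C$. The decidability hypothesis is exactly what makes this subset well-defined. Composition in $C_0$ is inherited from $C$, but one must check it is \emph{closed}: if $g \compos f$ is defined in $C$ with $f, g$ non-identities, is $g\compos f$ a non-identity? If $g \compos f = \idmor[x]$ were an identity, then $f$ would be a split mono and $g$ a split epi; but then we would have $x \prec y$ (via $f$, since $f$ is a non-identity $x\to y$) and $y \prec x$ (via $g$), giving a $\prec$-cycle, contradicting well-foundedness via the first bulleted fact about well-founded relations recalled before \cref{sec:cat-prel-direct-reedy}. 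So $C_0$ is a genuine semicategory, and $x \prec_{C_0} y$ iff there is a non-identity $x\to y$ iff $x \prec y$, hence $\prec_{C_0}$ is well-founded and $C_0$ is a direct semicategory.

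It then remains to show the category $C_0^+$ obtained by freely adjoining identities to $C_0$ is isomorphic to $C$. Define a functor $C_0^+ \to C$ by the identity on objects, sending adjoined identities to identities and sending each non-identity morphism of $C_0$ to the corresponding morphism of $C$; functoriality is immediate from how composition was defined. This functor is bijective on objects, and on hom-sets it is bijective precisely because every morphism $x\to y$ of $C$ is, by the decidability hypothesis, either an identity (matched by the adjoined identity, using $x = y$ in that case) or a non-identity (matched by the corresponding morphism of $C_0$), with no overlap. Hence it is an isomorphism of categories, so $C$ is direct.

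The main obstacle is the closure-of-composition check in the construction of $C_0$: one must rule out the composite of two non-identities being an identity, and the natural argument for that is exactly the no-$\prec$-cycle consequence of well-foundedness, so the hypothesis on $\prec$ is used in an essential place and not merely to certify directness of $C_0$ at the end. A secondary subtlety, worth a sentence, is that in the constructive setting the bijectivity on hom-sets of the comparison functor relies on the assumed dichotomy ``identity or non-identity'' — without it one cannot even form $C_0$, which is why the lemma's condition appears on both sides of the equivalence.
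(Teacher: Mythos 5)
Your proof is correct and takes essentially the same route as the paper: form the wide sub-semicategory of non-identity morphisms, verify closure under composition by ruling out a $\prec$-cycle via well-foundedness, and then observe that the dichotomy hypothesis makes the comparison with the freely-adjoined-identities category an isomorphism. The paper's version is just terser (it does not mention split mono/epi, which in any case is an unused side remark in your argument — the cycle $x \prec y \prec x$ is what does the work).
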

\begin{proof}
  The ``only if'' part is clear. For the ``if'' part, assume the dichotomy of morphisms
  into identities and non-identities. We need to show that $C$ is direct.
  Let the wide sub-semicategory $C'$ consist of all objects in $C$ and all
  non-identity morphisms in $C$. For well-definedness, we need to show that
  $C'$ is closed under compositions. Let $f\colon x\to y$ and $g\colon y\to z$
  be non-identity morphisms in $C$. If $g\compos f$ were an identity,
  then we would have $x\prec y \prec z=x$, which contradicts the well-foundedness of $\prec$.
  Therefore, $g\compos f$ is a non-identity, and $C'$ is a semicategory.
  By the assumption of the lemma, $C'$ is direct. It suffices to see that $C$ is
  isomorphic to the category obtained by freely adjoining identities to $C'$,
  which is immediate from our dichotomy assumption.
\end{proof}

\begin{remark}
  If $C$ is a finite category, the following three conditions are equivalent:
  \begin{enumerate}
  \item $C$ is a direct category.
  \item $C$ is an inverse category.
  \item Any endomorphisms and isomorphisms of $C$ are identities.
  \end{enumerate}
\end{remark}

We now move on to the definition of Reedy categories.

\begin{definition}[Reedy categories]
  \label{def:reedy-cat}
    Let $C$ be a category. A \emph{Reedy structure} on $C$ is a
    pair $\left(C_{-}, C_{+}\right)$ of wide subcategories of $C$ satisfying
    the following conditions:
    \begin{enumerate}
    \item Every morphism in $C$ can be uniquely factored as the
      composition of a morphism in $C_{-}$ followed by one in $C_{+}$:
      for each morphism $f: x \to y$ in $C$, there is a unique triple
      $\left(z, g, h\right)$ of $z \in \Ob(C)$, $g: x \to z$ in $C_{-}$,
      and $h: z \to y$ in $C_{+}$ with $f = h \compos g$.
      \label{item:def-reedy-cat:facto}
    \item Each of $C_{-}$ and $C_{+}$ is isomorphic to the category obtained
      by freely adjoining identities to some semicategory.
      \label{item:def-reedy-cat:id}
    \item Define a relation $<'$ on $\Ob(C)$ by setting $x <' y$ if and only
      if there is a non-identity $x \to y$ in $C_{+}$ or a
      non-identity $x \leftarrow y$ in $C_{-}$. Then $<'$ is a well-founded
      relation on $\Ob(C)$.
      \label{item:def-reedy-cat:wf}
    \end{enumerate}
    A \emph{Reedy category} $C=\left(C, C_{-}, C_{+}\right)$ is a category
    $C$ equipped with a Reedy structure $\left(C_{-}, C_{+}\right)$ on it.
  \end{definition}
  
  The definition may look different from the usual definition (for example, see
  Hovey~\cite{hovey2007model}),
  but it is in fact classically equivalent.
  With classical logic, the condition \eqref{item:def-reedy-cat:id}
  in \cref{def:reedy-cat} is derivable from
  \eqref{item:def-reedy-cat:wf}. To be constructively precise,
  see the following lemma:
  \begin{lemma}\label{lem:reedy-cat-iddec-equiv}
    Let $C$ be a category, and let $C_{-}, C_{+} \subseteq C$ be wide subcategories.
    Then we have the following:
    \begin{enumerate}
      \item Under the assumption \eqref{item:def-reedy-cat:wf} in
        \cref{def:reedy-cat}, the subcategory $C_{-}$ (resp.\@ $C_{+}$)
        is isomorphic to the category obtained by freely adjoining identities
        to some semicategory if and only if any morphism in $C_{-}$ (resp.\@
        $C_{+}$) is either an identity or non-identity.
      \item Under the assumption \eqref{item:def-reedy-cat:facto} in
        \cref{def:reedy-cat}, the dichotomy of morphisms into identities
        and non-identities in $C_{-}$ (resp.\@ $C_{+}$) is equivalent to the
        decidability of the membership of a morphism in $C_{+}$ (resp.\@ $C_{-}$).
    \end{enumerate}
  \end{lemma}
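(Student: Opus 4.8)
The plan is to treat the two parts independently. Throughout I will use self-duality: the triple $(\dualCat{C}, \dualCat{C_+}, \dualCat{C_-})$ is again a Reedy category — its minus part is $\dualCat{C_+}$ and its plus part is $\dualCat{C_-}$, its factorizations are the opposites of those in $C$, and the relation $<'$ of \cref{def:reedy-cat} is literally unchanged — so it suffices to prove the statement about $C_+$ in part (1), and the statement pairing $C_-$ with $C_+$ in part (2); the remaining two follow by applying these to the dual Reedy category. Note also that ``any morphism in $C_+$ is either an identity or non-identity'' is exactly the decidability, for each morphism $f$ of $C_+$, of the proposition ``$f$ is an identity'', and likewise for $C_-$, and for ``$f\in C_+$''.

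For part (1), assume \eqref{item:def-reedy-cat:wf}. I would first record that the relation $\prec$ on $\Ob C$, where $x\prec y$ means that there is a non-identity $x\to y$ in $C_+$, is contained in $<'$ (such a morphism is itself a witness for $x<'y$), hence is well-founded by stability of well-foundedness under the identity map, one of the facts collected in \cref{sec:cat-prel-foundations}. Next I would observe that ``$C_+$ is isomorphic to a category obtained by freely adjoining identities to \emph{some} semicategory'' is equivalent to ``$C_+$ is a direct category'': if $C_+$ is isomorphic to the category freely generated by a semicategory $S$, then $S$ must be isomorphic to the wide sub-semicategory of non-identity morphisms of $C_+$, whose generating relation is precisely $\prec$, so $S$ is direct. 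Under this reformulation, part (1) for $C_+$ is exactly \cref{lem:direct-cat-def-eqvt-cond} applied to the category $C_+$ with its well-founded relation $\prec$. (One may equally inline that lemma's proof: one direction is immediate from the shape of a freely-adjoined category, and the other takes $S$ to be the non-identity morphisms of $C_+$, the sole use of well-foundedness being to rule out that the composite of a composable pair of non-identities be an identity, which would produce a $\prec$-cycle.)

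For part (2), under the unique factorization \eqref{item:def-reedy-cat:facto}, I would first prove the elementary equivalence: a morphism $f$ of $C$ lies in $C_+$ if and only if the $C_-$-component of its unique factorization is an identity. Indeed, if $f\in C_+$ then $f = f\compos\idmor$ is a factorization, so uniqueness forces the $C_-$-component to be the identity; conversely, an identity $C_-$-component makes $f$ equal to its own $C_+$-component. I would then note that every morphism $g$ of $C_-$ arises as such a component: viewed in $C$, the factorization of $g$ is $g = \idmor\compos g$, so its $C_-$-component is $g$ itself. Combining these, the decidability of ``$f\in C_+$'' for all morphisms $f$ of $C$ is equivalent to the decidability of ``the $C_-$-component of $f$ is an identity'' for all such $f$, which — since the $C_-$-components exhaust $\Mor C_-$ — is equivalent to the decidability of ``$g$ is an identity'' for all $g\in\Mor C_-$, that is, to the dichotomy in $C_-$; each implication passes through the factorization. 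The statement pairing $C_+$ with $C_-$ is the mirror image, via ``$f\in C_-$ iff the $C_+$-component of $f$ is an identity''.

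I do not expect a substantive obstacle: the work is definition-chasing. The point deserving care is the reading of ``freely adjoining identities'' in the constructive setting — the adjoined identities must be taken as a genuinely fresh formal summand (a coproduct component indexed by the propositional equality of domain and codomain), so that membership in ``is an adjoined identity'' is decidable and no morphism of the underlying semicategory, not even an ``identity-like'' one, is identified with an adjoined identity. This is precisely what makes the forward direction of part (1) yield the dichotomy verbatim, with no appeal to decidable equality of objects.
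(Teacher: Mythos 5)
Your proof is correct and follows essentially the same route as the paper's one-line proof, which cites \cref{lem:direct-cat-def-eqvt-cond} for part (1) and the fact that $C_{-}\cap C_{+}$ consists only of identities for part (2). Your characterization ``$f\in C_{+}$ iff its $C_{-}$-component is an identity'' is a logically equivalent repackaging of that latter fact, and your explicit reduction by duality and your note on the constructive reading of ``freely adjoining identities'' simply make precise what the paper leaves implicit.
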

  \begin{proof}
    Follows from \cref{lem:direct-cat-def-eqvt-cond} and the fact that $C_{-}\cap C_{+}$
    consists only of identities.
  \end{proof}
  \begin{corollary}\label{cor:reedy-cat-iddec-equiv}
    If a category $C$ and a pair of wide subcategories $C_{-}, C_{+}\subseteq C$ satisfies
    \labelcref{item:def-reedy-cat:facto,item:def-reedy-cat:wf} in \cref{def:reedy-cat},
    then the followings are equivalent:
    \begin{enumerate}
      \item $(C, C_{-}, C_{+})$ is a Reedy category.
      \item The membership properties of a morphism in $C_{-}$ and in $C_{+}$ are decidable.
      \item Any morphism in $C$ is either an identity or non-identity.
      \item Any morphism in $C_{-}$ or $C_{+}$ is either an identity or non-identity.
    \end{enumerate}
  \end{corollary}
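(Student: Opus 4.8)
The plan is to reduce everything to Lemma \ref{lem:reedy-cat-iddec-equiv} and Lemma \ref{lem:direct-cat-def-eqvt-cond}, proving a cycle of implications $(1)\Rightarrow(4)\Rightarrow(2)\Rightarrow(3)\Rightarrow(4)$ together with the remaining links, so that all four conditions become equivalent. The standing hypothesis throughout is that $(C,C_{-},C_{+})$ satisfies the factorization axiom \labelcref{item:def-reedy-cat:facto} and the well-foundedness axiom \labelcref{item:def-reedy-cat:wf}; only \labelcref{item:def-reedy-cat:id} is in question.

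First I would observe that $(1)\Leftrightarrow(4)$ is essentially the content of part~(1) of Lemma \ref{lem:reedy-cat-iddec-equiv}: by definition, $(C,C_{-},C_{+})$ is a Reedy category iff \labelcref{item:def-reedy-cat:id} holds (the other two axioms being assumed), and \labelcref{item:def-reedy-cat:id} says each of $C_{-}$ and $C_{+}$ is freely obtained from a semicategory by adjoining identities; part~(1) of that lemma, applicable since \labelcref{item:def-reedy-cat:wf} holds, rephrases this as the dichotomy of morphisms in $C_{-}$ and in $C_{+}$ into identities and non-identities, which is precisely $(4)$. Next, $(4)\Leftrightarrow(2)$ is part~(2) of Lemma \ref{lem:reedy-cat-iddec-equiv}, which is applicable since \labelcref{item:def-reedy-cat:facto} holds: the dichotomy in $C_{-}$ is equivalent to decidability of membership in $C_{+}$, and symmetrically; conjoining the two gives exactly statement $(2)$.

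It remains to connect $(3)$ to the others. For $(2)\Rightarrow(3)$: given a morphism $f\colon x\to y$ in $C$, take its essentially unique Reedy factorization $f = h\compos g$ with $g\in C_{-}$, $h\in C_{+}$; by $(2)$ we may decide whether $g$ is an identity and whether $h$ is an identity, and $f$ is an identity precisely when both are (using that $C_{-}\cap C_{+}$ consists of identities, exactly as in the proof of Lemma \ref{lem:reedy-cat-iddec-equiv}), so $f$ is either an identity or a non-identity. For $(3)\Rightarrow(4)$: this is immediate, since $C_{-}$ and $C_{+}$ are (wide) subcategories of $C$, so a morphism lying in one of them is in particular a morphism of $C$, and $(3)$ provides the dichotomy for all morphisms of $C$. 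Having $(1)\Leftrightarrow(4)\Leftrightarrow(2)$ and $(2)\Rightarrow(3)\Rightarrow(4)$ closes the loop and yields the full four-way equivalence.

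I do not anticipate a serious obstacle here, since the corollary is a bookkeeping consequence of the two preceding lemmas; the only point requiring a little care is the constructive handling of $(2)\Rightarrow(3)$, where one must not silently assume $f$ is decidably an identity but instead genuinely derive the dichotomy for $f$ from the dichotomies for $g$ and $h$ via the factorization — the same maneuver already used in the proof of Lemma \ref{lem:reedy-cat-iddec-equiv}, so it can be invoked rather than reproven.
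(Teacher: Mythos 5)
Your proof is correct and takes essentially the same route as the paper, whose proof is the one-liner ``Follows from the previous lemma'': you correctly unpack the two parts of \cref{lem:reedy-cat-iddec-equiv} to obtain $(1)\Leftrightarrow(4)\Leftrightarrow(2)$, and you supply the small extra step linking $(3)$ via the factorization argument, which is the one piece the lemma does not directly address.
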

  \begin{proof}
    Follows from the previous lemma.
  \end{proof}
  The condition \eqref{item:def-reedy-cat:wf} in \cref{def:reedy-cat} is equivalently stated as
  the existence of a \emph{degree function}. A degree function of $C$,
  in the sense of Reedy categories, is by definition a function
  $F\colon \Ob(C) \to W$, where $W$ is some well-ordered set,
  such that all the non-identities $x \to y$ in $C_{+}$ and
  all the non-identities $x \leftarrow y$ in $C_{-}$ satisfy
  $F(x) < F(y)$. Some literature, including Hovey~\cite{hovey2007model},
  requires the equipment of a degree function, but its existence is enough
  for us.
  
  With this definition of Reedy categories,
  $\Mor\mleft(C_{-}\mright)$ and $\Mor\mleft(C_{+}\mright)$ are
  decidable subsets of $\Mor\mleft(C\mright)$. We also see that
  $C_{-}$ is an inverse category, and $C_{+}$ a direct category,
  as can be proven classically.
  
  In later sections, we will frequently draw a commutative
  diagram in a Reedy category. Within a diagram in a fixed Reedy category
  $C=\left(C, C_{-}, C_{+}\right)$, we depict morphisms in $C_{-}$ and
  $C_{+}$ with $\twoheadrightarrow$ and $\rightarrowtail$, respectively,
  to maintain the conciseness of these diagrams.
  Note that $\twoheadrightarrow$ and $\rightarrowtail$ will not necessarily
  stand for epimorphisms and monomorphisms when we consider a diagram
  in general Reedy categories, deviating from the general convention.
  
  We shall cite a useful lemma concerning Reedy categories:
  
  \begin{lemma}[Lemma 3.9 from \cite{bergner-rezk_2011}]\label{lem:reedy-idem-split}
    All idempotents in a Reedy category are split.
    For more precision, let $(C, C_{-}, C_{+})$ be a Reedy category and
    suppose that a morphism $\alpha\colon c \to c$ satisfies
    $\alpha \compos \alpha = \alpha$. Then there exists a unique pair of
    $\sigma \in \Mor C_{-}$ and $\delta \in \Mor C_{+}$ with
    $\delta \compos \sigma = \alpha$ and $\sigma \compos \delta = \idmor$.
  \end{lemma}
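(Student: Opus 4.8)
The plan is to read the splitting off from the Reedy factorization of $\alpha$, and then to check that it is automatically split by playing the uniqueness of Reedy factorizations against itself. First I would apply condition \labelcref{item:def-reedy-cat:facto} of \cref{def:reedy-cat} to $\alpha$, obtaining a uniquely determined object $d$ together with $\sigma\colon c\to d$ in $C_{-}$ and $\delta\colon d\to c$ in $C_{+}$ with $\alpha = \delta\compos\sigma$. Since $\delta\compos\sigma = \alpha$ holds by construction, the whole statement reduces to the single identity $\sigma\compos\delta = \idmor[d]$ (for existence), together with the observation that any pair as in the statement is itself a Reedy factorization of $\alpha$ (for uniqueness).

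The heart of the argument is the identity $\sigma\compos\delta = \idmor[d]$. Idempotence gives $\delta\compos(\sigma\compos\delta)\compos\sigma = \delta\compos\sigma\compos\delta\compos\sigma = \alpha\compos\alpha = \alpha$. Now factor the endomorphism $\sigma\compos\delta\colon d\to d$ by \labelcref{item:def-reedy-cat:facto} as $\delta'\compos\sigma'$ with $\sigma'\colon d\to d'$ in $C_{-}$ and $\delta'\colon d'\to d$ in $C_{+}$. Substituting, $\alpha = (\delta\compos\delta')\compos(\sigma'\compos\sigma)$, which is again a factorization of $\alpha$ into a morphism of $C_{-}$ followed by a morphism of $C_{+}$, this time through $d'$. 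Uniqueness of the Reedy factorization of $\alpha$ forces $d' = d$, so $\sigma'$ and $\delta'$ are endomorphisms of $d$. A non-identity endomorphism lying in $C_{-}$ or in $C_{+}$ would witness $d <' d$, i.e.\ a $<'$-cycle, contradicting the well-foundedness condition \labelcref{item:def-reedy-cat:wf}; and by \cref{cor:reedy-cat-iddec-equiv} every morphism of $C_{-}$ or of $C_{+}$ is either an identity or a non-identity. Hence $\sigma' = \delta' = \idmor[d]$, and therefore $\sigma\compos\delta = \delta'\compos\sigma' = \idmor[d]$.

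For the uniqueness clause, suppose $\sigma\in\Mor C_{-}$ and $\delta\in\Mor C_{+}$ satisfy $\delta\compos\sigma = \alpha$ --- say $\sigma\colon c\to d$ and $\delta\colon d\to c$. Then $\alpha = \delta\compos\sigma$ already displays $(d,\sigma,\delta)$ as a factorization of the type in \labelcref{item:def-reedy-cat:facto}, so it coincides with the unique such triple; in particular the pair $(\sigma,\delta)$ is forced (the condition $\sigma\compos\delta = \idmor$ is not even needed here). I do not expect a serious obstacle: the only point requiring care is the double invocation of the uniqueness part of \labelcref{item:def-reedy-cat:facto} --- once on $\alpha$ directly and once, indirectly, after re-factoring $\sigma\compos\delta$ --- and in particular checking that $(\delta\compos\delta')\compos(\sigma'\compos\sigma)$ genuinely is a $(C_{-},C_{+})$-factorization of $\alpha$, so that the same uniqueness clause applies. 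It is also worth noting that the whole proof is constructive, relying only on the dichotomy from \cref{cor:reedy-cat-iddec-equiv} and the absence of $<'$-cycles.
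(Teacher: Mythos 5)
Your proof is correct and follows essentially the same route as the paper's: take the Reedy factorization $\alpha = \delta\compos\sigma$, Reedy-factor the endomorphism $\sigma\compos\delta = \delta'\compos\sigma'$, use idempotence of $\alpha$ to exhibit $(\delta\compos\delta')\compos(\sigma'\compos\sigma)$ as a second $(C_{-},C_{+})$-factorization of $\alpha$, and invoke uniqueness together with the absence of non-identity endomorphisms in $C_{-}$ or $C_{+}$ to force $\sigma'=\delta'=\idmor$. Your uniqueness remark is likewise exactly what the paper has in mind when it dismisses that part as clear.
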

  \begin{proof}
    Uniqueness is clear. Let the following be the $(C_{-}, C_{+})$-factorizations:
    \begin{alignat}{3}
      \alpha &= \delta \compos \sigma,\,
      & \sigma &\in \Mor C_{-},\, &\delta &\in \Mor C_{+};
      \label{eq:lem-reedy-idem-split:1}\\
      \sigma \compos \delta &= \delta' \compos \sigma',\,
      & \sigma' &\in \Mor C_{-},\, &\delta' &\in \Mor C_{+}.
      \label{eq:lem-reedy-idem-split:2}
    \end{alignat}
    Since $\alpha$ is idempotent, we have:
    \[
      \sigma\compos\delta
      = \alpha
      = \alpha \compos \alpha
      = \sigma \compos \delta \compos \sigma \compos \delta
      = \sigma \compos \sigma' \compos \delta' \compos \delta.
    \]
    The uniqueness of $(C_{-}, C_{+})$-factorization, the
    directness of $C_{+}$, and the inverseness of $C_{-}$ together
    imply $\sigma' = \delta' = \idmor$, which reduces
    \cref{eq:lem-reedy-idem-split:2} to $\sigma \compos \delta = \idmor$.
    Combining this with \cref{eq:lem-reedy-idem-split:1}, we obtain the
    desired result.
  \end{proof}

As we declared in \cref{sec:introduction},
we shall fix a Reedy category $C=(C, C_{-}, C_{+})$
in \cref{sec:def-cat-down} and onward.

\subsection{The simplex category}
\label{sec:cat-prel-simpx}

We will frequently use the simplex category $\mtcSimpx$ in this paper. Here
we recall its definition and some of its properties. Remember we write $\mtcPoset$ for
the category of reflexively partially ordered sets (simply posets) and order-preserving
maps between them.

\begin{definition}[The simplex category]
  \label{def:simpx-cat}
  The category $\mtcSimpxAug$, called the \emph{augmented simplex category}, is the full subcategory of
  the $\mtcPoset$ spanned by finite ordinals: the linearly ordered sets
  $\deltaBra{n} \coloneqq n+1 = \set{0 < 1 < \dotsb < n}$ for integers $n \ge -1$.
  The \emph{simplex category} $\mtcSimpx$ is the full subcategory of $\mtcSimpxAug$
  spanned by inhabited finite ordinals: $\deltaBra{n}$ for $n \ge 0$.
\end{definition}

\begin{definition}[The canonical Reedy structure]
  \label{def:reedy-simpx-cat}
  We shall write $(\mtcSimpxAug)_{-}$ and $(\mtcSimpxAug)_{+}$ for the wide subcategories
  of $\mtcSimpxAug$ consisting of surjections
  (or \emph{degeneracies}) and injections (or \emph{face maps}), respectively. The pair
  $((\mtcSimpxAug)_{-}, (\mtcSimpxAug)_{+})$ is a Reedy structure on $\mtcSimpxAug$,
  which is considered as the canonical one on $\mtcSimpxAug$. The restrictions
  $\mtcSimpx_{-} \coloneqq (\mtcSimpxAug)_{-} \cap \mtcSimpx$ and $\mtcSimpx_{+} \coloneqq
  (\mtcSimpxAug)_{+} \cap \mtcSimpx$ form the canonical Reedy structure on $\mtcSimpx$.
\end{definition}

\begin{notation}
  We shall employ the conventional notation for morphisms in $\mtcSimpxAug$ (and hence in
  $\mtcSimpx$). 
  If $0\le k \le n$, the morphisms $\delta^n_k\colon \deltaBra{n-1} \rightarrowtail \deltaBra{n}$
  in $(\mtcSimpxAug)_{+}$ and $\sigma^n_k\colon\deltaBra{n+1} \twoheadrightarrow \deltaBra{n}$ in
  $(\mtcSimpxAug)_{-}$ are given by:
  \begin{align*}
    \delta^n_k(i) &\coloneqq \begin{cases}
      i &\text{if } 0\le i < k;\\
      i+1 &\text{if } k\le i < n;
    \end{cases}\\
    \sigma^n_k(i) &\coloneqq \begin{cases}
      i &\text{if } 0 \le i \le k;\\
      i-1 &\text{if } k < i \le n+1.
    \end{cases}
  \end{align*}
  Also, We add some non-conventional notation. Let $\deltaBra{n}\in\mtcSimpxAug$ and say we
  have the following subset:
  \[ S=\set{s_0 < s_1 < s_2 < \dotsc < s_{k-1}}\subseteq \deltaBra{n}. \]
  Then we write
  $\iota^n_S=\iota_S\colon \deltaBra{k-1} \rightarrowtail \deltaBra{n}$ for the morphism in
  $(\mtcSimpxAug)_{+}$ defined by $\iota_S(i) = s_i$.
  The particular case where $S$ is a singleton $S=\set{s}$ is also denoted by
  $\iota^n_s = \iota_s \coloneqq \iota^n_{\set{s}}\colon \deltaBra{0}\to\deltaBra{n}$.
\end{notation}

\subsection{Simplicial sets, quasicategories and \texorpdfstring{$(\infty,1)$}{(∞,1)}-categories}
\label{sec:cat-prel-simplicial-quasicat}

In this subsection, we partially review the theory of simplicial sets and quasicategories.
The notable textbook references for this are Lurie's \cite{kerodon,lurie2009higher}. Especially,
\cite{kerodon} will be repeatedly referred to in
\cref{sec:down-last-infty-loc-shape,sec:down-last-infty-loc}. The content of
this subsection will only be used in \cref{sec:down-last-infty-loc-shape,sec:down-last-infty-loc},
so we will use classical logic here.

\begin{definition}[Simplicial sets]
  A \emph{simplicial set} is a functor $\dualCat{\mtcSimpx} \to \mtcSet$. The category of
  simplicial sets, which is the functor category, is denoted by $\mtcSSet$. The morphisms in
  the category are called \emph{simplicial maps}. If $X\in\mtcSSet$, an $n$-\emph{simplex} of $X$
  is an element of $X_n$, which stands for $X$ evaluated at $\deltaBra{n}\in\mtcSimpx$. An
  $n$-simplex $\sigma$ is \emph{degenerate} if it is in the image of the map induced by some
  non-identity morphism in $\mtcSimpx_{-}$; it is \emph{non-degenerate} otherwise.
  A simplicial set is \emph{finite} if it has finitely many non-degenerate simplices across
  all dimensions.
\end{definition}

\begin{notation}[Simplices]
  We set ${\mtyndSpx}\coloneqq {\mtynd}\colon \mtcSimpx \to \mtcSSet$ for the special case of
  the Yoneda embedding.
  We may extend this functor to $\mtcSimpxAug$ by setting $\mtyndSpx$ to be:
  \[
  \begin{tikzcd}
    \mtcSimpxAug \ar[r, "{\mtynd}"] & \mtcPsh{\mtcSimpxAug}{\mtcSet} \ar[r, "\text{restrict}"]
    &[3em] {\mtcSSet.}
  \end{tikzcd}
  \]
  In other words, $\mtyndSpx\deltaBra{-1} \coloneqq \emptyset$.
\end{notation}

\begin{notation}[degenerate edge]
  Let $X$ be a simplicial set.  If $x \in X_0$
  is a vertex of $X$, its corresponding degenerate edge $\backwardinduce*{\sigma^0_0}(x)=X(\sigma^0_0)(x)$
  is also denoted by $\idmor[x]$.
\end{notation}

\begin{notation}[Boundaries and horns]
  The simplicial sets $\mtBdrySpx\deltaBra{n}\subseteq\mtyndSpx\deltaBra{n}$, for $n\ge 0$,
  and $\mtsHorn{n}{k}\subseteq\mtBdrySpx\deltaBra{n}$, for $0\le k\le n$, are defined as follows:
  \begin{align*}
    \mtBdrySpx\deltaBra{n}_l &\coloneqq \set{\alpha\colon\deltaBra{l}\to\deltaBra{n}}[\text{$\alpha$ is not surjective}],\\
    \mtsHorn{n}{k}_l &\coloneqq \set{\alpha\colon\deltaBra{l}\to\deltaBra{n}}%
    [ \set{0,\dotsc,l-1,l+1,\dotsc,n} \not\subseteq \operatorname{Im}\alpha].
  \end{align*}
\end{notation}

\begin{definition}[lifting property]
  Let $C$ be a (possibly large) category and $f\colon A \to B$, $g\colon X \to Y$ be morphisms in $C$.
  We say that the morphism $f$ has \emph{left lifting property} with respect to $g$, or
  that $g$ has \emph{right lifting property} with respect to $f$, if for any morphism $u\colon A \to X$
  and $v\colon B\to Y$ commutating the following outer square, there is a morphism that fits into the
  dotted line, commutating the two triangles:
  \[
  \begin{tikzcd}
    A \arrow[r, "u"] \arrow[d, "f"'] & X \arrow[d, "g"] \\
    B \arrow[r, "v"'] \arrow[ur, dotted] & Y
  \end{tikzcd}
  \]
\end{definition}

\begin{definition}[inner fibration, inner anodyne morphism, and quasicategories]
  Let $f\colon X\to Y$ be a morphism of simplicial sets. We say that $f$ is an \emph{inner fibration}
  if it has the right lifting property with respect to all horn inclusions $\mtsHorn{n}{k}\hookrightarrow\deltaBra{n}$,
  for all pair of integers $0 < k < n$. We say that a simplicial set $X$ is a \emph{quasicategory} if
  the canonical morphism $X \to \ast$ to the terminal object is an inner fibration. We say that a morphism
  $i\colon A \to B$ of simplicial sets is \emph{inner anodyne} if it has the left lifting property with respect
  to all inner fibrations.
\end{definition}

We refer the reader to \cite{kerodon,lurie2009higher} for:
\begin{itemize}
  \item \emph{equivalences} (or \emph{isomorphisms}) \emph{in a quasicategory};
  \item \emph{equivalences of quasicategories} and its generalization \emph{categorical equivalences};
  \item \emph{joins of simplicial sets}, denoted $X\star Y$.
\end{itemize}

\begin{notation}[{function complex}]
  The exponential object in the category of simplicial sets will be denoted by
  $\operatorname{Fun}(\bullet,\bullet)$, i.e.:
  \[
  \operatorname{Fun}(X,Y)_n \coloneqq \HomOf[\mtcSSet]{\mtyndSpx\deltaBra{n}\times X}{Y}.
  \]
\end{notation}

We now proceed to the definition of localization. In \cite{kerodon}, the following simplicial subset is denoted by
$\operatorname{Fun}(X[E^{-1}],Q)$. However, to avoid the misleading impression that it involves a simplicial set 
called $X[E^{-1}]$, we instead adopt a modified version of the notation used for the theory of marked simplicial sets 
in \cite{lurie2009higher}.

\begin{notation}[{complex of maps inverting some edges, cf.\@  \cite[Chapter~3]{lurie2009higher}}]
  Let $X\in\mtcSSet$ be a simplicial set, $E\subseteq X_1$ a set of edges, and
  $Q$ a quasicategory. We define a simplicial subset 
  $\operatorname{Fun}((X,E),Q^\natural)\subseteq\operatorname{Fun}(X,Q)$.
  The set $\operatorname{Fun}((X,E),Q^\natural)_0$ of vertices is defined to be the set of
  simplicial maps $X \to Q$ that send edges in $E$ to equivalences in $Q$. The set
  $\operatorname{Fun}((X,E),Q^\natural)_n$ of $n$-simplices is defined to be the set of
  all $n$-simplices $f\in\operatorname{Fun}(X,Q)_n$ whose vertices
  are in $\operatorname{Fun}((X,E),Q^\natural)_0$.
\end{notation}

This is the definition of localization that appears in our main theorem:

\begin{definition}[{localization \cite[\href{https://kerodon.net/tag/01MP}{Definition 01MP}]{kerodon}}]
  Let $f \colon X\to Y$ be a simplicial map, and $W\subseteq X_1$ be a collection of edges.
  We say that $f$ \emph{exhibits $Y$ as} an ($(\infty,1)$-)\emph{localization of $X$ at}, or \emph{with respect to},
  $W$, if for any quasicategory $Q$, the precomposition functor 
  $\backwardinduce{f}\colon \operatorname{Fun}(Y,Q) \to \operatorname{Fun}(X,Q)$ factors through
  $\operatorname{Fun}((X,W),Q^\natural) \subseteq \operatorname{Fun}(X,Q)$, and induces an equivalence
  of quasicategories 
  \[ \operatorname{Fun}(Y,Q) \to \operatorname{Fun}((X,W),Q^\natural). \]
  An ($(\infty,1)$-)\emph{localization map} is a simplicial map that exhibits the target as a localization
  of the source at some collection of edges.
\end{definition}

Of course, this notion is a refinement of the 1-localization: according to
\cite[\href{https://kerodon.net/tag/01MV}{Remark 01MV}]{kerodon},
if $f$ exhibits $Y$ as an $(\infty,1)$-localization of $X$ at $W\subseteq X_1$
and if $\operatorname{h}X$ and $\operatorname{h}Y$ denote the \emph{homotopy category} of $X$ and $Y$,
respectively, then the induced functor $\operatorname{h}f\colon \operatorname{h}X \to \operatorname{h}Y$
is a weak 1-localization of $\operatorname{h}X$ at the image of $W$ under the canonical map $X \to \nerve(\operatorname{h}X)$.

To facilitate our argument, we introduce a stronger notion of localization:

\begin{definition}[{universal localization \cite[\href{https://kerodon.net/tag/02M0}{Definition 02M0}, \href{https://kerodon.net/tag/02M1}{Proposition 02M1}]{kerodon}}]
  Let $f\colon X\to Y$ be a simplicial map. Then $f$ is said to be \emph{universally localizing} or
  to be a \emph{universal localization} if one, and hence both, of the following equivalent conditions hold:
  \begin{enumerate}
    \item Let $\varphi\colon \mtyndSpx\deltaBra{n} \to Y$ be any simplex of $Y$. Then the pullback
      $X\times_Y\mtyndSpx\deltaBra{n} \to \mtyndSpx\deltaBra{n}$ of $f$ along $\varphi$ is a localization
      map.
    \item Let $\varphi\colon Z\to Y$ be any simplicial map from an arbitrary simplicial set. Then the
      pullback $X\times_Y Z \to Z$ of $f$ along $\varphi$ exhibits $Z$ as an $(\infty,1)$-localization of $X\times_Y Z$
      at the preimage of the degeneracy edges of $Z$.
  \end{enumerate}
\end{definition}

We need some closure properties of localizations. The localizations and universal localizations
are closed under filtered
colimits~\cite[Propositions \href{https://kerodon.net/tag/01N6}{01N6} and \href{https://kerodon.net/tag/02M9}{02M9}]{kerodon}
and certain
pushouts~\cite[Propositions \href{https://kerodon.net/tag/01N7}{01N7} and \href{https://kerodon.net/tag/02MA}{02MA}]{kerodon}
in $\functCat{\deltaBra{1}}{(\mtcSSet)}$. Easier to verify is the closure under products:

\begin{lemma}[{closure under product of localizations, \cite[\href{https://kerodon.net/tag/02LV}{Proposition 02LV}]{kerodon}}]
  \label{lem:inf-loc-closure-product}
  Let $K$ be a simplicial set. Assume that a simplicial map $f\colon X\to Y$ exhibits $Y$ as a localization
  of $X$ at $W\subseteq X_1$. Then the induced map $f\times\idmor[K]\colon X\times K \to Y\times K$ on products
  exhibits $Y\times K$ as a localization of $X\times K$ at:
  \[
    W \times K_0 \coloneqq \set{(e, \idmor[x])}[e\in W, x\in K_0] \subseteq (X\times K)_1.
  \]
\end{lemma}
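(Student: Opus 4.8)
The plan is to deduce the product statement formally from the one-variable hypothesis on $f$, exploiting that $\mtcSSet$ is cartesian closed and that $\operatorname{Fun}(K,-)$ behaves well on quasicategories. Fix an arbitrary quasicategory $Q$. The exponential adjunction together with the symmetry of the cartesian product furnish isomorphisms of simplicial sets, natural in $X$ and $Y$,
\[
  \operatorname{Fun}(X\times K, Q)\cong\operatorname{Fun}(K,\operatorname{Fun}(X,Q)), \qquad \operatorname{Fun}(Y\times K, Q)\cong\operatorname{Fun}(K,\operatorname{Fun}(Y,Q)),
\]
under which $\backwardinduce{(f\times\idmor[K])}$ corresponds to $\operatorname{Fun}(K,\backwardinduce{f})$, where $\backwardinduce{f}\colon\operatorname{Fun}(Y,Q)\to\operatorname{Fun}(X,Q)$. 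Recall that $\operatorname{Fun}(X,Q)$ and $\operatorname{Fun}(Y,Q)$ are quasicategories, that $\operatorname{Fun}((X,W),Q^\natural)$ is a quasicategory, being the full simplicial subset of $\operatorname{Fun}(X,Q)$ spanned by the maps $X\to Q$ that invert $W$, and that by hypothesis $\backwardinduce{f}$ factors as an equivalence of quasicategories $\operatorname{Fun}(Y,Q)\overset{\sim}{\to}\operatorname{Fun}((X,W),Q^\natural)$ followed by that inclusion.

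The combinatorial core of the argument, and the step I expect to require the only genuine (and still modest) work, is the identification, under the first isomorphism above, of $\operatorname{Fun}((X\times K,\,W\times K_0),Q^\natural)$ with $\operatorname{Fun}(K,\operatorname{Fun}((X,W),Q^\natural))$ as simplicial subsets of $\operatorname{Fun}(K,\operatorname{Fun}(X,Q))$. Both are \emph{full} simplicial subsets: the left side by definition, and for the right side one checks that for any full simplicial subset $M_0\subseteq M$ the complex $\operatorname{Fun}(K,M_0)$ is the full simplicial subset of $\operatorname{Fun}(K,M)$ consisting of those maps $K\to M$ all of whose vertices lie in $M_0$. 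So it suffices to match vertices. A vertex on the left is a map $g\colon X\times K\to Q$ inverting every edge $(e,\idmor[x])$ with $e\in W$ and $x\in K_0$; since such an edge is the image of $e$ under $X\cong X\times\{x\}\hookrightarrow X\times K$, this holds exactly when, for each $x\in K_0$, the restriction $g|_{X\times\{x\}}\colon X\to Q$ inverts $W$, i.e.\ exactly when the adjunct $K\to\operatorname{Fun}(X,Q)$ sends every vertex of $K$ into $\operatorname{Fun}((X,W),Q^\natural)_0$ — which is precisely the condition defining a vertex on the right. One also verifies this identification is compatible with the isomorphisms of the first paragraph.

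Granting this, apply $\operatorname{Fun}(K,-)$ to the factorization of $\backwardinduce{f}$. Since $\operatorname{Fun}(K,-)$ is a right adjoint, it preserves monomorphisms, so $\operatorname{Fun}(K,\backwardinduce{f})$ factors through $\operatorname{Fun}(K,\operatorname{Fun}((X,W),Q^\natural))$; and since $\operatorname{Fun}(K,-)$ carries equivalences of quasicategories to equivalences of quasicategories (a standard fact, the Joyal model structure being cartesian), the induced map $\operatorname{Fun}(K,\operatorname{Fun}(Y,Q))\to\operatorname{Fun}(K,\operatorname{Fun}((X,W),Q^\natural))$ is an equivalence. Transporting back along the isomorphisms of the first paragraph and using the identification of the second, $\backwardinduce{(f\times\idmor[K])}$ factors through $\operatorname{Fun}((X\times K,\,W\times K_0),Q^\natural)$ and induces an equivalence $\operatorname{Fun}(Y\times K,Q)\overset{\sim}{\to}\operatorname{Fun}((X\times K,\,W\times K_0),Q^\natural)$. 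As $Q$ was arbitrary, this is exactly the assertion that $f\times\idmor[K]$ exhibits $Y\times K$ as a localization of $X\times K$ at $W\times K_0$. Everything but the vertex bookkeeping of the second paragraph is formal nonsense about cartesian closure and quasicategories.
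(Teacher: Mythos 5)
Your proof is correct, and it is exactly the "simple computation of mapping simplicial sets" that the paper's one-line proof defers to via the Kerodon citation. The exponential adjunction, the vertex-level identification of $\operatorname{Fun}((X\times K,\,W\times K_0),Q^\natural)$ with $\operatorname{Fun}(K,\operatorname{Fun}((X,W),Q^\natural))$ (using that both are full simplicial subsets), and the preservation of categorical equivalences between quasicategories by $\operatorname{Fun}(K,-)$ are precisely the ingredients needed, and each step checks out.
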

\begin{proof}
  See
  \cite[\href{https://kerodon.net/tag/02LV}{Proposition 02LV}]{kerodon}; this follows
  from the simple computation of mapping simplicial sets.
\end{proof}

\begin{lemma}[closure under product of universal localizations]\label{lem:univ-loc-closure-product}
  Let $f\colon X \to Y$ be a universally localizing simplicial map, and let $K$ be a simplicial set.
  Then the induced map $f\times\idmor[K]\colon X\times K \to Y\times K$ on products is universally localizing.
\end{lemma}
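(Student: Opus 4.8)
The plan is to verify the pullback characterization of universal localizations directly, using condition (2) of the definition: it suffices to show that for every simplicial set $Z$ and every simplicial map $\varphi\colon Z \to Y\times K$, the pullback $(X\times K)\times_{Y\times K} Z \to Z$ of $f\times\idmor[K]$ along $\varphi$ exhibits $Z$ as an $(\infty,1)$-localization of $(X\times K)\times_{Y\times K} Z$ at the preimage of the degeneracy edges of $Z$.

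First I would observe that this pullback is canonically isomorphic, over $Z$, to an ordinary pullback of $f$ itself. Writing $\psi \coloneqq \proj_Y\compos\varphi\colon Z \to Y$ for the first component of $\varphi$, one computes that an $n$-simplex of $(X\times K)\times_{Y\times K} Z$ is a triple consisting of $n$-simplices $x$ of $X$, $k$ of $K$, and $z$ of $Z$ with $f(x)=\psi(z)$ and $k = (\proj_K\compos\varphi)(z)$; since the last equation determines $k$ from $z$, the assignment $(x,k,z)\mapsto(x,z)$ is a natural bijection onto $X\times_Y Z$, where $Z$ is regarded over $Y$ via $\psi$. This bijection is clearly compatible with the face and degeneracy maps, commutes with the projections down to $Z$, and takes an edge sent to a degenerate edge of $Z$ to another such edge. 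Checking these compatibilities is the only real content of the argument, and it is entirely routine; I expect no genuine obstacle here.

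Now, since $f$ is assumed to be universally localizing, condition (2) applied to $f$ and the map $\psi\colon Z\to Y$ tells us that $X\times_Y Z \to Z$ exhibits $Z$ as an $(\infty,1)$-localization of $X\times_Y Z$ at the preimage of the degeneracy edges of $Z$. Transporting this along the isomorphism of the previous paragraph, the same holds for $(X\times K)\times_{Y\times K} Z \to Z$. As $Z$ and $\varphi$ were arbitrary, condition (2) is satisfied for $f\times\idmor[K]$, so $f\times\idmor[K]$ is universally localizing, as desired. (If one prefers, the identical argument runs with condition (1), taking $Z=\mtyndSpx\deltaBra{n}$ and $\varphi$ a simplex of $Y\times K$.)
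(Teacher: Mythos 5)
Your argument is correct and is essentially the same as the paper's: both reduce the pullback of $f\times\idmor[K]$ along an arbitrary $Z\to Y\times K$ to a pullback of $f$ along the composite $Z\to Y\times K\to Y$. The paper achieves this identification via the pasting lemma for pullbacks (the square formed by $f\times\idmor[K]$, $f$, and the two projections is itself a pullback), whereas you verify the resulting isomorphism directly at the level of $n$-simplices; the mathematical content is identical.
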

\begin{proof}
  If the left square in the following diagram is a pullback, then the whole rectangle is a pullback:
  \[
  \begin{tikzcd}
    Z \times_{Y\times K} (X\times K) \ar[r] \ar[d] & X\times K \ar[r] \ar[d, "f\times\idmor"']  & X \ar[d, "f"]\\
    Z \ar[r, "\text{arbitrary}"'] & Y\times K \ar[r] & Y
  \end{tikzcd}
  \]
\end{proof}

The proof of the closure under join is contributed by Maxime Ramzi as an answer \cite{Ramzi-localization-join-closure}
to my MathOverflow question: 

\begin{lemma}[closure under join of localizations, \cite{Ramzi-localization-join-closure}]\label{lem:inf-loc-closure-join}
  Let $K$ be a simplicial set. Assume that a simplicial map $f\colon X\to Y$ exhibits $Y$ as a localization
  of $X$ at $W\subseteq X_1$. Then the induced map $f\star\idmor[K]\colon X\star K \to Y\star K$ on joins
  exhibits $Y\star K$ as a localization of $X\star K$ at:
  \[
    W \subseteq X_1 = (X \star \emptyset)_1 \subseteq (X \star K)_1.
  \]
\end{lemma}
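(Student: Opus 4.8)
The plan is to pass from the join to the \emph{fat join}, a model of the join built entirely from products and pushouts, and then to invoke the closure of localizations under products (\cref{lem:inf-loc-closure-product}) and under pushouts in $\functCat{\deltaBra{1}}{\mtcSSet}$ (\cite[\href{https://kerodon.net/tag/01N7}{Proposition 01N7}]{kerodon}). For a simplicial set $A$ I would set
\[
  A\diamond K \coloneqq (A\times\mtyndSpx\deltaBra{1}\times K)\amalg_{A\times\mtBdrySpx\deltaBra{1}\times K}(A\sqcup K),
\]
the pushout formed along the two projections $A\times\{0\}\times K\to A$ and $A\times\{1\}\times K\to K$. This is functorial in $A$, and there is a natural comparison map $\gamma_A\colon A\diamond K\to A\star K$ restricting to the canonical inclusions of $A$ and of $K$, which is a categorical equivalence (standard; cf.\ \cite[\S 4.2.1]{lurie2009higher}). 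Applying $\gamma$ to $f$ gives a square relating $f\diamond\idmor[K]$ to $f\star\idmor[K]$ with vertical legs $\gamma_X,\gamma_Y$ categorical equivalences, and $\gamma_X$ carries the evident copy of $W$ in $(X\diamond K)_1$ bijectively onto $W\subseteq X_1\subseteq(X\star K)_1$.

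The first thing to check is then that being a localization at a prescribed set of edges transfers across such a square. This is short: for a quasicategory $Q$, the functor $\operatorname{Fun}(-,Q)$ sends categorical equivalences to equivalences of quasicategories, and it sends $\gamma_A$ to a map carrying $\operatorname{Fun}((A\star K,W),Q^\natural)$ onto $\operatorname{Fun}((A\diamond K,W),Q^\natural)$---because $\gamma_A$ identifies the two copies of $W$, a functor out of $A\star K$ inverts $W$ exactly when its $\gamma_A$-restriction does, and both full subobjects are closed under equivalence. Hence the assertion for $f\star\idmor[K]$ reduces to the assertion that $f\diamond\idmor[K]$ exhibits $Y\diamond K$ as a localization of $X\diamond K$ at the copy of $W$.

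For that assertion, functoriality of $\diamond$ presents $f\diamond\idmor[K]$ as the pushout, in $\functCat{\deltaBra{1}}{\mtcSSet}$, of the span of arrows
\[
  (f\sqcup\idmor[K])\ \longleftarrow\ (f\times\idmor[\mtBdrySpx\deltaBra{1}\times K])\ \longrightarrow\ (f\times\idmor[\mtyndSpx\deltaBra{1}\times K]),
\]
whose left leg is the two projections levelwise and whose right leg, induced by $\mtBdrySpx\deltaBra{1}\hookrightarrow\mtyndSpx\deltaBra{1}$, is a monomorphism. Each arrow of the span is a localization map: the two products by \cref{lem:inf-loc-closure-product}, and $f\sqcup\idmor[K]$ because $\operatorname{Fun}(A\sqcup A',Q)$ and its subobject $\operatorname{Fun}((A\sqcup A',W\sqcup W'),Q^\natural)$ split as products, so that a coproduct of localization maps is one. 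After enlarging each of the three edge sets by all degenerate edges---which changes neither the arrows nor their being localizations at those sets---the three sets become compatible with the span maps; the only delicate point is that over the boundary vertex $1$ the projection $X\times\{1\}\times K\to K$ collapses the product-localization edges to degenerate edges of $K$, which is exactly why the enlargement is needed. \cite[\href{https://kerodon.net/tag/01N7}{Proposition 01N7}]{kerodon} then shows $f\diamond\idmor[K]$ is a localization at the pushout of these edge sets, and a direct computation inside $(X\diamond K)_1$ identifies that pushout as the copy of $W$ together with some degenerate edges---hence $f\diamond\idmor[K]$ is a localization at the copy of $W$, as required.

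I expect the main obstacle to be precisely the bookkeeping of edge sets through the pushout in the last paragraph, together with checking that the exact hypotheses and conclusions of \cite[\href{https://kerodon.net/tag/01N7}{Proposition 01N7}]{kerodon} and of the fat-join comparison match the form used here; the rest is formal. (Filtering $K$ by skeleta and using closure under filtered colimits instead would not really help, since the building blocks $K=\mtyndSpx\deltaBra{n}$ still reduce, via iterated cones, to the same comparison between a join and a pushout of products.)
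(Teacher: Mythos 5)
Your proposal is correct and takes essentially the same route as the paper. The paper's proof invokes \cite[\href{https://kerodon.net/tag/01HN}{Construction 01HN}]{kerodon} and \cite[\href{https://kerodon.net/tag/01HP}{Proposition 01HP}]{kerodon} to present $f\star\idmor[K]$ directly as the map on categorical pushouts of the same span
\[
  (f\sqcup\idmor[K]) \longleftarrow (f\times\idmor[\mtBdrySpx\deltaBra{1}\times K]) \longrightarrow (f\times\idmor[\mtyndSpx\deltaBra{1}\times K])
\]
that you wrote down, and then applies \cite[\href{https://kerodon.net/tag/01N7}{Proposition 01N7}]{kerodon} together with \cref{lem:inf-loc-closure-product}. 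The content of your ``fat join is a model of the join'' step is exactly the content of \cite[\href{https://kerodon.net/tag/01HP}{Proposition 01HP}]{kerodon}, so you have merely inserted the strict colimit $A\diamond K$ as an explicit intermediary and then transported the localization property back along the categorical equivalences $\gamma_X,\gamma_Y$. This transport step is sound (it amounts to a commutative square of mapping quasicategories with vertical equivalences, using that the full subobjects $\operatorname{Fun}((-,W),Q^\natural)$ are replete) but is one more thing to verify than the paper needs. On the other hand, you are more explicit than the paper about one genuinely delicate point: the projection $X\times\set{1}\times K\to K$ sends product-localization edges to degenerate edges of $K$, so the three edge sets must be enlarged by degeneracies to become strictly compatible with the span before invoking \cite[\href{https://kerodon.net/tag/01N7}{Proposition 01N7}]{kerodon}. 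The paper simply lists the edge sets and says the rest is trivial, implicitly relying on the standard convention that localizing at $W$ and at $W$ together with all degenerate edges are the same. Your accounting of this is the more careful presentation of the same argument, not a different argument.
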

\begin{proof}
  \cite[\href{https://kerodon.net/tag/01HN}{Construction 01HN}]{kerodon}
  constructs the following commutative cube of simplicial sets:
  \[
    \begin{tikzcd}[column sep=0]
      X \times \mtBdrySpx\deltaBra{1} \times K \ar[rr, hook] \ar[dr, "f\times\idmor"'] \ar[dd]
      && X \times \mtyndSpx\deltaBra{1} \times K \ar [dd] \ar[dr, "f\times\idmor"] &\\
      & Y \times \mtBdrySpx\deltaBra{1} \times K \ar[rr, hook, crossing over]
      && Y \times \mtyndSpx\deltaBra{1} \times K \ar[dd] \\
      (X \times \set{0}) \amalg (\set{1} \times K) \ar[dr, "f\amalg{\idmor}"'] \ar[rr, hook]
      && X \star K \ar[dr, "f\star{\idmor}"] &\\
      & (Y \times \set{0}) \amalg (\set{1} \times K) \ar[rr, hook] \ar[from=uu, crossing over]
      && Y \star K
    \end{tikzcd}
  \]
  As \cite[\href{https://kerodon.net/tag/01HP}{Proposition 01HP}]{kerodon} states, the back and the front faces
  are \emph{categorical pushout} squares in the sense of \cite[\href{https://kerodon.net/tag/01F7}{Definition 01F7}]{kerodon},
  a.k.a.\@ homotopy pushout squares with respect to the Joyal model structure on $\mtcSSet$.
  Then \cite[\href{https://kerodon.net/tag/01N7}{Proposition 01N7}]{kerodon} implies that, in order to demonstrate
  our lemma, it suffices to check that the three maps that connect the pushout-defining spans are localization maps at
  appropriate sets of edges. We inspect the construction of the commutative cube to see that we need to show all of the
  following:
  \begin{itemize}
    \item $f\times{\idmor}\colon X\times\mtBdrySpx\deltaBra{1}\times K \to Y\times\mtBdrySpx\deltaBra{1}\times K$
      is a localization at $W\times (\mtBdrySpx\deltaBra{1} \times K)_0$;
    \item $f\times{\idmor}\colon X\times\mtyndSpx\deltaBra{1}\times K \to Y\times\mtyndSpx\deltaBra{1}\times K$
      is a localization at $W\times (\mtyndSpx\deltaBra{1} \times K)_0$;
    \item $f\amalg{\idmor}\colon (X\times\set{0})\amalg(\set{1}\times K) \to (Y\times\set{0})\amalg(\set{1}\times K)$
      is a localization at $W\times\set{0}$.
  \end{itemize}
  All of these are trivial or follow from \cref{lem:inf-loc-closure-product}.
\end{proof}

\begin{corollary}[closure under join of universal localizations]\label{cor:univ-loc-closure-join}
  Let $f\colon X \to Y$ be a universally localizing simplicial map, and let $K$ be a simplicial set.
  Then the induced map $f\star\idmor[K]\colon X\star K \to Y\star K$ on joins is universally localizing.
\end{corollary}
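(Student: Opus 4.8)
The plan is to verify the first of the two equivalent conditions in the definition of a universal localization for the map $f\star\idmor[K]\colon X\star K\to Y\star K$. So I would let $\varphi\colon\mtyndSpx\deltaBra{n}\to Y\star K$ be an arbitrary simplex and show that the pullback $(X\star K)\times_{Y\star K}\mtyndSpx\deltaBra{n}\to\mtyndSpx\deltaBra{n}$ of $f\star\idmor[K]$ along $\varphi$ is a localization map. The point is that this pullback is again a join, to which \cref{lem:inf-loc-closure-join} can be applied.

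To set this up, I first recall from the construction of joins that an $m$-simplex of $Y\star K$ is the same datum as a pair of integers $a,b\ge -1$ with $a+b+1=m$, together with an $a$-simplex $\mtyndSpx\deltaBra{a}\to Y$ and a $b$-simplex $\mtyndSpx\deltaBra{b}\to K$ (recall $\mtyndSpx\deltaBra{-1}=\emptyset$), and similarly for $X\star K$ and for joins of representables $\mtyndSpx\deltaBra{i}\star\mtyndSpx\deltaBra{j}$. In particular $\varphi$ is of the form $\psi\star\chi$ for some $i,j\ge -1$ with $i+j+1=n$, some simplex $\psi\colon\mtyndSpx\deltaBra{i}\to Y$ and some simplex $\chi\colon\mtyndSpx\deltaBra{j}\to K$, under the canonical identification $\mtyndSpx\deltaBra{i}\star\mtyndSpx\deltaBra{j}=\mtyndSpx\deltaBra{n}$. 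Unwinding this levelwise description on both sides, I would then check that there is a natural isomorphism
\[
  (X\star K)\times_{Y\star K}\mleft(\mtyndSpx\deltaBra{i}\star\mtyndSpx\deltaBra{j}\mright)
  \;\cong\;
  \mleft(X\times_Y\mtyndSpx\deltaBra{i}\mright)\star\mtyndSpx\deltaBra{j}
\]
over $\mtyndSpx\deltaBra{i}\star\mtyndSpx\deltaBra{j}=\mtyndSpx\deltaBra{n}$, under which the displayed projection becomes the join of the projection $X\times_Y\mtyndSpx\deltaBra{i}\to\mtyndSpx\deltaBra{i}$ with the identity of $\mtyndSpx\deltaBra{j}$. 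Here the $K$-factor of the join contributes nothing to the pullback, because the pullback of $\chi\colon\mtyndSpx\deltaBra{j}\to K$ along $\idmor[K]$ is $\mtyndSpx\deltaBra{j}$ itself; concretely, an $m$-simplex of the left-hand side is a ``cut'' $a+b+1=m$ together with an $a$-simplex of $X\times_Y\mtyndSpx\deltaBra{i}$ and a $b$-simplex of $\mtyndSpx\deltaBra{j}$.

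It then remains to observe that $X\times_Y\mtyndSpx\deltaBra{i}\to\mtyndSpx\deltaBra{i}$ is a localization map: this is exactly the first condition in the definition of universal localization, applied to $f$ and to the simplex $\psi$ of $Y$ (when $i=-1$ this map is the identity $\emptyset\to\emptyset$, which is trivially a localization map). Applying \cref{lem:inf-loc-closure-join} with this localization map in the role of $f$ and with $\mtyndSpx\deltaBra{j}$ in the role of $K$ shows that $(X\times_Y\mtyndSpx\deltaBra{i})\star\mtyndSpx\deltaBra{j}\to\mtyndSpx\deltaBra{n}$ is a localization map, as required. The only genuinely nontrivial step is the combinatorial identification of the pullback as a join in the second paragraph; granting that, the rest is a direct appeal to \cref{lem:inf-loc-closure-join} and to the definition, so I expect no real difficulty beyond carefully tracking the ``cuts'' that index simplices of a join.
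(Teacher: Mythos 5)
Your proposal is correct and follows essentially the same route as the paper: decompose the arbitrary simplex of $Y\star K$ as a join $\varphi_0\star\varphi_1$, identify the pullback of $f\star\idmor[K]$ along it with $(X\times_Y\mtyndSpx\deltaBra{m_0})\star\mtyndSpx\deltaBra{m_1}$, and invoke \cref{lem:inf-loc-closure-join}. The paper states the pullback identification without proof (as ``note that the following diagram is a pullback''); your explicit levelwise unwinding via the cut $a+b+1=m$ is exactly the verification implicit there.
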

\begin{proof}
  Let $\varphi\colon \mtyndSpx\deltaBra{n} \to Y\star K$ be any simplex of $Y\star K$. Then there are
  objects $\deltaBra{m_0},\deltaBra{m_1}\in\Ob\mtcSimpxAug$ with $m_0+m_1+1=n$, and simplicial maps
  $\varphi_0\colon \mtyndSpx\deltaBra{m_0} \to Y$ and $\varphi_1\colon \mtyndSpx\deltaBra{m_1} \to K$
  with:
  \[
  \varphi = \varphi_0\star\varphi_1\colon \mtyndSpx\deltaBra{n} = \mtyndSpx\deltaBra{m_0}\star\mtyndSpx\deltaBra{m_1} \to Y\star K.
  \]
  Now, note that the following diagram is a pullback of simplicial sets:
  \[
  \begin{tikzcd}
    (X \times_Y \mtyndSpx\deltaBra{m_0}) \star \mtyndSpx\deltaBra{m_1} \ar[r] \ar[d]
    & X \star K \ar[d, "{f\star\idmor[K]}"]\\
    \mtyndSpx\deltaBra{m_0} \star \mtyndSpx\deltaBra{m_1} \ar[r, "\varphi_0\star\varphi_1"']
    & Y \star K
  \end{tikzcd}
  \]
  Therefore, it suffices to show that the left vertical map is a localization map. This is a direct consequence
  of \cref{lem:inf-loc-closure-join}.
\end{proof}

We remember few more notations:

\begin{notation}[nerve]
  Let $C$ be a category. The \emph{nerve} $\nerve(C)$ of $C$ is the simplicial set defined by
  \[
  \nerve(C)_n \coloneqq \HomOf[\mtcCat]{\deltaBra{n}}{C}.
  \]
  It is common to write just $C$ for the nerve of $C$, but we will use $\nerve(C)$ to facilitate
  the notation for colimits.
\end{notation}

\begin{notation}[skeleton]
  Given a simplicial set $X$, the $n$-\emph{skeleton} of $X$ is denoted by $\mtsSk{n} X$. To be specific,
  let $\mtcSimpx_{\le n}$ denote the full subcategory of $\mtcSimpx$ spanned by $\deltaBra{0},\dotsc,\deltaBra{n}$,
  and consider the restriction functor $\mathrm{tr}_n\colon\mtcSSet \to \mtcPsh{\mtcSimpx_{\le n}}{\mtcSet}$ and
  its left adjoint $\mathrm{sk}_n\colon \mtcPsh{\mtcSimpx_{\le n}}{\mtcSet} \to \mtcSSet$. Then we have
  ${\mtsSk{n}} = \mathrm{sk}_n \compos \mathrm{tr}_n$.
\end{notation}

To conclude this section, we list some references to lemmas that will be useful to prove that
a simplicial map is a monomorphism or an inner anodyne map.
We begin with the following definition, which is here merely for the formulation of
\cref{lem:wfs-pushout-left-class}:

\begin{definition}
  Let $C$ be a category, and $L, R \subseteq \Mor C$ be classes of morphisms. 
  We say that $(L, R)$ is a \emph{weak factorization system} on $C$ if the following conditions
  are satisfied:
  \begin{itemize}
    \item for any morphism $f\colon x \to y$ in $C$, there exist an object $z \in \Ob C$ and
      morphisms $l\colon x \to z$ in $L$ and $r\colon z \to y$ in $R$ such that $f = r \compos l$;
    \item $f\in \Mor C$ is in $L$ if and only if it has the right lifting property with respect to
      all morphisms in $R$;
    \item $f\in \Mor C$ is in $R$ if and only if it has the left lifting property with respect to
      all morphisms in $L$.
  \end{itemize}
\end{definition}

Notice that $L$ may be monomorphisms or inner anodyne maps in $\mtcSSet$.

\begin{lemma}\label{lem:wfs-pushout-left-class}
  Let $(L, R)$ be a weak factorization system on a bicomplete category $C$.
  Let the following be a commutative diagram in $C$:
  \[
  \begin{tikzcd}
    a \ar[d] & b \ar[l] \ar[r] \ar[d] & c \ar[d] \\
    x & y \ar[l] \ar[r] & z
  \end{tikzcd}
  \]
  If the morphisms $a \to x$, $b \to y$, and $y \cup_b c \to z$ induced 
  by the diagram are in $L$, then the morphism $a \cup_b c \to x \cup_y z$ between pushouts 
  belongs to $L$.
\end{lemma}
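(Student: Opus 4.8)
The plan is to exhibit the map $a\cup_b c \to x\cup_y z$ as a composite of two morphisms, each obtained from one of the three given maps in $L$ by cobase change, and then to invoke the standard closure properties of the left class of a weak factorization system. Recall that $L$ is exactly the class of morphisms with the left lifting property against every morphism in $R$; such a class is automatically closed under composition and under pushouts. I would either cite these two facts or dispose of them with the two routine diagram chases.

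Write $b\to x$ for the common composite $b\to a\to x = b\to y\to x$, which is well defined by commutativity of the given diagram. The first morphism to consider is $a\cup_b c \to x\cup_b c$, induced by $a\to x$. Applying the pasting law for pushouts to
\[
\begin{tikzcd}
b \ar[r] \ar[d] & a \ar[r] \ar[d] & x \ar[d] \\
c \ar[r] & a\cup_b c \ar[r] & x\cup_b c
\end{tikzcd}
\]
--- whose left square and whose outer rectangle are both pushouts by construction of $a\cup_b c$ and $x\cup_b c$ --- shows that the right square is a pushout, so $a\cup_b c \to x\cup_b c$ is a pushout of $a\to x$ and lies in $L$. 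Running the same argument along $b\to y\to x$ shows that the square with corners $y,\ x,\ y\cup_b c,\ x\cup_b c$ is a pushout, i.e.\ $x\cup_b c \cong x\cup_y(y\cup_b c)$.

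Using this last identification, I would stack that pushout square on top of the pushout defining $(x\cup_b c)\cup_{y\cup_b c} z$:
\[
\begin{tikzcd}
y \ar[r] \ar[d] & x \ar[d] \\
y\cup_b c \ar[r] \ar[d] & x\cup_b c \ar[d] \\
z \ar[r] & W
\end{tikzcd}
\]
Both squares are pushouts, hence so is the outer rectangle; its left column is the map $y\to z$ from the given diagram, so $W\cong x\cup_y z$ and the second morphism $x\cup_b c \to x\cup_y z$ is a pushout of $y\cup_b c \to z$, hence lies in $L$. Finally I would check --- by comparing the two cocones on the summands $a$ and $c$ --- that the composite $a\cup_b c \to x\cup_b c \to x\cup_y z$ is precisely the map induced by the original diagram, and then conclude by closure of $L$ under composition.

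The only real obstacle is bookkeeping: one must confirm that every ``induced map'' is the canonical one, that the two applications of the pasting law use the correct common map $b\to x$ (this is exactly where commutativity of the $2\times 3$ diagram enters), and that the final composite is identified with the intended morphism. Beyond that there is nothing delicate; in particular only the finitely many pushouts appearing above are used, so the bicompleteness hypothesis serves merely for convenience.
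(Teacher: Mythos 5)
Your proof is correct, and it takes a genuinely different route from the paper's. The paper endows $C$ with a degenerate model structure (cofibrations $L$, fibrations $R$, all morphisms weak equivalences), treats the given diagram as a Reedy cofibration in $\functCat{S}{C}$ for $S=\{0\twoheadleftarrow 1\rightarrowtail 2\}$, and cites Hirschhorn's results on Reedy categories with fibrant constants to conclude that $\mtColim$ is left Quillen; the fact that the three given maps are Reedy latching maps for $S$ is exactly what makes the hypotheses line up. Your argument instead decomposes the induced map as $a\cup_b c \to x\cup_b c \to x\cup_y z$, exhibits the first arrow as a cobase change of $a\to x$ and the second as a cobase change of $y\cup_b c\to z$ (the hypothesis $b\to y\in L$ is not even used directly — it enters only in the identification $x\cup_b c\cong x\cup_y(y\cup_b c)$, which actually holds unconditionally), and invokes only the standard closure of $L$ under pushout and composition. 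What your approach buys: it is elementary and self-contained, avoids the detour through model-category axioms (and in particular does not need to verify that the degenerate model structure is well-posed), and needs only pushouts rather than bicompleteness. What the paper's approach buys: it is a one-line specialization of a general principle that scales to any Reedy category with fibrant constants, so the same citation handles more elaborate diagram shapes without new diagram chases. For this particular lemma your argument is arguably the more transparent of the two.
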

\begin{proof}
  We regard $C$ as a model category by taking $L$ as the class of cofibrations, $R$ as the class of fibrations,
  and all morphisms as weak equivalences. Consider the Reedy category $S=\set{0\twoheadleftarrow 1 \rightarrowtail 2}$,
  where the Reedy structure is shown by the shape of the arrows. As the commutative diagram in the statement
  may then be seen as a cofibration in the Reedy model category $\functCat{S}{C}$, the result follows if
  we can show that ${\mtColim}\colon \functCat{S}{C} \to C$ is a left Quillen functor. Since Proposition~15.10.2
  from Hirschhorn~\cite{hirschhorn2003model} demonstrates that $S$ has fibrant constants in the sense
  of Definition~15.10.1 from the same book, we can apply Theorem~15.10.8 from \cite{hirschhorn2003model} to conclude
  that ${\mtColim}$ is a left Quillen functor.
\end{proof}

The following lemma is useful and I believe it is a well-known folklore, but I could not find a reference.
Therefore I include a proof here. Note that the class $I$ of morphisms can be the class of injections or
the class of inner anodyne maps in $\mtcSSet$:

\begin{lemma}\label{lem:cospx-kan-ext-reedy-nat}
  Let $C$ be a cocomplete (large) category. Let $I \subseteq \Mor C$ be a class of morphisms,
  closed under transfinite compositions and pushouts. More specifically, assume that $I$ satisfies
  the following conditions:
  \begin{itemize}
    \item Let $\alpha > 0$ be an ordinal and $F\colon \alpha \to C$ be a functor
      with $F(\uniqmor_{\beta,\beta+1}) \in I$ for any $\beta$ with $\beta + 1 < \alpha$.
      If $F$ preserves all colimits of the diagrams $\beta\hookrightarrow\alpha$ for $\beta < \alpha$, then
      the canonical morphism $F(0) \to \mtColim_{\beta < \alpha} F(\beta)$ is in $I$.
    \item If $f\colon x \to y$ is a morphism in $I$ and $g\colon x\to z$ is a morphism in $C$,
      then the pushout $z \to y \cup_x z$ of $f$ along $g$ is in $I$.
  \end{itemize}
  Let $F,G\colon \mtcSimpx \to C$ be functors. Let the colimit-preserving functors
  $\hat{F},\hat{G}\colon \mtcSSet \to C$ be defined by the left Kan extensions of
  $F$ and $G$ along $\mtyndSpx$, as follows:
  \begin{align*}
    \hat{F}(X) &\coloneqq \mtColim_{\mtyndSpx\deltaBra{n} \to X} F\deltaBra{n},
    & \hat{G}(X) &\coloneqq \mtColim_{\mtyndSpx\deltaBra{n} \to X} G\deltaBra{n}.
  \end{align*}
  Assume that a natural transformation $\theta\colon F\Rightarrow G$ is Reedy in $I$, i.e.,
  for each $\deltaBra{n} \in \mtcSimpx$, the morphism
  $F\deltaBra{n} \cup_{\hat{F}(\mtBdrySpx\deltaBra{n})} \hat{G}(\mtBdrySpx\deltaBra{n})
  \to G\deltaBra{n}$ induced by $\theta$ belongs to $I$.
  Then, for any monomorphism $i\colon A\hookrightarrow B$ in $\mtcSSet$, the map
  $\hat{F}(B) \cup_{\hat{F}(A)} \hat{G}(A) \to \hat{G}(B)$ induced by $\theta$ and $i$ is in $I$.
  In particular, the consituent simplicial maps of
  the natural transformation $\hat{\theta}\colon \hat{F} \Rightarrow \hat{G}$
  induced by $\theta$ all belong to $I$.
\end{lemma}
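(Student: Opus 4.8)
The plan is a cellular induction on the monomorphism $i$: I would first verify the conclusion for the generating boundary inclusions $\mtBdrySpx\deltaBra{n}\hookrightarrow\mtyndSpx\deltaBra{n}$, and then propagate it along the skeletal cell structure of an arbitrary monomorphism, using that $I$ is closed under pushouts and transfinite composites. Before starting I would record two facts about $\hat{F},\hat{G}$. Being left Kan extensions along $\mtyndSpx$, they are left adjoints (to the ``nerve'' functors $C\to\mtcSSet$ sending $c$ to $\deltaBra{n}\mapsto\HomOf[C]{F\deltaBra{n}}{c}$, and similarly for $G$), hence preserve all colimits --- in particular the initial object, pushouts, and colimits of chains. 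Moreover, since the slice $\mtcSimpx/\deltaBra{n}$ has a terminal object, the colimit defining $\hat{F}(\mtyndSpx\deltaBra{n})$ collapses to $F\deltaBra{n}$ naturally in $\deltaBra{n}$, and likewise $\hat{G}(\mtyndSpx\deltaBra{n})\cong G\deltaBra{n}$.

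For a monomorphism $i\colon A\hookrightarrow B$ write $P(i)\coloneqq\hat{F}(B)\cup_{\hat{F}(A)}\hat{G}(A)$ and $q(i)\colon P(i)\to\hat{G}(B)$ for the map of the statement (well defined by naturality of $\hat\theta$), and let $\mathcal{S}$ be the class of monomorphisms $i$ with $q(i)\in I$. Taking $i=(\mtBdrySpx\deltaBra{n}\hookrightarrow\mtyndSpx\deltaBra{n})$ and using the identifications above, $q(i)$ is exactly the map $F\deltaBra{n}\cup_{\hat{F}(\mtBdrySpx\deltaBra{n})}\hat{G}(\mtBdrySpx\deltaBra{n})\to G\deltaBra{n}$, which lies in $I$ because $\theta$ is Reedy in $I$; hence every boundary inclusion belongs to $\mathcal{S}$.

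Next I would show that $\mathcal{S}$ is closed under pushout and transfinite composition. For a pushout $i'\colon A'\hookrightarrow B'$ of $i$ along an arbitrary map $A\to A'$, cocontinuity of $\hat{F},\hat{G}$ identifies $\hat{F}(B')$ with $\hat{F}(B)\cup_{\hat{F}(A)}\hat{F}(A')$ and $\hat{G}(B')$ with $\hat{G}(B)\cup_{\hat{G}(A)}\hat{G}(A')$; iterated use of the pasting law for pushouts then shows that the commutative square with corners $P(i),P(i'),\hat{G}(B),\hat{G}(B')$ is a pushout, so $q(i')$ is a pushout of $q(i)$ and stays in $I$. For a chain $X\colon\lambda\to\mtcSSet$ whose successor maps lie in $\mathcal{S}$ and which is continuous at limit stages, put $X_\infty\coloneqq\mtColim_{\beta<\lambda}X(\beta)$ and $R_\beta\coloneqq\hat{F}(X_\infty)\cup_{\hat{F}(X(\beta))}\hat{G}(X(\beta))$; then $R_0=P(X(0)\hookrightarrow X_\infty)$, one has $\mtColim_{\beta<\lambda}R_\beta\cong\hat{G}(X_\infty)$ with the transfinite composite $R_0\to\mtColim_{\beta<\lambda}R_\beta$ equal to $q(X(0)\hookrightarrow X_\infty)$, each $R_\beta\to R_{\beta+1}$ is a pushout of $q(X(\beta)\hookrightarrow X(\beta+1))\in I$ by the same pasting computation, and $\beta\mapsto R_\beta$ preserves colimits of initial segments because $\hat{F},\hat{G}$ do and pushouts commute with colimits. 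The first closure hypothesis on $I$ then gives $q(X(0)\hookrightarrow X_\infty)\in I$.

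Finally, every monomorphism $A\hookrightarrow B$ of simplicial sets is literally a transfinite composite of pushouts of boundary inclusions: filter $B$ over $A$ by $B^{(n)}\coloneqq A\cup\mtsSk{n}B$ and attach the non-degenerate simplices of $B$ not lying in $A$ one dimension at a time, well ordered within each dimension, each attachment being a pushout of a single $\mtBdrySpx\deltaBra{n}\hookrightarrow\mtyndSpx\deltaBra{n}$ (the boundary of such a simplex already lies in the preceding stage), with $B$ the colimit of the resulting chain. By the two closure properties just established, $\mathcal{S}$ contains every monomorphism, which is the main assertion; and the ``in particular'' clause is the case $A=\emptyset$, where $\hat{F},\hat{G}$ preserving the initial object give $P(\emptyset\hookrightarrow B)\cong\hat{F}(B)$ and $q(\emptyset\hookrightarrow B)=\hat\theta_B\colon\hat{F}(B)\to\hat{G}(B)$, which is therefore in $I$. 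I expect the only genuine obstacle to be the bookkeeping in the previous paragraph --- verifying, from cocontinuity of $\hat{F}$ and $\hat{G}$ alone, that the Leibniz-type construction $i\mapsto q(i)$ sends pushout squares of monomorphisms to pushout squares in $C$ and chains to chains; everything else is either a standard citation (generation of monomorphisms by boundary inclusions via the skeletal filtration) or an immediate application of the closure hypotheses on $I$.
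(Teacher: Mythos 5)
Your argument is correct and follows essentially the same route as the paper's proof: decompose the monomorphism as a transfinite composition of pushouts of boundary inclusions, introduce the auxiliary functor $\beta\mapsto\hat{F}(B)\cup_{\hat{F}(X(\beta))}\hat{G}(X(\beta))$ (your $R_\beta$), check that it is continuous at limit stages and that each successor step is a pushout of the corresponding Reedy-in-$I$ map, and invoke the closure hypotheses on $I$. The only cosmetic difference is that you package the pushout step as a separate closure property of the class $\mathcal{S}$ before assembling, whereas the paper performs the same pushout identification inline within the one transfinite filtration.
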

\begin{proof}
  As is well known, the monomorphism $i\colon A \hookrightarrow B$ can be written as a transfinite
  composition of the pushouts of the maps of the form $\mtBdrySpx\deltaBra{n} \hookrightarrow \mtyndSpx\deltaBra{n}$.
  Let us take such a transfinite composition $X\colon \alpha + 1 \to \mtcSSet$.
  To be more explicit: the functor $X\colon \alpha + 1 \to \mtcSSet$ has
  the successor of an ordinal $\alpha$ as its domain,
  satisfies $X(0)=A$, $X(\alpha)=B$ and $X(\uniqmor_{0,\alpha})=i$,
  and preserves colimits of the diagrams $\beta\hookrightarrow\alpha\hookrightarrow\alpha+1$ for $\beta \le \alpha$.
  Also, for each $\beta < \alpha$, we have a pushout diagram in $\mtcSSet$ of the following form:
  \[
  \begin{tikzcd}
    \mtBdrySpx\deltaBra{n_\beta} \arrow[r, hook] \arrow[d] & \mtyndSpx\deltaBra{n_\beta} \arrow[d] \\
    X(\beta) \arrow[r, hook, "X(\uniqmor)"] & X(\beta+1)
  \end{tikzcd}
  \]

  Define a functor $Y\colon \alpha + 1 \to C$ by
  $Y(\beta) \coloneqq \hat{F}(B) \cup_{\hat{F}(X(\beta))} \hat{G}(X(\beta))$. Since $\hat{F}$ and $\hat{G}$
  preserve colimits and $X$ preserves colimits of the diagrams $\beta\hookrightarrow\alpha\hookrightarrow\alpha+1$
  for $\beta \le \alpha$, we see that $Y$ preserves colimits of such diagrams as well.
  We also see that $Y(\uniqmor_{0,\alpha})$ is the morphism $\hat{F}(B) \cup_{\hat{F}(A)} \hat{G}(A) \to \hat{G}(B)$
  in question. Therefore, it suffices to prove that $Y(\uniqmor)\colon Y(\beta) \to Y(\beta+1)$ is in $I$
  for each $\beta < \alpha$, for it and our assumption on $I$ together imply that $Y(\uniqmor_{0,\alpha})$ is in $I$.

  To see this remaining claim, let $\beta < \alpha$ and notice that there is the following pushout square in $C$:
  \[
  \begin{tikzcd}
    F\deltaBra{n_\beta} \cup_{\hat{F}(\mtBdrySpx\deltaBra{n_\beta})} \hat{G}(\mtBdrySpx\deltaBra{n_\beta})
    \ar[r] \ar[d] & G\deltaBra{n_\beta} \ar[d] \\
    Y(\beta) \ar[r] & Y(\beta+1)
  \end{tikzcd}
  \]
  Since the top vertical arrow is in $I$ by assumption, the bottom arrow is also in $I$, as $I$ is closed
  under pushouts. This completes the proof.
\end{proof}

In conbination with the previous lemma, the following lemmas are useful.
The proof can be found in Section~14.3 from Riehl~\cite{Riehl2014CatHtpy}:

\begin{definition}[{\cite[Section~14.3]{Riehl2014CatHtpy}}]
  Let $C$ be a small category and $F\colon \mtcSimpx \to \functCat{C}{\mtcSet}$ be a functor.
  Then $F$ is said to be \emph{unaugmentable} if the pullback of the two morphisms
  $F(\delta^1_0), F(\delta^1_1)\colon F\deltaBra{0} \rightrightarrows F\deltaBra{1}$
  is empty, i.e., the initial object.
\end{definition}

\begin{lemma}[{A part of \cite[Lemma~14.3.8]{Riehl2014CatHtpy}}]
  Let $C$ be a small category and $F\colon \mtcSimpx \to \functCat{C}{\mtcSet}$ be a functor.
  Then the unique natural transformation $\emptyset \Rightarrow F$ from the initial functor to $F$
  is a Reedy monomorphism if and only if $F$ is unaugmentable.
\end{lemma}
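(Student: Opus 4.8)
The statement to prove is: for $F\colon \mtcSimpx \to \functCat{C}{\mtcSet}$, the unique natural transformation $\emptyset \Rightarrow F$ is a Reedy monomorphism if and only if $F$ is unaugmentable. The plan is to unwind the definition of ``Reedy monomorphism'' (instantiating \cref{lem:cospx-kan-ext-reedy-nat} with $I$ the class of monomorphisms and $G = F$, $\theta = \idmor[F]$, so that the relevant class of maps is really the class of monomorphisms in $\functCat{C}{\mtcSet}$, which is tested pointwise in $C$) and translate it into a statement about the simplices of $F$. Recall that $\emptyset \Rightarrow F$ being Reedy in $I$ means: for each $\deltaBra{n} \in \mtcSimpx$, the map $\hat{\emptyset}(\mtyndSpx\deltaBra{n}) \cup_{\hat{\emptyset}(\mtBdrySpx\deltaBra{n})} \hat{F}(\mtBdrySpx\deltaBra{n}) \to F\deltaBra{n}$ is a monomorphism; since $\hat{\emptyset}$ is the constant-initial functor, this is exactly the \emph{latching map} $L_n F \coloneqq \hat{F}(\mtBdrySpx\deltaBra{n}) \to F\deltaBra{n}$, i.e.\ the canonical map from the colimit over non-surjective $\alpha\colon\deltaBra{m}\to\deltaBra{n}$ of $F\deltaBra{m}$ into $F\deltaBra{n}$.

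For the ``only if'' direction, suppose every latching map $L_n F \to F\deltaBra{n}$ is a monomorphism. I would apply this at $n=1$: here $\mtBdrySpx\deltaBra{1} = \deltaBra{0}\amalg\deltaBra{0}$, so $L_1 F = F\deltaBra{0}\amalg F\deltaBra{0}$, and the latching map is precisely $(F(\delta^1_0), F(\delta^1_1))\colon F\deltaBra{0}\amalg F\deltaBra{0} \to F\deltaBra{1}$. This being a monomorphism of presheaves on $C$ means, pointwise, that each $(F(\delta^1_0)_c, F(\delta^1_1)_c)\colon F\deltaBra{0}(c)\amalg F\deltaBra{0}(c) \to F\deltaBra{1}(c)$ is injective; equivalently, the two maps $F(\delta^1_0)_c$ and $F(\delta^1_1)_c$ are jointly injective and have disjoint images, which is exactly the statement that their pullback (= fibre product over $F\deltaBra{1}$) is empty. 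Since pullbacks in $\functCat{C}{\mtcSet}$ are computed pointwise and the initial object is the pointwise-empty presheaf, this says the pullback of $F(\delta^1_0)$ and $F(\delta^1_1)$ is initial, i.e.\ $F$ is unaugmentable.

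For the ``if'' direction — which I expect to be the bulk of the work — suppose $F$ is unaugmentable and I must show each latching map $L_n F \to F\deltaBra{n}$ is a monomorphism. This is where I would lean on \cite[Lemma~14.3.8]{Riehl2014CatHtpy}, of which the excerpt only quotes ``a part''; the full lemma presumably characterizes precisely which functors $\mtcSimpx\to\functCat{C}{\mtcSet}$ are Reedy cofibrant (equivalently have all latching maps mono) in terms of unaugmentability together with the Eilenberg--Zilber / split-simplicial structure, so the cleanest route is to cite Riehl directly. If a self-contained argument is wanted instead, I would proceed pointwise in $C$ (reducing to $F$ a functor $\mtcSimpx\to\mtcSet$) and argue by induction on $n$: the latching object $L_n F$ is the set of \emph{degenerate-along-$\mtBdrySpx$} simplices, and the Eilenberg--Zilber lemma for simplicial sets says that for a split simplicial set the latching map is always a split monomorphism; unaugmentability is exactly the extra input needed (at the $n=0$ and $n=1$ stages) to see that $F$ extends to a split simplicial set, i.e.\ that no element of $F\deltaBra{n}$ is ``spuriously degenerate'' in a way that collapses the latching inclusion. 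The main obstacle is organizing this Eilenberg--Zilber bookkeeping cleanly; citing Riehl's Lemma~14.3.8 in full is the pragmatic way to sidestep it, and given the phrasing ``A part of'' that is clearly the intended proof.
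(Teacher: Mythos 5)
Your proposal is correct and takes the same route as the paper: the paper itself gives no proof of this lemma, relying entirely on the citation to Riehl's Lemma~14.3.8, and you likewise defer the substantive ``if'' direction to that reference while correctly working out the ``only if'' direction by inspecting the latching map at $n=1$. One small wording note: emptiness of the pullback of $F(\delta^1_0)$ and $F(\delta^1_1)$ is equivalent to disjointness of their images, not ``exactly'' to the monomorphism condition on $F\deltaBra{0}\amalg F\deltaBra{0}\to F\deltaBra{1}$ (the latter additionally requires each coface to be injective, which does hold automatically because $\sigma^0_0\compos\delta^1_j=\idmor$ makes them split monos); this does not affect the validity of your ``only if'' argument.
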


\begin{corollary}\label{cor:unaug-cospx-kan-ext-inj}
  Let $C$ be a small category and $F\colon \mtcSimpx \to \functCat{C}{\mtcSet}$ be a functor.
  Then the left Kan extension $\mtcSSet \to \functCat{C}{\mtcSet}$ of $F$ along the Yoneda embedding
  ${\mtyndSpx}\colon \mtcSimpx \to \mtcSSet$ preserves monomorphism if and only if $F$ is unaugmentable.
\end{corollary}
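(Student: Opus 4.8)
The plan is to read off both implications from \cref{lem:cospx-kan-ext-reedy-nat}, applied to the unique natural transformation out of the constant functor at the initial object, with $I$ taken to be the class of monomorphisms of $\functCat{C}{\mtcSet}$. I would first note that this $I$ satisfies the two closure conditions demanded by \cref{lem:cospx-kan-ext-reedy-nat}: monomorphisms of presheaves are detected pointwise, and injections of sets are preserved both by pushout and by transfinite composition (the latter since filtered colimits in $\mtcSet$ preserve injectivity).

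For the ``if'' direction, assume $F$ is unaugmentable. By \cite[Lemma~14.3.8]{Riehl2014CatHtpy}, cited above, the canonical natural transformation $\theta\colon\emptyset\Rightarrow F$ from the constant functor at the initial object is then a Reedy monomorphism, i.e.\ Reedy in $I$. The left Kan extension of that constant functor along $\mtyndSpx$ is again the constant functor at the initial object of $\functCat{C}{\mtcSet}$, since a colimit of a diagram constantly equal to the initial object is initial. Applying \cref{lem:cospx-kan-ext-reedy-nat} to $\theta$ — with the source functor taken to be this constant functor and the target functor taken to be $F$ — its conclusion states that the map $\emptyset\cup_{\emptyset}\hat F(A)\to\hat F(B)$ lies in $I$ for every monomorphism $A\hookrightarrow B$ of simplicial sets; but this map is just $\hat F(A)\to\hat F(B)$, so $\hat F$ preserves monomorphisms.

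For the ``only if'' direction, assume $\hat F$ preserves monomorphisms and apply this to the monomorphism $\mtBdrySpx\deltaBra{1}\hookrightarrow\mtyndSpx\deltaBra{1}$. Since $\hat F$ is cocontinuous, restricts to $F$ along the Yoneda embedding, and $\mtBdrySpx\deltaBra{1}\cong\mtyndSpx\deltaBra{0}\amalg\mtyndSpx\deltaBra{0}$, one obtains a monomorphism $F\deltaBra{0}\amalg F\deltaBra{0}\to F\deltaBra{1}$ whose two coproduct legs are $F(\delta^1_0)$ and $F(\delta^1_1)$. Evaluating at an arbitrary object of $C$, a monic map out of a binary coproduct of sets forces its two legs to have disjoint images; hence the pullback of $F(\delta^1_0)$ and $F(\delta^1_1)$ is the initial presheaf, which is precisely the condition that $F$ be unaugmentable.

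I do not expect a genuine obstacle; the argument is short. The points warranting a moment's attention are the two cocontinuity simplifications — that the left Kan extension of the constant initial functor is again constant initial, and that $\hat F(\mtBdrySpx\deltaBra{1})\cong F\deltaBra{0}\amalg F\deltaBra{0}$ — and the verification that monomorphisms form an admissible class $I$ for \cref{lem:cospx-kan-ext-reedy-nat}; all of these are routine.
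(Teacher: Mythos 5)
Your argument is correct and is exactly the intended proof: the ``if'' direction reads off the Reedy-monomorphism condition from the quoted part of Riehl's Lemma~14.3.8 and feeds it into \cref{lem:cospx-kan-ext-reedy-nat} with the constant initial cosimplicial object as source, while the ``only if'' direction evaluates $\hat F$ on the boundary inclusion $\mtBdrySpx\deltaBra{1}\hookrightarrow\mtyndSpx\deltaBra{1}$ and unwinds what injectivity of $F\deltaBra{0}\amalg F\deltaBra{0}\to F\deltaBra{1}$ says pointwise. Your ancillary checks --- that monomorphisms of $\functCat{C}{\mtcSet}$ form a class $I$ closed under pushout and transfinite composition, and that the left Kan extension of the constant initial functor is again constant initial --- are both correct.
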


\begin{lemma}[{The rest of \cite[Lemma~14.3.8]{Riehl2014CatHtpy}}]
  Let $C$ be a small category and $F,G\colon \mtcSimpx \to \functCat{C}{\mtcSet}$ be
  unaugmentable functors. Then a natural transformation $F\Rightarrow G$ is a Reedy
  monomorphism if and only if it is a pointwise monomorphism.
\end{lemma}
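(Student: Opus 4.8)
The plan is to read ``$\theta$ is a Reedy monomorphism'' as the statement that, for every $\deltaBra{n}\in\mtcSimpx$, the relative latching map
\[
r_n\colon F\deltaBra{n}\cup_{\hat F(\mtBdrySpx\deltaBra{n})}\hat G(\mtBdrySpx\deltaBra{n})\longrightarrow G\deltaBra{n}
\]
is a monomorphism, where $\hat F,\hat G\colon\mtcSSet\to\functCat{C}{\mtcSet}$ denote the left Kan extensions of $F$ and $G$ along $\mtyndSpx$; equivalently, that $\theta$ is Reedy in the class of monomorphisms in the sense of \cref{lem:cospx-kan-ext-reedy-nat} (one checks, using that the nondegenerate simplices of $\mtBdrySpx\deltaBra n$ form a final subcategory of all its simplices, that this also matches the latching-object formulation of \cite{Riehl2014CatHtpy}). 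With this reading the forward implication is essentially free: monomorphisms in $\functCat{C}{\mtcSet}$ are closed under pushouts and transfinite compositions, so \cref{lem:cospx-kan-ext-reedy-nat} applies with $I$ the class of monomorphisms, and specializing its conclusion to the monomorphism $\emptyset\hookrightarrow\mtyndSpx\deltaBra{n}$ shows that $\theta_{\deltaBra{n}}\colon F\deltaBra{n}\to G\deltaBra{n}$ is a monomorphism for every $n$.

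For the converse, assume $\theta$ is a pointwise monomorphism; I must show each $r_n$ is a monomorphism. First I would reduce to the case $F,G\colon\mtcSimpx\to\mtcSet$ with every $\theta_{\deltaBra n}$ injective: monomorphisms in $\functCat{C}{\mtcSet}$ are detected objectwise, and evaluation at an object of $C$ commutes with $\hat F$, $\hat G$ and with the pushout defining $r_n$. Working now in $\mtcSet$: by unaugmentability of $F$ and $G$, \cref{cor:unaug-cospx-kan-ext-inj} makes $\hat F(\mtBdrySpx\deltaBra n)\to F\deltaBra n$ and $\hat G(\mtBdrySpx\deltaBra n)\to G\deltaBra n$ injective; combining this with cocontinuity of $\hat F$ and the presentation of $\mtBdrySpx\deltaBra n$ as the union of the images of its coface maps, I would identify $\hat F(\mtBdrySpx\deltaBra n)$ with the subset $L_nF\coloneqq\bigcup_{i=0}^{n}\operatorname{Im}\bigl(F(\delta^n_i)\colon F\deltaBra{n-1}\to F\deltaBra{n}\bigr)\subseteq F\deltaBra n$, and similarly $L_nG\subseteq G\deltaBra n$; naturality then gives $\theta_{\deltaBra n}(L_nF)\subseteq L_nG$.

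The crux is the claim that $\theta_{\deltaBra n}^{-1}(L_nG)=L_nF$, i.e. that the square relating $L_nF\hookrightarrow F\deltaBra n$, $L_nG\hookrightarrow G\deltaBra n$ and $\theta$ is a pullback. Granting it, $r_n$ is injective by a direct inspection: in $\mtcSet$ the pushout $F\deltaBra n\cup_{L_nF}L_nG$ is the disjoint union of $F\deltaBra n$ with $L_nG\setminus\theta_{\deltaBra n}(L_nF)$, on which $r_n$ acts as $\theta_{\deltaBra n}$ and as the inclusion $L_nG\hookrightarrow G\deltaBra n$ respectively; both are injective, and their images meet only in $\theta_{\deltaBra n}(L_nF)$ because any $x\in F\deltaBra n$ with $\theta_{\deltaBra n}(x)\in L_nG$ satisfies $x\in L_nF$ by the claim. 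So everything comes down to proving the pullback claim, which I expect to be the main (and only substantive) obstacle, although it turns out to be short.

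To prove it, we may assume $n\ge 1$ since $\mtBdrySpx\deltaBra 0=\emptyset$. Let $x\in F\deltaBra n$ with $\theta_{\deltaBra n}(x)=G(\delta^n_i)(z)$ for some $0\le i\le n$ and $z\in G\deltaBra{n-1}$, and choose a retraction $\rho\colon\deltaBra n\to\deltaBra{n-1}$ of $\delta^n_i$ in $\mtcSimpx$. Naturality of $\theta$ gives $\theta_{\deltaBra{n-1}}(F(\rho)(x))=G(\rho)(G(\delta^n_i)(z))=z$, and hence $\theta_{\deltaBra n}(F(\delta^n_i)(F(\rho)(x)))=G(\delta^n_i)(z)=\theta_{\deltaBra n}(x)$; injectivity of $\theta_{\deltaBra n}$ then forces $x=F(\delta^n_i)(F(\rho)(x))\in\operatorname{Im}(F(\delta^n_i))\subseteq L_nF$, which is what we wanted. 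The one piece of bookkeeping to be careful about in writing this up is the identification $\hat F(\mtBdrySpx\deltaBra n)\cong L_nF$ and its compatibility with $\theta$, but this is formal given cocontinuity of $\hat F$ and the preservation of monomorphisms supplied by unaugmentability.
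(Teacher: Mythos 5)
Your proof is correct, and it is a genuine direct argument where the paper offers none — the paper simply cites \cite[Lemma~14.3.8]{Riehl2014CatHtpy} with no proof. A few remarks on the content.

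The forward direction is essentially reading off the last sentence of \cref{lem:cospx-kan-ext-reedy-nat} with $I$ the class of monomorphisms (closed under pushout and transfinite composition in a presheaf category) and $A\hookrightarrow B$ the map $\emptyset\hookrightarrow\mtyndSpx\deltaBra{n}$; since $\hat F(\emptyset)=\hat G(\emptyset)$ is initial, the induced map collapses to $\theta_{\deltaBra n}$. That is fine, and notably uses no unaugmentability.

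For the converse, the reduction to $\mtcSet$-valued cosimplicial objects is legitimate: monomorphisms in $\functCat{C}{\mtcSet}$ are detected objectwise, evaluation at $c\in C$ preserves all the colimits involved and commutes with the left Kan extensions, and unaugmentability (being a claim that a certain pullback is initial, hence objectwise empty) also passes through evaluation. The heart of the argument — the pullback claim $\theta_{\deltaBra n}^{-1}(L_nG)=L_nF$, proved via a retraction $\rho$ of $\delta^n_i$ and injectivity of $\theta_{\deltaBra n}$ — is clean, correct, and is a pleasantly elementary replacement for the Eilenberg--Zilber/elegance machinery that one might otherwise reach for here (Riehl's chapter 14 works through EZ decompositions). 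The only place that needs some care, as you flag yourself, is the identification $\hat F(\mtBdrySpx\deltaBra n)\cong L_nF$; the cleanest way to make it rigorous is via the standard coequalizer presentation of $\mtBdrySpx\deltaBra n$ as $\coprod_{i<j}\mtyndSpx\deltaBra{n-2}\rightrightarrows\coprod_i\mtyndSpx\deltaBra{n-1}$, applying the colimit-preserving $\hat F$ to obtain a surjection $\coprod_i F\deltaBra{n-1}\twoheadrightarrow\hat F(\mtBdrySpx\deltaBra n)$, and then composing with the injection $\hat F(\mtBdrySpx\deltaBra n)\hookrightarrow F\deltaBra n$ supplied by \cref{cor:unaug-cospx-kan-ext-inj} to conclude that $\hat F(\mtBdrySpx\deltaBra n)$ is exactly the image subset $L_nF$. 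This avoids having to check separately that $\hat F$ preserves the pairwise intersections. With that spelled out, the rest of the pushout computation in $\mtcSet$ goes through exactly as you describe. The parenthetical aside about nondegenerate simplices forming a final subcategory is not needed for the paper's purposes (the paper's Reedy notion is stated directly in the $\hat F(\mtBdrySpx\deltaBra n)$ formulation via \cref{lem:cospx-kan-ext-reedy-nat}) and could be dropped.
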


\begin{corollary}\label{cor:unaug-cospx-kan-ext-nat-inj}
  Let $C$ be a small category and $F,G\colon \mtcSimpx \to \functCat{C}{\mtcSet}$ be
  unaugmentable functors. Let $\theta\colon F\Rightarrow G$ be a natural transformation.
  Let $\hat{F},\hat{G}\colon \mtcSSet \to \functCat{C}{\mtcSet}$ denote the left Kan extensions
  of $F$ and $G$ along the Yoneda embedding ${\mtyndSpx}\colon \mtcSimpx \to \mtcSSet$.
  Then the induced natural transformation $\hat\theta\colon \hat{F}\Rightarrow \hat{G}$
  is a pointwise monomorphism if and only if $\theta$ is so.
\end{corollary}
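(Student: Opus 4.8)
The plan is to obtain this corollary by chaining together the previous lemma (the remaining half of \cite[Lemma~14.3.8]{Riehl2014CatHtpy}, which identifies Reedy monomorphisms between unaugmentable functors with pointwise monomorphisms) and \cref{lem:cospx-kan-ext-reedy-nat}, the latter applied with the ambient cocomplete category taken to be $\functCat{C}{\mtcSet}$ and with the distinguished class $I$ taken to be the class of all monomorphisms in $\functCat{C}{\mtcSet}$.

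First I would check that the monomorphisms in $\functCat{C}{\mtcSet}$ satisfy the two closure hypotheses imposed on $I$ in \cref{lem:cospx-kan-ext-reedy-nat}: since colimits in a presheaf category are computed pointwise and, in $\mtcSet$, transfinite composites and pushouts of monomorphisms are again monomorphisms, the same holds in $\functCat{C}{\mtcSet}$. (Alternatively, the monomorphisms there form the left class of a weak factorization system, which also suffices.) Granting this, the ``only if'' direction is immediate: if $\theta$ is a pointwise monomorphism, then by the previous lemma $\theta$ is a Reedy monomorphism, which is precisely the condition that $\theta$ be ``Reedy in $I$'' for $I$ the monomorphisms; \cref{lem:cospx-kan-ext-reedy-nat} then yields that every constituent map of $\hat\theta$ lies in $I$, i.e., that $\hat\theta$ is a pointwise monomorphism.

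For the ``if'' direction I would invoke only that the Yoneda embedding ${\mtyndSpx}\colon\mtcSimpx\to\mtcSSet$ is fully faithful, so that left Kan extension along it genuinely extends: the canonical maps $F\Rightarrow\hat F\compos{\mtyndSpx}$ and $G\Rightarrow\hat G\compos{\mtyndSpx}$ are isomorphisms, compatibly with the induced transformations, so $\hat\theta\whiskr{\mtyndSpx}\cong\theta$. Concretely, the comma category of objects $\mtyndSpx\deltaBra{m}\to\mtyndSpx\deltaBra{n}$ has a terminal object, so the defining colimit collapses, giving $\hat F(\mtyndSpx\deltaBra{n})\cong F\deltaBra{n}$ and likewise for $\hat G$ and for $\hat\theta$. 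Hence if $\hat\theta$ is a pointwise monomorphism, so is its restriction $\hat\theta\whiskr{\mtyndSpx}$, and therefore so is $\theta$.

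I do not expect a genuine obstacle here. The two points needing a little care are bookkeeping the two different roles of the symbol ``$C$'' — the small category in the statement versus the cocomplete category in \cref{lem:cospx-kan-ext-reedy-nat}, which we specialize to $\functCat{C}{\mtcSet}$ — and the verification of the closure properties of monomorphisms in $\functCat{C}{\mtcSet}$ that make \cref{lem:cospx-kan-ext-reedy-nat} applicable; beyond that, the argument is a direct assembly of the already-established results.
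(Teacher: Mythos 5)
Your proof is correct and matches the intended argument: the paper states this as a corollary without proof, and the natural reconstruction is exactly what you give — combine the preceding lemma (the rest of \cite[Lemma~14.3.8]{Riehl2014CatHtpy}, identifying Reedy monomorphisms between unaugmentable functors with pointwise monomorphisms) with \cref{lem:cospx-kan-ext-reedy-nat} applied to the class $I$ of monomorphisms in $\functCat{C}{\mtcSet}$, and handle the converse direction by restricting along the fully faithful Yoneda embedding. One small slip: you have the labels ``if'' and ``only if'' swapped — in the statement ``$\hat\theta$ is a pointwise monomorphism if and only if $\theta$ is so,'' the ``if'' half is $\theta\text{ mono}\Rightarrow\hat\theta\text{ mono}$ and the ``only if'' half is the restriction direction — but the mathematical content of both halves as you present them is correct.
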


\section{Several categories related to a Reedy category}
\label{sec:def-cat-down}

We remind the reader that we will fix a Reedy category $C$ for the rest of the paper.
In this section, we construct the category $\Down(C)$ from $C$.
I, the author, appreciate Lumsdaine's helpful answer~\cite{Lumsdaine-groth-construction}
to my MathOverflow question, which has reformulated the categories constructed in this section
using Grothendieck construction and categorical nerves. This reformulation is explicitly included
in \cref{def:groth-cat-ladder-all} immediately below and further discussed in \cref{rmk:groth-cat-are-groth}.

Our construction begins with the following category, $\int\catNerve(C)$,
which appears to be standard, although we could not locate a convenient reference.

\begin{definition}[$\int\catNerve(C)$]
  \label{def:groth-cat-ladder-all}
  We shall denote by $\int \catNerve\mleft(C\mright)$ the total category
  of the Grothendieck construction of the categorical nerve
  \[
    \catNerve\mleft(C\mright)\colon
    \dualCat{\mtcSimpx} \to \mtcCat;\;
    \deltaBra{n} \mapsto \functCat{\deltaBra{n}}{C}
    \text{.}
  \]
  Explicitly, the category $\int\catNerve\mleft(C\mright)$ is described
  as follows:
  \begin{itemize}
  \item An object of $\int\catNerve\mleft(C\mright)$ is
    a pair $\left(\deltaBra{n}, X\right)$ of an object
    $\deltaBra{n} \in \Ob\mtcSimpx$ and a functor
    $X\colon \deltaBra{n} \to C$.
  \item A morphism $\left(\deltaBra{m}, X\right) \to
    \left(\deltaBra{n}, Y\right)$ is a pair
    $\left(\alpha, \theta\right)$ of a morphism
    $\alpha\colon  \deltaBra{m} \to \deltaBra{n}$ in $\mtcSimpx$
    and a natural transformation
    $\theta\colon X \Rightarrow Y \compos \alpha$.
  \item The composition of
    \begin{align*}
      (\alpha, \theta)
      &\colon (\deltaBra{l}, X) \to (\deltaBra{m}, Y)\text{, and}\\
      (\beta, \phi)
      &\colon (\deltaBra{m}, Y) \to (\deltaBra{n}, Z)
    \end{align*}
    is $(\beta\compos\alpha, (\phi\whiskr\alpha)\compos \theta)$.
  \end{itemize}
\end{definition}

In order to intuitively explain the category
$\int\catNerve\mleft(C\mright)$, we compare it to the
functor category $\functCat{\deltaBra{n}}{C}$.
Diagrammatically, the latter category consists of objects which are
vertical chains of $n$ composable morphisms in $C$, and morphisms which are
horizontal commutative ladders consisting of $n+1$ rungs connecting
between those objects, as depicted in:
\[
  \begin{tikzcd}
    \bullet \arrow[r] \arrow[d] & \bullet \arrow[d]\\
    \bullet \arrow[r] \arrow[d] & \bullet \arrow[d]\\
    \tightvdots \arrow[d] & \tightvdots \arrow[d] \\
    \bullet \arrow[r] & \bullet
  \end{tikzcd}
\]
By contrast, the category
$\int \catNerve\mleft(C\mright)$ has vertical chains of
any finite length as objects, and skew ladders as morphisms,
where by ``skew'' we mean that the rungs can be non-horizontal, as long as
each object in the source chain connects to exactly one rung and
the rungs do not cross midway, as shown in:
\[
  \begin{tikzcd}
    \bullet \arrow[d] \arrow[r] & \bullet \arrow[d] \\
    \bullet \arrow[d] \arrow[r] & \bullet \arrow[d] \\
    \bullet \arrow[d] \arrow[r] & \bullet \arrow[d] \\
    \bullet           \arrow[r] & \bullet
  \end{tikzcd}
  \qquad
  \begin{tikzcd}
    \bullet \arrow[d] \arrow[dr] & \bullet \arrow[d] \\
    \bullet \arrow[d] \arrow[r]  & \bullet \arrow[d] \\
    \bullet \arrow[d] \arrow[r]  & \bullet \arrow[d] \\
    \bullet           \arrow[ur] & \bullet
  \end{tikzcd}
  \qquad
  \begin{tikzcd}
    \bullet \arrow[d] \arrow[r]  & \bullet \arrow[d] \\
    \bullet \arrow[d] \arrow[rd] & \bullet \arrow[d] \\
    \bullet           \arrow[rd] & \bullet \arrow[d] \\
    & \bullet
  \end{tikzcd}
  \qquad
  \begin{tikzcd}
    \bullet \arrow[d] \arrow[rd]   & \bullet \arrow[d] \\
    \bullet \arrow[d] \arrow[r]    & \bullet \\
    \bullet \arrow[d] \arrow[ru]   & \\
    \bullet           \arrow[ruu]  &
  \end{tikzcd}
\]

Since our actual interest is in two subcategories of
$\int\catNerve(C)$, we proceed to their definitions.

\begin{definition}[$\int \catNerve^{{-},{+}}(C)$]
  \label{def:groth-cat-ladder-weak}
  The subcategory
  $\int \catNerve^{{-},{+}}\mleft(C\mright)$
  of $\int\catNerve(C)$
  is defined to consist of:
  \begin{itemize}
  \item the objects $\left(\deltaBra{n}, X\right)$ in which $X$ can be
    considered as a functor $\deltaBra{n} \to C_{-}$, and
  \item the morphisms
    $\mleft(\alpha, \theta\mright)\colon \left(\deltaBra{m}, X\right)
    \to \left(\deltaBra{n}, Y\right)$ such that each constituent morphism
    \[
      \theta_i\colon X(i) \to Y(\alpha(i))
    \]
    lies in $C_{+}$ for $i \in \deltaBra{m}=\set{0,1,\dotsc,m}$.
  \end{itemize}
\end{definition}

\begin{definition}[$\int \catNerve^{{--},{+}}_{+}(C)$]
  \label{def:groth-cat-ladder-strict}
  We will write
  $\int \catNerve^{{--},{+}}_{+}\mleft(C\mright)$
  for the subcategory of
  $\int \catNerve^{{-},{+}}\mleft(C\mright)$
  specified by:
  \begin{itemize}
  \item its objects are the objects
    $\left(\deltaBra{n}, X\right)\in
    \Ob\int \catNerve^{{-},{+}}\mleft(C\mright)$
    such that $X\colon \deltaBra{n} \to C_{-}$ is conservative,
    or equivalently reflects identities;
  \item its morphisms are the morphisms
    $\mleft(\alpha, \theta\mright)\colon \left(\deltaBra{m}, X\right)
    \to \left(\deltaBra{n}, Y\right)$ in
    $\int \catNerve^{{-},{+}}\mleft(C\mright)$
    such that $\alpha$ lies within $\mtcSimpx_{+}$.
  \end{itemize}
\end{definition}

Although we shall investigate $\int \catNerve^{{-},{+}}(C)$ and
$\int \catNerve^{{--},{+}}_{+}(C)$ mainly
using the explicit and combinatorial formulation as the categories of pairs,
it might be helpful to justify the symbols having categorical intent:

\begin{remark}[Lumsdaine~\cite{Lumsdaine-groth-construction}]
  \label{rmk:groth-cat-are-groth}
  The categories
  $\int \catNerve^{{-},{+}}(C)$ and
  $\int \catNerve^{{--},{+}}_{+}(C)$,
  as the symbols indicate,
  are Grothendieck constructions of functors
  \begin{align*}
    \catNerve^{{-},{+}}(C)
    &\colon \dualCat{\mtcSimpx} \to \mtcCat\text{, and}\\
    \catNerve^{{--},{+}}_{+}(C)
    &\colon \dualCat{\mtcSimpx_{+}} \to \mtcCat\text{.}
  \end{align*}
  Here, the functor $\catNerve^{{-},{+}}(C)$ is a subfunctor of
  $\catNerve(C)$, and
  $\catNerve^{{--},{+}}_{+}(C)$ is also a subfunctor of the
  restriction
  \[
    \begin{tikzcd}
      \dualCat{\mtcSimpx_{+}} \arrow[r, hook]
      & \dualCat{\mtcSimpx}
      \arrow[rr, "\catNerve^{{-},{+}}(C)"]
      && \mtcCat\text{.}
    \end{tikzcd}
  \]
  We do not need the actual definitions of the two functors
  for the argument in this paper, but they should be clear from
  \cref{def:groth-cat-ladder-weak,def:groth-cat-ladder-strict}.
  As the two categories in question are Grothendieck constructions, they
  come equipped with Grothendieck fibrations
  \begin{align*}
    \textstyle\int \catNerve^{{-},{+}}(C) &\to \mtcSimpx\text{, and}\\
    \textstyle\int \catNerve^{{--},{+}}_{+}(C) &\to \mtcSimpx_{+}\text{.}
  \end{align*}
  We will utilize these two functors later in this section, but we need not
  see them as a part of Grothendieck construction. They can
  be considered as the mere first projections, in accordance with the
  explicit constructions in \cref{def:groth-cat-ladder-weak,def:groth-cat-ladder-strict}.
\end{remark}

The purpose of the next lemma is another remark we need to make.
Although \cref{def:groth-cat-ladder-strict} imposes a condition
on the morphisms of
$\int \catNerve^{{-},{+}}\mleft(C\mright)$
to define those of
$\int \catNerve^{{--},{+}}_{+}\mleft(C\mright)$,
it turns out that these conditions are always inherently satisfied;
hence the condiction is logically redundant:

\begin{lemma} \label{lem:groth-cat-full-sub}
  The subcategory
  $\int \catNerve^{{--},{+}}_{+}\mleft(C\mright)
  \subseteq
  \int \catNerve^{{-},{+}}\mleft(C\mright)$
  is full.
  Explicitly, if
  \[
    (\deltaBra{m}, X), (\deltaBra{n}, Y) \in
    \Ob\textstyle\int \catNerve^{{--},{+}}_{+}\mleft(C\mright)
    \subseteq
    \Ob\int \catNerve^{{-},{+}}\mleft(C\mright)
  \]
  are objects and
  $
    (\alpha, \theta)\colon (\deltaBra{m}, X) \to (\deltaBra{n}, Y)
  $
  is a morphism in
  $\int \catNerve^{{-},{+}}\mleft(C\mright)$,
  then the morphism $\alpha\colon \deltaBra{m} \to \deltaBra{n}$ in
  $\mtcSimpx$ also belongs to $\mtcSimpx_{+}$,
  qualifying $(\alpha, \theta)$ as an arrow in
  $\int \catNerve^{{--},{+}}_{+}\mleft(C\mright)$.
\end{lemma}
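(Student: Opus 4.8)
The plan is to prove directly that $\alpha$ is strictly monotone (hence injective, hence in $\mtcSimpx_+$), rather than arguing by contradiction from non-injectivity, which is awkward constructively. Since $\alpha\colon\deltaBra{m}\to\deltaBra{n}$ is order-preserving and the ordinals have decidable equality, it suffices to show $\alpha(i)<\alpha(i+1)$ for each $i$ with $0\le i<m$; and since we already know $\alpha(i)\le\alpha(i+1)$, it is enough to derive a contradiction from the hypothesis $\alpha(i)=\alpha(i+1)$.

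So assume $\alpha(i)=\alpha(i+1)$, and let $u\colon i\to i+1$ denote the unique such morphism of $\deltaBra{m}$. Naturality of $\theta\colon X\Rightarrow Y\compos\alpha$ at $u$ gives $\theta_{i+1}\compos X(u)=Y(\alpha(u))\compos\theta_i$. Under the hypothesis $\alpha(i)=\alpha(i+1)$, the morphism $\alpha(u)$ lies in a hom-set of a poset that is now a hom-set of an object to itself, hence $\alpha(u)=\idmor$ and $Y(\alpha(u))=\idmor_{Y(\alpha(i))}$; the naturality equation therefore reads $\theta_{i+1}\compos X(u)=\theta_i$. Now invoke the uniqueness of $(C_-,C_+)$-factorizations from \cref{def:reedy-cat}: since the object $(\deltaBra{m},X)$ of $\int\catNerve^{{-},{+}}(C)$ has $X$ valued in $C_-$, we have $X(u)\in\Mor C_-$; since $(\alpha,\theta)$ is a morphism of $\int\catNerve^{{-},{+}}(C)$, both $\theta_i$ and $\theta_{i+1}$ lie in $\Mor C_+$. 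Thus $\theta_i=\theta_{i+1}\compos X(u)$ is a $(C_-,C_+)$-factorization of $\theta_i$, and so is the trivial one $\theta_i=\theta_i\compos\idmor_{X(i)}$; by uniqueness the two triples coincide, whence $X(i)=X(i+1)$ and $X(u)=\idmor_{X(i)}$. Finally, because $X\colon\deltaBra{m}\to C_-$ reflects identities (this is exactly the defining condition of objects of $\int\catNerve^{{--},{+}}_{+}(C)$), we conclude $i=i+1$ and $u=\idmor_i$, contradicting $i<i+1$. Hence $\alpha(i)<\alpha(i+1)$ for all such $i$, so $\alpha\in\mtcSimpx_+$, and $(\alpha,\theta)$ is a morphism of $\int\catNerve^{{--},{+}}_{+}(C)$.

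I do not anticipate a serious obstacle: the argument is essentially a two-line application of the Reedy factorization axiom plus the ``reflects identities'' hypothesis. The only point worth care is the constructive framing of the reduction to consecutive indices; I would make explicit that it relies on decidable equality of the finite ordinals $\deltaBra{n}$, so that $\neg\bigl(\alpha(i)=\alpha(i+1)\bigr)$ together with $\alpha(i)\le\alpha(i+1)$ yields $\alpha(i)<\alpha(i+1)$, and that strict monotonicity at consecutive indices gives injectivity of an order-preserving map on a finite linear order.
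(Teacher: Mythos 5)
Your argument is correct and essentially identical to the paper's proof: both apply naturality of $\theta$, then use uniqueness of the $(C_-,C_+)$-factorization to force $X(\uniqmor)=\idmor$, then conclude via the identity-reflecting hypothesis on $X$. The paper is marginally more direct---it fixes arbitrary $i\le j$ with $\alpha(i)=\alpha(j)$ and derives $i=j$ positively---so your detour through consecutive indices, the final contradiction from $i=i+1$, and the appeal to decidable ordinal equality are not needed (and, contrary to your framing, the paper's own argument was never ``by contradiction from non-injectivity'').
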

\begin{proof}
  Set $(\alpha, \theta)\colon (\deltaBra{m}, X) \to (\deltaBra{n}, Y)$
  as assumed in the statement. It is sufficient to show
  $\alpha\colon \deltaBra{m} \to \deltaBra{n}$ is injective.
  Assume that $0 \le i \le j \le m$ satisfy $\alpha(i) = \alpha(j)$.
  The naturality of $\theta$ implies the commutativity of the following
  diagram:
  \[
    \begin{tikzcd}
      X(i) \arrow[d, two heads, "X(\uniqmor)"'] \arrow[r, tail, "\theta_i"]
      & Y(\alpha(i)) \arrow[d, equal] \\
      X(j) \arrow[r, tail, "\theta_j"']
      & Y(\alpha(j))
    \end{tikzcd}
  \]
  Here, $\uniqmor = \uniqmor_{ij}$ is the unique morphism $i \to j$ in
  $\deltaBra{m}$.
  In the diagram, the uniqueness of
  $\left(C_{-}, C_{+}\right)$-fac\-tor\-iza\-tion applies and we obtain
  $X(\uniqmor_{ij}) = \idmor$. Since $X$ is assumed to reflect identity
  by the definition of
  $\Ob\int \catNerve^{{--},{+}}_{+}\mleft(C\mright)$,
  we have $i=j$, as required.
\end{proof}

We proceed to the definitions of $\Down(C)$ and its inherently related
category $\Downstar(C)$.
For that purpose,
we first need to enrich the three previously defined categories over
the category $\mtcPoset$ of (reflexively) partially ordered sets,
since our desired categories are obtained by changing the enriching base
of the two $\mtcPoset$-categories $\int \catNerve^{{-},{+}}\mleft(C\mright)$
and $\int \catNerve^{{--},{+}}_{+}\mleft(C\mright)$.

Remember the canonical $\mtcPoset$-enrichment of $\mtcSimpx$
as a subcategory of the cartesian closed category $\mtcPoset$.
Explicitly stated, for any parallel pair of morphisms
$\alpha, \alpha'\colon \deltaBra{m} \to \deltaBra{n}$,
we consider that $\alpha \le \alpha'$ if and only if
we have $\alpha(i) \le \alpha'(i)$ for all
$i \in \deltaBra{m} = \set{0,1,\dotsc,m}$.
This enrichment induces the following $\mtcPoset$-enrichments:

\begin{definition}\label{def:groth-cat-poset-enrich}
  Let $C=\left(C,C_{-},C_{+}\right)$ be as above.
  Let
  \[\mleft(\alpha, \theta\mright),
    \mleft(\alpha', \theta'\mright)\colon \left(\deltaBra{m}, X\right)
    \to \left(\deltaBra{n}, Y\right)\]
  be a parallel pair of morphisms in any of the three categories
  $\int\catNerve\mleft(C\mright)$,
  $\int \catNerve^{{-},{+}}\mleft(C\mright)$, and
  $\int \catNerve^{{--},{+}}_{+}\mleft(C\mright)$.
  We say $\left(\alpha, \theta\right)\le\left(\alpha', \theta'\right)$
  if and only if it holds that $\alpha \le \alpha'$ and the following
  diagram of natural transformations is commutative:
  \[
    \begin{tikzcd}[row sep=tiny, column sep=small]
      &  &  & Y\compos\alpha
      \arrow[dd, Rightarrow, "Y\whiskl\uniqmor"] \\
      X \arrow[rrru, Rightarrow, "\theta"]
      \arrow[rrrd, Rightarrow, "\theta'"'] &  &  & \\
      &  &  & Y\compos\alpha'
    \end{tikzcd}
  \]
  Here, the unique natural transformation
  $\uniqmor\colon \alpha \Rightarrow \alpha'$
  corresponds to the inequality $\alpha \le \alpha'$ and
  $Y\whiskl\uniqmor$
  stands for its left whiskering with $Y$. With this partial order, we regard
  the three categories as $\mtcPoset$-enriched categories, and hence
  as 2-categories.
\end{definition}

Let $\pi_0\colon \mtcPoset \to \mtcSet$
denote the left adjoint functor of the discrete poset functor
$\mtcSet \to \mtcPoset$. The functor
$\pi_0$ sends a poset $P$ to the quotient of the set $P$ by
the symmetric transitive closure of the partial order on $P$.
Since this functor is strongly cartesian monoidal, the following
\cref{def:down-c}, of the category we want to construct in this section,
makes sense.

\begin{definition} \label{def:groth-cat-eqvce}
  Let
  \[
    (\alpha, \theta), (\beta, \phi)
    \colon (\deltaBra{m}, X) \to (\deltaBra{n}, Y)
  \]
  be a parallel pair of morphisms in any of the three categories
  $\int\catNerve\mleft(C\mright)$,
  $\int \catNerve^{{-},{+}}\mleft(C\mright)$, and
  $\int \catNerve^{{--},{+}}_{+}\mleft(C\mright)$.
  We say they are \emph{equivalent} and write
  $(\alpha, \theta) \sim (\beta, \phi)$
  if the two arrows are related by the symmetric transitive closure of
  the order on the hom-set
  $\HomOf{(\deltaBra{m}, X)}{(\deltaBra{n}, Y)}$.
\end{definition}

\begin{definition}[$\Down(C)$] \label{def:down-c}
  Let $C = \left(C,C_{-},C_{+}\right)$ be a Reedy category.
  The category $\Down(C)$ is defined to be the $\mtcSet$-category obtained by
  changing the enriching base of
  $\int \catNerve^{{--},{+}}_{+}\mleft(C\mright)$
  by the functor $\pi_0\colon \mtcPoset \to \mtcSet$.
  To be more explicit, $\Down(C)$ has the same set of objects as
  $\int \catNerve^{{--},{+}}_{+}\mleft(C\mright)$,
  and we declare the hom-sets of $\Down(C)$ to be
  the quotients of the original hom-sets by the equivalence we
  detailed above. Similarly, we define $\Downstar(C)$ to be the
  change of enriching base by $\pi_0$ of
  $\int \catNerve^{{-},{+}}\mleft(C\mright)$.
\end{definition}

There are two paths of construction of the category $\Down(C)$
from $\int\catNerve^{{-},{+}}(C)$. As we have done, we may first take the full subcategory
$\int\catNerve^{{--},{+}}_{+}(C)\subseteq\int\catNerve^{{-},{+}}(C)$, and
then take the quotient in hom-sets to obtain $\Down(C)$.
On the other hand, as will be seen in \cref{rem:groth-cat-functors},
we can also regard $\Down(C)$ as a full subcategory of $\Downstar(C)$,
which is obtained by taking quotients in hom-sets
of $\int\catNerve^{{-},{+}}(C)$. The two intermediate categories $\Downstar(C)$
and $\int\catNerve^{{--},{+}}_{+}(C)$ serve different purposes in our investigation
of $\Down(C)$. The quotient $\Downstar(C)$ is better in its categorical nature, and
used in the proof that $\Down(C) \to C$ is a localization. The full subcategory
$\int\catNerve^{{--},{+}}_{+}(C)$ is better in its combinatorial simplicity, and
used in the proof that $\Down(C)$ is finite and direct.

Now we have defined the category $\Down(C)$;
however, the combinatorial nature of the category $\Down(C)$ is
still unclear. For a better understanding of this combinatorics,
the next section
will be devoted to the analysis of the order and the equivalence
relation on the Hom-sets of
$\int\catNerve\mleft(C\mright)$,
$\int \catNerve^{{-},{+}}\mleft(C\mright)$, and
$\int \catNerve^{{--},{+}}_{+}\mleft(C\mright)$.

\section{The hom-posets of the categories of skew ladders}
\label{sec:groth-cat-partial-order}

The symmetric transitive closure of a partial order,
used in the definitions of $\Down(C)$ and $\Downstar(C)$,
is generally
tricky to discuss and compute with.
However, the simplicity of the order on the hom-sets on
$\int \catNerve^{{--},{+}}_{+}\mleft(C\mright)$,
shown in this section, makes it manageable.

We will prepare some notations for the subsequent discussion,
in which we will analyze the
$\mtcPoset$-enrichment of the three categories
$\int\catNerve\mleft(C\mright)$,
$\int \catNerve^{{-},{+}}\mleft(C\mright)$, and
$\int \catNerve^{{--},{+}}_{+}\mleft(C\mright)$.
During the analysis, $\Gamma$ will denote any of these three categories.

If $p\in P$ is an element of a poset $P$, we shall write
\[
  \upset[P]{p} = \upset{p}
  \coloneqq \set{x\in P}[p \le x]
\]
for the upward unbounded interval above $p$. If $p, q \in P$, then we set
\[
  \cintv[P]{p}{q} = \cintv{p}{q}
  \coloneqq \set{x\in P}[p \le x \le q]
\]
to be the closed bounded interval between $p$ and $q$.
Also, let us remind ourselves of the notation from \cref{def:cat-glossary}:
if $x \le y \in P$ are elements in a poset, we write
$\uniqmor = \uniqmor_{xy}\colon x \to y$ for the unique morphism in $P$ seen as a
category, as we already did in
\cref{def:groth-cat-poset-enrich} and in the proof of
\cref{lem:groth-cat-full-sub}.

We can consider the three categories
$\int\catNerve(C)$,
$\int \catNerve^{{-},{+}}(C)$, and
$\int \catNerve^{{--},{+}}_{+}(C)$
as Grothendieck constructions, as stated in \cref{def:groth-cat-ladder-all,rmk:groth-cat-are-groth};
thus they come equipped with Grothendieck fibrations
\begin{align*}
  \textstyle\int\catNerve\mleft(C\mright)
  & \to \mtcSimpx\text{,} \\
  \textstyle\int \catNerve^{{-},{+}}\mleft(C\mright)
  & \to \mtcSimpx\text{, and}\\
  \textstyle\int \catNerve^{{--},{+}}_{+}\mleft(C\mright)
  & \to \mtcSimpx_{+}\text{.}
\end{align*}
They are the first projections in terms of the explicit construction
in \cref{def:groth-cat-ladder-all,def:groth-cat-ladder-weak,%
  def:groth-cat-ladder-strict}.
By post-composing the inclusion
$\mtcSimpx_{+} \hookrightarrow \mtcSimpx$ as necessary, we get a functor
$\proj\colon \Gamma \to \mtcSimpx$.

In the following series of lemmas, we will analyze upward unbounded intervals
of the hom-posets of $\Gamma$, by comparing such intervals with the intervals
of the hom-posets of $\mtcSimpx$. The functor $\proj$ will be central in this analysis.

\begin{lemma} \label{lem:groth-cat-up-inj-refl}
  Let
  $\mleft(\alpha, \theta\mright)
    \colon \left(\deltaBra{m}, X\right)
    \to \left(\deltaBra{n}, Y\right)$
  be a morphism in $\Gamma$.
  Then the canonical order-preserving map
  \[
    \proj\colon
    \upset[{%
      \HomOf[\Gamma]{%
        \left(\deltaBra{m}, X\right)}{\left(\deltaBra{n}, Y\right)}
      }]{(\alpha,\theta)}
    \to \upset[{\HomOf[\mtcSimpx]{\deltaBra{m}}{\deltaBra{n}}}]{\alpha}
    \text{,}
  \]
  induced by the functor $\proj$ above, is injective and reflects order.
\end{lemma}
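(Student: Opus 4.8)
The plan is to exploit a single feature of the order on hom-posets introduced in \cref{def:groth-cat-poset-enrich}: among the morphisms lying above a fixed $(\alpha,\theta)$, the natural-transformation component is completely determined by the simplicial component. Spelling out the order, to say $(\alpha',\theta')\ge(\alpha,\theta)$ is to say $\alpha'\ge\alpha$ together with commutativity of the whiskered triangle, and componentwise the latter reads
\[
  \theta'_i \;=\; Y(\uniqmor_{\alpha(i),\alpha'(i)})\compos\theta_i
  \qquad\text{for all } i\in\deltaBra{m},
\]
where $\uniqmor_{\alpha(i),\alpha'(i)}\colon\alpha(i)\to\alpha'(i)$ is the unique morphism in the poset $\deltaBra{n}$ (which exists because $\alpha\le\alpha'$, and the associated natural transformation $\alpha\Rightarrow\alpha'$ is automatic since $\deltaBra{n}$ is a poset). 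So the first step is simply to record this componentwise restatement of the relation ``$(\alpha',\theta')\ge(\alpha,\theta)$''; after that both assertions fall out quickly.

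For injectivity: if $(\alpha',\theta')$ and $(\alpha'',\theta'')$ both lie in the interval above $(\alpha,\theta)$ and $\proj(\alpha',\theta')=\proj(\alpha'',\theta'')$, then $\alpha'=\alpha''$, and the displayed formula then forces $\theta'_i=Y(\uniqmor_{\alpha(i),\alpha'(i)})\compos\theta_i=\theta''_i$ for every $i$; hence $(\alpha',\theta')=(\alpha'',\theta'')$. (In fact injectivity also follows formally from the order-reflection proved next together with antisymmetry of the hom-poset, so one could present only the second part.)

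For order-reflection: take $(\alpha',\theta'),(\alpha'',\theta'')$ in the interval above $(\alpha,\theta)$ with $\alpha'=\proj(\alpha',\theta')\le\proj(\alpha'',\theta'')=\alpha''$, and verify directly the triangle witnessing $(\alpha',\theta')\le(\alpha'',\theta'')$. Since $\alpha\le\alpha'\le\alpha''$, uniqueness of morphisms in the poset $\deltaBra{n}$ gives $\uniqmor_{\alpha(i),\alpha''(i)}=\uniqmor_{\alpha'(i),\alpha''(i)}\compos\uniqmor_{\alpha(i),\alpha'(i)}$; applying the functor $Y$ and precomposing with $\theta_i$ yields
\[
  \theta''_i \;=\; Y(\uniqmor_{\alpha(i),\alpha''(i)})\compos\theta_i
  \;=\; Y(\uniqmor_{\alpha'(i),\alpha''(i)})\compos Y(\uniqmor_{\alpha(i),\alpha'(i)})\compos\theta_i
  \;=\; Y(\uniqmor_{\alpha'(i),\alpha''(i)})\compos\theta'_i ,
\]
which is exactly the componentwise commutativity needed for $(\alpha',\theta')\le(\alpha'',\theta'')$.

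I do not anticipate a genuine obstacle; the only subtlety worth flagging in the write-up is that this is a \emph{relative} statement — $\proj$ does not reflect the order on the whole hom-poset, and the key identity $\theta'_i=Y(\uniqmor_{\alpha(i),\alpha'(i)})\compos\theta_i$ is available precisely because we have restricted to morphisms above the fixed $(\alpha,\theta)$. Beyond that it is routine bookkeeping with left whiskering and with the trivial naturality of maps into posets, and the whole argument is uniform in the three possibilities for $\Gamma$, since none of the conditions distinguishing $\int\catNerve^{{-},{+}}(C)$ or $\int\catNerve^{{--},{+}}_{+}(C)$ inside $\int\catNerve(C)$ play any role.
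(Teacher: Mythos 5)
Your proof is correct and is exactly the unpacking the paper has in mind when it says ``Follows from the definition of the order.''  The componentwise identity $\theta'_i = Y(\uniqmor_{\alpha(i),\alpha'(i)})\compos\theta_i$ is precisely the content of \cref{def:groth-cat-poset-enrich}, and your observation that both injectivity and order-reflection drop out of it (and that the claim is genuinely relative to the chosen lower bound $(\alpha,\theta)$) is on target.
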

\begin{proof}
  Follows from the definition of the order.
\end{proof}

\begin{lemma} \label{lem:groth-cat-up-lift}
  We consider the cases
  $\Gamma=\int \catNerve^{{-},{+}}\mleft(C\mright)$
  and
  $\Gamma=
  \int \catNerve^{{--},{+}}_{+}\mleft(C\mright)$.
  Let
  \[\mleft(\alpha, \theta\mright)
    \colon \left(\deltaBra{m}, X\right)
    \to \left(\deltaBra{n}, Y\right)\]
  be a morphism in $\Gamma$.
  Let $\beta\colon \deltaBra{m} \to \deltaBra{n}$
  be a morphism of $\mtcSimpx$ with $\alpha \le \beta$.
  Then the following conditions are
  equivalent.
  \begin{enumerate}
  \item There exists a morphism in $\Gamma$ of the form
    \[
      \mleft(\beta, \phi\mright) \colon \left(\deltaBra{m}, X\right)
      \to \left(\deltaBra{n}, Y\right)
    \]
    satisfying $(\alpha, \theta) \le (\beta, \phi)$.
    \label{item:lem-groth-cat-up-lift:lift}
  \item For each $0\le i \le m$, the composite $\phi_i$ in the
    following commutative diagram belongs to $C_{+}$:
    \[
      \begin{tikzcd}
        X(i) \arrow[r, tail, "\theta_i"] \arrow[rd,"\phi_i"']
        & Y(\alpha(i)) \arrow[d, two heads, "Y(\uniqmor)"] \\
        & Y\mleft(\alpha'(i)\mright)
      \end{tikzcd}
    \]
    \label{item:lem-groth-cat-up-lift:cond}
  \end{enumerate}
  Additionally, if the equivalent conditions hold, the morphism
  $(\beta, \phi)$ stated to exist in
  \eqref{item:lem-groth-cat-up-lift:lift} is unique, and is
  specified by $\phi = \left(\phi_i\right)_{0 \le i \le m}$ using
  the notation from \eqref{item:lem-groth-cat-up-lift:cond}.
\end{lemma}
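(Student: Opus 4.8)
The plan is to reduce everything to an explicit formula that \cref{def:groth-cat-poset-enrich} forces on any morphism dominating $(\alpha,\theta)$. First I would spell out what $(\alpha,\theta)\le(\beta,\phi)$ means in components: beyond $\alpha\le\beta$, the whiskering triangle of \cref{def:groth-cat-poset-enrich} is exactly the family of pointwise identities $\phi_i = Y(\uniqmor_{\alpha(i)\beta(i)})\compos\theta_i$ for $0\le i\le m$, where $\uniqmor_{\alpha(i)\beta(i)}$ is the unique morphism $\alpha(i)\to\beta(i)$ in $\deltaBra{n}$ coming from $\alpha(i)\le\beta(i)$. This single observation does three things at once: it proves the uniqueness clause, since it pins down every component of any $(\beta,\phi)$ lying above $(\alpha,\theta)$; it shows that such a $\phi$, if it exists, must be the tuple $(\phi_i)_i$ displayed in \eqref{item:lem-groth-cat-up-lift:cond}; and it yields \eqref{item:lem-groth-cat-up-lift:lift}$\Rightarrow$\eqref{item:lem-groth-cat-up-lift:cond}, because in either of our two categories every component of a morphism lies in $C_{+}$ by \cref{def:groth-cat-ladder-weak}, so in particular each such $\phi_i$ does.

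For the converse \eqref{item:lem-groth-cat-up-lift:cond}$\Rightarrow$\eqref{item:lem-groth-cat-up-lift:lift}, I would simply set $\phi\coloneqq(\phi_i)_{0\le i\le m}$ with $\phi_i\coloneqq Y(\uniqmor_{\alpha(i)\beta(i)})\compos\theta_i$ and verify that $(\beta,\phi)$ is a genuine morphism of $\Gamma$ satisfying $(\alpha,\theta)\le(\beta,\phi)$. That last inequality is immediate from the very definition of $\phi_i$, and the components $\phi_i$ lie in $C_{+}$ by the hypothesis \eqref{item:lem-groth-cat-up-lift:cond}. The one step with actual content is checking that $\phi$ is a natural transformation $X\Rightarrow Y\compos\beta$: for $i\le j$ in $\deltaBra{m}$ one has $Y(\beta(\uniqmor_{ij}))\compos\phi_i = Y(\uniqmor_{\alpha(i)\beta(j)})\compos\theta_i$, collapsing $\uniqmor_{\beta(i)\beta(j)}\compos\uniqmor_{\alpha(i)\beta(i)} = \uniqmor_{\alpha(i)\beta(j)}$ in the poset $\deltaBra{n}$ (legitimate since $\alpha(i)\le\beta(i)\le\beta(j)$), while $\phi_j\compos X(\uniqmor_{ij}) = Y(\uniqmor_{\alpha(j)\beta(j)})\compos\theta_j\compos X(\uniqmor_{ij}) = Y(\uniqmor_{\alpha(j)\beta(j)})\compos Y(\uniqmor_{\alpha(i)\alpha(j)})\compos\theta_i = Y(\uniqmor_{\alpha(i)\beta(j)})\compos\theta_i$, using first the naturality of $\theta$ and then the collapse $\uniqmor_{\alpha(j)\beta(j)}\compos\uniqmor_{\alpha(i)\alpha(j)} = \uniqmor_{\alpha(i)\beta(j)}$ (legitimate since $\alpha(i)\le\alpha(j)\le\beta(j)$, which holds by monotonicity of $\alpha$ and $\alpha\le\beta$). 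So the two sides of the naturality square agree. This already makes $(\beta,\phi)$ a morphism of $\int\catNerve^{{-},{+}}(C)$; in the case $\Gamma=\int\catNerve^{{--},{+}}_{+}(C)$, the fullness of $\int\catNerve^{{--},{+}}_{+}(C)\subseteq\int\catNerve^{{-},{+}}(C)$ from \cref{lem:groth-cat-full-sub} automatically upgrades it to a morphism of $\Gamma$ (in particular $\beta\in\mtcSimpx_{+}$ comes for free), so we never have to argue separately about injectivity of $\beta$.

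The only place demanding any care is that naturality computation, and even there the substance is just that a composite of the unique poset morphisms of $\deltaBra{n}$ is again the unique morphism between its endpoints — which is precisely what licenses the two collapses above. I therefore do not expect any genuine obstacle; the proof is essentially an unwinding of \cref{def:groth-cat-poset-enrich} together with one invocation of \cref{lem:groth-cat-full-sub}.
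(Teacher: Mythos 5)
Your proof is correct and takes essentially the same route as the paper's: both observe that the definition of the order pins down every component of a dominating morphism as $\phi_i = Y(\uniqmor)\compos\theta_i$, which immediately gives uniqueness and \eqref{item:lem-groth-cat-up-lift:lift}$\Rightarrow$\eqref{item:lem-groth-cat-up-lift:cond}, and then both verify naturality of $\phi$ for the converse and invoke \cref{lem:groth-cat-full-sub} to cover the $\int\catNerve^{{--},{+}}_{+}(C)$ case. The only cosmetic difference is that you carry out the naturality check by explicit component-wise equations while the paper packages the same steps (naturality of $\theta$, functoriality of $Y$) into a single commutative diagram.
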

\begin{proof}
  The implication $\eqref{item:lem-groth-cat-up-lift:lift}
  \implies \eqref{item:lem-groth-cat-up-lift:cond}$ and
  the uniqueness part is a direct
  consequence of the definition of the order on
  $\HomOf[\Gamma]{\mleft(\deltaBra{m}, X\mright)}{%
    \mleft(\deltaBra{n}, Y\mright)}$.
  It just remains to demonstrate $\eqref{item:lem-groth-cat-up-lift:cond}
  \implies \eqref{item:lem-groth-cat-up-lift:lift}$.

  Assume the condition \eqref{item:lem-groth-cat-up-lift:cond}.
  Remember \cref{lem:groth-cat-full-sub};
  in order to construct a legitimate morphism $(\beta, \phi)$,
  it suffices to prove that
  $\phi = \left(\phi_0,\phi_1,\dotsc,\phi_m\right)$
  is a natural transformation.
   Let $0 \le i \le j \le m$.
  Consider the following diagram:
  \[
    \begin{tikzcd}
      X(i) \arrow[ddd, two heads, "X(\uniqmor)"']
      \arrow[rr, tail, "\phi_i"] \arrow[rd, tail, "\theta_i"]
      & & Y(\beta(i)) \arrow[ddd, two heads, "Y(\uniqmor)"] \\
      & Y(\alpha(i)) \arrow[ru, two heads, "Y(\uniqmor)"]
        \arrow[d, two heads, "Y(\uniqmor)"] & \\
      & Y(\alpha(j)) \arrow[rd, two heads, "Y(\uniqmor)"] & \\
      X(j) \arrow[ru, tail, "\theta_j"] \arrow[rr, tail, "\phi_j"']
      & & Y(\beta(j))
    \end{tikzcd}
  \]
  The commutativity of the two triangles follows
  from the definition of $\phi$. The naturality of $\theta$ commutes
  the left small quadrilateral. The functoriality of $Y$ ensures the
  commutativity of the right small quadrilateral. Combining them,
  we obtain the commutativity of the outer square, which is the
  naturality we desire.

  Now we have ensured that
  \[
    \mleft(\beta, \phi\mright) \colon \left(\deltaBra{m}, X\right)
    \to \left(\deltaBra{n}, Y\right)
  \]
  is a well-defined morphism in $\Gamma$, and the inequality
  $(\alpha, \theta) \le (\beta, \phi)$ is already included in
  our assumption. This establishes
  \eqref{item:lem-groth-cat-up-lift:lift}, and completes the proof.
\end{proof}

\begin{lemma} \label{lem:groth-cat-up-max}
  Let
  $\mleft(\alpha, \theta\mright)
    \colon \left(\deltaBra{m}, X\right)
    \to \left(\deltaBra{n}, Y\right)$
  be a morphism in $\Gamma$.
  Then there is the largest element in
  the upward unbounded interval
  \[
    \upset[{\HomOf[\Gamma]{(\deltaBra{m}, X)}{(\deltaBra{n}, Y)}}]
    {(\alpha, \theta)}.
  \]
\end{lemma}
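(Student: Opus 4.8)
The plan is to push the whole problem down to the hom-poset of $\mtcSimpx$ along the projection $\proj$, and to exploit that the latter poset is finite and closed under pointwise maxima. Write $I \coloneqq \upset[{\HomOf[\Gamma]{(\deltaBra{m},X)}{(\deltaBra{n},Y)}}]{(\alpha,\theta)}$ for the interval in question. First I would record, via \cref{lem:groth-cat-up-inj-refl}, that $\proj$ restricts to an injective, order-reflecting (hence order-embedding) map from $I$ into $\upset[{\HomOf[\mtcSimpx]{\deltaBra{m}}{\deltaBra{n}}}]{\alpha}$ (with $\mtcSimpx_{+}$ replacing $\mtcSimpx$ when $\Gamma = \int\catNerve^{{--},{+}}_{+}(C)$); since that target is a finite set, $I$ is a finite nonempty poset (it contains $(\alpha,\theta)$). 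It then suffices to prove $I$ is upward-directed, i.e.\ any two elements have a common upper bound inside $I$, because a finite nonempty upward-directed poset has a greatest element (iterate binary upper bounds along an enumeration of its elements).

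So I would take $(\beta_1,\phi_1),(\beta_2,\phi_2)\in I$ and form the pointwise maximum $\beta\colon\deltaBra{m}\to\deltaBra{n}$, $\beta(i)\coloneqq\max(\beta_1(i),\beta_2(i))$. This $\beta$ is order-preserving and satisfies $\alpha\le\beta_1\le\beta$; when $\Gamma=\int\catNerve^{{--},{+}}_{+}(C)$ one checks additionally that $\beta$ is injective (if $i<i'$ had $\beta(i)=\beta(i')=v$, pick $s$ with $\beta_s(i')=v$; injectivity of $\beta_s$ forces $\beta_s(i)<v$, hence $\beta_t(i)=v$ for the other index $t$, whence $\beta_t(i')>v$, contradicting $\beta(i')=v$), so $\beta\in\mtcSimpx_{+}$. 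Next I would lift $\beta$ to a morphism $(\beta,\phi)\in I$ sitting above both $(\beta_j,\phi_j)$. For $\Gamma=\int\catNerve^{{-},{+}}(C)$ and $\Gamma=\int\catNerve^{{--},{+}}_{+}(C)$, the criterion \eqref{item:lem-groth-cat-up-lift:cond} in \cref{lem:groth-cat-up-lift} for existence of such a lift above $(\alpha,\theta)$ is a conjunction of conditions, one per index $i$; for each $i$ we have $\beta(i)=\beta_s(i)$ for some $s\in\{1,2\}$, so $Y(\uniqmor_{\alpha(i),\beta(i)})\compos\theta_i=Y(\uniqmor_{\alpha(i),\beta_s(i)})\compos\theta_i\in C_{+}$ because $(\beta_s,\phi_s)$ already satisfies that criterion, and therefore the (unique) lift $(\beta,\phi)\ge(\alpha,\theta)$ exists. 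For $\Gamma=\int\catNerve(C)$, where \cref{lem:groth-cat-up-lift} is not available, I would instead simply set $\phi_i\coloneqq Y(\uniqmor_{\alpha(i),\beta(i)})\compos\theta_i$ and verify naturality of $(\phi_i)_i$ by the same diagram chase used in the proof of \cref{lem:groth-cat-up-lift}, which uses only functoriality of $Y$ and naturality of $\theta$; thus again $(\alpha,\theta)\le(\beta,\phi)$. In every case $\proj(\beta,\phi)=\beta\ge\beta_j=\proj(\beta_j,\phi_j)$, so the order-reflection clause of \cref{lem:groth-cat-up-inj-refl} gives $(\beta_j,\phi_j)\le(\beta,\phi)$ for $j=1,2$, producing the desired common upper bound and completing the proof.

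The one genuinely load-bearing point — and what I would flag as the (mild) main obstacle — is observing that the liftability criterion \eqref{item:lem-groth-cat-up-lift:cond} of \cref{lem:groth-cat-up-lift} is tested one index at a time, so it is automatically inherited by a pointwise maximum of two liftable monotone maps. The remaining ingredients (finiteness of $\HomOf[\mtcSimpx]{\deltaBra{m}}{\deltaBra{n}}$, the order-embedding from \cref{lem:groth-cat-up-inj-refl}, and stability of injective monotone maps under pointwise maxima in the $\int\catNerve^{{--},{+}}_{+}(C)$ case) are routine.
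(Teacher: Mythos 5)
Your proof is correct, but it follows a genuinely different route from the paper's. The paper constructs the maximum explicitly: for $\Gamma=\int\catNerve^{{-},{+}}(C)$ and $\Gamma=\int\catNerve^{{--},{+}}_{+}(C)$ it sets $\alpha'(i)$ to be the largest $k\ge\alpha(i)$ for which $Y(\uniqmor_{\alpha(i),k})\compos\theta_i\in C_{+}$, then proves as a separate step that the resulting function $\alpha'$ is order-preserving (a diagram chase invoking uniqueness of $(C_{-},C_{+})$-factorization and the inverseness of $C_{-}$), and only then lifts via \cref{lem:groth-cat-up-lift}; for $\int\catNerve(C)$ it notes $\proj$ is an order-isomorphism onto $\upset{\alpha}$, whose top element is the constant map at $n$. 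You instead argue abstractly: the interval $I$ embeds (\cref{lem:groth-cat-up-inj-refl}) into the finite poset $\upset{\alpha}$, is nonempty, and is upward-directed, and a finite nonempty directed poset has a greatest element. The directedness step shares the one load-bearing observation with the paper — that the liftability criterion \eqref{item:lem-groth-cat-up-lift:cond} of \cref{lem:groth-cat-up-lift} is checked index-by-index, hence is inherited by a pointwise maximum — but applies it to a strictly easier task: the pointwise max of two order-preserving maps is trivially order-preserving, so the paper's delicate verification that $\alpha'$ is monotone simply does not arise. The price you pay is reliance on finiteness of $\HomOf[\mtcSimpx]{\deltaBra{m}}{\deltaBra{n}}$ together with the lemma about finite directed posets; both are harmless, and since liftability is decidable (membership in $C_{+}$ is decidable by the Reedy axioms), $I$ is a decidable subset of a finite set and hence constructively finite, so your argument stays within the paper's constructive framework. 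Iterating your binary upper bounds in fact reproduces the paper's $\alpha'$; the two proofs construct the same object and differ only in how they certify that the pointwise-maximal index function is a legitimate morphism of $\mtcSimpx$.
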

\begin{proof}
  For $\Gamma=\int\catNerve\mleft(C\mright)$,
  the obvious surjectivity of
  \[
    \proj\colon
    \upset[{\HomOf[\int\catNerve(C)]{(\deltaBra{m}, X)}{(\deltaBra{n}, Y)}}]
    {(\alpha, \theta)}
    \to \upset[{\HomOf[\mtcSimpx]{\deltaBra{m}}{\deltaBra{n}}}]
    {\alpha}
  \]
  and \cref{lem:groth-cat-up-inj-refl} together imply that the
  unique inverse image of the maximum element of
  $\HomOf[\mtcSimpx]{\deltaBra{m}}{\deltaBra{n}}$ is the required
  largest element.

  We proceed to the cases of
  $\Gamma=\int\catNerve^{{-},{+}}(C)$
  and
  $\Gamma=\int\catNerve^{{--},{+}}_{+}(C)$.
  For each $i \in \deltaBra{m}$, define a natural number
  $\alpha'(i) \in \deltaBra{n} = \set{0,1,\dotsc,n}$ as the biggest
  $\alpha(i) \le k \le n$ for which the following composition 
  lies within $C_{+}$:
  \[
    \begin{tikzcd}
      X(i) \arrow[r, tail, "\theta_i"]
      & Y(\alpha(i)) \arrow[r, two heads, "Y(\uniqmor)"]
      & Y(k).
    \end{tikzcd}
  \]
  Remember that
  $\Mor(C_{+})$ is a decidable subset of
  $\Mor(C)$; therefore the maximum $k$ does exist.
  This defines a map $\alpha' \colon \deltaBra{m} \to \deltaBra{n}$ of
  sets.

  We need to verify that $\alpha'$ preserves order.
  Let $0 \le i \le j \le m$. We wish to show $\alpha'(i) \le \alpha'(j)$.
  If $\alpha'(i) < \alpha(j)$, there is nothing to prove; we consider
  the case $\alpha'(i) \ge \alpha(j)$.
  Consider the following commutative diagram:
  \[
    \begin{tikzcd}
      X(i) \arrow[r, tail, "\theta_i"]
      \arrow[d, two heads, "X\mleft(\uniqmor\mright)"']
      & Y\mleft(\alpha(i)\mright)
      \arrow[d, two heads, "Y\mleft(\uniqmor\mright)"]\\
      X(j) \arrow[r, tail, "\theta_{j}"]
      \arrow[d, two heads, "\sigma"']
      & Y\mleft(\alpha(j)\mright)
      \arrow[d, two heads, "Y\mleft(\uniqmor\mright)"]\\
      Z\arrow[r, tail] & Y\mleft(\alpha'(i)\mright)
    \end{tikzcd}
  \]
  where $X(j) \twoheadrightarrow Z \rightarrowtail Y(\alpha'(i))$ is the
  $\left(C_{-}, C_{+}\right)$-factorization of the composite
  $X(j) \rightarrowtail Y(\alpha(j)) \twoheadrightarrow Y(\alpha'(i))$.
  By the definition of $\alpha'(i)$,
  the morphism
  $X(i) \to Y\mleft(\alpha'(i)\mright)$
  in the diagram belongs to $C_{+}$;
  therefore the uniqueness of $\left(C_{-}, C_{+}\right)$-factorization
  implies that $X(i) \twoheadrightarrow Z$ is an identity.
  It follows from the
  inverseness of $C_{-}$ that
  $X\mleft(\uniqmor_{ij}\mright) = \sigma = \idmor$.
  Therefore $\alpha'(i) = \alpha'(j)$ by definition.

  Now we have proved that $\alpha'$ is a morphism of $\mtcSimpx$.
  By \cref{lem:groth-cat-up-lift}, there is a unique morphism in $\Gamma$
  of the form
  \[
    (\alpha',\theta')\colon
    \left(\deltaBra{m}, X\right)
    \to \left(\deltaBra{n}, Y\right)
  \]
  satisfying $(\alpha,\theta) \le (\alpha',\theta')$.
  By the construction of $\alpha'$ and \cref{lem:groth-cat-up-lift},
  we see that this is the maximum we desired.
\end{proof}

\begin{proposition} \label{prop:groth-cat-up-set-isom}
  Let
  $(\alpha, \theta)\colon \left(\deltaBra{m}, X\right)
    \to \left(\deltaBra{n}, Y\right)$
  be a morphism in $\Gamma$.
  Then the canonical order-preserving map
  \[
    \proj\colon\upset[{%
      \HomOf[\Gamma]{(\deltaBra{m}, X)}{(\deltaBra{n}, Y)}%
    }]{(\alpha,\theta)}
    \to \upset[{
      \HomOf[\mtcSimpx]{\deltaBra{m}}{\deltaBra{n}}}]{\alpha}
  \]
  has the largest element $\alpha'$ in its image. Furthermore,
  the map is an order isomorphism when restricted as
  \[
    \upset[{
      \HomOf[\Gamma]{(\deltaBra{m}, X)}{(\deltaBra{n}, Y)}
    }]{(\alpha, \theta)}
    \to \cintv[{\HomOf[\mtcSimpx]{\deltaBra{m}}{\deltaBra{n}}}]{\alpha}{\alpha'}
    \text{.}
  \]
\end{proposition}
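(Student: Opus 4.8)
The plan is to deduce the proposition from the two facts already established: \cref{lem:groth-cat-up-inj-refl}, which says the map $\proj$ is injective and reflects order on the up-set above $(\alpha,\theta)$, and \cref{lem:groth-cat-up-max}, which produces a largest element $(\alpha',\theta')$ of that up-set. First I would set $\alpha'\coloneqq\proj(\alpha',\theta')\in\HomOf[\mtcSimpx]{\deltaBra{m}}{\deltaBra{n}}$. Since $\proj$ preserves order, every $(\beta,\phi)$ in the up-set above $(\alpha,\theta)$ satisfies $\alpha\le\beta=\proj(\beta,\phi)\le\alpha'$; hence $\alpha'$ is the largest element of the image of $\proj$ on this up-set, and that image is contained in $\cintv{\alpha}{\alpha'}$. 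Combined with \cref{lem:groth-cat-up-inj-refl}, the only remaining task is to prove surjectivity onto $\cintv{\alpha}{\alpha'}$: that every morphism $\beta$ with $\alpha\le\beta\le\alpha'$ equals $\proj(\beta,\phi)$ for some $(\beta,\phi)\ge(\alpha,\theta)$ in $\Gamma$. For $\Gamma=\int\catNerve(C)$ nothing more is needed: as observed in the proof of \cref{lem:groth-cat-up-max}, $\proj$ already restricts to a bijection from the up-set above $(\alpha,\theta)$ onto the whole up-set above $\alpha$, whose top element is the constant function at $n$; take that for $\alpha'$.

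For $\Gamma=\int\catNerve^{{-},{+}}(C)$ and $\Gamma=\int\catNerve^{{--},{+}}_{+}(C)$, fix $\beta$ with $\alpha\le\beta\le\alpha'$. By \cref{lem:groth-cat-up-lift}, a lift $(\beta,\phi)\ge(\alpha,\theta)$ exists (and then automatically lies in the image of $\proj$) as soon as each composite $\phi_i=Y(\uniqmor_{\alpha(i),\beta(i)})\compos\theta_i$ belongs to $C_{+}$. Applying \cref{lem:groth-cat-up-lift} to the maximal lift $(\alpha',\theta')$ shows $\theta'_i=Y(\uniqmor_{\alpha(i),\alpha'(i)})\compos\theta_i\in\Mor C_{+}$ for every $i$. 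Since $\alpha(i)\le\beta(i)\le\alpha'(i)$ in $\deltaBra{n}$, the unique morphisms compose and $\theta'_i=Y(\uniqmor_{\beta(i),\alpha'(i)})\compos\phi_i$ with $Y(\uniqmor_{\beta(i),\alpha'(i)})\in\Mor C_{-}$, using that $Y$ factors through $C_{-}$.

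The whole argument therefore reduces to one elementary lemma about the Reedy structure, which I expect to be the only genuine step: \emph{if $r\compos s\in\Mor C_{+}$ with $r\in\Mor C_{-}$, then $s\in\Mor C_{+}$}. To prove it, take the $(C_{-},C_{+})$-factorization $s=\delta\compos\sigma$ and then the $(C_{-},C_{+})$-factorization $r\compos\delta=\delta'\compos\sigma'$; then $r\compos s=\delta'\compos(\sigma'\compos\sigma)$ is a $(C_{-},C_{+})$-factorization of a morphism that already lies in $C_{+}$, so uniqueness of factorizations forces $\sigma'\compos\sigma=\idmor$. As $\sigma,\sigma'\in\Mor C_{-}$ and $<'$ admits no cycle, a non-identity $\sigma$ is impossible — it would make its left inverse $\sigma'$ non-identity as well, producing a $2$-cycle of non-identity morphisms in $C_{-}$ — so $\sigma=\idmor$ and $s=\delta\in\Mor C_{+}$. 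Applying this with $r=Y(\uniqmor_{\beta(i),\alpha'(i)})$ and $s=\phi_i$ gives $\phi_i\in\Mor C_{+}$ for all $i$, hence the desired lift $(\beta,\phi)$; so the image of $\proj$ on the up-set above $(\alpha,\theta)$ is exactly $\cintv{\alpha}{\alpha'}$. Being order-preserving, injective, and order-reflecting there (\cref{lem:groth-cat-up-inj-refl}), $\proj$ is an order isomorphism onto $\cintv{\alpha}{\alpha'}$, as claimed.
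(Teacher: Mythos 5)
Your proof is correct and follows essentially the same route as the paper. You reduce the surjectivity claim to showing that each component $\phi_i = Y(\uniqmor_{\alpha(i),\beta(i)})\compos\theta_i$ lies in $C_{+}$, exactly as the paper does via \cref{lem:groth-cat-up-lift,lem:groth-cat-up-inj-refl}. The only organizational difference is that you abstract the final step into a standalone right-cancellation lemma ($r\compos s\in\Mor C_{+}$ and $r\in\Mor C_{-}$ imply $s\in\Mor C_{+}$), whereas the paper performs the same two successive Reedy factorizations inline on the diagram $X(i)\to Y(\alpha(i))\to Y(\beta(i))\to Y(\alpha'(i))$ and reads off the conclusion from uniqueness of factorization and the inverseness of $C_{-}$. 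Both arguments are the same computation; yours is slightly more modular, the paper's slightly more direct.
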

\begin{proof}
  Since $\proj$ in the statement preserves order,
  the map sends the maximum element $(\alpha', \theta')\in
  \upset[{\HomOf[\Gamma]{(\deltaBra{m}, X)}{(\deltaBra{n}, Y)}}]{(\alpha, \theta)}$,
  shown to exist in \cref{lem:groth-cat-up-max},
  to the biggest element
  $\alpha'$ in the image of the map.
  Thanks to \cref{lem:groth-cat-up-inj-refl},
  all that is left to prove is the surjectivity of
  \[
    \proj\colon
    \upset[{\HomOf[\Gamma]{(\deltaBra{m}, X)}{(\deltaBra{n}, Y)}}]
    {(\alpha, \theta)}
    \to \cintv[{\HomOf[\mtcSimpx]{\deltaBra{m}}{\deltaBra{n}}}]{\alpha}{\alpha'}
    \text{,}
  \]
  which is obvious for $\Gamma=\int\catNerve(C)$.
  We prove this for
  $\Gamma=\int \catNerve^{{-},{+}}(C)$
  and
  $\Gamma=
  \int \catNerve^{{--},{+}}_{+}(C)$.

  Let $\beta\colon \deltaBra{m} \to \deltaBra{n}$ in $\mtcSimpx$
  satisfy $\alpha \le \beta \le \alpha'$. We would like to show that
  there is a morphism in $\Gamma$ having the form
  \[
    (\beta, \phi)\colon (\deltaBra{m}, X) \to (\deltaBra{n}, Y)
  \]
  such that $(\alpha, \theta) \le (\beta, \phi) \le (\alpha', \theta')$.
  By \cref{lem:groth-cat-up-lift,lem:groth-cat-up-inj-refl}, it suffices
  to prove, for each $0 \le i \le m$, that the composite
  \[
    \begin{tikzcd}
      X(i) \arrow[r, tail, "\theta_i"]
      & Y(\alpha(i)) \arrow[r, two heads, "Y(\uniqmor)"]
      & Y(\beta(i))
    \end{tikzcd}
  \]
  resides in $C_{+}$. Examine the following commutative diagram,
  formed by successively taking $(C_{-},C_{+})$-factorization twice:
  \[
    \begin{tikzcd}
      X(i) \arrow[r, tail, "\theta_i"] \arrow[d, two heads, "\exists!"']
      & Y(\alpha(i)) \arrow[d, two heads, "Y(\uniqmor)"] \\
      {}^{\exists!} Z_\beta \arrow[r, tail, "\exists!"]
        \arrow[d, two heads, "\exists!"']
      & Y(\beta(i)) \arrow[d, two heads, "Y(\uniqmor)"] \\
      {}^{\exists!} Z_{\alpha'} \arrow[r, tail, "\exists!"']
      & Y(\alpha'(i))
    \end{tikzcd}
  \]

  Since $(\alpha,\theta)$ is a morphism in $\Gamma$,
  $X(i) \to Y(\alpha'(i))$ belongs to $C_{+}$.
  From the uniqueness of $(C_{-},C_{+})$-factorization
  follows that $X(i) \twoheadrightarrow Z_{\alpha'}$ is an identity.
  The inverseness of $C_{-}$ implies that
  $X(i) \twoheadrightarrow Z_\beta$ is an identity; therefore
  we obtain that $X(i) \to Y(\beta(i))$ is in $C_{+}$.
\end{proof}

Thanks to \cref{prop:groth-cat-up-set-isom}, we see that upward unbounded intervals
of the hom-posets of $\Gamma$ has a good structure:

\begin{corollary}\label{cor:groth-cat-up-set-lattice}
  Let
  $(\alpha, \theta)\colon \left(\deltaBra{m}, X\right)
    \to \left(\deltaBra{n}, Y\right)$
  be a morphism in $\Gamma$.
  Then the upward unbounded interval
  $\upset[{\HomOf[\Gamma]{(\deltaBra{m}, X)}{(\deltaBra{n}, Y)}}]
  {(\alpha, \theta)}$
  enjoys the structure of a bounded distributive lattice.
\end{corollary}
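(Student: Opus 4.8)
The plan is to pull the lattice structure back along the order isomorphism produced by \cref{prop:groth-cat-up-set-isom}. That proposition identifies $\upset[{\HomOf[\Gamma]{(\deltaBra{m}, X)}{(\deltaBra{n}, Y)}}]{(\alpha, \theta)}$, via $\proj$, with the closed interval $\cintv[{\HomOf[\mtcSimpx]{\deltaBra{m}}{\deltaBra{n}}}]{\alpha}{\alpha'}$ for a suitable $\alpha'\ge\alpha$. Since any order isomorphism transports whatever finite meets, joins, top and bottom exist, it is enough to show that this interval inside $\HomOf[\mtcSimpx]{\deltaBra{m}}{\deltaBra{n}}$, with its pointwise order, is a bounded distributive lattice.

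First I would observe that $\HomOf[\mtcSimpx]{\deltaBra{m}}{\deltaBra{n}}$ is a sublattice of the poset of \emph{all} functions $\set{0,1,\dotsc,m}\to\deltaBra{n}$ ordered pointwise: if $\beta,\gamma\colon\deltaBra{m}\to\deltaBra{n}$ are monotone, then so are $i\mapsto\max(\beta(i),\gamma(i))$ and $i\mapsto\min(\beta(i),\gamma(i))$, and these are exactly the join and meet of $\beta,\gamma$ in the ambient function poset. (The operations $\max$ and $\min$ are well defined constructively because $\deltaBra{n}$ is a finite, hence decidable, linear order.) In the ambient poset meets and joins are computed coordinatewise, so it is distributive because the chain $\deltaBra{n}$ is and a finite product of distributive lattices is distributive. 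A sublattice of a distributive lattice is distributive; hence $\HomOf[\mtcSimpx]{\deltaBra{m}}{\deltaBra{n}}$ is itself a (finite) distributive lattice.

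Next, the closed interval $\cintv{\alpha}{\alpha'}$ is a sublattice of $\HomOf[\mtcSimpx]{\deltaBra{m}}{\deltaBra{n}}$, since $\alpha\le\beta,\gamma\le\alpha'$ forces $\alpha\le\beta\wedge\gamma$ and $\beta\vee\gamma\le\alpha'$. Being a sublattice of a distributive lattice, it is distributive, and it has least element $\alpha$ and greatest element $\alpha'$, so it is bounded. Transporting this structure back along $\proj$ via \cref{prop:groth-cat-up-set-isom} yields the corollary.

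There is no real obstacle here beyond routine lattice-theoretic bookkeeping; the only point deserving attention is the constructive one, namely that the pointwise join and meet of monotone maps into $\deltaBra{n}$ are well defined and again monotone, which follows from decidability of the order on a finite chain. One could instead verify distributivity of $\cintv{\alpha}{\alpha'}$ by hand, again exploiting that $\deltaBra{n}$ is linearly ordered, but routing through the standard facts that a power of a chain is distributive and that sublattices of distributive lattices are distributive keeps the argument short.
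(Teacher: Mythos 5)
Your proposal is correct and takes the same route as the paper: both transport the bounded distributive lattice structure along the order isomorphism of \cref{prop:groth-cat-up-set-isom}. The paper's proof is a one-liner that simply asserts the interval $\cintv{\alpha}{\alpha'}$ in $\HomOf[\mtcSimpx]{\deltaBra{m}}{\deltaBra{n}}$ is a bounded distributive lattice; you spell out the standard lattice-theoretic justification (pointwise max/min, distributivity of chains, closure of sublattices), which the paper leaves implicit.
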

\begin{proof}
  Let $\alpha'$ be as specified in \cref{prop:groth-cat-up-set-isom}.
  For
  $\cintv[{\HomOf[\mtcSimpx]{\deltaBra{m}}{\deltaBra{n}}}]{\alpha}{\alpha'}$
  is a bounded distributive lattice, \cref{prop:groth-cat-up-set-isom}
  directly verifies the corollary.
\end{proof}

Now we are ready to simplify the definition of
the equivalence relation on the hom-sets of $\Gamma$ in a way that is
easier to investigate.

\begin{proposition} \label{prop:groth-cat-eqvce-eqvcond}
  Let
  $(\alpha, \theta), (\alpha', \theta')
  \colon (\deltaBra{m}, X) \to (\deltaBra{n}, Y)$
  be a parallel pair of morphisms in $\Gamma$.
  Then the following conditions are equivalent:
  \begin{enumerate}
  \item $(\alpha, \theta) \sim (\alpha', \theta')$.
    \label{item:prop-groth-cat-eqvce-eqvcond:eqvce}
  \item There exists a common upper bound of $(\alpha, \theta)$ and
    $(\alpha',\theta')$.
    \label{item:prop-groth-cat-eqvce-eqvcond:upper}
  \item Let
    \begin{align*}
      (\tilde{\alpha},\tilde{\theta})
      & \in \upset[{%
        \HomOf[\Gamma]{(\deltaBra{m}, X)}{%
        (\deltaBra{n}, Y)}}]{(\alpha, \theta)}\text{, and}\\
      (\tilde{\alpha}', \tilde{\theta}')
      & \in \upset[{\HomOf[\Gamma]{(\deltaBra{m}, X)}{(\deltaBra{n}, Y)}}]
        {(\alpha', \theta')}
    \end{align*}
    be the largest elements,
    shown to exist in \cref{lem:groth-cat-up-max}. Then they
    coincide:
    $(\tilde{\alpha},\tilde{\theta})
    = (\tilde{\alpha}', \tilde{\theta}')$.
    \label{item:prop-groth-cat-eqvce-eqvcond:max}
  \end{enumerate}
\end{proposition}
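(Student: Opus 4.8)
The plan is to establish the cyclic chain of implications $\eqref{item:prop-groth-cat-eqvce-eqvcond:upper}\Rightarrow\eqref{item:prop-groth-cat-eqvce-eqvcond:eqvce}\Rightarrow\eqref{item:prop-groth-cat-eqvce-eqvcond:max}\Rightarrow\eqref{item:prop-groth-cat-eqvce-eqvcond:upper}$, with essentially all of the content sitting in the middle implication.

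Two of the three links are immediate. For $\eqref{item:prop-groth-cat-eqvce-eqvcond:upper}\Rightarrow\eqref{item:prop-groth-cat-eqvce-eqvcond:eqvce}$, a common upper bound $(\beta,\phi)$ of $(\alpha,\theta)$ and $(\alpha',\theta')$ produces the length-two zig-zag $(\alpha,\theta)\le(\beta,\phi)\ge(\alpha',\theta')$ in the hom-poset $\HomOf[\Gamma]{(\deltaBra{m},X)}{(\deltaBra{n},Y)}$, and since $\sim$ is by definition the symmetric transitive closure of the order, this already gives $(\alpha,\theta)\sim(\alpha',\theta')$. For $\eqref{item:prop-groth-cat-eqvce-eqvcond:max}\Rightarrow\eqref{item:prop-groth-cat-eqvce-eqvcond:upper}$, if the largest elements $(\tilde\alpha,\tilde\theta)\in\upset{(\alpha,\theta)}$ and $(\tilde\alpha',\tilde\theta')\in\upset{(\alpha',\theta')}$ (which exist by \cref{lem:groth-cat-up-max}) coincide, then this common morphism lies above both $(\alpha,\theta)$ and $(\alpha',\theta')$, so it is the desired common upper bound.

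For $\eqref{item:prop-groth-cat-eqvce-eqvcond:eqvce}\Rightarrow\eqref{item:prop-groth-cat-eqvce-eqvcond:max}$ I would first isolate the following order-theoretic claim: \emph{if $(\alpha_1,\theta_1)\le(\alpha_2,\theta_2)$ is a comparison in the hom-poset, then the largest element of $\upset{(\alpha_1,\theta_1)}$ equals the largest element of $\upset{(\alpha_2,\theta_2)}$.} To see this, write $\hat m_1$ and $\hat m_2$ for these two largest elements (\cref{lem:groth-cat-up-max}). From $(\alpha_1,\theta_1)\le(\alpha_2,\theta_2)$ one has $\upset{(\alpha_2,\theta_2)}\subseteq\upset{(\alpha_1,\theta_1)}$, so $\hat m_2\le\hat m_1$; and $(\alpha_2,\theta_2)\in\upset{(\alpha_1,\theta_1)}$ forces $(\alpha_2,\theta_2)\le\hat m_1$, whence $\hat m_1\in\upset{(\alpha_2,\theta_2)}$ and therefore $\hat m_1\le\hat m_2$. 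Thus $\hat m_1=\hat m_2$. Granting the claim, the implication is finished by writing a finite zig-zag $(\alpha,\theta)=c_0,c_1,\dots,c_k=(\alpha',\theta')$ with each pair $c_i,c_{i+1}$ comparable in one direction or the other — such a zig-zag exists precisely because $\sim$ is the symmetric transitive closure of the order — and applying the claim at each step, which is legitimate regardless of the direction of the comparison. It follows that the largest element of $\upset{c_i}$ does not depend on $i$, so in particular $(\tilde\alpha,\tilde\theta)=(\tilde\alpha',\tilde\theta')$.

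I do not anticipate a genuine obstacle: once \cref{lem:groth-cat-up-max} is available the whole argument is elementary order theory, and the only point requiring a little care is that the claim must be stated and invoked symmetrically along the zig-zag. One could instead route the claim through the order isomorphism of \cref{prop:groth-cat-up-set-isom}, but the mere existence of a top element in each upward interval suffices.
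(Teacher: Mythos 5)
Your proof is correct, and it takes a genuinely different route from the paper. The paper proves the cycle $\eqref{item:prop-groth-cat-eqvce-eqvcond:eqvce}\Rightarrow\eqref{item:prop-groth-cat-eqvce-eqvcond:upper}\Rightarrow\eqref{item:prop-groth-cat-eqvce-eqvcond:max}\Rightarrow\eqref{item:prop-groth-cat-eqvce-eqvcond:eqvce}$, and the work sits in the first link: to show that ``has a common upper bound'' is transitive (and hence contains $\sim$), the paper invokes the bounded distributive lattice structure from \cref{cor:groth-cat-up-set-lattice} to form a join of two upper bounds inside a shared upward interval. You instead prove the cycle $\eqref{item:prop-groth-cat-eqvce-eqvcond:upper}\Rightarrow\eqref{item:prop-groth-cat-eqvce-eqvcond:eqvce}\Rightarrow\eqref{item:prop-groth-cat-eqvce-eqvcond:max}\Rightarrow\eqref{item:prop-groth-cat-eqvce-eqvcond:upper}$, so the direction that needed the lattice machinery becomes the trivial one, and the work moves to $\eqref{item:prop-groth-cat-eqvce-eqvcond:eqvce}\Rightarrow\eqref{item:prop-groth-cat-eqvce-eqvcond:max}$, where you need only the elementary observation that if $a\le b$ and both $\upset{a}$ and $\upset{b}$ have greatest elements, those greatest elements coincide; this requires only \cref{lem:groth-cat-up-max}, not \cref{cor:groth-cat-up-set-lattice}. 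Your monotonicity claim and its application along the finite zig-zag witnessing $\sim$ are both sound. The upshot is a modest but real economy: your route never uses the lattice structure and hence depends on strictly fewer prior results, while the paper's route makes the join operation do the work. Both are valid; yours is the leaner derivation of this particular proposition, though the lattice structure of \cref{cor:groth-cat-up-set-lattice} remains independently useful elsewhere.
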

\begin{proof}
  We verify each part of
  $\eqref{item:prop-groth-cat-eqvce-eqvcond:eqvce}
  \implies \eqref{item:prop-groth-cat-eqvce-eqvcond:upper}
  \implies \eqref{item:prop-groth-cat-eqvce-eqvcond:max}
  \implies \eqref{item:prop-groth-cat-eqvce-eqvcond:eqvce}$
  separately.

  $\eqref{item:prop-groth-cat-eqvce-eqvcond:eqvce}
  \implies \eqref{item:prop-groth-cat-eqvce-eqvcond:upper}$:
  By the definition of the equivalence, it suffices to show that
  the condition \eqref{item:prop-groth-cat-eqvce-eqvcond:upper}
  is a symmetric transitive relation
  when seen as a relation between $(\alpha, \theta)$ and
  $(\alpha', \theta')$. Symmetry is trivial. For transitivity, let
  \[
    (\alpha_0, \theta_0),(\alpha_1, \theta_1), (\alpha_2, \theta_2)
    \colon (\deltaBra{m}, X) \to (\deltaBra{n}, Y)
  \]
  be a parallel triple of arrows in $\Gamma$. Assume that
  \[ (\beta_1,\phi_1), (\beta_2,\phi_2)
    \colon (\deltaBra{m}, X) \to (\deltaBra{n}, Y) \]
  satisfy $(\alpha_i,\theta_i) \le (\beta_j, \phi_j)$ for $j = 1,2$ and
  $i = j-1, j$. Since $(\alpha_1, \theta_1)$ bounds both
  $(\beta_1, \phi_1)$ and $(\beta_2,\phi_2)$ from below,
  \cref{cor:groth-cat-up-set-lattice} allows us to take their join,
  i.e., their least upper bound, which we call $(\gamma, \psi)$.
  The transitivity of the order gives
  $(\alpha_i,\theta_i) \le (\gamma,\psi)$ for $i=0,2$, which verifies
  the transitivity of the relation
  \eqref{item:prop-groth-cat-eqvce-eqvcond:upper}, as desired.

  $\eqref{item:prop-groth-cat-eqvce-eqvcond:upper}\implies
  \eqref{item:prop-groth-cat-eqvce-eqvcond:max}$:
  If $(\beta, \phi)$ is a common upper bound of $(\alpha, \theta)$
  and $(\alpha', \theta')$, then the upward unbounded interval
  $\upset{(\beta,\phi)}$ is a common upper-closed subset of
  $\upset{(\alpha,\theta)}$ and
  $\upset{(\alpha',\theta')}$.
  Since $\upset{(\beta,\phi)}$ is inhabited,
  $\upset{(\beta,\phi)}$ shares any largest element of
  $\upset{(\alpha,\theta)}$ or
  $\upset{(\alpha',\theta')}$;
  this immediately implies
  \eqref{item:prop-groth-cat-eqvce-eqvcond:max}.

  $\eqref{item:prop-groth-cat-eqvce-eqvcond:max}\implies
  \eqref{item:prop-groth-cat-eqvce-eqvcond:eqvce}$:
  We see $(\alpha,\theta) \sim (\tilde{\alpha},\tilde{\theta})
  = (\tilde{\alpha}',\tilde{\theta}') \sim (\alpha',\theta')$.
\end{proof}

As a direct consequence of the previous proposition, we get:

\begin{corollary}
  \label{cor:groth-cat-eqvce-class-max}
  If $(\deltaBra{m}, X), (\deltaBra{n}, Y) \in \Ob\Gamma$ are objects,
  every equivalence class
  in $\HomOf[\Gamma]{(\deltaBra{m}, X)}{(\deltaBra{n}, Y)}$
  possesses a maximum with respect to the equipped order.
\end{corollary}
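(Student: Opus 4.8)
The plan is to exhibit, for each morphism $(\alpha,\theta)\colon(\deltaBra{m},X)\to(\deltaBra{n},Y)$ in $\Gamma$, a concrete candidate for the maximum of its equivalence class in $\HomOf[\Gamma]{(\deltaBra{m},X)}{(\deltaBra{n},Y)}$, namely the largest element $(\tilde\alpha,\tilde\theta)$ of the upward unbounded interval $\upset{(\alpha,\theta)}$, whose existence is guaranteed by \cref{lem:groth-cat-up-max}. I then have to check two things: that $(\tilde\alpha,\tilde\theta)$ lies in the equivalence class of $(\alpha,\theta)$, and that it dominates every member of that class.

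For the first point, since $(\alpha,\theta)\le(\tilde\alpha,\tilde\theta)$ and the equivalence relation $\sim$ is, by \cref{def:groth-cat-eqvce}, the symmetric transitive closure of the order on the hom-set, we immediately get $(\alpha,\theta)\sim(\tilde\alpha,\tilde\theta)$, so the candidate belongs to the class.

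For the second point, let $(\alpha',\theta')$ be any morphism in the same hom-set with $(\alpha',\theta')\sim(\alpha,\theta)$. I would invoke the equivalence of conditions \eqref{item:prop-groth-cat-eqvce-eqvcond:eqvce} and \eqref{item:prop-groth-cat-eqvce-eqvcond:max} in \cref{prop:groth-cat-eqvce-eqvcond}: the largest element of $\upset{(\alpha',\theta')}$ coincides with the largest element of $\upset{(\alpha,\theta)}$, i.e., with $(\tilde\alpha,\tilde\theta)$. Since $(\alpha',\theta')$ belongs to $\upset{(\alpha',\theta')}$, this forces $(\alpha',\theta')\le(\tilde\alpha,\tilde\theta)$. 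Hence $(\tilde\alpha,\tilde\theta)$ is a member of the equivalence class lying above every member, that is, its maximum, which is then automatically unique.

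I do not expect any genuine obstacle here: the statement is essentially a repackaging of \cref{lem:groth-cat-up-max} together with \cref{prop:groth-cat-eqvce-eqvcond}, and the argument is uniform across the three choices of $\Gamma$ since both cited results are. The only small points requiring care are the two trivialities just used — that the hom-set order is contained in $\sim$, and that a morphism always lies in its own upward unbounded interval.
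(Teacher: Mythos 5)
Your proof is correct and matches what the paper leaves implicit: the paper labels this a direct consequence of \cref{prop:groth-cat-eqvce-eqvcond}, and your argument is precisely the expected unwinding, taking the maximum of $\upset{(\alpha,\theta)}$ from \cref{lem:groth-cat-up-max} as the candidate and using the equivalence of conditions \eqref{item:prop-groth-cat-eqvce-eqvcond:eqvce} and \eqref{item:prop-groth-cat-eqvce-eqvcond:max} to show it dominates every element of the class.
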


\section{Reedy structure, directness and finiteness}
\label{sec:groth-cat-direct}

In this section, we will construct a Reedy structure of
$\int \catNerve^{{-},{+}}(C)$,
and prove the directness of
$\int \catNerve^{{--},{+}}_{+}(C)$ and $\Down(C)$.

Set $\Gamma\coloneqq\int\catNerve^{{-},{+}}(C)$
until \cref{prop:groth-cat-reedy}, where
we would like to establish a Reedy structure of $\Gamma$.

\begin{definition}[$\int\catNerve^{{-},\set{\idmor}}_{-}(C)$,
  $\int\catNerve^{{-},{+}}_{+}(C)$]
  \label{def:groth-cat-reedy-def}
  Let wide subcategories
  \begin{align*}
    \Gamma_{-}&=\textstyle\int\catNerve^{{-},\set{\idmor}}_{-}(C)\text{, and}\\
    \Gamma_{+}&=\textstyle\int\catNerve^{{-},{+}}_{+}(C)
  \end{align*}
  of $\Gamma=\int\catNerve^{{-},{+}}(C)$ be defined as follows:
  \begin{align*}
    \Mor\Gamma_{-}
    &\coloneqq
      \set{(\sigma,\idmor[X\compos\sigma])
      \colon(\deltaBra{m},X\compos\sigma)\to(\deltaBra{n},X)}[%
      \begin{gathered}
        (\deltaBra{n},X)\in\Ob\Gamma,\\
        \sigma\colon\deltaBra{m}\to\deltaBra{n} \text{ in } \mtcSimpx_{-}
      \end{gathered}]\text{, and}\\
    \Mor\Gamma_{+}
    &\coloneqq
      \set{(\delta,\theta)
      \colon(\deltaBra{m},X)\to(\deltaBra{n},Y)\text{ in }\Gamma}[%
      \delta\colon\deltaBra{m}\to\deltaBra{n} \text{ in } \mtcSimpx_{+}]\text{.}
  \end{align*}
\end{definition}

\begin{lemma}\label{lem:groth-cat-reedy-factor}
  $\Gamma=\int\catNerve^{{-},{+}}(C)$ admits
  unique $(\Gamma_{-}, \Gamma_{+})$-factorization.
\end{lemma}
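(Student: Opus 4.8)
The plan is to lift the canonical $(\mtcSimpx_{-},\mtcSimpx_{+})$-factorization of $\mtcSimpx$ along the projection $\proj\colon\Gamma\to\mtcSimpx$. Fix a morphism $(\alpha,\theta)\colon(\deltaBra{m},X)\to(\deltaBra{n},Y)$ in $\Gamma=\int\catNerve^{{-},{+}}(C)$ and write $\alpha=\delta\compos\sigma$ with $\sigma\colon\deltaBra{m}\to\deltaBra{k}$ in $\mtcSimpx_{-}$ and $\delta\colon\deltaBra{k}\to\deltaBra{n}$ in $\mtcSimpx_{+}$; by the Reedy structure of $\mtcSimpx$ the triple $(\deltaBra{k},\sigma,\delta)$ is uniquely determined by $\alpha$. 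By \cref{def:groth-cat-reedy-def}, any morphism of $\Gamma_{-}$ out of $(\deltaBra{m},X)$ lying over $\sigma$ is forced to be of the form $(\sigma,\idmor)\colon(\deltaBra{m},X)\to(\deltaBra{k},Z)$ for a functor $Z\colon\deltaBra{k}\to C_{-}$ with $Z\compos\sigma=X$; so the whole factorization is pinned down once we exhibit such a $Z$ together with a morphism $(\delta,\phi)\colon(\deltaBra{k},Z)\to(\deltaBra{n},Y)$ of $\Gamma_{+}$ whose composite with $(\sigma,\idmor)$ is $(\alpha,\theta)$.

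First I would isolate the one genuinely Reedy-theoretic input: if $0\le i\le i'\le m$ satisfy $\sigma(i)=\sigma(i')$ (equivalently $\alpha(i)=\alpha(i')$), then $X(\uniqmor_{ii'})=\idmor$ and $\theta_i=\theta_{i'}$. Indeed, naturality of $\theta$ gives $\theta_i=\theta_{i'}\compos X(\uniqmor_{ii'})$, since $Y$ sends the unique morphism $\alpha(i)\to\alpha(i')$ to an identity; as $X(\uniqmor_{ii'})\in\Mor C_{-}$ and $\theta_{i'}\in\Mor C_{+}$, this is a $(C_{-},C_{+})$-factorization of $\theta_i$ that must coincide with the trivial one $\theta_i=\theta_i\compos\idmor$ by the uniqueness clause of \cref{def:reedy-cat} --- this is precisely where the hypothesis $\theta_i\in C_{+}$ enters. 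Next I would choose the section $s\colon\deltaBra{k}\to\deltaBra{m}$ of $\sigma$ given by $s(j)\coloneqq\min\sigma^{-1}(j)$, which is order-preserving with $\sigma\compos s=\idmor$ and $s(\sigma(i))\le i$ for all $i$, and I would set $Z\coloneqq X\compos s$ and $\phi\coloneqq\theta\whiskr s$. Whiskering the (unique) natural transformation $\eta\colon s\compos\sigma\Rightarrow\idmor[\deltaBra{m}]$ by $X$ produces $X\whiskl\eta\colon X\compos s\compos\sigma\Rightarrow X$, every component of which is $X(\uniqmor_{s(\sigma(i)),i})=\idmor$ by the observation; a natural transformation with all components identities forces $Z\compos\sigma=X\compos s\compos\sigma=X$. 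Using $\alpha\compos s=\delta\compos\sigma\compos s=\delta$ one gets $\phi\colon Z\Rightarrow Y\compos\delta$ with each component $\phi_j=\theta_{s(j)}\in\Mor C_{+}$, so $(\sigma,\idmor)$ is indeed a morphism of $\Gamma_{-}$ and $(\delta,\phi)$ one of $\Gamma_{+}$. Their composite in $\Gamma$ is $(\delta\compos\sigma,\phi\whiskr\sigma)$ by the composition rule of \cref{def:groth-cat-ladder-all}, and $(\phi\whiskr\sigma)_i=\phi_{\sigma(i)}=\theta_{s(\sigma(i))}=\theta_i$ again by the observation; hence the composite is $(\alpha,\theta)$, as required.

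For uniqueness, I would take a second $(\Gamma_{-},\Gamma_{+})$-factorization $(\deltaBra{m},X)\xrightarrow{(\sigma',\idmor)}(\deltaBra{k'},Z')\xrightarrow{(\delta',\phi')}(\deltaBra{n},Y)$, so that $Z'\compos\sigma'=X$ with $\sigma'\in\mtcSimpx_{-}$, $\delta'\in\mtcSimpx_{+}$, $\delta'\compos\sigma'=\alpha$, and $\phi'\whiskr\sigma'=\theta$. Uniqueness of the $(\mtcSimpx_{-},\mtcSimpx_{+})$-factorization of $\alpha$ gives $(\deltaBra{k'},\sigma',\delta')=(\deltaBra{k},\sigma,\delta)$; right-whiskering the identities $Z'\compos\sigma=X$ and $\phi'\whiskr\sigma=\theta$ by the section $s$ and using $\sigma\compos s=\idmor$ then yields $Z'=X\compos s=Z$ and $\phi'=\theta\whiskr s=\phi$. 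So the factorization is unique. The argument is essentially bookkeeping around $\proj$ together with the composition rule of \cref{def:groth-cat-ladder-all}; the only step carrying real content --- and the one I would write out most carefully --- is the observation that the $C_{+}$-valued natural transformation $\theta$ is constant along the fibers of $\sigma$.
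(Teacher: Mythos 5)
Your proof is correct and takes essentially the same approach as the paper: factor $\alpha$ in $\mtcSimpx$, choose a section of $\sigma$, set the middle object and natural transformation by precomposing with that section, and invoke uniqueness of $(C_{-},C_{+})$-factorization in $C$ to verify everything. The only cosmetic differences are that you take the smallest section $s(j)=\min\sigma^{-1}(j)$ while the paper takes the largest section $\epsilon$ (your ``constancy along fibers'' observation shows both give the same $Z$), and that you isolate that pointwise observation up front instead of packaging it via the interchange law as the paper does.
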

\begin{proof}
  A $(\Gamma_{-},\Gamma_{+})$-factorization of $(\alpha,\theta)$
  is a commutative diagram of the form \eqref{eq:lem-groth-cat-reedy-factor:comm} having a
  morphism from $\Gamma_{-}$ and then one from $\Gamma_{+}$ in the bottom row:
  \begin{equation}\label{eq:lem-groth-cat-reedy-factor:comm}
    \begin{tikzcd}[column sep=huge]
      (\deltaBra{m}, X) \arrow[rr, "{(\alpha,\theta)}"]
      \arrow[d,equal]
      && (\deltaBra{n}, Y) \arrow[d,equal] \\
      {(\deltaBra{m}, X'\compos\sigma)}
      \arrow[r, "{(\sigma, \idmor[X'\compos\sigma])}"']
      & {(\deltaBra{l}, X')} \arrow[r, "{(\delta,\theta')}"']
      & (\deltaBra{n}, Y)
    \end{tikzcd}
  \end{equation}

  We break down the condition that the diagram
  \eqref{eq:lem-groth-cat-reedy-factor:comm} is a $(\Gamma_{-},\Gamma_{+})$-factorization.
  The membership conditions $(\deltaBra{l}, X') \in \Ob\Gamma$,
  $(\sigma, \idmor[X'\compos\sigma]) \in \Mor\Gamma_{-}$, and
  $(\delta, \theta') \in \Mor\Gamma_{+}$ are equivalent to the following set of
  conditions:
  \begin{align}
    \deltaBra{l}
    &\in \Ob\mtcSimpx,
      \label{eq:lem-groth-cat-reedy-factor:l}\\
    \sigma
    &\in \HomOf[\mtcSimpx_{-}]{\deltaBra{m}}{\deltaBra{l}},
      \label{eq:lem-groth-cat-reedy-factor:delta}\\
    \delta
    &\in \HomOf[\mtcSimpx_{+}]{\deltaBra{l}}{\deltaBra{n}},
      \label{eq:lem-groth-cat-reedy-factor:sigma}\\
    X'
    &\colon \deltaBra{l} \to C_{-} \text{ a functor,}
      \label{eq:lem-groth-cat-reedy-factor:xpr}\\
    \theta'
    &\colon X' \Rightarrow Y\compos\delta\colon\deltaBra{l} \to C
      \text{ a natural transformation,}
      \label{eq:lem-groth-cat-reedy-factor:thetapr}\\
    \theta'_i
    &\in \HomOf[C_{+}]{X'(i)}{Y(\delta(i))},\, \forall i \in \deltaBra{l}.
      \label{eq:lem-groth-cat-reedy-factor:plus}
  \end{align}
  For the commutativity of the diagram \eqref{eq:lem-groth-cat-reedy-factor:comm},
  it is necessary and sufficient to have all of
  \labelcref{eq:lem-groth-cat-reedy-factor:idx,eq:lem-groth-cat-reedy-factor:obj,%
    eq:lem-groth-cat-reedy-factor:tr}:
  \begin{alignat}{2}
    \alpha &= \delta \compos \sigma
    &&\colon \deltaBra{m} \to \deltaBra{n},
       \label{eq:lem-groth-cat-reedy-factor:idx}\\
    X &= X' \compos \sigma
    &&\colon \deltaBra{m} \to C_{-},
       \label{eq:lem-groth-cat-reedy-factor:obj}\\
    \theta &= \theta'\whiskr\sigma
    &&\colon X=X' \compos \sigma \Rightarrow Y\compos\alpha =
       Y\compos\delta\compos\sigma.
       \label{eq:lem-groth-cat-reedy-factor:tr}
  \end{alignat}

  Since $(\mtcSimpx, \mtcSimpx_{-}, \mtcSimpx_{+})$ is
  a Reedy category, there exists a unique triple $(\deltaBra{l}, \sigma, \delta)$
  satisfying
  \labelcref{eq:lem-groth-cat-reedy-factor:l,eq:lem-groth-cat-reedy-factor:delta,%
    eq:lem-groth-cat-reedy-factor:sigma,eq:lem-groth-cat-reedy-factor:idx}.
  We fix $(\deltaBra{l}, \sigma, \delta)$ as
  such.

  Now it suffices to establish the unique existence of a pair
  $(X',\theta')$ enjoying the conditions
  \labelcref{eq:lem-groth-cat-reedy-factor:xpr,eq:lem-groth-cat-reedy-factor:thetapr,%
    eq:lem-groth-cat-reedy-factor:plus,eq:lem-groth-cat-reedy-factor:obj,%
    eq:lem-groth-cat-reedy-factor:tr}.
  Remembering that every morphism in $\mtcSimpx_{-}$ is a split epimorphism in
  $\mtcSimpx$ and that the $\mtcPoset$-enriched category $\mtcSimpx$
  is in fact enriched over finite bounded lattices,
  take the largest section $\epsilon\colon \deltaBra{l} \to \deltaBra{m}$ of $\sigma$.
  By precomposing $\epsilon$ on the both sides of
  \cref{eq:lem-groth-cat-reedy-factor:obj,eq:lem-groth-cat-reedy-factor:tr}, we get:
  \begin{alignat}{2}
    X\compos\epsilon &= X'
    &&\colon \deltaBra{l} \to C_{-},
       \label{eq:lem-groth-cat-reedy-factor:objeps}\\
    \theta\whiskr\epsilon &= \theta'
    &&\colon X'\Rightarrow Y\compos\delta.
       \label{eq:lem-groth-cat-reedy-factor:treps}
  \end{alignat}
  From these equations, the uniqueness of $(X', \theta')$ is immediate.

  It just remains to show that the pair $(X', \theta')$ defined by
  \cref{eq:lem-groth-cat-reedy-factor:objeps,eq:lem-groth-cat-reedy-factor:treps}
  satisfies the conditions
  \labelcref{eq:lem-groth-cat-reedy-factor:xpr,eq:lem-groth-cat-reedy-factor:thetapr,%
    eq:lem-groth-cat-reedy-factor:plus,eq:lem-groth-cat-reedy-factor:obj,%
    eq:lem-groth-cat-reedy-factor:tr}, of which
  \labelcref{eq:lem-groth-cat-reedy-factor:xpr,eq:lem-groth-cat-reedy-factor:thetapr,%
    eq:lem-groth-cat-reedy-factor:plus} are obvious.
  We wish to show \cref{eq:lem-groth-cat-reedy-factor:obj,%
    eq:lem-groth-cat-reedy-factor:tr}.
  The maximality of $\epsilon$ yields the inequality
  $\idmor[\deltaBra{m}]\le\epsilon\compos\sigma\colon \deltaBra{m} \to \deltaBra{m}$;
  we have a natural transformation
  $\uniqmor = \uniqmor_{\idmor, \epsilon\compos\sigma}\colon
  \idmor[\deltaBra{m}]\Rightarrow\epsilon\compos\sigma$.
  Notice:
  \begin{equation}
    \label{eq:lem-groth-cat-reedy-factor:alphaeps}
    \alpha\whiskl\uniqmor_{\idmor, \epsilon\compos\sigma}
    = \uniqmor = \idmor\colon
    \alpha\Rightarrow\alpha\compos\epsilon\compos\sigma
    = \delta \compos\sigma\compos\epsilon\compos\sigma = \alpha.
  \end{equation}
  The interchange law of
  $\theta\colon X \Rightarrow Y\compos\alpha$ and
  $\uniqmor\colon \idmor[\deltaBra{m}] \Rightarrow \epsilon\compos\sigma$
  may be written, combining
  \cref{eq:lem-groth-cat-reedy-factor:objeps,eq:lem-groth-cat-reedy-factor:treps,%
    eq:lem-groth-cat-reedy-factor:alphaeps}, as the following
  commutative square of natural transformations:
  \begin{equation}\label{eq:lem-groth-cat-reedy-factor:interchange1}
    \begin{tikzcd}[column sep=huge]
      X \arrow[r, Rightarrow, "\theta"]
      \arrow[d, Rightarrow, "X\whiskl\uniqmor"']
      & Y\compos\alpha
      \arrow[d, equal] \\
      {X' \compos \sigma}
      \arrow[r, Rightarrow, "{\theta'\whiskr\sigma}"']
      & Y \compos\alpha
    \end{tikzcd}
  \end{equation}
  In the diagram \eqref{eq:lem-groth-cat-reedy-factor:interchange1},
  every constituent morphism of
  $X\whiskl\uniqmor\colon X \Rightarrow X'\compos\sigma$ belongs
  to $C_{-}$, and $C_{+}$ has
  all the component morphisms of natural transformations drawn horizontally in
  the diagram.
  Therefore the uniqueness of the $(C_{-},C_{+})$-factorization applies and
  we get \cref{eq:lem-groth-cat-reedy-factor:obj,eq:lem-groth-cat-reedy-factor:tr},
  which concludes the proof.
\end{proof}

\begin{proposition}\label{prop:groth-cat-reedy}
  The pair of wide subcategories $(\Gamma_{-}, \Gamma_{+})$ in
  \cref{def:groth-cat-reedy-def}
  is a Reedy structure on $\Gamma=\int\catNerve^{{-},{+}}(C)$.
\end{proposition}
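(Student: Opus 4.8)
The plan is to check, for the pair $(\Gamma_{-},\Gamma_{+})$, the three clauses of \cref{def:reedy-cat}. That $\Gamma_{-}$ and $\Gamma_{+}$ are genuine wide subcategories (closed under composition) is a routine check from \cref{def:groth-cat-reedy-def}, since $\mtcSimpx_{-}$ and $\mtcSimpx_{+}$ are; clause \eqref{item:def-reedy-cat:facto}, the unique $(\Gamma_{-},\Gamma_{+})$-factorization, is precisely \cref{lem:groth-cat-reedy-factor}. So only the well-foundedness clause \eqref{item:def-reedy-cat:wf} and the ``adjoined-identities'' clause \eqref{item:def-reedy-cat:id} remain, and I intend to obtain the latter essentially for free from \cref{cor:reedy-cat-iddec-equiv} once the former is in place.

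For clause \eqref{item:def-reedy-cat:wf} I would exhibit a measure map into a well-founded set. Write $<_{C}$ for the well-founded relation on $\Ob C$ coming from the Reedy structure of $C$; recall that each morphism of $C_{+}$ is an identity or a non-identity (\cref{cor:reedy-cat-iddec-equiv}) and that $<_{C}$ has no cycles. A short induction on $k$, using the facts about well-founded relations collected in \cref{sec:cat-prel-foundations}, shows that the coordinatewise relation on $(\Ob C)^{k}$ induced by $<_{C}$ — declaring $\vec{c}$ strictly below $\vec{c}'$ when $c_{i}=c'_{i}$ or $c_{i}<_{C}c'_{i}$ for every $i$ and $\vec{c}\neq\vec{c}'$ — is well-founded; hence so is the lexicographic sum $W\coloneqq\coprod_{n\in\metanats}(\Ob C)^{n+1}$, ordered by the first coordinate $n$ first and coordinatewise within each fibre. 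Define $f\colon\Ob\Gamma\to W$ by $f(\deltaBra{n},X)\coloneqq\bigl(n,(X(0),\dots,X(n))\bigr)$. I then verify that $x<' y$ (the relation of clause \eqref{item:def-reedy-cat:wf} for $\Gamma$) implies $f(x)<f(y)$: a non-identity of $\Gamma_{-}$ strictly drops the simplicial dimension from source to target (a non-identity surjection of $\mtcSimpx$ strictly drops dimension), so the ``$\Gamma_{-}$'' alternative is handled by the first coordinate of $f$; a non-identity $(\alpha,\theta)$ of $\Gamma_{+}$ either has $\alpha$ a non-identity injection, again strictly dropping the dimension, or has $\alpha=\idmor$, in which case $\theta\colon X\Rightarrow Y$ has all components in $C_{+}$ and, being a non-identity of $\Gamma$, has some component $\theta_{j}$ a non-identity of $C_{+}$ (were all $\theta_{i}$ identities, naturality would force $X=Y$ and $\theta=\idmor$, making $(\alpha,\theta)$ an identity). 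Then $X(j)<_{C}Y(j)$, while for each $i$ the component $\theta_{i}$ gives $X(i)=Y(i)$ or $X(i)<_{C}Y(i)$, so $(X(i))_{i}$ lies strictly below $(Y(i))_{i}$ coordinatewise. Well-foundedness of $<'$ on $\Ob\Gamma$ then follows, since $W$ is well-founded and $f$ carries $<'$-related pairs to pairs related by the order of $W$.

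With \eqref{item:def-reedy-cat:facto} and \eqref{item:def-reedy-cat:wf} in hand, \cref{cor:reedy-cat-iddec-equiv} reduces the remaining clause to the decidability of membership in $\Gamma_{-}$ and in $\Gamma_{+}$. A morphism $(\alpha,\theta)$ lies in $\Gamma_{+}$ iff $\alpha\in\mtcSimpx_{+}$, which is decidable since $\Mor\mtcSimpx_{+}$ is a decidable subset of $\Mor\mtcSimpx$; it lies in $\Gamma_{-}$ iff $\alpha\in\mtcSimpx_{-}$ and every component $\theta_{i}$ is an identity — both decidable, the latter by the dichotomy in $C$ — and this does characterize $\Gamma_{-}$, because all $\theta_{i}=\idmor$ forces $Y\compos\alpha=X$ and $\theta=\idmor$ by naturality. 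Hence $(\Gamma,\Gamma_{-},\Gamma_{+})$ is a Reedy category, i.e.\ $(\Gamma_{-},\Gamma_{+})$ is a Reedy structure on $\Gamma$.

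The main obstacle is clause \eqref{item:def-reedy-cat:wf}. Taking the simplicial dimension as the whole degree fails, because $\Gamma_{+}$ contains non-identity morphisms sitting over identities of $\mtcSimpx$; the degree must therefore also record information about the functors $X\colon\deltaBra{n}\to C_{-}$, and — crucially, in the constructive setting, where a single morphism of $\Gamma_{+}$ can make independent progress at several of the $n+1$ vertices simultaneously — this pushes one to a coordinatewise (product) well-founded order rather than a single ordinal-valued degree, whose well-foundedness must itself be derived from the abstract stability properties of well-founded relations.
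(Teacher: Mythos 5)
Your proof is correct and follows essentially the same route as the paper's own argument: you cite \cref{lem:groth-cat-reedy-factor} for unique factorization, verify well-foundedness via the same measure $(\deltaBra{n},X)\mapsto(n,(X(0),\dotsc,X(n)))$ into a lexicographic sum of coordinatewise-ordered powers of $\Ob C$, and dispose of the identity-dichotomy clause through decidability (the paper argues identity-decidability of morphisms in $\Gamma$ directly, you route through membership-decidability in $\Gamma_{-}$ and $\Gamma_{+}$ and \cref{cor:reedy-cat-iddec-equiv}; these are equivalent by that very corollary). One small expository slip to correct: a non-identity face map $\alpha\in\mtcSimpx_{+}$ strictly \emph{raises} the simplicial dimension from source to target, rather than ``dropping'' it as in the $\mtcSimpx_{-}$ case — the argument is unaffected, since all that matters is that the source $x$ of the $\Gamma_{+}$-alternative of $x<'y$ has strictly smaller first coordinate than the target $y$, which still yields $f(x)<f(y)$.
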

\begin{proof}
  In light of \cref{lem:groth-cat-reedy-factor}, it just remains to
  show the conditions \labelcref{item:def-reedy-cat:id,item:def-reedy-cat:wf}
  from \cref{def:reedy-cat}.
  We first verify that the property of morphisms being an identity is decidable.
  For a morphism
  $(\alpha, \theta)\colon (\deltaBra{m},X)\to(\deltaBra{n},Y)$ to be identity,
  it is necessary and sufficient to have identities as the components
  $\alpha, \theta_0, \theta_1, \dotsc, \theta_m$. Since we have dichotomy of
  each of these morphisms into identities and non-identies, we obtain the
  desired decidablity.

  Now it suffices to establish the well-foundedness of $\Ob\Gamma$.
  Since $C$ is a Reedy category, $C_{+}$ is a direct category. That is,
  the relation $<_{+}$ defined by the following is a well-founded
  relation on $\Ob C = \Ob C_{+}$:
  \[
    \text{for $x,y\in\Ob C$, }\; x <_{+} y \coloniff
    \exists f\colon x \to y \text{ in }C_{+}, f \neq \idmor[x].
  \]
  Consider the following set:
  \[
    (\Ob C)^{<\omega} = \coprod_{n \in \metanats}(\Ob C)^n,
  \]
  which consists of finitely long sequences
  $(x_0,x_1,\dotsc,x_{n-1})$ of objects of $C$.
  For each pair of elements
  $\boldsymbol{x}=(x_0,x_1,\dotsc,x_{n-1}),
  \boldsymbol{y}=(y_0,y_1,\dotsc,y_{m-1})\in (\Ob C)^{<\omega}$,
  let us say $\boldsymbol{x} < \boldsymbol{y}$ if and only if we
  have either $m < n$ or all of the followings hold:
  \begin{itemize}
  \item $m = n$,
  \item for some $0 \le i < m$, $x_i <_{+} y_i$, and
  \item for all $0 \le i < m$, $x_i = y_i$ or $x_i <_{+} y_i$.
  \end{itemize}
  The well-foundedness of this relation $<$ on $(\Ob C)^{<\omega}$ follows
  from those of $\metanats$ and $\Ob C_{+}$.
  Now, consider the following map of sets:
  \[
    F\colon \Ob \Gamma \to (\Ob C)^{<\omega};\;
    (\deltaBra{n}, \theta) \mapsto (\theta_0,\theta_1,\dotsc,\theta_n).
  \]
  Now we can readily confirm that any
  non-identity $\boldsymbol{\alpha} \to \boldsymbol{\beta}$ in $\Gamma_{+}$
  satisfies $F(\boldsymbol{\alpha}) < F(\boldsymbol{\beta})$, and that
  any non-identity $\boldsymbol{\alpha} \to \boldsymbol{\beta}$ in $\Gamma_{-}$
  satisfies $F(\boldsymbol{\alpha}) > F(\boldsymbol{\beta})$. This implies
  the well-foundedness $\Ob\Gamma$ with respect to the relation specified
  in \eqref{item:def-reedy-cat:wf} from \cref{def:reedy-cat}.
\end{proof}

\begin{remark}
  Let $\Gamma=\int\catNerve^{{-},{\mathrm{all}}}(C)$ be the full subcategory
  of $\int\catNerve(C)$ spanned by those objects $(\deltaBra{n}, X)$ for which
  the constituent functor $X\colon \deltaBra{n} \to C$ factors through $C_{-}$.
  This category $\Gamma$ admits a Reedy structure $(\Gamma_{-}, \Gamma_{+})$,
  and the Reedy structure on $\int\catNerve^{{-},{+}}(C)$ in \cref{prop:groth-cat-reedy}
  is the restriction of this Reedy structure. Here, letting $\pm$ denote either
  $-$ or $+$, a morphism
  \[
  (\alpha, \theta)\colon (\deltaBra{m},X)\to(\deltaBra{n},Y)
  \]
  belongs to $\Gamma_{\pm} = \int\catNerve^{{-},{\pm}}_{\pm}(C)$ if and only if
  $\alpha\in\mtcSimpx_{\pm}$ and $\theta_i\in C_{\pm}$ for every $i\in\deltaBra{m}$.
  The proof of this statement is similar to the proof in
  \cref{lem:groth-cat-reedy-factor}, but it calls for a slightly more
  careful construction of the Reedy factorization.
\end{remark}

From the Reedy structure of $\int \catNerve^{{-},{+}}(C)$,
the directness of $\int \catNerve^{{--},{+}}_{+}(C)$ may be deduced.

\begin{corollary} \label{cor:groth-cat-direct}
  The category $\int \catNerve^{{--},{+}}_{+}(C)$ is direct.
\end{corollary}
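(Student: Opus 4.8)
The plan is to verify directness of $\Delta\coloneqq\int\catNerve^{{--},{+}}_{+}(C)$ through the constructive criterion of \cref{lem:direct-cat-def-eqvt-cond}: writing $x\prec y$ for ``there is a non-identity morphism $x\to y$ in $\Delta$'', it suffices to show that $\prec$ is well-founded on $\Ob\Delta$ and that every morphism of $\Delta$ is either an identity or a non-identity. Both facts I would deduce from the Reedy structure $(\Gamma_{-},\Gamma_{+})$ on $\Gamma\coloneqq\int\catNerve^{{-},{+}}(C)$ obtained in \cref{prop:groth-cat-reedy}, exploiting that, by \cref{lem:groth-cat-full-sub}, $\Delta$ is the full subcategory of $\Gamma$ spanned by the objects $(\deltaBra{n},X)$ for which $X$ reflects identities, and that every morphism $(\alpha,\theta)$ of $\Delta$ automatically has $\alpha\in\mtcSimpx_{+}$ and hence lies in the wide subcategory $\Gamma_{+}\subseteq\Gamma$.

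\textbf{Key steps.} For the dichotomy of morphisms: since $\Delta$ is a full subcategory of $\Gamma$ on a subset of its objects, a morphism of $\Delta$ is an identity in $\Delta$ exactly when it is an identity in $\Gamma$; and the proof of \cref{prop:groth-cat-reedy} already establishes that every morphism $(\alpha,\theta)\colon(\deltaBra{m},X)\to(\deltaBra{n},Y)$ of $\Gamma$ is either an identity or a non-identity, by examining its components $\alpha,\theta_0,\dotsc,\theta_m$, each of which is decidably an identity or a non-identity in $\mtcSimpx$ or in $C$ as appropriate. For the well-foundedness: any non-identity morphism of $\Delta$ is, as just noted, a non-identity morphism of $\Gamma_{+}$, hence witnesses $(\deltaBra{m},X)<'(\deltaBra{n},Y)$ for the relation $<'$ from condition~\eqref{item:def-reedy-cat:wf} of the Reedy structure on $\Gamma$. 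Thus the object-inclusion $\Ob\Delta\hookrightarrow\Ob\Gamma$ carries $\prec$ into $<'$; since $<'$ is well-founded on $\Ob\Gamma$ and a relation admitting a relation-preserving map to a well-founded relation is itself well-founded, $\prec$ is well-founded on $\Ob\Delta$. Now \cref{lem:direct-cat-def-eqvt-cond} yields that $\Delta$ is direct.

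\textbf{Expected obstacle.} I do not anticipate a genuine obstacle: the corollary is bookkeeping on top of \cref{prop:groth-cat-reedy} and \cref{lem:groth-cat-full-sub}. The only two points that need a moment's care are (a) checking that ``identity versus non-identity'' transfers along the full inclusion $\Delta\subseteq\Gamma$ (immediate, since a subcategory functor is faithful and injective on objects), and (b) using the membership $\alpha\in\mtcSimpx_{+}$ — part of the definition of $\Delta$, and automatic by \cref{lem:groth-cat-full-sub} — to place each morphism of $\Delta$ inside $\Gamma_{+}$, which is what links $\prec$ to the Reedy relation $<'$. One may equivalently organize the argument by first observing that $\Gamma_{+}$ is a direct category (the ``$C_{+}$''-component of any Reedy category) and then that a full subcategory of a direct category is again direct; the route above is merely a direct unfolding of this, and keeps the constructive reasoning transparent.
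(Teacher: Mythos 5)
Your proof is correct and follows essentially the same route as the paper: the paper's two-line proof simply observes that $\int\catNerve^{{-},{+}}_{+}(C)$ is the direct part of the Reedy structure from \cref{prop:groth-cat-reedy} (hence direct) and that $\int\catNerve^{{--},{+}}_{+}(C)$ is a full subcategory of it. Your argument is the explicit unfolding of the implicit step ``a full subcategory of a direct category is direct'' via the constructive criterion of \cref{lem:direct-cat-def-eqvt-cond}, which you yourself note at the end is equivalent to the paper's phrasing.
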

\begin{proof}
  By \cref{prop:groth-cat-reedy}, the cagegory
  $\int \catNerve^{{-},{+}}_{+}(C)$ is the direct part
  of the Reedy structure
  of $\int \catNerve^{{-},{+}}(C)$, and is therefore direct;
  the category  $\int \catNerve^{{--},{+}}_{+}(C)$ in question
  is a (full) subcategory of $\int \catNerve^{{-},{+}}_{+}(C)$.
\end{proof}

We now show the directness of $\Down(C)$.

\begin{lemma}
  \label{lem:groth-cat-strict-ob-cond}
  Let $(\deltaBra{n}, X)\in\Ob\int\catNerve^{{-},{+}}(C)$.
  Then the following conditions are equivalent:
  \begin{enumerate}
  \item $(\deltaBra{n}, X)\in\Ob\int\catNerve^{{--},{+}}_{+}(C)$.
    \label{item:lem-groth-cat-strict-ob-cond:ob}
  \item The element
    $\idmor[(\deltaBra{n},X)]\in\HomOf[\int\catNerve^{{-},{+}}(C)]
    {(\deltaBra{n},X)}{(\deltaBra{n},X)}$ is maximal, i.e.,
    for any $\boldsymbol{\alpha} \ge \idmor[(\deltaBra{n},X)]
    \colon (\deltaBra{n}, X) \to (\deltaBra{n}, X)$, we have
    $\boldsymbol{\alpha} = \idmor[(\deltaBra{n}, X)]$.
    \label{item:lem-groth-cat-strict-ob-cond:mx}
  \item The element
    $\idmor[(\deltaBra{n},X)]\in\HomOf[\int\catNerve^{{-},{+}}(C)]
    {(\deltaBra{n},X)}{(\deltaBra{n},X)}$ is minimal, i.e.,
    for any $\boldsymbol{\alpha} \le \idmor[(\deltaBra{n},X)]
    \colon (\deltaBra{n}, X) \to (\deltaBra{n}, X)$, we have
    $\boldsymbol{\alpha} = \idmor[(\deltaBra{n}, X)]$.
    \label{item:lem-groth-cat-strict-ob-cond:mn}
  \item The element $\idmor[(\deltaBra{n},X)] \in
    \HomOf[\int\catNerve^{{-},{+}}(C)]{(\deltaBra{n},X)}{(\deltaBra{n},X)}$
    is the largest in its own equivalence class.
    \label{item:lem-groth-cat-strict-ob-cond:eqmx}
  \item The equivalence class of $\idmor[(\deltaBra{n},X)] \in
    \HomOf[\int\catNerve^{{-},{+}}(C)]{(\deltaBra{n},X)}{(\deltaBra{n},X)}$ is
    a singleton.
    \label{item:lem-groth-cat-strict-ob-cond:eqsing}
  \end{enumerate}
\end{lemma}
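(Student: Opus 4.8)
The plan is to prove the implications
$\eqref{item:lem-groth-cat-strict-ob-cond:ob}\Rightarrow\eqref{item:lem-groth-cat-strict-ob-cond:mx}$ and
$\eqref{item:lem-groth-cat-strict-ob-cond:ob}\Rightarrow\eqref{item:lem-groth-cat-strict-ob-cond:mn}$ directly,
to show conversely that if $X$ fails to reflect identities then both
\eqref{item:lem-groth-cat-strict-ob-cond:mx} and \eqref{item:lem-groth-cat-strict-ob-cond:mn} fail,
and then to close the circle through
$\eqref{item:lem-groth-cat-strict-ob-cond:mx}\wedge\eqref{item:lem-groth-cat-strict-ob-cond:mn}\Rightarrow
\eqref{item:lem-groth-cat-strict-ob-cond:eqsing}\Rightarrow\eqref{item:lem-groth-cat-strict-ob-cond:eqmx}\Rightarrow
\eqref{item:lem-groth-cat-strict-ob-cond:mx}$. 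Throughout I abbreviate $\idmor[(\deltaBra{n},X)]$ to $\idmor$
and use the unwound form of \cref{def:groth-cat-poset-enrich}: a morphism
$(\beta,\phi)\colon(\deltaBra{n},X)\to(\deltaBra{n},X)$ of $\int\catNerve^{{-},{+}}(C)$ satisfies
$(\beta,\phi)\ge\idmor$ exactly when $i\le\beta(i)$ and $\phi_i=X(\uniqmor_{i,\beta(i)})$ for all $i$, and
$(\beta,\phi)\le\idmor$ exactly when $\beta(i)\le i$ and $X(\uniqmor_{\beta(i),i})\compos\phi_i=\idmor[X(i)]$ for all $i$.

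For $\eqref{item:lem-groth-cat-strict-ob-cond:ob}\Rightarrow\eqref{item:lem-groth-cat-strict-ob-cond:mx}$:
if $(\beta,\phi)\ge\idmor$, then each $\phi_i=X(\uniqmor_{i,\beta(i)})$ lies both in $C_{-}$ (since $X$ factors through $C_{-}$)
and in $C_{+}$ (by the definition of morphisms of $\int\catNerve^{{-},{+}}(C)$); as $C_{-}\cap C_{+}$ consists of identities only,
$\phi_i=\idmor$, so $X(\uniqmor_{i,\beta(i)})=\idmor$, whence $\beta(i)=i$ because $X$ reflects identities. Thus $(\beta,\phi)=\idmor$.
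The implication $\eqref{item:lem-groth-cat-strict-ob-cond:ob}\Rightarrow\eqref{item:lem-groth-cat-strict-ob-cond:mn}$
is the delicate one, and I expect it to be the main obstacle. Given $(\beta,\phi)\le\idmor$, suppose toward a contradiction that
$\beta(l)<l$ for some $l$ and put $j\coloneqq\beta(l)$, so $j<l$. Naturality of $\phi\colon X\Rightarrow X\compos\beta$ at the
morphism $\uniqmor_{j,l}$ of $\deltaBra{n}$ gives $X(\uniqmor_{\beta(j),\beta(l)})\compos\phi_j=\phi_l\compos X(\uniqmor_{j,l})$;
since $\beta(l)=j$, the left-hand side is $X(\uniqmor_{\beta(j),j})\compos\phi_j$, which equals $\idmor[X(j)]$ by the order
condition read at the index $j$. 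Hence $\phi_l\compos X(\uniqmor_{j,l})=\idmor[X(j)]$; as $X(\uniqmor_{j,l})\in C_{-}$ and
$\phi_l\in C_{+}$, this is a $(C_{-},C_{+})$-factorization of $\idmor[X(j)]$, so uniqueness of Reedy factorizations forces
$X(\uniqmor_{j,l})=\idmor$ --- impossible, since $j<l$ and $X$ reflects identities. Therefore $\beta=\idmor[\deltaBra{n}]$, whence
$\phi_i=\idmor$ for all $i$ and $(\beta,\phi)=\idmor$. The genuine difficulty here is that the order condition read at the index $l$
only tells us $X(\uniqmor_{j,l})\compos\phi_l=\idmor[X(l)]$, a composition of the $C_{+}$-morphism $\phi_l$ followed by the
$C_{-}$-morphism $X(\uniqmor_{j,l})$, which --- as $\sigma^0_0\compos\delta^1_0=\idmor$ in $\mtcSimpx$ shows --- need not have
trivial factors; one must instead produce the reversed composite $\phi_l\compos X(\uniqmor_{j,l})=\idmor[X(j)]$ through naturality of
$\phi$ before uniqueness of factorizations can be applied.

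Conversely, suppose $X$ does not reflect identities. A short argument --- using that $X$ takes values in the inverse category $C_{-}$,
so that $X$ reflects identities iff it is injective on objects, together with a degree function of $C$ --- produces an index
$0\le k<n$ with $X(\uniqmor_{k,k+1})=\idmor[X(k)]$, so in particular $X(k)=X(k+1)$. To defeat
\eqref{item:lem-groth-cat-strict-ob-cond:mx} I let $\beta$ fix every index except $\beta(k)\coloneqq k+1$ and take
$\phi\coloneqq X\whiskl\uniqmor$ for $\uniqmor\colon\idmor[\deltaBra{n}]\Rightarrow\beta$; then every $\phi_i$ is an identity
(using $X(\uniqmor_{k,k+1})=\idmor$ when $i=k$), so $(\beta,\phi)$ is a legitimate morphism of $\int\catNerve^{{-},{+}}(C)$ lying
strictly above $\idmor$. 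To defeat \eqref{item:lem-groth-cat-strict-ob-cond:mn} I let $\beta$ fix every index except
$\beta(k+1)\coloneqq k$, set $\phi_i\coloneqq\idmor$ for $i\neq k+1$ and let $\phi_{k+1}\coloneqq\idmor[X(k)]$ be regarded as a
morphism $X(k+1)\to X(k)$ (legitimate since $X(k)=X(k+1)$), check that $\phi$ is natural by the routine square-by-square
verification --- the only nonformal square, the one at $\uniqmor_{k,k+1}$, being handled by $X(\uniqmor_{k,k+1})=\idmor$ --- and
note $(\beta,\phi)\le\idmor$ from the displayed condition while $(\beta,\phi)\neq\idmor$. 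This produces morphisms strictly above and
strictly below $\idmor$, so each of \eqref{item:lem-groth-cat-strict-ob-cond:mx} and \eqref{item:lem-groth-cat-strict-ob-cond:mn}
implies \eqref{item:lem-groth-cat-strict-ob-cond:ob}; combined with the previous paragraph, \eqref{item:lem-groth-cat-strict-ob-cond:ob},
\eqref{item:lem-groth-cat-strict-ob-cond:mx} and \eqref{item:lem-groth-cat-strict-ob-cond:mn} are equivalent.

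It remains to incorporate \eqref{item:lem-groth-cat-strict-ob-cond:eqmx} and \eqref{item:lem-groth-cat-strict-ob-cond:eqsing},
using that $(\beta,\phi)\ge\idmor$ entails $(\beta,\phi)\sim\idmor$, since $\sim$ is the symmetric transitive closure of the order.
If \eqref{item:lem-groth-cat-strict-ob-cond:eqsing} holds, any $(\beta,\phi)\ge\idmor$ lies in the singleton equivalence class and
hence equals $\idmor$, so \eqref{item:lem-groth-cat-strict-ob-cond:mx} holds; and
\eqref{item:lem-groth-cat-strict-ob-cond:eqsing}$\Rightarrow$\eqref{item:lem-groth-cat-strict-ob-cond:eqmx} is immediate. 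If
\eqref{item:lem-groth-cat-strict-ob-cond:eqmx} holds, any $(\beta,\phi)\ge\idmor$ is equivalent to $\idmor$, hence $\le\idmor$,
hence equals $\idmor$, so \eqref{item:lem-groth-cat-strict-ob-cond:mx} holds. Finally, assuming both
\eqref{item:lem-groth-cat-strict-ob-cond:mx} and \eqref{item:lem-groth-cat-strict-ob-cond:mn}: if $(\beta,\phi)\sim\idmor$, then
by \cref{prop:groth-cat-eqvce-eqvcond} there is a common upper bound $(\gamma,\psi)$ of $(\beta,\phi)$ and $\idmor$; maximality
forces $(\gamma,\psi)=\idmor$, so $(\beta,\phi)\le\idmor$, and minimality forces $(\beta,\phi)=\idmor$, which is
\eqref{item:lem-groth-cat-strict-ob-cond:eqsing}. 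This closes the circle, so all five conditions are equivalent.
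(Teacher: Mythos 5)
Your proof is correct, but its logical organization differs from the paper's in a way worth noting. The paper runs the cycle $\labelcref{item:lem-groth-cat-strict-ob-cond:ob}\Rightarrow\labelcref{item:lem-groth-cat-strict-ob-cond:mx}\Rightarrow\labelcref{item:lem-groth-cat-strict-ob-cond:mn}\Rightarrow\labelcref{item:lem-groth-cat-strict-ob-cond:ob}$, obtaining the step $\labelcref{item:lem-groth-cat-strict-ob-cond:mx}\Rightarrow\labelcref{item:lem-groth-cat-strict-ob-cond:mn}$ by defining $\beta(i)\coloneqq\max\alpha^{-1}\set{\alpha(i)}$ and invoking \cref{lem:groth-cat-up-lift} to lift $\beta$ to a morphism above $\idmor$, then applying maximality to conclude $\alpha$ is injective. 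You instead prove $\labelcref{item:lem-groth-cat-strict-ob-cond:ob}\Rightarrow\labelcref{item:lem-groth-cat-strict-ob-cond:mn}$ directly; your observation that the raw order condition at index $l$ gives a composition in the wrong order ($C_{+}$ followed by $C_{-}$, which need not have trivial factors, as $\sigma^0_0\compos\delta^1_0=\idmor$ illustrates) and that one must first pass through the naturality square at $\uniqmor_{j,l}$ to get $\phi_l\compos X(\uniqmor_{j,l})=\idmor[X(j)]$ in $(C_{-},C_{+})$-order is exactly the right observation, and arguably cleaner than the paper's route, which leaves the verification of the hypothesis of \cref{lem:groth-cat-up-lift} implicit. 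You then prove the converse implications by exhibiting explicit morphisms strictly above and strictly below $\idmor$ when $X$ fails to reflect identities (your second construction is essentially the same as the one the paper uses for $\labelcref{item:lem-groth-cat-strict-ob-cond:mn}\Rightarrow\labelcref{item:lem-groth-cat-strict-ob-cond:ob}$); note that this contrapositive phrasing requires the decidability of condition~\labelcref{item:lem-groth-cat-strict-ob-cond:ob}, which does hold here because $\deltaBra{n}$ is finite and identity-ness of morphisms in $C$ is decidable, but in the constructive setting this step deserves an explicit remark, whereas the paper's direct $\labelcref{item:lem-groth-cat-strict-ob-cond:mn}\Rightarrow\labelcref{item:lem-groth-cat-strict-ob-cond:ob}$ avoids it. Two small points: invoking a degree function is unnecessary — the fact that split monomorphisms in the inverse category $C_{-}$ are identities (no cycles in a well-founded relation) already gives $X(\uniqmor_{k,k+1})=\idmor$ from $X(\uniqmor_{k,l})=\idmor$; and the paper handles $\labelcref{item:lem-groth-cat-strict-ob-cond:eqmx}$ via \cref{prop:groth-cat-eqvce-eqvcond} directly and $\labelcref{item:lem-groth-cat-strict-ob-cond:eqsing}$ as the definitional conjunction of $\labelcref{item:lem-groth-cat-strict-ob-cond:mx}$ and $\labelcref{item:lem-groth-cat-strict-ob-cond:mn}$, which is marginally shorter than your closing chain but equivalent.
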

\begin{proof}
  By the definition of the equivalence of parallel morphisms,
  \eqref{item:lem-groth-cat-strict-ob-cond:eqsing}
  is equivalent to the conjuction of
  \labelcref{item:lem-groth-cat-strict-ob-cond:mx,%
    item:lem-groth-cat-strict-ob-cond:mn}.
  The equivalence of
  \eqref{item:lem-groth-cat-strict-ob-cond:mx}
  and \eqref{item:lem-groth-cat-strict-ob-cond:eqmx}
  follows from
  \cref{prop:groth-cat-eqvce-eqvcond}.
  We show
  $\eqref{item:lem-groth-cat-strict-ob-cond:ob}
  \implies \eqref{item:lem-groth-cat-strict-ob-cond:mx}
  \implies \eqref{item:lem-groth-cat-strict-ob-cond:mn}
  \implies \eqref{item:lem-groth-cat-strict-ob-cond:ob}$
  part by part.

  $\eqref{item:lem-groth-cat-strict-ob-cond:ob}
  \implies \eqref{item:lem-groth-cat-strict-ob-cond:mx}$:
  Let $(\deltaBra{n}, X)\in\Ob\int\catNerve^{{--},{+}}_{+}(C)$, and let:
  \[
    \boldsymbol{\alpha} = (\alpha,\theta) \ge \idmor[(\deltaBra{n},X)]
    \colon (\deltaBra{n}, X) \to (\deltaBra{n}, X).
  \]
  Let $i\in\deltaBra{n}$. We have the following commutative diagram:
  \[
    \begin{tikzcd}
      X(i)\arrow[dr, tail, "\theta_i"]
      \arrow[r, equal]
      & X(i) \arrow[d, twoheadrightarrow, "X(\uniqmor)"] \\
      & X(\alpha(i))
    \end{tikzcd}
  \]
  By the uniqueness of $(C_{-}, C_{+})$-factorization, we get
  $X(i) = X(\alpha(i))$, $\theta_i = \idmor[X(i)]$, and
  $X(\uniqmor_{\alpha(i)i})=\idmor[X(i)]\colon X(\alpha(i)) \to X(i)$.
  From the last equation of the three and the assumption
  \eqref{item:lem-groth-cat-strict-ob-cond:ob}, we obtain
  $\alpha(i) = i$. Wrapping up, we get $\alpha = \idmor[\deltaBra{n}]$,
  $\theta = \idmor[X]$, and hence
  $\boldsymbol{\alpha} = \idmor[(\deltaBra{n}, X)]$, as required.

  $\eqref{item:lem-groth-cat-strict-ob-cond:mx}
  \implies \eqref{item:lem-groth-cat-strict-ob-cond:mn}$:
  Assume \eqref{item:lem-groth-cat-strict-ob-cond:mx}, and let
  $\boldsymbol{\alpha} = (\alpha,\theta) \le \idmor[(\deltaBra{n},X)]
  \colon (\deltaBra{n}, X) \to (\deltaBra{n}, X)$.
  Define a morphism $\beta\colon \deltaBra{n} \to \deltaBra{n}$
  in $\mtcSimpx$ by
  $\beta(i) \coloneqq \max \alpha^{-1}\set{\alpha(i)}$ for
  $i \in \deltaBra{n}$. By definition, we see that
  $\beta \ge \idmor[\deltaBra{n}]$. From this and \cref{lem:groth-cat-up-lift}
  follows that $\beta$ corresponds to a unique morphism
  $\boldsymbol{\beta}=(\beta,\phi)\colon
  (\deltaBra{n}, X) \to (\deltaBra{n}, X)$ in $\int\catNerve^{{-},{+}}(C)$
  satisfying $\boldsymbol{\beta} \ge \idmor[(\deltaBra{n}, X)]$, which,
  by our assumption \eqref{item:lem-groth-cat-strict-ob-cond:mx},
  implies $\boldsymbol{\beta} = \idmor[(\deltaBra{n}, X)]$. Therefore
  $\alpha$ is injective, and hence $\alpha = \idmor[\deltaBra{n}]$.
  Combining this with $\boldsymbol{\alpha} \le \idmor[(\deltaBra{n}, X)]$,
  we obtain the desired equation
  $\boldsymbol{\alpha} = \idmor[(\deltaBra{n}, X)]$.

  $\eqref{item:lem-groth-cat-strict-ob-cond:mn}
  \implies \eqref{item:lem-groth-cat-strict-ob-cond:ob}$:
  Suppose \eqref{item:lem-groth-cat-strict-ob-cond:mn}. Assume that
  $0 \le k \le l \le n$ satisfy:
  \[
    X(\uniqmor_{kl})=\idmor[X(k)]\colon X(k)\to X(k)=X(l).
  \]
  It suffices to show $k=l$. By the inverseness of $C_{-}$,
  for each pair $k \le i \le j \le l$, it holds that:
  \begin{equation}
    \label{eq:lem-groth-cat-strict-ob-cond:idmor}
    X(\uniqmor_{ij})=\idmor[X(k)]\colon X(i)=X(k)\to X(k)=X(j).
  \end{equation}
  Define $\alpha\colon \deltaBra{n} \to \deltaBra{n}$ in $\mtcSimpx$ by
  \[
    \alpha(i)\coloneqq
    \begin{cases}
      k & \text{if $k \le i \le l$,} \\
      i & \text{if $0 \le i < k$ or $l < i \le n$}.
    \end{cases}
  \]
  By \cref{eq:lem-groth-cat-strict-ob-cond:idmor},
  we see $X = X\compos\alpha$; therefore
  $(\alpha, \idmor[X])\colon (\deltaBra{n},X)\to(\deltaBra{n},X)$
  is a valid morphism in $\int\catNerve^{{-},{+}}(C)$.
  By \cref{eq:lem-groth-cat-strict-ob-cond:idmor} again, we see that
  $(\alpha,\idmor[X]) \le \idmor[(\deltaBra{n},X)]$, and hence that
  $(\alpha,\idmor[X]) = \idmor[(\deltaBra{n},X)]$
  by the assumption \eqref{item:lem-groth-cat-strict-ob-cond:mn}.
  From this we obtain $l = \idmor[\deltaBra{n}](l) = \alpha(l) = k$,
  which concludes the proof.
\end{proof}

\begin{proposition} \label{prop:down-c-direct}
  The category $\Down(C)$ is direct.
\end{proposition}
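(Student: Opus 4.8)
The plan is to transport directness from $\int\catNerve^{{--},{+}}_{+}(C)$, which is direct by \cref{cor:groth-cat-direct}, along the canonical quotient functor. Let $Q\colon \int\catNerve^{{--},{+}}_{+}(C) \to \Down(C)$ be the functor obtained from the change of enriching base along $\pi_0$ (see \cref{def:down-c}); it is the identity on objects and sends each morphism to its equivalence class, so it is surjective on every hom-set. Since $\Ob\Down(C) = \Ob\int\catNerve^{{--},{+}}_{+}(C)$, it suffices, by \cref{lem:direct-cat-def-eqvt-cond}, to check that (i) the relation $\prec$ on $\Ob\Down(C)$ --- where $a\prec b$ means there is a non-identity morphism $a\to b$ in $\Down(C)$ --- is well-founded, and (ii) every morphism of $\Down(C)$ is an identity or a non-identity.

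Both will follow from one observation about lifting morphisms through $Q$. Given a morphism $f\colon a\to b$ of $\Down(C)$, pick a preimage $\tilde f\colon a\to b$ in $\int\catNerve^{{--},{+}}_{+}(C)$. As that category is direct, each of its morphisms --- $\tilde f$ included --- is an identity or a non-identity; moreover, if $\tilde f$ is a non-identity then $a\neq b$, since a direct category has no non-identity endomorphisms (an endomorphism in the underlying direct semicategory would yield a $\prec$-cycle, contradicting well-foundedness). Hence, in the first case $a=b$ and $f = Q(\tilde f) = \idmor[a]$ is an identity, while in the second case $f$ has distinct source and target and is therefore a non-identity. This establishes (ii). It also shows that any non-identity morphism $a\to b$ of $\Down(C)$ lifts to a non-identity morphism $a\to b$ of $\int\catNerve^{{--},{+}}_{+}(C)$ --- the "identity" branch of the dichotomy above would force $f=\idmor[a]$, which is excluded --- so $a\prec b$ in $\Down(C)$ implies the corresponding relation on $\Ob\int\catNerve^{{--},{+}}_{+}(C)$, which is well-founded because $\int\catNerve^{{--},{+}}_{+}(C)$ is direct.

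Since $Q$ is the identity on objects, this last implication exhibits $\prec$ on $\Ob\Down(C)$ via the identity function as a relation admitting a relation-preserving map to a well-founded relation; by the corresponding fact about well-foundedness recalled in \cref{sec:cat-prel-foundations}, $\prec$ is itself well-founded on $\Ob\Down(C)$, which is (i). With (i) and (ii) in place, \cref{lem:direct-cat-def-eqvt-cond} lets us conclude that $\Down(C)$ is direct.

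I do not anticipate a real obstacle: this is a routine transport of directness along a functor that is bijective on objects and surjective on hom-sets. The one point demanding care is keeping the implications pointed correctly --- using that $Q$ preserves identities, and that a non-identity morphism of a direct category can never be an endomorphism --- so that non-identities in $\Down(C)$ are genuinely witnessed by non-identities in $\int\catNerve^{{--},{+}}_{+}(C)$.
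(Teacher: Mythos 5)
Your proof is correct, and the overall route coincides with the paper's: transport the directness of $\int\catNerve^{{--},{+}}_{+}(C)$ (established in \cref{cor:groth-cat-direct}) along the identity-on-objects, surjective-on-hom-sets quotient functor $Q$, using \cref{lem:direct-cat-def-eqvt-cond}. Where you diverge is the justification for the key lifting fact that a non-identity of $\Down(C)$ has a non-identity preimage, i.e.\ that $Q$ reflects identities: the paper obtains this by citing \cref{lem:groth-cat-strict-ob-cond}, whose last equivalent condition says that the equivalence class of any identity is a singleton, while you derive it directly from the elementary observation that a direct category has no non-identity endomorphisms, so a non-identity preimage $\tilde f\colon a\to b$ has $a\neq b$ and hence cannot descend to an identity. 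This is a valid and arguably more self-contained shortcut---it re-derives the relevant instance of \cref{lem:groth-cat-strict-ob-cond} from \cref{cor:groth-cat-direct} alone---though the paper has \cref{lem:groth-cat-strict-ob-cond} in hand anyway for its use in \cref{lem:eqvce-down-c-downstar}. Your spelled-out verification of the morphism dichotomy and of well-foundedness (via the relation-preserving identity map on objects) is a faithful expansion of what the paper compresses into ``the result immediately follows.''
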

\begin{proof}
  By \cref{lem:groth-cat-strict-ob-cond},
  the equivalence class of any idenitity in
  $\int \catNerve^{{--},{+}}_{+}(C)$ is a singleton.
  Therefore the result immediately follows from \cref{cor:groth-cat-direct}.
\end{proof}

We conclude this section by showing the finiteness of $\Down(C)$.

\begin{lemma} \label{lem:groth-cat-finite}
  If $C$ is finite, then $\int\catNerve^{{--},{+}}_{+}(C)$ is finite.
\end{lemma}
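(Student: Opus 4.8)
The plan is to bound the size of the simplex $\deltaBra{n}$ occurring in an object $(\deltaBra{n}, X)$ of $\int\catNerve^{{--},{+}}_{+}(C)$ in terms of $\#\Ob C$, and then to count objects and morphisms as finite unions of decidable subsets of finite sets. The only step with real content is the bound; the rest is bookkeeping.

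First I would record that the inverse category $C_{-}$ has no non-identity endomorphisms: a non-identity $f\colon x \to x$ in $C_{-}$ would give $x <' x$ for the relation $<'$ of condition \eqref{item:def-reedy-cat:wf} in \cref{def:reedy-cat}, contradicting the absence of $<'$-cycles guaranteed by well-foundedness. Now if $(\deltaBra{n}, X) \in \Ob\int\catNerve^{{--},{+}}_{+}(C)$ — so $X\colon \deltaBra{n} \to C_{-}$ reflects identities — and $0 \le i \le j \le n$ satisfy $X(i) = X(j)$, then $X(\uniqmor_{ij})$ is an endomorphism in $C_{-}$, hence an identity, hence $i = j$ because $X$ reflects identities. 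Thus $i \mapsto X(i)$ is an injection $\deltaBra{n} \hookrightarrow \Ob C_{-} = \Ob C$, so $n + 1 \le \#\Ob C$. This caps the number of admissible values of $n$.

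For the objects, I would then observe that for each admissible $n$ a functor $\deltaBra{n} \to C_{-}$ is uniquely determined by the composable tuple $\bigl(X(\uniqmor_{i,i+1})\bigr)_{0 \le i < n}$ of morphisms of $C_{-}$, so such functors form a decidable subset of the finite set $(\Mor C)^{n}$; imposing the (decidable, since identity-hood is decidable in a Reedy category) condition that $X$ reflect identities cuts out a further decidable, hence finite, subset. Taking the finite union over the finitely many admissible $n$ shows $\Ob\int\catNerve^{{--},{+}}_{+}(C)$ is finite.

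For the morphisms, since there are now only finitely many objects it suffices to show every hom-set is finite. A morphism $(\deltaBra{m}, X) \to (\deltaBra{n}, Y)$ is a pair $(\alpha, \theta)$ with $\alpha$ in the finite set $\HomOf[\mtcSimpx]{\deltaBra{m}}{\deltaBra{n}}$ and $\theta\colon X \Rightarrow Y \compos \alpha$ a natural transformation; the latter embeds via its components into the finite set $\prod_{i=0}^{m} \HomOf[C]{X(i)}{Y(\alpha(i))}$, and the conditions that the components be natural and lie in $C_{+}$ are decidable (while $\alpha \in \mtcSimpx_{+}$ is automatic by \cref{lem:groth-cat-full-sub}). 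So each hom-set is a decidable subset of a finite set, whence $\Mor\int\catNerve^{{--},{+}}_{+}(C)$ is finite and the proof is done. I expect the bound $n + 1 \le \#\Ob C$ to be the crux; everything afterwards is routine, modulo the mild constructive diligence of checking that the subsets in question are decidable.
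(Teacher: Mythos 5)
Your proof is correct and follows essentially the same route as the paper's: both bound $n$ by $\#\Ob C$, count functors $\deltaBra{n}\to C_{-}$ for each admissible $n$ to see the object set is a finite union of finite sets, and exhibit each hom-set as a decidable subset of a finite product. The only difference is that you spell out the justification for the bound $n+1 \le \#\Ob C$ (via $X$ reflecting identities and $C_{-}$ having no non-identity endomorphisms), which the paper simply asserts.
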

\begin{proof}
  Since $C$ is finite, $\Mor C$ has decidable equality.
  Therefore, for any pair of objects $(\deltaBra{m}, X),(\deltaBra{n}, Y)\in\Ob\int\catNerve(C)$,
  the hom-set $\HomOf[\int\catNerve(C)]{(\deltaBra{m}, X)}{(\deltaBra{n}, Y)}$
  is a decidable subset of the finite product 
  $\HomOf[\mtcSimpx]{\deltaBra{m}}{\deltaBra{n}} \times (\Mor C)^{m+1}$ of finite sets, and hence
  is finite. Assume that $(\deltaBra{m}, X),(\deltaBra{n}, Y)\in\Ob\int\catNerve^{{--},{+}}_{+}(C)$.
  The decidability of the subset $\Mor C_{+} \subseteq \Mor C$, which we have assumed as part of the definition of
  Reedy structures, implies the decidability of the following subset, and hence its finiteness:
  \[ 
      \HomOf[\int\catNerve^{{--},{+}}_{+}(C)]{(\deltaBra{m}, X)}{(\deltaBra{n}, Y)}
      \subseteq \HomOf[\int\catNerve(C)]{(\deltaBra{m}, X)}{(\deltaBra{n}, Y)}.
  \]

  Thus it suffices to show that the set of objects of $\int\catNerve^{{--},{+}}_{+}(C)$ is finite.
  If the number of objects of $C$ is $N$, then any object $(\deltaBra{n}, X)$ of
  $\int\catNerve^{{--},{+}}_{+}(C)$ satisfies $n < N$. Since the set of functors $\deltaBra{n} \to C_{-}$,
  which $X$ is a member of, is finite, the object set in question may be seen as the disjoint union of a finite family
  of finite sets, and is therefore finite.
\end{proof}

\begin{corollary} \label{lem:down-c-finite}
  If $C$ is finite, then $\Down(C)$ is finite.
\end{corollary}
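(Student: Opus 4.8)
The plan is to combine \cref{lem:groth-cat-finite} with the description of $\Down(C)$ as the change of enriching base of $\int\catNerve^{{--},{+}}_{+}(C)$ along $\pi_0$. By \cref{def:down-c}, the category $\Down(C)$ has exactly the same set of objects as $\int\catNerve^{{--},{+}}_{+}(C)$, and each of its hom-sets is the quotient of the corresponding hom-set of $\int\catNerve^{{--},{+}}_{+}(C)$ by the equivalence relation $\sim$ of \cref{def:groth-cat-eqvce}. Assuming $C$ is finite, \cref{lem:groth-cat-finite} tells us that $\int\catNerve^{{--},{+}}_{+}(C)$ is finite, so its object set is finite and all its hom-sets are finite; it therefore only remains to check that quotienting the hom-sets by $\sim$ preserves finiteness.

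The only subtlety is the constructive one: a quotient of a finite set need not be finite unless the equivalence relation is decidable. I would resolve this using \cref{prop:groth-cat-eqvce-eqvcond}, which says that two parallel morphisms $(\alpha,\theta)$ and $(\alpha',\theta')$ of $\int\catNerve^{{--},{+}}_{+}(C)$ satisfy $(\alpha,\theta)\sim(\alpha',\theta')$ if and only if the largest elements of their respective upward unbounded intervals — which exist by \cref{lem:groth-cat-up-max} — coincide. Since $C$ is finite, $\Mor C$ has decidable equality, so equality of morphisms of $\int\catNerve^{{--},{+}}_{+}(C)$ is decidable, and hence $\sim$ is a decidable equivalence relation on each finite hom-set. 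A quotient of a finite set by a decidable equivalence relation is again finite; concretely one may identify the quotient hom-set with the set of $\sim$-maximal morphisms, which by \cref{cor:groth-cat-eqvce-class-max} meets every equivalence class exactly once and forms a decidable, hence finite, subset of the original finite hom-set.

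Putting this together, $\Down(C)$ has a finite set of objects and finite hom-sets, so it is a finite category. I expect the only place requiring care is the bookkeeping around decidability and the choice of canonical representatives for $\sim$-classes; everything else is a direct appeal to the results already established in \cref{sec:groth-cat-partial-order,sec:groth-cat-direct}.
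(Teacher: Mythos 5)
Your proposal is correct and follows essentially the same route as the paper: finiteness of $\int\catNerve^{{--},{+}}_{+}(C)$ via \cref{lem:groth-cat-finite}, then decidability of the equivalence relation $\sim$ via \cref{prop:groth-cat-eqvce-eqvcond} together with decidable equality on $\Mor C$. The additional remark identifying the quotient hom-set with the set of $\sim$-maximal representatives (using \cref{cor:groth-cat-eqvce-class-max}) is a nice explicit way to package the same observation, but it is not a different argument.
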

\begin{proof}
  The category $\Down(C)$ is obtained by taking quotients of the hom-sets of $\int\catNerve^{{--},{+}}_{+}(C)$, which
  is finite by \cref{lem:groth-cat-finite}. Therefore it suffices to check the equivalence relations on the hom-sets
  producing the quotient is decidable. For this, see \cref{prop:groth-cat-eqvce-eqvcond} and remember that the category
  $C$ has decidable equality on morphisms.
\end{proof}

\section{Functors between the constructed categories}
\label{sec:groth-cat-comparison}

In this section, we examine canonical functors between the categories
defined in \cref{sec:def-cat-down}. Here we list such functors:

\begin{remark}\label{rem:groth-cat-functors}
  By the construction of the five categories constructed in
  \cref{sec:def-cat-down}, we have the following commutative diagram of functors:
  \[
    \begin{tikzcd}
      \int\catNerve^{{--},{+}}_{+}(C) \arrow[r, hook, "\text{full}"] \arrow[d, two heads]
      & \int\catNerve^{{-},{+}}(C) \arrow[d, two heads] \arrow[r, hook]
      & \int\catNerve(C) \\
      \Down(C) \arrow[r, hook, "\text{full}"]
      & \Downstar(C) &
    \end{tikzcd}
  \]
  The two horizontal arrows in the upper row are inclusions of subcategories.
  The vertical arrows are the canonical projections: to be pedantic on categorical formalism,
  it is induced by the canonical natural transformation
  \[
    \mathrm{forget} \Rightarrow \pi_0\colon \mtcPoset \to \mtcSet.
  \]
  The lower horizontal arrow $\Down(C) \hookrightarrow \Downstar(C)$ is induced by the inclusion
  $\int\catNerve^{{--},{+}}_{+}(C) \hookrightarrow \int\catNerve^{{-},{+}}(C)$, which,
  by \cref{def:groth-cat-poset-enrich,lem:groth-cat-full-sub},
  is a $\mtcPoset$-fully faithful $\mtcPoset$-enriched functor,
  meaning that the functor is order isomorphisms in hom-posets.
  Therefore, the $\Down(C) \hookrightarrow \Downstar(C)$ may be considered as the inclusion of a full subcategory.
\end{remark}

The main contents of this section are the following two results:
\begin{itemize}
  \item The inclusion $\Down(C) \hookrightarrow \Downstar(C)$ of a full subcategory is a categorical equivalence
  (\cref{lem:eqvce-down-c-downstar}).
  \item The canonical surjective functor $\int\catNerve^{{-},{+}}(C) \twoheadrightarrow \Downstar(C)$ is a strict
  1-localization functor (\cref{prop:downstar-strict-localization}).
\end{itemize}

\begin{lemma}\label{lem:eqvce-down-c-downstar}
  The inclusion $\Down(C) \hookrightarrow \Downstar(C)$
  is a categorical equivalence.
\end{lemma}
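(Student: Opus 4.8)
The plan is to exhibit $\Down(C) \hookrightarrow \Downstar(C)$ as a fully faithful functor that is essentially surjective — or, even better, to produce an explicit retraction exhibiting it as one half of an equivalence. Since the inclusion is already fully faithful (noted in \cref{rem:groth-cat-functors}: it is $\mtcPoset$-fully faithful, hence an isomorphism on hom-sets after applying $\pi_0$), the only remaining task is essential surjectivity, i.e.\ that every object of $\Downstar(C)$ is isomorphic in $\Downstar(C)$ to one coming from $\int\catNerve^{{--},{+}}_{+}(C)$. Recall $\Downstar(C)$ has the same objects as $\int\catNerve^{{-},{+}}(C)$, namely pairs $(\deltaBra{n}, X)$ with $X\colon\deltaBra{n}\to C_{-}$; the objects of $\Down(C)$ are those in which $X$ additionally reflects identities (is conservative). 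So I must show: given any $(\deltaBra{n}, X)$, there is a conservative $X'\colon \deltaBra{k}\to C_{-}$ and an isomorphism $(\deltaBra{n},X)\cong(\deltaBra{k},X')$ in $\Downstar(C)$.

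The key construction is the following. Given $X\colon\deltaBra{n}\to C_{-}$, form the quotient of $\deltaBra{n}$ that collapses consecutive elements $i\le j$ for which $X(\uniqmor_{ij})$ is an identity; by the inverseness of $C_{-}$ this is a genuine equivalence relation, and since $C_{-}$ has decidable identities it yields a surjection $\sigma\colon\deltaBra{n}\twoheadrightarrow\deltaBra{k}$ in $\mtcSimpx_{-}$ together with a functor $X'\colon\deltaBra{k}\to C_{-}$ with $X'\compos\sigma = X$, where $X'$ reflects identities by construction. Thus $(\deltaBra{k},X')\in\Ob\int\catNerve^{{--},{+}}_{+}(C)$. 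I would then take the largest section $\epsilon\colon\deltaBra{k}\to\deltaBra{n}$ of $\sigma$ (available since morphisms of $\mtcSimpx_{-}$ are split epis and $\mtcSimpx$ is enriched in bounded lattices, exactly as used in the proof of \cref{lem:groth-cat-reedy-factor}), and consider the two morphisms
\[
  (\sigma, \idmor[X])\colon (\deltaBra{n},X)\to(\deltaBra{k},X'),\qquad
  (\epsilon, \idmor[X'])\colon (\deltaBra{k},X')\to(\deltaBra{n},X)
\]
in $\int\catNerve^{{-},{+}}(C)$ (the identity natural transformations type-check because $X'\compos\sigma=X$ and $X'\compos\epsilon = X'$ as $\sigma\compos\epsilon=\idmor$). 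Their composites are $(\sigma\compos\epsilon,\idmor)=\idmor[(\deltaBra{k},X')]$ on one side, and on the other side $(\epsilon\compos\sigma,\idmor[X])\colon(\deltaBra{n},X)\to(\deltaBra{n},X)$, which satisfies $\epsilon\compos\sigma\ge\idmor[\deltaBra{n}]$ by maximality of $\epsilon$; hence this composite is $\ge \idmor[(\deltaBra{n},X)]$ in the hom-poset, so it is $\sim$-equivalent to the identity, i.e.\ equal to the identity in $\Downstar(C)$. Therefore the images of $(\sigma,\idmor[X])$ and $(\epsilon,\idmor[X'])$ in $\Downstar(C)$ are mutually inverse isomorphisms, establishing essential surjectivity.

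Combining essential surjectivity with full faithfulness (\cref{rem:groth-cat-functors}) gives that $\Down(C)\hookrightarrow\Downstar(C)$ is a categorical equivalence. The main point requiring care — the ``hard part'' — is verifying that $\epsilon\compos\sigma\ge\idmor[\deltaBra{n}]$ really does force the composite morphism in $\Downstar(C)$ to be the identity: this is where one must unwind \cref{def:groth-cat-poset-enrich} to see that $(\epsilon\compos\sigma,\idmor[X])$ lies above $\idmor[(\deltaBra{n},X)]$ in the enriched hom-poset (the relevant commuting triangle of natural transformations is built from $X(\uniqmor)$'s that are identities on exactly the collapsed pairs, so it commutes), and then invoke the definition of $\sim$ as the symmetric-transitive closure to conclude equality after applying $\pi_0$. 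Everything else is bookkeeping with the explicit descriptions from \cref{sec:def-cat-down} and the split-epi structure of $\mtcSimpx_{-}$ already exploited in \cref{lem:groth-cat-reedy-factor}.
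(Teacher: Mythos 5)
Your proof is correct, and it takes a genuinely more elementary route than the paper's own.

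The paper proceeds abstractly: it takes the largest element $\boldsymbol\alpha$ of the equivalence class of $\idmor[(\deltaBra{m},X)]$ (guaranteed by \cref{cor:groth-cat-eqvce-class-max}), shows $\boldsymbol\alpha$ is idempotent from maximality, splits the idempotent via \cref{lem:reedy-idem-split} and the Reedy structure of \cref{prop:groth-cat-reedy}, and then needs a further non-trivial argument (again invoking the Reedy structure and \cref{lem:groth-cat-strict-ob-cond}) to verify that the resulting object lands in $\Ob\Down(C)$. You instead write down the answer directly: collapse the $\deltaBra{n}$-chain along the intervals where $X$ is identity-constant to obtain $\sigma\colon\deltaBra{n}\twoheadrightarrow\deltaBra{k}$ and a factorization $X = X'\compos\sigma$ with $X'$ manifestly conservative, and then exhibit $(\sigma,\idmor[X])$ and $(\epsilon,\idmor[X'])$ as mutually inverse in $\Downstar(C)$ using the largest section $\epsilon$. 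This sidesteps both the idempotent-splitting lemma and \cref{prop:groth-cat-reedy} entirely, needing only the combinatorics of $\mtcSimpx_{-}$ and the enrichment data of \cref{def:groth-cat-poset-enrich}. One could argue that your construction makes the direct replacement more transparent — it shows exactly which object represents the $\Downstar(C)$-isomorphism class — at the cost of having to check by hand that the relation you quotient by is convex (your appeal to inverseness of $C_{-}$ is what forces the equivalence classes to be intervals, which is the genuinely non-trivial point; plain transitivity would hold even without it). Your maps and the paper's splitting in fact agree: tracing \cref{lem:groth-cat-up-max}, the maximal element $\boldsymbol\alpha$ is precisely $(\epsilon\compos\sigma,\idmor[X])$, and its splitting is your pair. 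Both arguments are sound; yours is lower-tech and more self-contained.
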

\begin{proof}
  Since the inclusion is fully faithful, it suffices to
  explicitly construct,
  for each $(\deltaBra{m}, X) \in \Ob(\Downstar(C))$, a pair of
  an object $(\deltaBra{n}, Y) \in \Ob(\Down(C))$ and an
  isomorphism $(\deltaBra{n}, Y) \overset{\sim}{\longrightarrow} (\deltaBra{m}, X)$
  in $\Downstar(C)$. Let $(\deltaBra{m}, X) \in \Ob(\Downstar(C))$ and
  consider its identity morphism
  in $\int\catNerve^{{-},{+}}(C)$:
  \[
    \idmor[(\deltaBra{m}, X)] = (\idmor[\deltaBra{m}], \idmor[X])\colon
    (\deltaBra{m}, X) \to (\deltaBra{m}, X)\text{.}
  \]
  Let $\boldsymbol{\alpha}\colon(\deltaBra{m}, X) \to (\deltaBra{m}, X)$
  be the largest
  element of the equivalence class of $\idmor[(\deltaBra{m}, X)]$, which is
  shown to exist in \cref{cor:groth-cat-eqvce-class-max}.
  Since
  $\idmor[(\deltaBra{m}, X)] \le \boldsymbol{\alpha}$, we have
  $\boldsymbol{\alpha} \le \boldsymbol{\alpha}\compos\boldsymbol{\alpha}$.
  The maximality of $\boldsymbol{\alpha}$ and
  $\boldsymbol{\alpha} \sim \boldsymbol{\alpha}\compos\boldsymbol{\alpha}$
  together give
  $\boldsymbol{\alpha} \ge \boldsymbol{\alpha}\compos\boldsymbol{\alpha}$;
  we therefore get
  $\boldsymbol{\alpha} = \boldsymbol{\alpha}\compos\boldsymbol{\alpha}$.
  By \cref{lem:reedy-idem-split} and \cref{prop:groth-cat-reedy}, the
  idempotent $\boldsymbol{\alpha}$ has a unique
  splitting of the following form:
  \[
    \begin{tikzcd}
      (\deltaBra{m}, X)
      \arrow[rr, "\boldsymbol{\alpha}"]
      \arrow[rd, twoheadrightarrow, "\boldsymbol{\sigma}"']
      && (\deltaBra{m}, X)
      \arrow[rd, twoheadrightarrow, "\boldsymbol{\sigma}"]& \\
      & (\deltaBra{n}, Y)
      \arrow[rr, equal]
      \arrow[ru, tail, "\boldsymbol{\delta}"]
      && (\deltaBra{n},Y)
    \end{tikzcd}
  \]
  Since $\boldsymbol{\alpha}\sim\idmor$, we have
  constructed an isomorphism $(\deltaBra{m}, X)\cong(\deltaBra{n},Y)$
  in $\Downstar(C)$.

  It only remains to show that
  \[
    (\deltaBra{n}, Y) \in
    \Ob(\Down(C))=\Ob\textstyle\int \catNerve^{{--},{+}}_{+}(C).
  \]
  Let $\boldsymbol{\alpha'}\colon(\deltaBra{n}, Y)\to(\deltaBra{n}, Y)$
  be the largest element of the equivalence class of the identity
  $\idmor[(\deltaBra{n}, Y)] \in
  \HomOf[\int\catNerve^{{-},{+}}(C)]{(\deltaBra{n}, Y)}{(\deltaBra{n}, Y)}$.
  By \cref{lem:groth-cat-strict-ob-cond},
  it suffices to prove $\boldsymbol{\alpha'} = \idmor[(\deltaBra{n}, Y)]$.
  By repeating the argument above, we may take $\boldsymbol{\sigma'}$ and
  $\boldsymbol{\delta'}$ with the following commutative diagram:
  \[
    \begin{tikzcd}
      (\deltaBra{n}, Y)
      \arrow[rr, "\boldsymbol{\alpha'}"]
      \arrow[rd, twoheadrightarrow, "\boldsymbol{\sigma'}"']
      && (\deltaBra{n}, Y)
      \arrow[rd, twoheadrightarrow, "\boldsymbol{\sigma'}"]& \\
      & (\deltaBra{l}, Z)
      \arrow[rr, equal]
      \arrow[ru, tail, "\boldsymbol{\delta'}"]
      && (\deltaBra{l},Z)
    \end{tikzcd}
  \]
  Since $\idmor[(\deltaBra{n}, Y)]\le\boldsymbol{\alpha'}$,
  we have:
  \[
    \boldsymbol{\alpha}
    = \boldsymbol{\delta} \compos  \idmor[(\deltaBra{n}, Y)] \compos \boldsymbol{\sigma}
    \le \boldsymbol{\delta} \compos \boldsymbol{\alpha'}  \compos \boldsymbol{\sigma}
    = \boldsymbol{\delta} \compos  \boldsymbol{\delta'}
    \compos \boldsymbol{\sigma'} \compos \boldsymbol{\sigma}.
  \]
  From this and the maximality of $\boldsymbol{\alpha}$, we can derive:
  \[
    \boldsymbol{\delta} \compos \boldsymbol{\sigma} = \boldsymbol{\alpha}
    = \boldsymbol{\delta} \compos  \boldsymbol{\delta'}
    \compos \boldsymbol{\sigma'} \compos \boldsymbol{\sigma}.
  \]
  \cref{prop:groth-cat-reedy} implies
  $(\deltaBra{l},Z)=(\deltaBra{n},Y)$,
  $\boldsymbol{\delta}=\boldsymbol{\delta}\compos\boldsymbol{\delta'}$ and
  $\boldsymbol{\sigma}=\boldsymbol{\sigma'}\compos\boldsymbol{\sigma}$, which require
  $\boldsymbol{\delta'}=\boldsymbol{\sigma'}=\idmor[(\deltaBra{n},Y)]$.
  Hence $\boldsymbol{\alpha'} = \idmor[(\deltaBra{n},Y)]$, as desired.
\end{proof}

Next, we consider the quotient functor $\int\catNerve^{{-},{+}}(C)\to\Downstar(C)$:

\begin{proposition} \label{prop:downstar-strict-localization}
  The quotient functor $q\colon\int\catNerve^{{-},{+}}(C)\to\Downstar(C)$
  exhibits the category $\Downstar(C)$ as the strict 1-localization of $\int\catNerve^{{-},{+}}(C)$
  at $\int\catNerve^{{-},\set{\idmor}}_{-}(C)$ in \cref{def:groth-cat-reedy-def}.
\end{proposition}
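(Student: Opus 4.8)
The plan is to verify the two conditions from \cref{def:cat-1-localization} that characterize strict localizations: first, that $q$ sends every morphism in $W\coloneqq\Mor\int\catNerve^{{-},\set{\idmor}}_{-}(C)$ to an isomorphism in $\Downstar(C)$, and second, the strict universal property that any functor $G$ killing $W$ factors uniquely through $q$. Recall from \cref{def:down-c} that $\Downstar(C)$ has the same objects as $\int\catNerve^{{-},{+}}(C)$ and that $q$ is a bijection on objects and a surjection on each hom-set, identifying two parallel morphisms precisely when they are equivalent in the sense of \cref{def:groth-cat-eqvce}. So by \cref{rmk:cat-1-localization-weak-strict-equiv} (the object-bijectivity clause), it is enough to check these two conditions directly, and the uniqueness of $H$ in condition \labelcref{item:def-cat-1-localization:strict} is automatic from the surjectivity of $q$ on objects and morphisms.

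For the first condition, fix a generating morphism $(\sigma,\idmor[X\compos\sigma])\colon(\deltaBra{m},X\compos\sigma)\to(\deltaBra{n},X)$ with $\sigma\in\mtcSimpx_{-}$. Choosing the largest section $\epsilon\colon\deltaBra{n}\to\deltaBra{m}$ of $\sigma$ in the poset $\HomOf[\mtcSimpx]{\deltaBra{n}}{\deltaBra{m}}$ (as in the proof of \cref{lem:groth-cat-reedy-factor}), one has $\sigma\compos\epsilon=\idmor[\deltaBra{n}]$ and $\idmor[\deltaBra{m}]\le\epsilon\compos\sigma$, so the morphism $(\epsilon,\idmor[X\compos\sigma\compos\epsilon])=(\epsilon,\idmor[X])\colon(\deltaBra{n},X)\to(\deltaBra{m},X\compos\sigma)$ — which is a legitimate morphism of $\int\catNerve^{{-},{+}}(C)$ since its natural-transformation component is an identity — satisfies $(\sigma,\idmor)\compos(\epsilon,\idmor)=\idmor[(\deltaBra{n},X)]$ on the nose, while $(\epsilon,\idmor)\compos(\sigma,\idmor)\ge\idmor[(\deltaBra{m},X\compos\sigma)]$ and hence becomes equal to the identity after applying $q$. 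Thus $q(\sigma,\idmor)$ is an isomorphism in $\Downstar(C)$ with inverse $q(\epsilon,\idmor)$.

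For the second condition, let $G\colon\int\catNerve^{{-},{+}}(C)\to E$ send every morphism of $W$ to an isomorphism. The key point is that $G$ is then constant on $\sim$-equivalence classes: if $(\alpha,\theta)\le(\alpha',\theta')$ are parallel morphisms, then — using the lifting description in \cref{lem:groth-cat-up-lift} — the inequality is witnessed by a commuting triangle whose ``vertical'' leg is built from components of $C_{-}$; concretely, $(\alpha',\theta')=(\beta,\phi)\compos(\alpha,\theta)$ where the factor between the two is (a morphism of the form) $(\idmor,\psi)$ with each $\psi_i\in C_{-}$, and any such morphism lies in $\int\catNerve^{{-},\set{\idmor}}_{-}(C)$ — wait, more carefully: a morphism whose $\mtcSimpx$-component is an identity and whose natural-transformation component has all legs in $C_{-}$ need not literally be a $\Gamma_{-}$-generator, but one shows directly that $G$ inverts it, e.g.\ by noting it equals a composite of such generators after splitting via \cref{lem:reedy-idem-split}, or by a direct argument that $G$ applied to it is both split mono and split epi. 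Hence $G$ inverts every morphism witnessing the order relation, so $G(\alpha,\theta)=G(\alpha',\theta')$ whenever $(\alpha,\theta)\sim(\alpha',\theta')$ by passing through the symmetric–transitive closure. Consequently $G$ factors set-theoretically through the quotient maps on hom-sets, and since $q$ is the identity on objects this defines a functor $H\colon\Downstar(C)\to E$ with $H\compos q=G$; functoriality of $H$ is inherited from that of $G$ because $q$ is a full, surjective-on-objects functor.

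\textbf{Main obstacle.} The delicate step is the claim that $G$ inverts every morphism $(\idmor,\psi)\colon(\deltaBra{m},X)\to(\deltaBra{m},X')$ with $\psi_i\in C_{-}$ for all $i$ — these are the morphisms that witness the generating inequalities but are not themselves among the chosen generators $W$. The cleanest route is probably to observe that such a morphism factors as $(\sigma,\idmor)$ followed by an isomorphism-after-$q$ using \cref{lem:reedy-idem-split,prop:groth-cat-reedy} exactly as in the proof of \cref{lem:eqvce-down-c-downstar}, so that $G$ of it is forced to be invertible; alternatively one enlarges $W$ harmlessly, noting the localization of $\int\catNerve^{{-},{+}}(C)$ at $W$ coincides with its localization at the larger class of all $(\idmor,\psi)$ with $C_{-}$-legs, since each such $\psi$ is a composite of $\Gamma_{-}$-generators after a section is inserted. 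Getting this bookkeeping right — reconciling the combinatorial generators $W$ with the order-theoretic presentation of $\sim$ from \cref{sec:groth-cat-partial-order} — is where the real work lies; everything else is formal.
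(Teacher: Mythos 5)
The first part of your proposal (exhibiting the largest section as an inverse modulo $\sim$) and the reduction to ``$G$ respects $\sim$'' both match the paper. The gap is in the step where you claim that an inequality $(\alpha,\theta)\le(\alpha',\theta')$ between parallel morphisms is witnessed by a factorization $(\alpha',\theta')=(\beta,\phi)\compos(\alpha,\theta)$ with the factor of the form $(\idmor,\psi)$, $\psi_i\in C_{-}$. No such factorization exists: the composition law forces $\beta\compos\alpha=\alpha'$, which fails whenever $\alpha$ identifies indices that $\alpha'$ separates (a perfectly legal instance of $\alpha\le\alpha'$, e.g.\ $\alpha$ constant and $\alpha'=\idmor[\deltaBra{m}]$), and with $\beta=\idmor$ it collapses to $\alpha=\alpha'$. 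Moreover, by \cref{def:groth-cat-ladder-weak} a morphism of $\int\catNerve^{{-},{+}}(C)$ must have its natural-transformation legs in $C_{+}$, so a pair $(\idmor,\psi)$ with $\psi_i\in C_{-}$ is not even a morphism of this category unless $\psi$ is an identity. The $C_{-}$-legged transformation $Y\whiskl\uniqmor$ appearing in \cref{def:groth-cat-poset-enrich} is enrichment data, not an arrow; neither \cref{lem:reedy-idem-split} nor a split-mono/split-epi argument will produce one, so your ``main obstacle'' paragraph is aimed at inverting a morphism that is not there.

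The paper sidesteps this entirely with a degeneracy trick that your sketch does not contain. Given $(\alpha,\theta)\ge(\alpha',\theta')$, it interpolates coordinate-by-coordinate through a chain $(\alpha^k,\theta^k)$ and, for each $k$, constructs a single morphism $(\tilde\alpha^k,\tilde\theta^k)\colon(\deltaBra{m+1},X\compos\sigma^m_k)\to(\deltaBra{n},Y)$ whose precompositions with the two sections $(\delta^{m+1}_k,\idmor[X])$ and $(\delta^{m+1}_{k+1},\idmor[X])$ of the $W$-generator $(\sigma^m_k,\idmor[X\compos\sigma^m_k])$ recover $(\alpha^k,\theta^k)$ and $(\alpha^{k+1},\theta^{k+1})$ respectively. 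Because $G$ inverts $(\sigma^m_k,\idmor)$, it is forced to send both sections to the same (unique) inverse, hence to identify the two precomposites. The invertibility being exploited is that of a degeneracy in $\mtcSimpx_{-}$ together with its two coherent $\mtcSimpx_{+}$-sections, not that of any $C_{-}$-legged morphism; that is the missing idea, and it cannot be recovered by the bookkeeping maneuvers you propose.
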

\begin{proof}
  We first show that the functor $q$ sends every morphism in $\int\catNerve^{{-},\set{\idmor}}_{-}(C)$
  to an isomorphism in $\Downstar(C)$.
  Let $(\sigma,\idmor[X\compos\sigma])\colon
  (\deltaBra{m},X\compos\sigma)\to(\deltaBra{n},X)$ be a morphism in
  $\int\catNerve^{{-},\set{\idmor}}_{-}(C)$.
  Using $\sigma\in\Mor\mtcSimpx_{-}$, which follows from the definition
  of the wide subcategory $\int\catNerve^{{-},\set{\idmor}}_{-}(C)$,
  take the largest section $\delta\colon\deltaBra{n}\to\deltaBra{m}$ of
  $\sigma\colon\deltaBra{m}\to\deltaBra{n}$.
  We easily calculate to see that
  $(\delta,\idmor[X])\colon(\deltaBra{n},X)=(\deltaBra{n},X\compos\sigma\compos\delta)
  \to(\deltaBra{m},X\compos\sigma)$
  is a legitimate morphism in $\int\catNerve^{{-},{+}}(C)$ and is a section
  of $(\sigma,\idmor[X\compos\sigma])$. In addition, we have:
  \[
    \idmor[(\deltaBra{m},X\compos\sigma)]
    =(\idmor[\deltaBra{m}],\idmor[X\compos\sigma])
    \le(\delta\compos\sigma,\idmor[X\compos\sigma]) =
    (\delta,\idmor[X])\compos(\sigma,\idmor[X\compos\sigma]).
  \]
  Thus, $q(\sigma,\idmor[X\compos\sigma])$ and $q(\delta,\idmor[X])$ are mutual
  inverses in $\Downstar(C)$, as desired.

  Next, we address the conditional unique factorization through $q$.
  Assume that a functor $F\colon\int\catNerve^{{-},\set{\idmor}}_{-}(C)\to D$ sends
  every morphism in $\int\catNerve^{{-},\set{\idmor}}_{-}(C)$ to an isomorphism in $D$.
  Since $q$ is a quotient functor,
  it suffices to prove that $F$ respects the equivalence
  relations on hom-sets: explicitly, $F(\alpha,\theta)=F(\alpha',\theta')$ for any
  parallel pair of morphisms
  $(\alpha,\theta)\ge(\alpha',\theta')\colon (\deltaBra{m},X)\to(\deltaBra{n},Y)$.
  Take any such parallel pair. To facilitate the proof, we construct a sequence
  of morphisms:
  \begin{multline*}
    (\alpha,\theta)=(\alpha^0,\theta^0)\ge(\alpha^1,\theta^1)\ge
    \dotsb\ge(\alpha^m,\theta^m)\ge(\alpha^{m+1},\theta^{m+1}) = (\alpha',\theta')\\
    \colon (\deltaBra{m},X)\to(\deltaBra{n},Y).
  \end{multline*}
  Specifically, for each $0\le k \le m+1$, the $k$-th entry of the sequence is
  given by:
  \begin{align*}
    \alpha^k(i)
    &\coloneqq
      \begin{cases}
        \alpha'(i) & \text{if } 0 \le i < k\text{, and} \\
        \alpha(i) & \text{if } k \le i \le m;
      \end{cases}\\
    \left(\theta^k\right)_i
    &\coloneqq
      \begin{cases}
        \theta'_i &\text{if } 0 \le i < k\text{, and} \\
        \theta_i &\text{if } k \le i \le m.
      \end{cases}
  \end{align*}

  Now, we need only demonstrate $F(\alpha^k,\theta^k)=F(\alpha^{k+1},\theta^{k+1})$
  for every $0\le k \le m$.
  Let the morphisms $\sigma^m_k\colon \deltaBra{m+1} \to \deltaBra{m}$ and
  $\delta^{m+1}_k,\delta^{m+1}_{k+1}\colon \deltaBra{m}\to\deltaBra{m+1}$
  in $\mtcSimpx$ be those from the usual convention.
  By assumption, the functor $F$ sends the morphism
  \[
    (\sigma^m_k,\idmor[X\compos\sigma^m_k])
    \colon (\deltaBra{m+1}, X\compos\sigma^m_k) \to (\deltaBra{m}, X)
  \]
  to an isomorphism. Since the two morphisms
  \[
    (\delta^{m+1}_k,\idmor[X]),(\delta^{m+1}_{k+1},\idmor[X])
    \colon (\deltaBra{m}, X) \to (\deltaBra{m+1}, X\compos\sigma^m_k)
  \]
  are both
  sections of $(\sigma^m_k,\idmor[X\compos\sigma^m_k])$,
  their images by $F$ coincide.
  Define the morphism
  $(\tilde{\alpha}^k,\tilde{\theta}^k)\colon(\deltaBra{m+1}, X\compos\sigma^m_k)
  \to(\deltaBra{n}, Y)$
  by:
  \begin{align*}
    \tilde{\alpha}^k(i)
    &\coloneqq
      \begin{cases}
        \alpha'(i) & \text{if } 0 \le i \le k\text{, and} \\
        \alpha(i-1) & \text{if } k < i \le m+1;
      \end{cases}\\
    {(\tilde{\theta}^k)}_i
    &\coloneqq
      \begin{cases}
        \theta'_i &\text{if } 0 \le i \le k\text{, and} \\
        \theta_{i-1} &\text{if } k < i \le m+1.
      \end{cases}
  \end{align*}
  We see $(\tilde{\alpha}^k,\tilde{\theta}^k)\compos(\delta^{m+1}_l,\idmor[X])
  = (\alpha^l,\theta^l)$ for $l=k, k+1$; therefore we get the following, as required:
  \begin{align*}
    &F(\alpha^k,\theta^k)\\
    &=F(\tilde{\alpha}^k,\tilde{\theta}^k)\compos F(\delta^{m+1}_k,\idmor[X])\\
    &=F(\tilde{\alpha}^k,\tilde{\theta}^k)\compos F(\delta^{m+1}_{k+1},\idmor[X])\\
    &=F(\alpha^{k+1},\theta^{k+1}).
  \end{align*}
\end{proof}

Now, we add an obvious corollary to the above proposition:
we can consider both $\Down(C)$ and $\Downstar(C)$
as the localization of $\int\catNerve^{{-},{+}}(C)$ at
$\int\catNerve^{{-},\set{\idmor}}_{-}(C)$:

\begin{corollary}\label{cor:down-weak-localization}
  Write $q\colon \int\catNerve^{{-},{+}}(C)\to\Downstar(C)$ for the canonical
  projection, and let $r\colon \Downstar(C)\to\Down(C)$ stand for the quasi-inverse of
  the inclusion $\Down(C)\hookrightarrow\Downstar(C)$, constructed in
  \cref{lem:eqvce-down-c-downstar}. Then the pairs
  $(\Downstar(C), q)$ and $(\Down(C), r\compos q)$ are both weak 1-localizations
  of $\int\catNerve^{{-},{+}}(C)$ at
  $\int\catNerve^{{-},\set{\idmor}}_{-}(C)$.
\end{corollary}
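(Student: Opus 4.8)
The plan is to deduce both assertions directly from results already in hand, using the precise comparison between weak and strict $1$-localizations recorded in \cref{rmk:cat-1-localization-weak-strict-equiv}, so that no fresh verification of the defining conditions of \cref{def:cat-1-localization} is needed.

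For the pair $(\Downstar(C), q)$ I would simply observe that \cref{prop:downstar-strict-localization} already shows $q$ exhibits $\Downstar(C)$ as the \emph{strict} $1$-localization of $\int\catNerve^{{-},{+}}(C)$ at $\int\catNerve^{{-},\set{\idmor}}_{-}(C)$, and that every strict $1$-localization is a weak one by \cref{rmk:cat-1-localization-weak-strict-equiv}. This case is therefore immediate.

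For the pair $(\Down(C), r\compos q)$ I would invoke the criterion from \cref{rmk:cat-1-localization-weak-strict-equiv}: a pair $(D, F)$ is a weak $1$-localization of a category at $W$ whenever there exist a strict $1$-localization $(D', F')$ at $W$, an equivalence $H\colon D'\overset{\sim}{\to} D$, and a natural isomorphism $H\compos F'\cong F$. I take $D' = \Downstar(C)$, $F' = q$ (the strict localization from \cref{prop:downstar-strict-localization}), $H = r$, $D = \Down(C)$, and $F = r\compos q$; the required natural isomorphism is then the identity $r\compos q = r\compos q$. The one point to record is that $r$ is genuinely an equivalence of categories: by construction it is a quasi-inverse of the inclusion $\Down(C)\hookrightarrow\Downstar(C)$, which is a categorical equivalence by \cref{lem:eqvce-down-c-downstar}, and a quasi-inverse of an equivalence is itself an equivalence.

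I do not anticipate any real obstacle: the corollary is a bookkeeping consequence of \cref{prop:downstar-strict-localization}, \cref{lem:eqvce-down-c-downstar}, and \cref{rmk:cat-1-localization-weak-strict-equiv}. The only mild subtleties are recalling that a quasi-inverse of an equivalence is an equivalence, and that the two notions of localization relate exactly as in the remark; neither requires new argument, so the proof will be short.
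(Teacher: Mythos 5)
Your proposal is correct and takes exactly the same route as the paper, which proves the corollary by appealing to \cref{prop:downstar-strict-localization} and \cref{rmk:cat-1-localization-weak-strict-equiv}; you have simply spelled out the bookkeeping that the paper leaves implicit. The only thing you add beyond the paper's one-line proof is the (correct and worth stating) observation that $r$ is an equivalence because it is a quasi-inverse of the equivalence from \cref{lem:eqvce-down-c-downstar}, and that the natural isomorphism $H\compos F'\cong F$ required by the remark can be taken to be the identity.
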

\begin{proof}
  Follows from \cref{prop:downstar-strict-localization,rmk:cat-1-localization-weak-strict-equiv}.
\end{proof}

\section{The proof of 1-localization}
\label{sec:down-last-localiz}

Now in this section, we compare $\Down(C)$ and $C$.
We first construct the functor that compares
the two categories:

\begin{definition}[The last component functor]
  \label{def:down-fctr-last}
  The following describes three functors $\int\catNerve(C)\to C$,
  $\int\catNerve^{{-},{+}}(C)\to C$, and $\int\catNerve^{{--},{+}}_{+}(C)\to C$,
  which will all be denoted commonly by $\last$:
  \begin{alignat*}{2}
    \last(\deltaBra{n},X)
    &\coloneqq X(n)
    &\quad&\text{for }
            (\deltaBra{n},X)\text{ an object of the domain};\\
    \last(\alpha,\theta)
    &\coloneqq Y(\uniqmor_{\alpha(m),n})\compos\theta_m
    &&\text{for }(\alpha,\theta)\colon (\deltaBra{m},X)\to(\deltaBra{n},Y).
  \end{alignat*}
  These functors respect the equivalence relations on the hom-sets of the domain categories,
  so that they induce the functors $\Down(C) \to C$ and $\Downstar(C) \to C$. These two
  will also be denoted by $\last$, abusing the notation. When we need to distinguish
  the five functors above, we write $\last[\Gamma]\colon \Gamma \to C$.
\end{definition}

Our purpose is to prove that $\last$ presents $C$ as a localization of $\Down(C)$.
Therefore we will also need the class of weak equivalences:

\begin{definition}[$\last$-weak equivalence]
  \label{def:down-weq-last}
  Let $\Gamma$ denote one of the five categories $\int\catNerve(C)$,
  $\int\catNerve^{{-},{+}}(C)$, $\int\catNerve^{{--},{+}}_{+}(C)$,
  $\Downstar(C)$, and $\Down(C)$. A morphism in $\Gamma$ is said to be a
  $\last$-weak equivalence (in $\Gamma$) if $\last\colon \Gamma \to C$ maps it to some
  identity in $C$. We will write:
  \[
    \weqlast = \weqlast[\Gamma] \coloneqq
    \set{\boldsymbol{\alpha}\in\Mor\Gamma}[{\last({\boldsymbol{\alpha})=\idmor}}]
    \subseteq \Mor \Gamma
  \]
  for the collection of $\last$-weak equivalences.
\end{definition}

The main theorem of this section is the following one:

\begin{theorem}
  \label{thm:down-last-localiz}
  Let $\Gamma$ be one of the four categories $\int\catNerve(C)$,
  $\int\catNerve^{{-},{+}}(C)$, $\Down(C)$, and $\Downstar(C)$.
  The category $C$ and the functor $\last[\Gamma]$ in \cref{def:down-fctr-last} constitute
  a weak 1-localization at $\last$-weak equivalences.
\end{theorem}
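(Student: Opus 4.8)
The plan is to handle $\Gamma=\int\catNerve(C)$ and $\Gamma=\int\catNerve^{{-},{+}}(C)$ directly, and to reduce the cases $\Gamma=\Downstar(C)$ and $\Gamma=\Down(C)$ to the case $\Gamma=\int\catNerve^{{-},{+}}(C)$. The reduction rests on a transitivity principle for weak $1$-localizations, which I would prove first: if $F\colon A\to B$ exhibits $B$ as a weak $1$-localization of $A$ at $S$ and $G\colon B\to D$ is a functor for which $G\compos F$ exhibits $D$ as a weak $1$-localization of $A$ at $T$ with $S\subseteq T$, then $G$ exhibits $D$ as a weak $1$-localization of $B$ at $\set{F(t)}[t\in T]$. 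This follows from \cref{def:cat-1-localization} by cancellation: $\backwardinduce{F}$ and $\backwardinduce*{G\compos F}=\backwardinduce{F}\compos\backwardinduce{G}$ are fully faithful, hence so is $\backwardinduce{G}$, and one identifies the essential image of $\backwardinduce{G}$ with the functors inverting $\set{F(t)}[t\in T]$. I would apply this with $F=q\colon\int\catNerve^{{-},{+}}(C)\to\Downstar(C)$ (a strict, hence weak, $1$-localization at $\int\catNerve^{{-},\set{\idmor}}_{-}(C)$ by \cref{prop:downstar-strict-localization}), $G=\last[\Downstar(C)]$ and $T=\weqlast[\int\catNerve^{{-},{+}}(C)]$: here $S\subseteq T$ because every generating morphism $(\sigma,\idmor[X\compos\sigma])$ of $\int\catNerve^{{-},\set{\idmor}}_{-}(C)$ has $\sigma$ surjective, so $\sigma$ preserves top elements and $\last$ sends the morphism to an identity, while $G\compos F=\last[\int\catNerve^{{-},{+}}(C)]$ by \cref{def:down-fctr-last}. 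Since $q$ is surjective on morphisms, $q(\weqlast[\int\catNerve^{{-},{+}}(C)])=\weqlast[\Downstar(C)]$, so the principle yields the case $\Gamma=\Downstar(C)$; the case $\Gamma=\Down(C)$ then follows from the categorical equivalence $\Down(C)\hookrightarrow\Downstar(C)$ of \cref{lem:eqvce-down-c-downstar}, which matches up the $\last$-weak equivalences, since a weak $1$-localization remains one after replacing the source by an equivalent category (one may also rerun the transitivity argument with $F=r\compos q$ via \cref{cor:down-weak-localization}).

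For $\Gamma=\int\catNerve(C)$ the point is that $\last$ has a fully faithful right adjoint, namely $i\colon C\to\int\catNerve(C)$, $c\mapsto(\deltaBra 0,c)$. Indeed a morphism $(\deltaBra m,X)\to(\deltaBra 0,c)$ consists of the unique map $\deltaBra m\to\deltaBra 0$ together with a natural transformation from $X$ to the constant diagram at $c$, and the latter is uniquely determined by its component $X(m)\to c$ at the terminal object of $\deltaBra m$; this gives a bijection $\HomOf[\int\catNerve(C)]{(\deltaBra m,X)}{(\deltaBra 0,c)}\cong\HomOf[C]{\last(\deltaBra m,X)}{c}$ natural in both variables, and visibly $\last\compos i=\idmor[C]$. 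Thus $(C,\last)$ is a reflective localization, so $\last$ exhibits $C$ as a weak $1$-localization of $\int\catNerve(C)$ at $W_{\max}\coloneqq\set{g\in\Mor\int\catNerve(C)}[\last(g)\text{ is an isomorphism}]$ (a standard fact about reflective localizations, together with \cref{rmk:cat-1-localization-weak-strict-equiv}). To descend from $W_{\max}$ to $\weqlast[\int\catNerve(C)]$ it suffices to check that a functor out of $\int\catNerve(C)$ inverts $W_{\max}$ if and only if it inverts $\weqlast[\int\catNerve(C)]$. One direction is immediate since $\last$ sends $\weqlast$ to identities; for the other, given $G$ inverting $\weqlast$ and $g$ with $\last(g)$ an isomorphism, note that the unit $\eta_{(\deltaBra m,X)}\colon(\deltaBra m,X)\to i\last(\deltaBra m,X)$ is a $\last$-weak equivalence, so in the naturality square $\eta\compos g=i\last(g)\compos\eta$ the morphisms $G(\eta)$ are invertible and $G(i\last(g))$ is invertible (as $i$ preserves isomorphisms), forcing $G(g)$ to be invertible. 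This settles $\Gamma=\int\catNerve(C)$.

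The remaining case $\Gamma=\int\catNerve^{{-},{+}}(C)$ is the crux, and here $\last$ carries no such adjoint, so a direct argument is needed. I would compare $\int\catNerve^{{-},{+}}(C)$ with $\int\catNerve(C)$ along the (non-full) inclusion $j$, which satisfies $\last[\int\catNerve(C)]\compos j=\last[\int\catNerve^{{-},{+}}(C)]$ and $\weqlast[\int\catNerve^{{-},{+}}(C)]=j^{-1}(\weqlast[\int\catNerve(C)])$, and show that $j$ induces an equivalence between the localizations of the two categories at their respective $\last$-weak equivalences; together with the previous paragraph this gives the result. Essential surjectivity after localization is easy: each chain $(\deltaBra n,Y)$ of $\int\catNerve(C)$ admits a $\last$-weak equivalence $(\deltaBra n,Y)\to(\deltaBra n,\mathrm{const}_{Y(n)})$ onto the constant chain on its last object, which already lies in $\int\catNerve^{{-},{+}}(C)$. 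The hard part---and the step I expect to be the main obstacle---is full faithfulness after localization: one must show that a general ladder rung appearing in a morphism of $\int\catNerve(C)$ between two $C_{-}$-chains can be rectified, up to $\last$-weak equivalences, into a rung lying in $C_{+}$, by splicing in the $C_{-}$-part of its $(C_{-},C_{+})$-factorization (which becomes invertible in the localization) as a new vertex of the source chain, and that this rectification is essentially unique. This is where the combinatorics of the skew ladders developed in \cref{sec:def-cat-down,sec:groth-cat-partial-order} has to be reconciled with the uniqueness of $(C_{-},C_{+})$-factorization, and is the only place in the proof of \cref{thm:down-last-localiz} where the Reedy structure of $C$ enters in full force.
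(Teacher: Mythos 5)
Your overall architecture is sound and your transitivity principle for weak $1$-localizations is correct, as is your treatment of $\Gamma=\int\catNerve(C)$ via the fully faithful right adjoint $i\colon c\mapsto(\deltaBra{0},c)$ (this is exactly \cref{lem:groth-tot-last-localiz}). Your reduction of $\Downstar(C)$ and $\Down(C)$ to $\int\catNerve^{{-},{+}}(C)$ runs the paper's reduction backwards: the paper instead proves the $\Downstar(C)$ case directly and derives $\int\catNerve^{{-},{+}}(C)$ by the complementary half of the same transitivity principle (a strict localization followed by a weak one is weak, using \cref{prop:downstar-strict-localization,lem:down-groth-last-compat}). Both directions work, and the choice of which corner to prove first is largely a matter of convenience.

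However, there is a genuine gap at the crux, which you have correctly flagged but not filled. The entire content of the theorem resides in establishing one of the four cases directly, and you offer only a sketch: you propose to show that the inclusion $j\colon\int\catNerve^{{-},{+}}(C)\hookrightarrow\int\catNerve(C)$ induces an equivalence between the localizations, dispose of essential surjectivity via the $\last$-weak equivalence $(\deltaBra{n},Y)\to(\deltaBra{n},\mathrm{const}_{Y(n)})$, and then acknowledge that full faithfulness after localization is "the main obstacle" without carrying it out. This is precisely the step the paper does work to establish: \cref{prop:downstar-last-localiz-factor-exist} builds the factoring functor $\tilde{F}$ explicitly (by separately defining $\tilde{F}_{-}$ and $\tilde{F}_{+}$ on the two halves of the Reedy structure and checking compatibility via the commutative diagrams \eqref{eq:lem-downstar-last-localiz-factor-exist:degen-functorial-1} and \eqref{eq:lem-downstar-last-localiz-factor-exist:funct-welldef-3}), and \cref{lem:last-localiz-whisker-bij} separately establishes the uniqueness of natural transformations, using a case split along $C_{-}$ and $C_{+}$ together with the decidability of identities in a Reedy category and \cref{lem:down-groth-last-decp}. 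The choice of $\Downstar(C)$ is what makes the naturality and functoriality checks manageable, because the hom-set quotient has already identified the shuffled ladders that would otherwise obstruct commutativity of the relevant diagrams. Your alternative route (full faithfulness of the comparison along $j$) is a legitimate idea, but until that step is supplied, the proof is incomplete in exactly the place where the Reedy hypothesis and the unique $(C_{-},C_{+})$-factorization bear the load.
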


Notice the \emph{four} in the statement of the theorem; we have one counterexample:

\begin{remark}
  The functor $\last\colon\int\catNerve^{{--},{+}}_{+}(C)\to C$ is \emph{not}
  in general a 1-lo\-cal\-i\-za\-tion. For a simple counterexample, define $C$ to be
  the poset $\set{0 \le 2 \le 1}$ equipped with the Reedy structure
  $(C_{-},C_{+})$ determined by $g\coloneqq\uniqmor_{21} \in C_{-}$,
  $f\coloneqq\uniqmor_{02}\in C_{+}$, and $g\compos f=\uniqmor_{01} \in C_{+}$.
  Let $C'$ be the category obtained by freely adjoining $h\colon 0 \to 1$ to $C$.
  In diagram:
  \begin{align*}
    C&\coloneqq\left\{
      \begin{tikzcd}[ampersand replacement=\&]
        \&2 \arrow[d, "g"{name=MID}, two heads]\\
        \&1 \\
        0 \arrow[uur, "f", bend left, tail]
        \arrow[phantom, to=MID, "\circlearrowleft"{pos=0.55}, bend left=20]
        \arrow[ur, "g\compos f"', tail] \&
      \end{tikzcd}
    \right\}; &
    C'&\coloneqq\left\{
      \begin{tikzcd}[ampersand replacement=\&]
        \&2 \arrow[d,"g"{name=MID}]\\
        \&1\\
        0\arrow[uur, "f", bend left] \arrow[ur] \arrow[ur, "h"', bend right=50]
        \arrow[phantom, to=MID, "\circlearrowleft"{pos=0.55}, bend left=20]
        \arrow[ur, phantom, "\circlearrowleft", "\bigtimes", bend right=25] \&
      \end{tikzcd}
    \right\}.
  \end{align*}
  Let $\mathrm{incl}\colon C\hookrightarrow C'$ be the inclusion.
  Let us define a functor $F\colon\int\catNerve^{{--},{+}}_{+}(C)\to C'$.
  On objects, we set: 
  \[ F(\deltaBra{n},X)\coloneqq X(n) = \operatorname{incl}(\last(\deltaBra{n},X)). \]
  Let $(\alpha,\theta)\colon(\deltaBra{m},X)\to(\deltaBra{n},Y)$ be a morphism in
  $\int\catNerve^{{--},{+}}_{+}(C)$. Then we set:
  \[ F(\alpha,\theta) \coloneqq \begin{cases}
      h & \text{if } m=0,\,X(0) = 0,\, Y(\alpha(0))=1\text{, and } \theta_0 = g\compos f; \\
      \operatorname{incl}(\last(\alpha,\theta)) & \text{otherwise}.
    \end{cases} \]
  Then $F$ is a well-defined functor and sends all the $\last$-weak equivalences to isomorphisms.
  However it does not factor through $\last$ even up to natural isomorphism;
  indeed, $F$ maps the parallel pair of morphisms
  \[ (\delta^1_1, f), (\delta^1_0, g\compos f)\colon(\deltaBra{0},0)\to
  (\deltaBra{1},2\overset{g}{\twoheadrightarrow}1) \]
  to an unequal parallel pair $g\compos f$ and $h$,
  while $\last$ sends them to an equal morphism $g\compos f$. This establishes a counterexample:
  $\last\colon\int\catNerve^{{--},{+}}_{+}(C)\to C$ is not a 1-localization.
\end{remark}

The rest of this section is devoted to the proof of \cref{thm:down-last-localiz}.
The demonstration is broken down to several lemmas, which will be wrapped up in
\cpageref{prf:thm-down-last-localiz}.
We begin with the most trivial two of these auxiliary propositions:

\begin{lemma}\label{lem:down-groth-last-respects-weq}
  The five functors denoted by $\last$ in \cref{def:down-fctr-last} send
  $\last$-weak equivalences to isomorphisms. In fact, $\last$-weak equivalences
  are exactly the morphisms mapped to isomorphisms by $\last$.
\end{lemma}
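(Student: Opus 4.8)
The plan is the following. The first assertion is immediate: by \cref{def:down-weq-last} a $\last$-weak equivalence in $\Gamma$ is precisely a morphism that $\last$ sends to an identity of $C$, and identities are isomorphisms. Hence the only real content is the converse inclusion, and since it concerns only the image morphism $\last(\boldsymbol{\alpha})$ of $C$, I would reduce the whole statement to a single general fact, recorded as a sub-lemma: \emph{in every Reedy category each isomorphism is an identity}. Granting this, whenever $\last(\boldsymbol{\alpha})$ is an isomorphism in $C$ it must already be an identity, so $\boldsymbol{\alpha}\in\weqlast$; and this argument is uniform in the five functors $\last$, all of which land in the fixed Reedy category $C$.

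For the sub-lemma I would argue directly with unique Reedy factorization. Let $f\colon x\to y$ be an isomorphism with inverse $g\colon y\to x$. First take the $(C_{-},C_{+})$-factorizations $f=\delta_f\compos\sigma_f$ and $g=\delta_g\compos\sigma_g$, as well as the factorization $\sigma_g\compos\delta_f=\delta'\compos\sigma'$. Then $\idmor[x]=g\compos f=(\delta_g\compos\delta')\compos(\sigma'\compos\sigma_f)$ is a $(C_{-},C_{+})$-factorization of $\idmor[x]$, and comparing it with the trivial factorization $\idmor[x]=\idmor\compos\idmor$ the uniqueness clause of \cref{def:reedy-cat} forces $\delta_g\compos\delta'$ and $\sigma'\compos\sigma_f$ to be identities. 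Since $C_{+}$ is direct and $C_{-}$ is inverse, the pertinent relations on objects are well-founded and hence admit no cycles; thus a composite of two composable morphisms of $C_{+}$, respectively of $C_{-}$, can be an identity only when each factor is one. In particular $\sigma_f=\idmor[x]$ and $\delta_g=\idmor[x]$, so $f=\delta_f$ lies in $\Mor C_{+}$ and $g=\sigma_g$ lies in $\Mor C_{-}$, with $g\colon y\to x$.

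To conclude, I would examine $\idmor[y]=f\compos g$: here $g\in\Mor C_{-}$ is applied first and $f\in\Mor C_{+}$ second, so $f\compos g$ is itself a $(C_{-},C_{+})$-factorization of $\idmor[y]$; comparing it once more with $\idmor[y]=\idmor\compos\idmor$ and using uniqueness yields $x=y$ and $f=g=\idmor[y]$. Therefore $f$ is an identity, which establishes the sub-lemma and with it the lemma.

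I do not expect a genuine obstacle. The only care needed is bookkeeping — verifying that each composite to which the uniqueness clause of \cref{def:reedy-cat} is applied really is a $(C_{-},C_{+})$-factorization — together with using the no-cycle property of well-founded relations (as listed among the facts in \cref{sec:cat-prel-foundations}) rather than any classically flavoured argument, so that the reasoning stays inside the constructive fragment used in this part of the paper.
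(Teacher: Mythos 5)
Your proposal is correct and fills in, carefully and constructively, exactly what the paper's one-line proof leaves implicit: the paper simply says the converse ``follows from the assumption that $C$ is a Reedy category,'' tacitly invoking the standard fact that isomorphisms in a Reedy category are identities, whereas you actually prove that fact from \cref{def:reedy-cat} via unique $(C_{-},C_{+})$-factorization, the dichotomy of morphisms into identity/non-identity, and the no-cycle consequence of well-foundedness. This is the same underlying argument the paper relies on, so the approaches agree; you have just supplied the sub-lemma's proof. One small remark on presentation: when you conclude from $\sigma'\compos\sigma_f=\idmor$ that both factors are identities, it is worth being explicit that the dichotomy clause of the Reedy definition is what licenses the case split (both non-identity versus at least one identity) constructively, since that is what makes the no-cycle argument go through without classical reasoning.
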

\begin{proof}
  The first statement follows from the definition of $\last$-weak equivalences. The second
  follows from the assumption that $C$ is a Reedy category.
\end{proof}

\begin{lemma}\label{lem:down-groth-last-compat}
  The functors $\last$ and the families $\weqlast$ are compatible with the
  following strictly commutative diagram of functors. Explicitly,
  for any arrow $F\colon\Gamma\to\Gamma'$ depicted in the diagram, we have
  ${\last[\Gamma']}\compos F = \last[\Gamma]$
  and $F^{-1}(\weqlast[\Gamma']) = \weqlast[\Gamma]$.
  \begin{equation}
    \label{eq:diagram-down-groth-cat}
    \begin{tikzcd}
      \int \catNerve^{{--},{+}}_{-}(C)
      \arrow[r, hook] \arrow[d, "\mathrm{quotient}"',two heads]
      & \int \catNerve^{{-},{+}}(C)
      \arrow[r, hook] \arrow[d, "\mathrm{quotient}",two heads]
      & \int \catNerve(C) \\
      \Down(C)
      \arrow[r, hook, "\sim"]
      &\Downstar(C)
      &
    \end{tikzcd}
  \end{equation}
\end{lemma}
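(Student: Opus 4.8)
The plan is to reduce the statement to bookkeeping with the definitions, since the underlying diagram of functors is exactly the one displayed and shown to commute in \cref{rem:groth-cat-functors} (the bottom horizontal map being an equivalence by \cref{lem:eqvce-down-c-downstar}), so its strict commutativity is already at hand. It thus suffices to check, for each arrow $F\colon\Gamma\to\Gamma'$ in \eqref{eq:diagram-down-groth-cat}, the two identities $\last[\Gamma']\compos F=\last[\Gamma]$ and $F^{-1}(\weqlast[\Gamma'])=\weqlast[\Gamma]$. The second follows from the first once and for all: granting $\last[\Gamma']\compos F=\last[\Gamma]$, a morphism $\boldsymbol{\alpha}$ of $\Gamma$ lies in $F^{-1}(\weqlast[\Gamma'])$ iff $\last[\Gamma'](F(\boldsymbol{\alpha}))$ is an identity of $C$, iff $\last[\Gamma](\boldsymbol{\alpha})$ is an identity, iff $\boldsymbol{\alpha}\in\weqlast[\Gamma]$ (see \cref{def:down-weq-last}). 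So the only real content is the strict commutativity of the $\last$-triangles.

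For the two inclusions in the top row, $\last[\Gamma']\compos F=\last[\Gamma]$ holds on the nose: by \cref{def:down-fctr-last} the functor $\last$ on each of $\int\catNerve(C)$, $\int\catNerve^{{-},{+}}(C)$, and $\int\catNerve^{{--},{+}}_{+}(C)$ is given by the very same formula on objects and on morphisms, so restricting $\last$ on the larger category along an inclusion literally recovers $\last$ on the smaller one. For the two vertical quotient functors, again by \cref{def:down-fctr-last} the functor $\last$ on $\Downstar(C)$ (resp.\ on $\Down(C)$) is \emph{defined} to be the functor induced, on passing to the quotient hom-sets, by $\last$ on $\int\catNerve^{{-},{+}}(C)$ (resp.\ on $\int\catNerve^{{--},{+}}_{+}(C)$), which is precisely the required identity.

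The one arrow that needs a short argument is the bottom inclusion $j\colon\Down(C)\hookrightarrow\Downstar(C)$. Write $q_0\colon\int\catNerve^{{--},{+}}_{+}(C)\to\Down(C)$ and $q_1\colon\int\catNerve^{{-},{+}}(C)\to\Downstar(C)$ for the quotients and $\iota$ for the top-row inclusion $\int\catNerve^{{--},{+}}_{+}(C)\hookrightarrow\int\catNerve^{{-},{+}}(C)$. Commutativity of the left square (from \cref{rem:groth-cat-functors}) gives $j\compos q_0=q_1\compos\iota$, whence
\[
  \last[\Downstar(C)]\compos j\compos q_0
  =\last[\Downstar(C)]\compos q_1\compos\iota
  =\last[\int\catNerve^{{-},{+}}(C)]\compos\iota
  =\last[\int\catNerve^{{--},{+}}_{+}(C)]
  =\last[\Down(C)]\compos q_0
\]
using the identities just established. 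Since $q_0$ is the identity on objects and surjective on each hom-set, a functor out of $\Down(C)$ is determined by its composite with $q_0$; cancelling $q_0$ on the right yields $\last[\Downstar(C)]\compos j=\last[\Down(C)]$.

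I do not expect a genuine obstacle here; the single point deserving a moment's care is this cancellation of $q_0$ — equivalently, verifying that the ``induced-from-$\int\catNerve^{{--},{+}}_{+}(C)$'' description of $\last$ on $\Down(C)$ agrees with the ``restricted-from-$\Downstar(C)$'' one — which the surjectivity of $q_0$ settles at once. None of this invokes well-foundedness or finiteness, so it goes through unproblematically over the constructive base of \cref{sec:def-cat-down,sec:groth-cat-partial-order,sec:groth-cat-direct,sec:groth-cat-comparison,sec:down-last-localiz}.
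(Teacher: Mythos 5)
Your proof is correct and takes essentially the same approach as the paper, whose entire proof is the single phrase ``By definition.'' You have simply unwound what that means, which is fine; the key observations (the $\weqlast$ statement reduces to the $\last$ statement, the top-row restrictions and the two quotients commute on the nose by \cref{def:down-fctr-last}, and the bottom arrow follows by cancelling the surjective-on-objects-and-hom-sets quotient $q_0$) are all accurate and carefully justified.
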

\begin{proof}
  By definition.
\end{proof}

To facilitate a clearer presentation, we commence with the relatively simple case of $\Gamma=\int\catNerve(C)$ in \cref{thm:down-last-localiz}:

\begin{lemma}\label{lem:groth-tot-last-localiz}
  The functor $\last\colon \int\catNerve(C)\to C$ exhibits $C$ as a weak 1-localization
  of $\int\catNerve(C)$ at $\last$-weak equivalences.
\end{lemma}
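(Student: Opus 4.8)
The plan is to exhibit an explicit section of $\last$ together with a natural transformation showing it is inverse to $\last$ up to $\last$-weak equivalence; the three clauses of the weak 1-localization property then follow formally. Let $\iota\colon C \to \int\catNerve(C)$ send an object $c$ to the length-zero chain $\iota(c) \coloneqq (\deltaBra{0}, c)$ and a morphism $f\colon c \to c'$ to $(\idmor[\deltaBra{0}], f)$, reading $f$ as the natural transformation between the two functors $\deltaBra{0}\to C$ it names; then $\last\compos\iota = \idmor[C]$ on the nose. Next I would define a natural transformation $\eta\colon \idmor[\int\catNerve(C)] \Rightarrow \iota\compos\last$ by setting $\eta_{(\deltaBra{n},X)}\colon (\deltaBra{n},X) \to (\deltaBra{0},X(n))$ to be the pair $\bigl(\uniqmor,\, (X(\uniqmor_{i,n}))_{0\le i\le n}\bigr)$, where $\uniqmor\colon \deltaBra{n}\to\deltaBra{0}$ is the unique morphism and the family $(X(\uniqmor_{i,n}))_{i}$ is the canonical cocone over $X$ with apex $X(n)$ (this makes $\eta_{(\deltaBra{n},X)}$ a legitimate morphism of $\int\catNerve(C)$, and naturality of $\eta$ follows from the naturality of the constituent transformations and the functoriality of the constituent functors of morphisms of $\int\catNerve(C)$). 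A direct computation from \cref{def:down-fctr-last} gives $\last(\eta_{(\deltaBra{n},X)}) = \idmor[X(n)]$, so every component of $\eta$ is a $\last$-weak equivalence. (Equivalently, the obvious bijection $\HomOf[\int\catNerve(C)]{(\deltaBra{n},X)}{(\deltaBra{0},c)} \cong \HomOf[C]{X(n)}{c}$, ``evaluate a cocone at its apex'', is natural, so $\iota$ is right adjoint to $\last$ with unit $\eta$; I would probably not need this reformulation.)

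Granting $\iota$ and $\eta$, I would check the conditions \labelcref{item:def-cat-1-localization:weq-send,item:def-cat-1-localization:weak-ex,item:def-cat-1-localization:weak-uniq} of \cref{def:cat-1-localization} for the functor $\last\colon \int\catNerve(C)\to C$ and the class $W$ of $\last$-weak equivalences. Condition \labelcref{item:def-cat-1-localization:weq-send} is precisely the first assertion of \cref{lem:down-groth-last-respects-weq}. For condition \labelcref{item:def-cat-1-localization:weak-ex}, given $G\colon \int\catNerve(C) \to E$ sending every $\last$-weak equivalence to an isomorphism, put $H \coloneqq G\compos\iota$; then the whiskering $G\whiskl\eta\colon G \Rightarrow H\compos\last$ has component $G(\eta_{(\deltaBra{n},X)})$ at $(\deltaBra{n},X)$, which is an isomorphism since $\eta_{(\deltaBra{n},X)}$ is a $\last$-weak equivalence, so $G\whiskl\eta$ is a natural isomorphism and its inverse is the required natural isomorphism $H\compos\last \Rightarrow G$.

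For condition \labelcref{item:def-cat-1-localization:weak-uniq}, given functors $H,H'\colon C\to E$ and a natural isomorphism $\theta\colon H\compos\last \Rightarrow H'\compos\last$, I would put $\hat\theta \coloneqq \theta\whiskr\iota\colon H \Rightarrow H'$ — this is legitimate because $\last\compos\iota = \idmor[C]$, and it is a natural isomorphism because whiskering preserves isomorphisms. The identity $\hat\theta\whiskr\last = \theta$ is the crux. For an object $Z = (\deltaBra{n},X)$, the naturality of $\theta$ along the morphism $\eta_Z\colon Z \to \iota(\last Z)$ gives $\theta_{\iota(\last Z)}\compos H(\last\, \eta_Z) = H'(\last\, \eta_Z)\compos\theta_Z$; since $\last\, \eta_Z = \idmor$ this collapses to $\theta_{\iota(\last Z)} = \theta_Z$, whence $(\hat\theta\whiskr\last)_Z = \hat\theta_{\last Z} = \theta_{\iota(\last Z)} = \theta_Z$. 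Uniqueness of $\hat\theta$ is then immediate: if $\hat\theta'\whiskr\last = \theta$ as well, then for every $c\in\Ob C$ we have $\hat\theta'_c = (\hat\theta'\whiskr\last)_{\iota(c)} = \theta_{\iota(c)} = (\hat\theta\whiskr\last)_{\iota(c)} = \hat\theta_c$.

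The routine but slightly fussy parts are the naturality of $\eta$ and the computation $\last\, \eta_Z = \idmor$; the step I expect to carry the real weight is condition \labelcref{item:def-cat-1-localization:weak-uniq}, where it matters not just that $\last$ inverts the components of $\eta$ but that it sends them to honest identities, so that the naturality square of $\theta$ along $\eta_Z$ degenerates to $\theta_{\iota(\last Z)} = \theta_Z$ and yields $\hat\theta\whiskr\last = \theta$ strictly rather than merely up to a further isomorphism.
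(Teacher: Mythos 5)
Your proof is correct and takes essentially the same approach as the paper: the paper constructs the same functor $\iota$ (called $i$), observes it is a fully faithful right adjoint to $\last$, concludes that $C$ is a reflective subcategory of $\int\catNerve(C)$ with $\last$ as reflector, and cites the general fact that reflectors are localizations together with \cref{lem:down-groth-last-respects-weq}. You unfold that general fact into an explicit verification of conditions \labelcref{item:def-cat-1-localization:weq-send,item:def-cat-1-localization:weak-ex,item:def-cat-1-localization:weak-uniq}, which is more self-contained but reaches the same place by the same adjunction.
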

\begin{proof}
  The functor in question admits the following right adjoint:
  \begin{align*}
    i\colon C &\to \textstyle\int\catNerve(C);\\
    \Ob C=\Ob \functCat{\deltaBra{0}}{C}\ni x &\mapsto (\deltaBra{0},x);\\
    (f\colon x\to y) &\mapsto (\idmor[\deltaBra{0}],f).
  \end{align*}
  Since this right adjoint is fully faithful, we may consider $C$ as a reflective
  subcategory of $\int\catNerve(C)$, and $\last$ is its reflector.
  Now the claim follows from \cref{lem:down-groth-last-respects-weq}.
\end{proof}

The other cases are a little more intricate; therefore we prove them part by part.
The next lemma shows the weak uniqueness property of the factorization through
$\last[\Gamma]$, which is a part of the definition of weak 1-localization in 
\cref{def:cat-1-localization}.
It holds even for $\Gamma=\int\catNerve^{{--},{+}}_{+}(C)$:

\begin{lemma}\label{lem:last-localiz-whisker-bij}
  Let $\last=\last[\Gamma]\colon\Gamma\to C$ be any of the five functors defined
  in \cref{def:down-fctr-last}.
  Let $F, G\colon C\to D$ be functors. If:
  \[
    \epsilon\colon F\compos{\last} \Rightarrow G\compos{\last}
    \colon \Gamma \to D
  \]
  is a natural transformation, there is a unique natural transformation
  $\tilde{\epsilon}\colon F \Rightarrow G$ satisfying
  $\tilde{\epsilon}\whiskr\last = \epsilon$.
\end{lemma}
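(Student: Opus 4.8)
The plan is to determine $\tilde\epsilon$ by its values on the ``length‑zero'' objects and then to verify naturality and the whiskering identity by transporting $\epsilon$ along a handful of explicit morphisms of $\Gamma$. For each object $x$ of $C$, write $(\deltaBra{0}, x)$ for the object of $\Gamma$ given by the unique functor $\deltaBra{0}\to C$ with value $x$; this is an object of each of the five categories, and $\last(\deltaBra{0}, x) = x$. If $\tilde\epsilon\colon F\Rightarrow G$ satisfies $\tilde\epsilon\whiskr\last = \epsilon$, then evaluating at $(\deltaBra{0}, x)$ forces $\tilde\epsilon_x = \epsilon_{(\deltaBra{0}, x)}$, which already gives uniqueness. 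For existence I would set $\tilde\epsilon_x \coloneqq \epsilon_{(\deltaBra{0}, x)}\colon F(x)\to G(x)$ and check the two required properties.

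I would first record an auxiliary observation: if $w\colon P\to Q$ is any $\last$‑weak equivalence in $\Gamma$, then $\last(w)$ is an identity, so the naturality square of $\epsilon$ at $w$ collapses to $\epsilon_P = \epsilon_Q$. Using this, the whiskering identity $\tilde\epsilon\whiskr\last = \epsilon$ amounts to showing $\epsilon_{(\deltaBra{0}, X(n))} = \epsilon_{(\deltaBra{n}, X)}$ for every object $(\deltaBra{n}, X)$ of $\Gamma$. This follows by applying the observation to the morphism $(\iota^n_n, \idmor[X(n)])\colon (\deltaBra{0}, X(n))\to(\deltaBra{n}, X)$, where $\iota^n_n\colon\deltaBra{0}\to\deltaBra{n}$ selects the top vertex; one checks directly that this is a legitimate morphism of each of the five categories and that $\last$ sends it to $\idmor[X(n)]$, so it is indeed a $\last$‑weak equivalence.

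For naturality of $\tilde\epsilon$ at a morphism $f\colon x\to y$ of $C$ I would use the Reedy factorization $f = \delta\compos\sigma$ with $\sigma\colon x\to z$ in $C_{-}$ and $\delta\colon z\to y$ in $C_{+}$, distinguishing two cases. If $\sigma = \idmor$, so that $f = \delta$ lies in $C_{+}$, then $(\idmor[\deltaBra{0}], f)\colon (\deltaBra{0}, x)\to(\deltaBra{0}, y)$ is a morphism of $\Gamma$ with $\last(\idmor[\deltaBra{0}], f) = f$, and the naturality square of $\epsilon$ at it is precisely the naturality square of $\tilde\epsilon$ at $f$. If $\sigma\neq\idmor$, I would introduce the object $(\deltaBra{1}, Z)$ with $Z\colon\deltaBra{1}\to C$ given by $0\mapsto x$, $1\mapsto z$, $\uniqmor_{01}\mapsto\sigma$; this is an object of all five categories, the key point being that $Z$ reflects identities exactly because $\sigma$ is a non‑identity of $C_{-}$. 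Together with the morphisms $(\iota^1_0, \idmor[x])\colon(\deltaBra{0}, x)\to(\deltaBra{1}, Z)$, $(\iota^1_1, \idmor[z])\colon(\deltaBra{0}, z)\to(\deltaBra{1}, Z)$, and $(\idmor[\deltaBra{0}], \delta)\colon(\deltaBra{0}, z)\to(\deltaBra{0}, y)$ — which $\last$ sends to $\sigma$, $\idmor[z]$, and $\delta$ respectively, so that the middle one is a $\last$‑weak equivalence — combining the three naturality squares of $\epsilon$ (invoking the auxiliary observation on the middle one) yields $G(f)\compos\tilde\epsilon_x = \tilde\epsilon_y\compos F(f)$, as desired. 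For the quotients $\Downstar(C)$ and $\Down(C)$ the same constructions, read as equivalence classes of morphisms, work verbatim, since $\last$ is defined on the quotients.

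I do not expect a genuine obstacle here. The only mildly delicate point is the bookkeeping needed to confirm that $(\deltaBra{1}, Z)$ and the three connecting morphisms genuinely belong to the more restrictive categories $\int\catNerve^{{--},{+}}_{+}(C)$ and $\Down(C)$ — which is exactly why the case distinction on whether $\sigma$ is an identity is necessary — and checking that the computations of $\last$ on these morphisms are carried out correctly so that the argument reads uniformly across all five categories.
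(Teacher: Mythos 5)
Your proposal is correct and follows essentially the same approach as the paper's proof: determine $\tilde\epsilon$ on objects via $(\deltaBra{0},x)$, use the $\last$-weak equivalence $(\iota_n,\idmor[X(n)])$ to recover the whiskering identity, and verify naturality via the Reedy factorization, with a case split on whether the degeneracy part is an identity. The only cosmetic difference is that you paste three naturality squares to handle a general $f$ directly, whereas the paper first observes that it suffices to treat $C_{-}$ and $C_{+}$ separately and then handles each case with a single square; the underlying morphisms of $\Gamma$ you use are the same ones the paper uses.
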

\begin{proof}
  The equation $\tilde{\epsilon}\whiskr\last = \epsilon$ implies:
  \begin{equation}\label{eq:lem-last-localiz-whisker-bij:comp}
    \tilde{\epsilon}_x = \tilde{\epsilon}_{\last(\deltaBra{0},x)}
    = \epsilon_{(\deltaBra{0},x)}\colon F(x) \to G(x)
  \end{equation}
  for any $x \in \Ob C = \Ob C^{\deltaBra{0}}$. This shows uniqueness.
  It suffices to prove that
  \eqref{eq:lem-last-localiz-whisker-bij:comp} defines a natural transformation
  $\tilde{\epsilon}\colon F\Rightarrow G$ and to check
  $\tilde{\epsilon}\whiskr\last = \epsilon$.

  We shall first show:
  \begin{equation}\label{eq:lem-last-localiz-whisker-bij:whisk}
    \tilde{\epsilon}_{\last(\deltaBra{n},X)}
    =\epsilon_{(\deltaBra{n},X)}\colon
    F(\last(\deltaBra{n},X)) \to G(\last(\deltaBra{n},X))
  \end{equation}
  for any $(\deltaBra{n},X)\in\Ob\Gamma$. Once we show the naturality of $\tilde\epsilon$,
  \eqref{eq:lem-last-localiz-whisker-bij:whisk} should automatically prove
  $\tilde{\epsilon}\whiskr\last = \epsilon$. Let $(\deltaBra{n},X)\in\Ob\Gamma$ be
  arbitrary. Remember our notation; the morphism
  $\iota_n\colon \deltaBra{0}\rightarrowtail\deltaBra{n}$ in $\mtcSimpx$
  is the one with $\iota_n(0)=n$.
  We apply the naturality of $\epsilon$ to
  $(\iota_n,\idmor[X(n)])\colon(\deltaBra{0},X(n))\to(\deltaBra{n},X)$,
  which belongs to any choice of $\Gamma$, to get the
  following commutative diagram:
  \[
    \begin{tikzcd}[column sep=huge]
      F(\last(\deltaBra{0},X(n)))
      \arrow[r,"\epsilon_{(\deltaBra{0},X(n))}"]
      \arrow[d,"{F(\last(\iota_n,\idmor[X(n)]))}"']
      & G(\last(\deltaBra{0},X(n)))
      \arrow[d,"{G(\last(\iota_n,\idmor[X(n)]))}"]
      \\
      F(\last(\deltaBra{n},X))
      \arrow[r,"\epsilon_{(\deltaBra{n},X)}"']
      & G(\last(\deltaBra{n},X))
    \end{tikzcd}
  \]
  Since the vertical arrows reduce to $\idmor[F(X(n))]$ and $\idmor[G(X(n))]$,
  the horizontal arrows must be equal. Hence the desired equality
  \eqref{eq:lem-last-localiz-whisker-bij:whisk} follows from the defining equation
  \eqref{eq:lem-last-localiz-whisker-bij:comp}.

  It just remains to confirm the naturality. Since $C$ is a Reedy category,
  it suffices to show
  the commutativity of the naturality diagram:
  \begin{equation}\label{eq:lem-last-localiz-whisker-bij:nat}
    \begin{tikzcd}
      F(x) \arrow[r, "\tilde{\epsilon}_x"]
      \arrow[d, "F(f)"']
      & G(x) \arrow[d, "G(f)"]
      \\
      F(y) \arrow[r, "\tilde{\epsilon}_y"']
      & G(y)
    \end{tikzcd}
  \end{equation}
  in the cases $f \in \Mor C_{+}$ and $f \in \Mor C_{-}$.

  If $f \in \Mor C_{+}$,
  the diagram \eqref{eq:lem-last-localiz-whisker-bij:nat} coincides with
  the following commutative diagram, which is
  the naturality of $\epsilon$ for
  $(\idmor[\deltaBra{0}],f)\colon (\deltaBra{0},x)\to(\deltaBra{0},y)$:
  \[
    \begin{tikzcd}[column sep=huge]
      F(\last(\deltaBra{0},x))
      \arrow[r,"\epsilon_{(\deltaBra{0},x)}"]
      \arrow[d,"{F(\last(\idmor[\deltaBra{0}],f))}"']
      & G(\last(\deltaBra{0},x))
      \arrow[d,"{G(\last(\idmor[\deltaBra{0}],f))}"]
      \\
      F(\last(\deltaBra{0},y))
      \arrow[r,"\epsilon_{(\deltaBra{0},y)}"']
      & G(\last(\deltaBra{0},y))
    \end{tikzcd}
  \]

  We discuss the remaining case: $f \in \Mor C_{-}$. Remember the property of
  $f$ being an identity is decidable. If $f$ is some identity, the desired naturality
  follows from the case $f\in\Mor C_{+}$. Otherwise, we use
  \eqref{eq:lem-last-localiz-whisker-bij:whisk} to see that the desired
  commutative diagram \eqref{eq:lem-last-localiz-whisker-bij:nat} is exactly
  the naturality of $\epsilon$ for
  $(\delta^1_1,\idmor[x])\colon(\deltaBra{0},x)\to
  (\deltaBra{1},x\overset{f}{\twoheadrightarrow}y)$, which is:
  \[
    \begin{tikzcd}[column sep=huge]
      F(\last(\deltaBra{0},x))
      \arrow[r,"\epsilon_{(\deltaBra{0},x)}"]
      \arrow[d,"{F(\last(\delta^1_1,\idmor[x]))}"']
      & G(\last(\deltaBra{0},x))
      \arrow[d,"{G(\last(\delta^1_1,\idmor[x]))}"]
      \\
      F(\last(\deltaBra{1},f))
      \arrow[r,"\epsilon_{(\deltaBra{1},f)}"']
      & G(\last(\deltaBra{1},f))
    \end{tikzcd}
  \]
  This now concludes the proof.
\end{proof}

Now we demonstrate the remaining condition of 1-localization:
the conditional existence of the factorization through $\last[\Gamma]$
for $\Gamma=\Downstar(C)$. The cases $\Gamma=\int\catNerve^{{-},{+}}(C)$ and
$\Gamma=\Down(C)$ are equivalent to this case, so their treatment is postponed.
We split out a small lemma from the proof to make it easier to read:

\begin{lemma}\label{lem:down-groth-last-decp}
  Let $\Gamma$ stand for any of $\int\catNerve(C)$, $\int\catNerve^{{-},{+}}(C)$
  and $\int\catNerve^{{--},{+}}_{+}(C)$.
  Then any morphism $(\alpha,\theta)\colon(\deltaBra{l},X)\to(\deltaBra{n},Z)$ 
  decomposes into the composition
  $(\alpha,\theta)=(\gamma,\psi)\compos(\beta,\phi)$
  of a (not necessarily unique) pair of morphisms
  \begin{align*}
    (\beta,\phi) &\colon (\deltaBra{l}, X) \to (\deltaBra{m},Y),\\
    (\gamma,\psi) &\colon (\deltaBra{m}, Y) \to (\deltaBra{n}, Z)
  \end{align*}
  satisfying $\beta(l) = m$, $Y(m) = Z(\gamma(m))$,
  and $\psi_m = \idmor[Y(m)] = \idmor[Z(\gamma(m))]$.
\end{lemma}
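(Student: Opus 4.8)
The plan is to realise this decomposition as the standard Cartesian--vertical factorisation of a morphism in a Grothendieck construction. Recall that the first-projection functor $\proj\colon\Gamma\to\mtcSimpx$ (or $\Gamma\to\mtcSimpx_{+}$ in the strict case) exhibits $\Gamma$ as a Grothendieck construction, so that any morphism $(\alpha,\theta)$ splits as a vertical morphism carrying the natural transformation $\theta$ and no change of index, followed by a Cartesian morphism carrying only the index map $\alpha$. All the real work is then to check that both halves stay inside whichever of the three categories $\Gamma$ denotes.

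Concretely, given $(\alpha,\theta)\colon(\deltaBra{l},X)\to(\deltaBra{n},Z)$ in $\Gamma$, I would set $m\coloneqq l$, take the intermediate object to be $(\deltaBra{m},Y)\coloneqq(\deltaBra{l},Z\compos\alpha)$, and define
\begin{align*}
  (\beta,\phi)&\coloneqq(\idmor[\deltaBra{l}],\theta)\colon(\deltaBra{l},X)\to(\deltaBra{l},Z\compos\alpha),\\
  (\gamma,\psi)&\coloneqq(\alpha,\idmor[Z\compos\alpha])\colon(\deltaBra{l},Z\compos\alpha)\to(\deltaBra{n},Z),
\end{align*}
where $\theta$ is regarded as a natural transformation $X\Rightarrow(Z\compos\alpha)\compos\idmor[\deltaBra{l}]$ and $\idmor[Z\compos\alpha]$ is the identity natural transformation of $Z\compos\alpha$. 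A direct computation with the composition rule of \cref{def:groth-cat-ladder-all} gives $(\gamma,\psi)\compos(\beta,\phi)=(\alpha,\theta)$, so this is a factorisation; and the three side conditions hold on sight, since $\beta(l)=l=m$, $Y(m)=(Z\compos\alpha)(l)=Z(\alpha(l))=Z(\gamma(m))$, and $\psi_m=\idmor[Y(m)]=\idmor[Z(\gamma(m))]$.

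What still needs an argument --- and this is the only case-dependent part --- is that $(\beta,\phi)$, $(\gamma,\psi)$, and the object $(\deltaBra{l},Z\compos\alpha)$ really belong to $\Gamma$. For $\Gamma=\int\catNerve(C)$ there is nothing to verify beyond well-typedness. For $\Gamma=\int\catNerve^{{-},{+}}(C)$, the functor $Z\compos\alpha$ factors through $C_{-}$ because $Z$ does, so $(\deltaBra{l},Z\compos\alpha)$ is an object; the components of $\phi=\theta$ lie in $C_{+}$ since those of $\theta$ do, and the components of $\psi$ are identities. For $\Gamma=\int\catNerve^{{--},{+}}_{+}(C)$ one additionally needs $\alpha\in\mtcSimpx_{+}$ and $\idmor[\deltaBra{l}]\in\mtcSimpx_{+}$, which are clear (the former is forced anyway by \cref{lem:groth-cat-full-sub}), and one needs $Z\compos\alpha$ to reflect identities; this holds because $Z$ reflects identities and $\alpha$, being an order-preserving injection between finite ordinals, does too.

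I do not anticipate any real obstacle here: the content is essentially the existence of the Cartesian--vertical factorisation plus the bookkeeping that the intermediate object $(\deltaBra{l},Z\compos\alpha)$ satisfies the defining conditions of $\Gamma$, the most delicate point being its conservativity in the strict case $\int\catNerve^{{--},{+}}_{+}(C)$; everything else unwinds directly from \cref{def:groth-cat-ladder-all,def:groth-cat-ladder-weak,def:groth-cat-ladder-strict}.
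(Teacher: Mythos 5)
Your proof is correct, and it takes a genuinely different route from the paper. The paper sets $m\coloneqq\alpha(l)$ and factors through the truncation $(\deltaBra{\alpha(l)},\,Z\compos\iota_{\le\alpha(l)})$, so that $\alpha'$ is $\alpha$ with codomain restricted and the second morphism is the initial-segment inclusion $(\iota_{\le m},\idmor)$. You instead take $m\coloneqq l$ and factor through the pullback $(\deltaBra{l},\,Z\compos\alpha)$, which is precisely the standard vertical–Cartesian factorization of $(\alpha,\theta)$ over the Grothendieck fibration $\proj\colon\Gamma\to\mtcSimpx$. Both decompositions satisfy the three required side conditions (your $\beta=\idmor$ trivially hits the max, and your $\psi$ is the identity natural transformation), and both serve the downstream application in \cref{prop:downstar-last-localiz-factor-exist} equally well, since that proof only needs a two-case reduction of the stated form. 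Your approach is arguably more conceptual (it exhibits the decomposition as a general fact about fibrations), while the paper's keeps the intermediate object "small" by only retaining the initial segment $\set{0,\dotsc,\alpha(l)}$ of the target index; neither has an advantage the other lacks for the purposes of this lemma. Your membership check for $\int\catNerve^{--,+}_{+}(C)$ --- that $Z\compos\alpha$ is conservative because $Z$ is and $\alpha$ is injective in that case --- is correct and is the only point where the intermediate object's membership in $\Gamma$ is not immediate; the paper does not spell this out, but its analogous check (conservativity of $Z\compos\iota_{\le m}$) is equally routine.
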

\begin{proof}
  Set $m\coloneqq \alpha(l)$. There is a factorization that
  fits into the following commutative diagram, given by $\alpha'(i) = \alpha(i)$:
  \[
    \begin{tikzcd}
      \deltaBra{l}\arrow[r,"{\exists\alpha'}"]
      \arrow[rd, "\alpha"']
      &\deltaBra{m}\arrow[d,tail,"\iota_{\le m}"]\\
      &\deltaBra{n}
    \end{tikzcd}
  \]
  Here, $\iota_{\le m} = \iota_{\set{0,1,\dotsc,m}}\colon\deltaBra{m}
  \rightarrowtail \deltaBra{n}$ is specified by $\iota_{\le m}(i) = i$. Now,
  \begin{align*}
    (\beta, \phi) \coloneqq (\alpha', \theta)
    &\colon (\deltaBra{l}, X)\to(\deltaBra{m}, Y\compos\iota_{\le m}),\\
    (\gamma,\psi) \coloneqq
    (\iota_{\le m}, \idmor[Y\compos\iota_{\le m}])
    &\colon (\deltaBra{m}, Y\compos\iota_{\le m})\to(\deltaBra{n}, Y)
  \end{align*}
  meets the requirements.
\end{proof}

Now we proceed to the actual proof of the desired existence.

\begin{proposition}\label{prop:downstar-last-localiz-factor-exist}
  Let $D$ be a category. Suppose that a functor
  $F\colon \Downstar(C)\to D$ maps every $\last$-weak equivalence to an isomorphism.
  Then there exists a functor $\tilde{F}\colon C \to D$ and a natural isomorphism
  $\theta\colon \tilde{F}\compos\last\overset\sim\Rightarrow F$.
\end{proposition}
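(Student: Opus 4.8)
The plan is to build $\tilde{F}$ by hand --- first on objects and on the two halves of the Reedy structure of $C$, then assembling it via $(C_{-},C_{+})$-factorization. On objects, set $\tilde{F}(x)\coloneqq F(\deltaBra{0},x)$, so that $(\tilde{F}\compos\last)(\deltaBra{n},X)=F(\deltaBra{0},X(n))$. For $h\colon x\rightarrowtail y$ in $C_{+}$, use the morphism $(\idmor[\deltaBra{0}],h)\colon(\deltaBra{0},x)\to(\deltaBra{0},y)$ of $\int\catNerve^{{-},{+}}(C)$ and set $\tilde{F}(h)\coloneqq F(\idmor[\deltaBra{0}],h)$. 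For $g\colon x\twoheadrightarrow y$ in $C_{-}$, introduce the length-one chain $(\deltaBra{1},x\overset{g}{\twoheadrightarrow}y)$ together with its two vertex-inclusions $\mathbf{u}_g=(\delta^1_1,\idmor[x])\colon(\deltaBra{0},x)\to(\deltaBra{1},x\overset{g}{\twoheadrightarrow}y)$ and $\mathbf{v}_g=(\delta^1_0,\idmor[y])\colon(\deltaBra{0},y)\to(\deltaBra{1},x\overset{g}{\twoheadrightarrow}y)$; since $\last(\mathbf{v}_g)=\idmor[y]$, the morphism $\mathbf{v}_g$ is a $\last$-weak equivalence (\cref{def:down-weq-last}), so $F(\mathbf{v}_g)$ is invertible and we set $\tilde{F}(g)\coloneqq F(\mathbf{v}_g)^{-1}\compos F(\mathbf{u}_g)$. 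When $g=\idmor[x]$, the morphisms $(\delta^1_1,\idmor[x])$ and $(\delta^1_0,\idmor[x])$ are comparable in the relevant hom-poset, hence identified in $\Downstar(C)$, so both prescriptions send identities to identities and agree there. Finally, for a general $f\colon x\to y$ with unique $(C_{-},C_{+})$-factorization $f=h\compos g$ (with $g\in C_{-}$, $h\in C_{+}$), set $\tilde{F}(f)\coloneqq\tilde{F}(h)\compos\tilde{F}(g)$; by the previous sentence this extends the two partial definitions.

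The bulk of the work is showing $\tilde{F}$ is a functor. Preservation of identities is immediate, and functoriality on $C_{+}$ follows from $(\idmor[\deltaBra{0}],h_2)\compos(\idmor[\deltaBra{0}],h_1)=(\idmor[\deltaBra{0}],h_2\compos h_1)$. For functoriality on $C_{-}$ --- i.e.\@ $\tilde{F}(g_2\compos g_1)=\tilde{F}(g_2)\compos\tilde{F}(g_1)$ for composable $x\overset{g_1}{\twoheadrightarrow}y\overset{g_2}{\twoheadrightarrow}z$ in $C_{-}$ --- I would introduce the length-two chain $W\coloneqq(\deltaBra{2},x\overset{g_1}{\twoheadrightarrow}y\overset{g_2}{\twoheadrightarrow}z)$, the three ``edge'' morphisms $\mathbf{p},\mathbf{q},\mathbf{r}$ into $W$ from the chains of $g_1$, $g_2$, $g_2\compos g_1$ (induced by $\delta^2_2,\delta^2_0,\delta^2_1$), and the three ``vertex'' morphisms $\mathbf{A},\mathbf{B},\mathbf{C}$ into $W$ from $(\deltaBra{0},x),(\deltaBra{0},y),(\deltaBra{0},z)$. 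The simplicial identities give $\mathbf{A}=\mathbf{p}\compos\mathbf{u}_{g_1}=\mathbf{r}\compos\mathbf{u}_{g_2\compos g_1}$, $\mathbf{B}=\mathbf{p}\compos\mathbf{v}_{g_1}=\mathbf{q}\compos\mathbf{u}_{g_2}$, $\mathbf{C}=\mathbf{q}\compos\mathbf{v}_{g_2}=\mathbf{r}\compos\mathbf{v}_{g_2\compos g_1}$; moreover $\mathbf{C},\mathbf{v}_{g_1},\mathbf{v}_{g_2},\mathbf{v}_{g_2\compos g_1}$ are $\last$-weak equivalences, hence inverted by $F$. Solving for $F(\mathbf{p}),F(\mathbf{q}),F(\mathbf{r})$ from the equalities involving the $\mathbf{v}$'s and substituting into those involving the $\mathbf{u}$'s yields $\tilde{F}(g_2)\compos\tilde{F}(g_1)=F(\mathbf{C})^{-1}\compos F(\mathbf{A})=\tilde{F}(g_2\compos g_1)$. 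The remaining ``mixed'' case is a commuting-square argument: given $a\overset{h}{\rightarrowtail}b$ in $C_{+}$ and $b\overset{g}{\twoheadrightarrow}c$ in $C_{-}$ with $(C_{-},C_{+})$-factorization $g\compos h=h'\compos g'$ (so $a\overset{g'}{\twoheadrightarrow}d\overset{h'}{\rightarrowtail}c$), that square furnishes a morphism $\mathbf{s}\coloneqq(\idmor[\deltaBra{1}],(h,h'))\colon(\deltaBra{1},a\overset{g'}{\twoheadrightarrow}d)\to(\deltaBra{1},b\overset{g}{\twoheadrightarrow}c)$ of $\int\catNerve^{{-},{+}}(C)$, and one checks $\mathbf{s}\compos\mathbf{u}_{g'}=\mathbf{u}_{g}\compos(\idmor[\deltaBra{0}],h)$ and $\mathbf{s}\compos\mathbf{v}_{g'}=\mathbf{v}_{g}\compos(\idmor[\deltaBra{0}],h')$; eliminating $F(\mathbf{s})$ gives $\tilde{F}(g)\compos\tilde{F}(h)=\tilde{F}(h')\compos\tilde{F}(g')=\tilde{F}(g\compos h)$. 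Since $C_{-}$ and $C_{+}$ are closed under composition and $(C_{-},C_{+})$-factorizations are unique, these three cases combine to give $\tilde{F}(f_2\compos f_1)=\tilde{F}(f_2)\compos\tilde{F}(f_1)$ in general.

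For the natural isomorphism, set $\theta_{(\deltaBra{n},X)}\coloneqq F(\iota_n,\idmor[X(n)])$, where $(\iota_n,\idmor[X(n)])\colon(\deltaBra{0},X(n))\to(\deltaBra{n},X)$ is the inclusion of the last vertex $n\in\deltaBra{n}$; since $\last(\iota_n,\idmor[X(n)])=\idmor[X(n)]$, this is a $\last$-weak equivalence and $\theta_{(\deltaBra{n},X)}$ is invertible. For naturality against $\mathbf{g}=(\alpha,\theta')\colon(\deltaBra{m},X)\to(\deltaBra{n},Y)$, put $j\coloneqq\alpha(m)$ and note $\mathbf{g}\compos(\iota_m,\idmor[X(m)])=(\iota_j,\theta'_m)$ while $\last(\mathbf{g})=Y(\uniqmor_{j,n})\compos\theta'_m$. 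Using the $\last$-weak equivalence $\mathbf{t}\coloneqq(\iota_{\set{j,n}},\idmor)\colon(\deltaBra{1},Y(j)\overset{Y(\uniqmor_{j,n})}{\twoheadrightarrow}Y(n))\to(\deltaBra{n},Y)$, which satisfies $\mathbf{t}\compos\mathbf{u}_{Y(\uniqmor_{j,n})}=(\iota_j,\idmor[Y(j)])$ and $\mathbf{t}\compos\mathbf{v}_{Y(\uniqmor_{j,n})}=(\iota_n,\idmor[Y(n)])$, one obtains $F(\iota_n,\idmor[Y(n)])\compos\tilde{F}(Y(\uniqmor_{j,n}))=F(\iota_j,\idmor[Y(j)])$; combining this with functoriality of $\tilde{F}$ and with $\tilde{F}(\theta'_m)=F(\idmor[\deltaBra{0}],\theta'_m)$ gives $\theta_{(\deltaBra{n},Y)}\compos(\tilde{F}\compos\last)(\mathbf{g})=F(\iota_j,\theta'_m)=F(\mathbf{g})\compos\theta_{(\deltaBra{m},X)}$, which is the required naturality square. (One may alternatively organize this reduction through \cref{lem:down-groth-last-decp}.)

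I expect the main obstacle to be the functoriality of $\tilde{F}$, and within it the two diagram chases (the length-two-chain argument for $C_{-}$ and the commuting-square argument for the mixed case): the care needed is to confirm that all the auxiliary morphisms $\mathbf{u}_g,\mathbf{v}_g,\mathbf{p},\mathbf{q},\mathbf{r},\mathbf{s},\mathbf{t}$ genuinely lie in $\int\catNerve^{{-},{+}}(C)$ (their natural-transformation components being identities or the given $C_{+}$-maps), that exactly the right ones among them are $\last$-weak equivalences so as to be inverted by $F$, and that the asserted composite identities hold already in $\int\catNerve^{{-},{+}}(C)$, hence after passing to $\Downstar(C)$.
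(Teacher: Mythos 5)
Your proposal is correct and takes essentially the same route as the paper's own proof: define $\tilde{F}$ on objects, separately on $C_{+}$ via $(\idmor[\deltaBra{0}],h)$ and on $C_{-}$ via the two vertex-inclusions of the length-one chain (the paper packages this as the defining diagram~\eqref{eq:lem-downstar-last-localiz-factor-exist:img-degen-spec}), glue by Reedy factorization, prove functoriality on $C_{-}$ with the length-two-chain diagram (your $\mathbf{p},\mathbf{q},\mathbf{r},\mathbf{A},\mathbf{B},\mathbf{C}$ are exactly the arrows in the paper's diagram~\eqref{eq:lem-downstar-last-localiz-factor-exist:degen-functorial-1}) and the mixed case with the commuting-square morphism $\mathbf{s}=(\idmor[\deltaBra{1}],(h,h'))$ (the paper's~\eqref{eq:lem-downstar-last-localiz-factor-exist:funct-welldef-3}), and take $\theta$ to be the last-vertex inclusion. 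The only organizational difference is that you make the three functoriality cases and their combination explicit, and you verify naturality by one direct computation using $\mathbf{t}$ and the (already-established) functoriality of $\tilde{F}$, rather than invoking \cref{lem:down-groth-last-decp} for a case split as the paper does; both are equivalent diagram chases.
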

\begin{proof}
  For the sake of readability of the construction of the functor $\tilde{F}$, we begin by
  specifying its two restrictions $\tilde{F}_{+}\colon C_{+}\to D$ and
  $\tilde{F}_{-}\colon C_{-}\to D$.
  For any object $x \in \Ob C = \Ob\functCat{\deltaBra{0}}{C}$, we set
  $\tilde{F}(x) = \tilde{F}_{+}(x) = \tilde{F}_{-}(x) \coloneqq
  F(\deltaBra{0},x)$.
  If $d\colon x \rightarrowtail y$ be a morphism in $C_{+}$, then we define
  $\tilde{F}_{+}(d)\coloneqq F(\idmor[\deltaBra{0}], d)$,
  through the identification $C = \functCat{\deltaBra{0}}{C}$.
  This obviously makes $\tilde{F}_{+}$ a functor.

  Now consider a morphism $s\colon x \twoheadrightarrow y$ in $C_{-}$.
  The object $(x\overset{s}{\twoheadrightarrow}y)\in
  \Ob\functCat{\deltaBra{1}}{(C_{-})}$ constitutes an object
  $(\deltaBra{1},s)\in\Ob\Downstar(C)$. We define $\tilde{F}_{-}(s)$ to be the
  unique morphism that commutates the following diagram:
  \begin{equation}\label{eq:lem-downstar-last-localiz-factor-exist:img-degen-spec}
    \begin{tikzcd}[column sep=huge]
      \tilde{F}_{-}(x) \arrow[d, equal] \arrow[rr, "\tilde{F}_{-}(s)"]
      && \tilde{F}_{-}(y) \arrow[d, equal] \\
      F(\deltaBra{0},x) \arrow[r, "{F(\delta^0_1,\idmor[x])}"']
      & F(\deltaBra{1},s)
      & F(\deltaBra{0},y)\arrow[l, "{F(\delta^0_0,\idmor[y])}", "\sim"']
    \end{tikzcd}
  \end{equation}
  We need to show the functoriality of $\tilde{F}_{-}$.
  Since putting $x=y$, $s=\idmor[x]$,
  and $\tilde{F}_{-}(\idmor[x])=\idmor[\tilde{F}_{-}(x)]$ commutes the diagram
  \eqref{eq:lem-downstar-last-localiz-factor-exist:img-degen-spec}, we see that
  $\tilde{F}_{-}(\idmor[x])=\idmor[\tilde{F}_{-}(x)]$.
  We prove that $\tilde{F}_{-}$ preserves composition.
  Let
  $X=(x\overset{s}{\twoheadrightarrow}y\overset{t}{\twoheadrightarrow}z)\in
  \Ob\functCat{\deltaBra{2}}{(C_{-})}$ be an arbitrary composable pair of morphisms.
  We have the following commutative diagram in
  $\Downstar(C)$ (in fact in $\int \catNerve^{{-},{+}}(C)$):
  \begin{equation}\label{eq:lem-downstar-last-localiz-factor-exist:degen-functorial-1}
    \begin{tikzcd}[column sep=huge]
      (\deltaBra{0},x)
      \arrow[r,"{(\delta^1_1,\idmor[x])}"]
      \arrow[rd,"{(\iota_0,\idmor[x])}"']
      \arrow[d,"{(\delta^1_1,\idmor[x])}"']
      &(\deltaBra{1},t\compos s)
      \arrow[d, bend right, "{(\delta^2_1,\idmor[t\compos s])}"]
      &(\deltaBra{0},z)
      \arrow[l, "{(\delta^1_0,\idmor[z])}"']
      \arrow[dl, "{(\iota_2,\idmor[z])}"]
      \arrow[d, "{(\delta^1_0,\idmor[z])}"]\\
      (\deltaBra{1},s)
      \arrow[r, "{(\delta^2_2,\idmor[s])}"']
      &(\deltaBra{2},X)
      &(\deltaBra{1},t)
      \arrow[l, "{(\delta^2_0,\idmor[t])}"]\\
      &(\deltaBra{0},y)
      \arrow[ul, bend left, "{(\delta^1_0,\idmor[y])}"]
      \arrow[u, "{(\iota_1,\idmor[y])}"]
      \arrow[ur, bend right, "{(\delta^1_1,\idmor[y])}"']
      &
    \end{tikzcd}
  \end{equation}
  The functor $F$ sends this to the commutative diagram below, which demonstrates
  $\tilde{F}_{-}(t)\compos \tilde{F}_{-}(s) = \tilde{F}_{-}(t\compos s)$, as required:
  \begin{equation}\label{eq:lem-downstar-last-localiz-factor-exist:degen-functorial-2}
    \begin{tikzcd}[column sep=small, row sep=small]
      \tilde{F}_{-}(x)
      \arrow[dr, equal]
      \arrow[rrrr, "\tilde{F}_{-}(t\compos s)"]
      \arrow[rrdddd, bend right, "\tilde{F}_{-}(s)"']
      &&&&\tilde{F}_{-}(z)
      \arrow[dl, equal]\\
      &F(\deltaBra{0},x)
      \arrow[r]
      \arrow[rd]
      \arrow[d]
      &F(\deltaBra{1},t\compos s)
      \arrow[d, "\sim" sloped]
      &F(\deltaBra{0},z)
      \arrow[l,"\sim"']
      \arrow[dl,"\sim" sloped]
      \arrow[d,"\sim" sloped]&\\
      &F(\deltaBra{1},s)
      \arrow[r]
      &F(\deltaBra{2},X)
      &F(\deltaBra{1},t)
      \arrow[l, "\sim" sloped]&\\
      &&F(\deltaBra{0},y)
      \arrow[ul, "\sim" sloped]
      \arrow[u]
      \arrow[ur]
      &&\\
      &&\tilde{F}_{-}(y)
      \arrow[rruuuu, bend right, "\tilde{F}_{-}(t)"']
      \arrow[u, equal]
      &&
    \end{tikzcd}
  \end{equation}
  Thus we obtain that $\tilde{F}_{-}$ is a functor.

  Now we set
  $\tilde{F}(d\compos s) \coloneqq \tilde{F}_{+}(d)\compos\tilde{F}_{-}(s)$
  for any composable pair
  $x\overset{s}{\twoheadrightarrow}y\overset{d}{\rightarrowtail} z$ in $C$.
  The unique factorization of the Reedy structure of $C$ ensures that this
  gives a well-defined family of functions $\HomOf[C]{x}{z} \to \HomOf[D]{x}{z}$ for
  $x,z\in\Ob C$.

  We wish to verify the functoriality of $\tilde{F}$. 
  Consider any commutative square of the following form:
  \begin{equation}\label{eq:lem-downstar-last-localiz-factor-exist:funct-welldef-1}
    \begin{tikzcd}[ampersand replacement=\&]
      w \arrow[r, tail, "{d'}"] \arrow[d, two heads, "s"']
      \& x \arrow[d, two heads, "{s'}"] \\
      y \arrow[r, tail, "d"']
      \& z
    \end{tikzcd}
  \end{equation}
  The diagram \eqref{eq:lem-downstar-last-localiz-factor-exist:funct-welldef-1}
  represents a natural transformation
  $\phi_{d'd}\colon s \Rightarrow s'\colon \deltaBra{1}\to C$,
  which comprises a
  morphism $(\idmor[\deltaBra{1}],\phi_{d'd})\colon(\deltaBra{1},s)\to(\deltaBra{1},s')$
  in $\int\catNerve^{{-},{+}}(C)$.

  Now, notice that the desired functoriality follows from the outer square of the diagram
  \eqref{eq:lem-downstar-last-localiz-factor-exist:funct-welldef-3} below.
  However, in the following
  \eqref{eq:lem-downstar-last-localiz-factor-exist:funct-welldef-3},
  the commutativity of each inner small trapezoid is definitionally guaranteed,
  and that of each small triangle is obtained by simple calculation:
  \begin{equation}\label{eq:lem-downstar-last-localiz-factor-exist:funct-welldef-3}
    \begin{tikzcd}
      \tilde{F}(w) \arrow[dr, equal]
      \arrow[rrrrr, "{\tilde{F}_{+}(d')}"]
      \arrow[dddd, "{\tilde{F}_{-}(s)}"']
      &[-0.9em] &&&&[-0.9em] \tilde{F}(x) \arrow[dl, equal]
      \arrow[dddd, "{\tilde{F}_{-}(s')}"]
      \\[-0.9em]
      & F(\deltaBra{0},w)
      \arrow[rrr, "{F(\idmor[\deltaBra{0}], d')}"]
      \arrow[drrr, "{F(\delta^1_1, d')}"]
      \arrow[d, "{F(\delta^1_1, \idmor[w])}"']
      &&& F(\deltaBra{0},x)
      \arrow[d, "{F(\delta^1_1,\idmor[x])}"]
      &\\
      & F(\deltaBra{1}, s) 
      \arrow[rrr, "{F(\idmor[\deltaBra{1}], \phi_{d'd})}"{pos=0.35}]
      &&& {F(\deltaBra{1},s')}
      &\\
      & F(\deltaBra{0}, y)
      \arrow[u, "\sim"' sloped, "{F(\delta^1_0, \idmor[y])}"]
      \arrow[urrr, "{F(\delta^1_0, d)}"]
      \arrow[rrr, "{F(\idmor[\deltaBra{0}],d)}"']
      &&& F(\deltaBra{0}, z)
      \arrow[u, "\sim" sloped, "{F(\delta^1_0, \idmor[z])}"']
      &\\[-0.9em]
      \tilde{F}(y) \arrow[ur, equal] \arrow[rrrrr, "\tilde{F}_{+}(d)"']
      &&&&& \tilde{F}(z) \arrow[ul, equal]
    \end{tikzcd}
  \end{equation}
  Hence the outer square in
  \eqref{eq:lem-downstar-last-localiz-factor-exist:funct-welldef-3}
  is commutative, which completes the proof of the functoriality of $\tilde{F}$.

  Now we wish to define $\theta\colon \tilde{F}\compos\last\overset\sim\Rightarrow F$ by
  \[
    \theta_{(\deltaBra{n}, X)} \coloneqq F(\iota_n, \idmor[X\compos\iota_n])
    \colon \tilde{F}(\last(\deltaBra{n}, X)) = F(\deltaBra{0}, X(n))
    \overset\sim\longrightarrow F(\deltaBra{n},X).
  \]
  Since $(\iota_n, \idmor[X\compos\iota_n])$ is a $\last$-weak equivalence, the morphism
  above is an isomorphism. It just remains to demonstrate the naturality of $\theta$:
  for any $\boldsymbol{\alpha}\colon (\deltaBra{m},X)\to(\deltaBra{n},Y)$, we shall check
  the commutativity of the following diagram:
  \begin{equation}\label{eq:lem-downstar-last-localiz-factor-exist:nat}
    \begin{tikzcd}[column sep=large]
      \tilde{F}(\last(\deltaBra{m}, X))
      \arrow[r, "\theta_{(\deltaBra{m},X)}"]
      \arrow[d, "{\tilde{F}(\last(\boldsymbol{\alpha}))}"']
      & F(\deltaBra{m}, X)
      \arrow[d, "{F(\boldsymbol{\alpha})}"]
      \\
      \tilde{F}(\last(\deltaBra{n}, Y))
      \arrow[r, "{\theta_{(\deltaBra{n},Y)}}"']
      & F(\deltaBra{n},Y)
    \end{tikzcd}
  \end{equation}
  Let $(\alpha,\phi)\colon(\deltaBra{m},X)\to(\deltaBra{n},Y)$ in
  $\int\catNerve^{{-},{+}}(C)$ be any representative of the equivalence class
  $\boldsymbol{\alpha}\in\Mor\Downstar(C)$.
  By \cref{lem:down-groth-last-decp}, we may confine our consideration into two
  cases: the one where $\alpha(m) = n$ and the one where
  $X(m)=Y(\alpha(m))$ and $\phi_m = \idmor[X(m)]$.

  If $\alpha(m) = n$, then $\last(\boldsymbol{\alpha})=\phi_m\in\Mor C_{+}$;
  hence by definition we have:
  \[
    \tilde{F}(\last(\boldsymbol{\alpha}))=\tilde{F}_{+}(\phi_m)
    =F(\idmor[\deltaBra{0}], \phi_m)\colon F(\deltaBra{0}, X(m)) \to F(\deltaBra{0}, Y(n)).
  \]
  Now send the following commutative diagram under $F$ to obtain the commutativity of
  \eqref{eq:lem-downstar-last-localiz-factor-exist:nat}:
  \begin{equation}\label{eq:lem-downstar-last-localiz-factor-exist:nat-face}
    \begin{tikzcd}[column sep=huge]
      (\deltaBra{0}, X(m)) \arrow[d, "{(\idmor[\deltaBra{0}], \phi_m)}"']
      \arrow[r, "{(\iota_m, \idmor[X(m)])}"]
      & (\deltaBra{m}, X) \arrow[d, "{(\alpha,\phi)}"]
      \\
      (\deltaBra{0}, Y(n)) \arrow[r, "{(\iota_n,\idmor[Y(n)])}"']
      & (\deltaBra{n}, Y)
    \end{tikzcd}
  \end{equation}

  Finally, we consider the remaining case: $X(m)=Y(\alpha(m))$ and $\phi_m = \idmor[X(m)]$.
  Let us write
  $s\coloneqq Y(\uniqmor_{\alpha(m),n})\colon Y(\alpha(m))\twoheadrightarrow Y(n)$.
  The premise of the case implies that $\last(\boldsymbol{\alpha})=s$ and that
  $\tilde{F}(\last(\boldsymbol{\alpha}))=\tilde{F}_{-}(s)$.
  Let $\beta\colon \deltaBra{1} \to \deltaBra{n}$ in $\mtcSimpx$ be given by
  $\beta(0) = \alpha(m)$ and $\beta(1) = n$. By the defining diagram
  \eqref{eq:lem-downstar-last-localiz-factor-exist:img-degen-spec},
  the commutativity of
  \eqref{eq:lem-downstar-last-localiz-factor-exist:nat}
  follows from the image by $F$ of the following commutative diagram:
  \begin{equation}\label{eq:lem-downstar-last-localiz-factor-exist:nat-degen}
    \begin{tikzcd}[column sep=huge]
      (\deltaBra{0}, X(m)) \arrow[d, "{(\delta^0_1, \idmor[X(m)])}"']
      \arrow[r, "{(\iota_m, \idmor[X(m)])}"]
      & (\deltaBra{m}, X) \arrow[dd, "{(\alpha,\phi)}"]
      \\
      (\deltaBra{1}, s)\arrow[dr, "{(\beta,\idmor[s])}"]
      & \\
      (\deltaBra{0}, Y(n)) \arrow[r, "{(\iota_n,\idmor[Y(n)])}"']
      \arrow[u, "{(\delta^0_0,\idmor[Y(n)])}"]
      & (\deltaBra{n}, Y)
    \end{tikzcd}
  \end{equation}

  Now we have constructed the both that have been required.
\end{proof}

Now we wrap up the lemmas above to achieve our goal of this section:

\begin{proof}[Proof of \cref{thm:down-last-localiz}]\label{prf:thm-down-last-localiz}
  The case $\Gamma=\int \catNerve(C)$ is addressed in
  \cref{lem:groth-tot-last-localiz}.
  The case $\Gamma=\Downstar(C)$ follows from
  \cref{lem:down-groth-last-respects-weq,%
    lem:last-localiz-whisker-bij,prop:downstar-last-localiz-factor-exist}.
  By \cref{lem:down-groth-last-compat,lem:eqvce-down-c-downstar},
  the cases $\Gamma=\Downstar(C)$ and $\Gamma=\Down(C)$ are equivalent.

  The remaining case is $\Gamma=\int\catNerve^{{-},{+}}(C)$.
  In \cref{prop:downstar-strict-localization}, note that:
  \[
    \Mor \textstyle\int\catNerve^{{-},\set{\idmor}}_{-}(C) \subseteq
    \weqlast[\int\catNerve^{{-},{+}}(C)].
  \]
  Therefore, by \cref{lem:down-groth-last-compat}, the case in question
  is again equivalent to the case $\Gamma=\Downstar(C)$. This concludes the proof.
\end{proof}

\section{Shapes of \texorpdfstring{$(\infty,1)$}{(∞,1)}-diagrams used in the proof of
\texorpdfstring{$(\infty,1)$}{(∞,1)}-localization}
\label{sec:down-last-infty-loc-shape}

In the preceding discussion, we worked on finite or constructive foundation.
In this and the next section, we will work on a sufficiently strong classical
set theoretic foundation, like ZFC.

In \cref{thm:down-last-localiz} from the previous section, 
we proved that the functor $\last[\Gamma]$ is a $1$-localization functor for
$\Gamma=\int\catNerve(C),\,\int\catNerve^{{-},{+}}(C),\,\Downstar(C),\,\Down(C)$.
This section presents the preparatory technical arguments needed for
the next \cref{sec:down-last-infty-loc}, where we prove that $\last[\Gamma]$ is
an $(\infty,1)$-localization map for the same four instances of $\Gamma$.
To be more precise, the generalities discussed here will affect the proof for three of those four:
$\Gamma=\int\catNerve^{{-},{+}}(C),\,\Downstar(C),\,\Down(C)$.
The remaining case, $\Gamma=\int\catNerve(C)$, simply follows from the argument
of a reflexive subcategory.

More concretely speaking, in this section, we shall construct the following six endofunctors
on the category $\mtcSSet$ of simplicial sets, and study certain natural transformations between them:
\begin{itemize}
  \item $\mtsDcp$ and $\mtsDcpI$ (\cref{def:dcp-dcpI});
  \item $\mtsESd$ (\cref{def:esd}) and $\mtsESdI$ (\cref{def:esdi});
  \item $\mtsESdp$ and $\mtsESdpI$ (\cref{def:esdp-esdpi}).
\end{itemize}
The transformations fit into the following commutative diagram (see \cref{lem:dcp-esd-esdp-i-comm}):
\[
  \begin{tikzcd}[column sep=small]
    \mtsDcp X\times\set{0} \arrow[r, two heads] \arrow[d, hook]
    & \mtsESd X\times\set{0} \arrow[r, hook] \arrow[d, hook]
    & \mtsESdp X\times\set{0} \arrow[d, hook]
    & X\times\set{0} \arrow[l, hook'] \arrow[d, equal] \\
    \mtsDcpI X \arrow[r, two heads]
    & \mtsESdI X \arrow[r, hook]
    & \mtsESdpI X
    & X \times \set{0} \arrow[d, hook]\\
    X\times\set{1} \arrow[u, hook] \arrow[r, equal]
    & X\times\set{1} \arrow[u, hook] \arrow[r, equal]
    & X\times \set{1} \arrow[r, hook]
    & X \times \mtyndSpx\deltaBra{1} \arrow[lu, hook']
  \end{tikzcd}
\]
As we will see in \cref{subsec:down-last-infty-loc-shape-uloc,subsec:down-last-infty-loc-shape-inner-anod,%
subsec:down-last-infty-loc-shape-inner-anod2,subsec:down-last-infty-loc-shape-main-lem}, the maps
$\mtsDcp X \to \mtsESdp X$ and $\mtsDcpI X \to \mtsESdpI X$ are localization maps for any $X$.

These functors are used to describe the shapes of the $(\infty,1)$-diagrams
that appear in the proof that $\last[\Gamma]$ is an $(\infty,1)$-localization.
Specifically, in \cref{lem:down-and-orig-dcp-esdp-maps} from the next section, we will construct
the following simplicial maps:
\begin{itemize}
  \item $\mtsDcp\nerve(C) \to \nerve(\Gamma)$;
  \item $\mtsDcpI\nerve(\Gamma) \to \nerve(\Gamma)$;
  \item $\mtsESdp\nerve(C) \to \nerve(C)$;
  \item $\mtsESdpI\nerve(\Gamma) \to \nerve(C)$.
\end{itemize}
Suppose that a quasi-category-valued simplicial map $\nerve(\Gamma)\to Q$ sends
$\last$-weak equivalences to equivalences. Then the general properties of the transformations
investigated here will allow us to factor $\mtsDcp\nerve(C) \to \nerve(\Gamma) \to Q$ through
$\mtsESdp\nerve(C)$ and $\mtsDcpI\nerve(\Gamma) \to \nerve(\Gamma) \to Q$ through
$\mtsESdpI\nerve(\Gamma)$. This will give $\nerve(C) \to Q$ and $\nerve(\Gamma) \times \mtyndSpx\deltaBra{1} \to Q$,
which shows the factorization property of a localization map.

This section is organized as follows: in \cref{subsec:down-last-infty-loc-shape-functors},
we will define the six endofunctors on $\mtcSSet$ mentioned above. In \cref{subsec:down-last-infty-loc-shape-transf},
we will construct the natural transformations between them, as shown in the diagram above.
\Cref{subsec:down-last-infty-loc-shape-uloc,subsec:down-last-infty-loc-shape-inner-anod,subsec:down-last-infty-loc-shape-inner-anod2}
will be devoted to the investigation of specific properties of these transformations:
\begin{itemize}
  \item in \cref{subsec:down-last-infty-loc-shape-uloc}, we will show that $\mtsDcp X \to \mtsESd X$ and
  $\mtsDcpI X \to \mtsESdI X$ are universal localization maps;
  \item in \cref{subsec:down-last-infty-loc-shape-inner-anod},
  we will prove that $\mtsESd X \to \mtsESdp X$ is an inner anodyne map;
  \item in \cref{subsec:down-last-infty-loc-shape-inner-anod2}, we will demonstrate that
  $\mtsESdI X \cup_{\mtsESd X} \mtsESdp X \to \mtsESdpI X$ is an inner anodyne map.
\end{itemize}
\Cref{subsec:down-last-infty-loc-shape-main-lem}
will conclude this \cref{sec:down-last-infty-loc-shape} by introducing new notation and restating the results.
It will facilitate the reference from the next section.

\subsection{Some endofunctors on \texorpdfstring{$\mtcSSet$}{Set\_Δ}}
\label{subsec:down-last-infty-loc-shape-functors}

We shall begin by defining two simplicial sets that elaborate to higher dimensions
the commutative diagrams used in the proof of \cref{prop:downstar-last-localiz-factor-exist}.

\begin{definition}\label{def:dcp-dcpI}
  For any object $\deltaBra{n} \in \mtcSimpx$, we define the category $\mtcDcp\deltaBra{n}$
  as the full subcategory of the product category
  $\deltaBra{n} \times (\overcat{\mtcSimpx}{\deltaBra{n}})$
  spanned by the objects $(x,\alpha\colon\deltaBra{k} \to \deltaBra{n})$ such that $x \le \alpha(0)$.
  Also, we define the category $\mtcDcpI\deltaBra{n}$ by the following formulae:
  \begin{align*}
    \Ob(\mtcDcpI\deltaBra{n}) \coloneqq \Ob(\mtcDcp\deltaBra{n}) \amalg \deltaBra{n} 
    &= (\set{0} \times \Ob\mtcDcp\deltaBra{n}) \cup (\set{1} \times \deltaBra{n});\\
    \HomOf[\mtcDcpI\deltaBra{n}]{(0,p)}{(0,q)} &\coloneqq \HomOf[\mtcDcp\deltaBra{n}]{p}{q};\\
    \HomOf[\mtcDcpI\deltaBra{n}]{(0,(x,\alpha\colon\deltaBra{k}\to\deltaBra{n}))}{(1,y)}
    &\coloneqq\HomOf[\deltaBra{n}]{\alpha(k)}{y};\\
    \HomOf[\mtcDcpI\deltaBra{n}]{(1,x)}{(0,q)} &\coloneqq \emptyset;\\
    \HomOf[\mtcDcpI\deltaBra{n}]{(1,x)}{(1,y)} &\coloneqq \HomOf[\deltaBra{n}]{x}{y}.
  \end{align*}
  Here, the composition of morphisms is clear, for every composition has its value
  in a singleton set or is of the form $(0,p)\to(0,q)\to(0,r)$.
  In an evident way, 
  the categories $\mtcDcp\deltaBra{n}$ and $\mtcDcpI\deltaBra{n}$ assemble into
  cosimplicial categories 
  \[ 
    {\mtcDcp},{\mtcDcpI}\colon\mtcSimpx\to\mtcCat.
  \]
  We shall denote by ${\mtsDcp},{\mtsDcpI}\colon\mtcSSet\to\mtcSSet$, respectively,
  the left Kan extensions of ${\nerve}\compos{\mtcDcp},{\nerve}\compos{\mtcDcpI}
  \colon\mtcSimpx\to\mtcSSet$ along
  the Yoneda embedding $\mtyndSpx\colon\mtcSimpx\to\mtcSSet$.
\end{definition}

\begin{remark}
  Explicitly, for any simplicial set $X$, we have
  \begin{alignat*}{2}
    \mtsDcp X
    &= \int^{\deltaBra{n}\in\mtcSimpx}N(\mtcDcp\deltaBra{n})\times X_n
    &&= \mtColim_{\mtyndSpx\deltaBra{n}\to X} N(\mtcDcp\deltaBra{n});\\
    \mtsDcpI X
    &= \int^{\deltaBra{n}\in\mtcSimpx}N(\mtcDcpI\deltaBra{n})\times X_n
    &&= \mtColim_{\mtyndSpx\deltaBra{n}\to X} N(\mtcDcpI\deltaBra{n}).
  \end{alignat*}
\end{remark}

In \cref{prop:downstar-last-localiz-factor-exist}, we have constructed
a functor $\tilde{F}\colon C\to D$ and a natural isomorphism
$\theta\colon\tilde{F}\compos\last\overset\sim\Rightarrow F$ from a functor
$F\colon\Downstar(C)\to D$ that sends every $\last$-weak equivalence to an
isomorphism. The simplicial set $\mtsDcp(\nerve(C))$ captures the shape
of the diagrams used for the construction of $\tilde{F}$, namely
\eqref{eq:lem-downstar-last-localiz-factor-exist:degen-functorial-2} and
\eqref{eq:lem-downstar-last-localiz-factor-exist:funct-welldef-3}.
The construction of $\theta$, which used the commutative diagrams
\eqref{eq:lem-downstar-last-localiz-factor-exist:nat-face} and
\eqref{eq:lem-downstar-last-localiz-factor-exist:nat-degen}, is covered by
the simplicial set $\mtsDcpI(\nerve(\Downstar(C)))$.

For a good control of compositions and inverses in quasi-categories,
we need a few more endofunctors of $\mtcSSet$. We go on to define them.

\begin{definition}\label{def:esd}
  For the purpose of this definition, let $F\colon \mtcSimpx\to\mtcSimpx$ denote
  the functor $F\deltaBra{n} = \deltaBra{n}\star\deltaBra{n}=\deltaBra{2n+1}$.
  We shall define the endofunctor
  ${\mtsESd}\colon\mtcSSet\to\mtcSSet$ as the precomposition of 
  $\dualCat{F}\colon\dualCat{\mtcSimpx}\to\dualCat{\mtcSimpx}$:
  for any simplicial set $X$, we have $\mtsESd(X)_n = X_{2n+1}$.
\end{definition}

\begin{remark}
  The endofunctor ${\mtsESd}\colon\mtcSSet\to\mtcSSet$ is not new: it is the
  composition of the two well-known functors. Let $\mtcSSSet$ denote
  the category of bisimplicial sets: set-valued presheaves on the
  product category $\mtcSimpx\times\mtcSimpx$.
  The \emph{total d\'ecalage functor} $\mathrm{Dec}\colon\mtcSSet\to\mtcSSSet$ and
  the \emph{diagonal functor} $\mathrm{diag}\colon\mtcSSSet\to\mtcSSet$ are given by
  the precomposition of the \emph{join} ${\star}\colon \mtcSimpx\times\mtcSimpx\to\mtcSimpx$
  and the \emph{diagonal inclusion} $\mtcSimpx\to\mtcSimpx\times\mtcSimpx$.
  It is easy to see that these two functors compose to produce our ${\mtsESd}$.
\end{remark}

Since the left and the right Kan extensions of $\dualCat{F}$ gives the left
and the right adjoints of $\mtsESd$, note that $\mtsESd$ preserves limits
and colimits, as should also be clear from the formula
$\mtsESd(X)_n = X_{2n+1}$.

\begin{definition}\label{def:esdi}
  We define the cosimplicial simplicial set
  ${\mtsESdIc}\colon\mtcSimpx\to\mtcSSet$ by
  \[
    \mtsESdIc\deltaBra{n}_k \coloneqq
    \coprod_{\substack{I\sqcup J = \deltaBra{k}\\I < J}}
    \HomOf[\mtcPoset]{I\star I\star J}{\deltaBra{n}}.
  \]
  Here, the disjoint union $\amalg$ is taken over all the partitions
  $\deltaBra{k} = I \sqcup J$ satisfying
  $i < j$ for any $i \in I$ and $j \in J$.
  We define the endofunctor ${\mtsESdI}\colon\mtcSSet\to\mtcSSet$
  as the left Kan extension of $\mtsESdIc$ along the Yoneda embedding
  $\mtyndSpx\colon\mtcSimpx\to\mtcSSet$.
\end{definition}

With this definition, the set of $n$-simplices of $\mtsESdI X$ can be expressed as
the following disjoint union:
\[
  (\mtsESdI X)_n \cong \coprod_{i=0}^{n+1} X_{n+i}.
\]

\begin{definition}\label{def:esdp-esdpi}
  Let us define the cosimplicial posets
  ${\mtpESdp},{\mtpESdpI}\colon\mtcSimpx\to\mtcPoset$ by:
  \begin{align*}
    \mtpESdp\deltaBra{n} &\coloneqq \functCat{\deltaBra{1}}{\deltaBra{n}};\\
    \mtpESdpI\deltaBra{n} &\coloneqq
    (\set{0}\times\functCat{\deltaBra{1}}{\deltaBra{n}}) \cup
    (\set{1}\times\deltaBra{n}).
  \end{align*}
  Here, the latter set is regarded as a poset by pulling back the order through the following
  injective map, whose codomain is endowed with the product order:
  \begin{align*}
    \mtpESdpI\deltaBra{n} &\to \deltaBra{1}\times\functCat{\deltaBra{1}}{\deltaBra{n}};\\
    (0,f) &\mapsto (0,f);\\
    (1,x) &\mapsto (1,\mathrm{const}_x).
  \end{align*}
  We define the endofunctors ${\mtsESdp},{\mtsESdpI}\colon\mtcSSet\to\mtcSSet$
  as the left Kan extensions along the Yoneda embedding
  $\mtyndSpx\colon\mtcSimpx\to\mtcSSet$ 
  of the post-compositions of ${\mtpESdp}$ and ${\mtpESdpI}$
  with the embedding $\mtcPoset\hookrightarrow\mtcCat\hookrightarrow\mtcSSet$.
\end{definition}

Let us explain the shapes and the intended meanings of the simplicial sets we have defined,
emphasizing lower dimensions. We begin with $\mtsDcp$. Since we would like to show that
$\nerve({\last})\colon\nerve(\Down(C))\to \nerve(C)$ is a localization map, we need to
construct a map $\nerve(C)\to Q$, given a quasi-category-valued simplicial map
$\nerve(\Down(C))\to Q$ that sends $\last$-weak equivalences to equivalences.
This endofunctor $\mtsDcp$ is designed to make $\mtsDcp(\nerve(C))$
capture how simplices in the nerve $\nerve(C)$ of our fixed Reedy category $C$ should
be mapped to $\nerve(\Down(C))$, considering $\last$-weak equivalences as invertibles.
Let us say we have a $2$-simplex in $\nerve(C)$: a commutative triangle $X_0 \to X_1 \to X_2$
in $C$. Assume that the following is the Reedy factorization of the triangle:
\[
  \begin{tikzcd}
    X_0 \arrow[rr, two heads, "s_1"] \arrow[dr, two heads, "s_2"']
    && Y_1 \arrow[rr, tail, "d_1"] \arrow[dr, tail, "d"'] && X_2 \\
    & Y_2 \arrow[dr, tail, "d_2"'] \arrow[ur, two heads, "s"'] && Y_0 \arrow[ur, tail, "d_0"'] & \\
    && X_1 \arrow[ur, two heads, "s_0"'] &&
  \end{tikzcd}
\]
Let us write:
\[
  S \coloneqq \left(X_0 \overset{s_2}{\twoheadrightarrow} Y_2 
  \overset{s}{\twoheadrightarrow} Y_1\right)
\]
for the length-$2$ composable chain in $C$, representing an object in $\Downstar(C)$.
The factorized triangle $X_0 \to X_1 \to X_2$ should be mapped to the following in $\Downstar(C)$:
\begin{equation}\label{eq:diagram-dcp-2-simplex-concrete}
  \begin{tikzcd}
    X_0 \arrow[rr, hook] \arrow[dr, hook] \arrow[drr, hook]
    && s_1 \arrow[d, hook, "\sim" sloped]
    && Y_1 \arrow[ll, hook, "\sim"'] \arrow[dl, hook, "\sim"' sloped] \arrow[dll, hook, "\sim" sloped] \arrow[dr, "d"'] \arrow[rr, "d_1"] \arrow[dd]
    && X_2 \\
    & s_2 \arrow[r, hook] & S & s \arrow[l,hook, "\sim"] \arrow[dr, "{(d_2,d)}" sloped] && Y_0 \arrow[ur, "d_0"'] \arrow[dl, hook, "\sim" sloped] &\\
    && Y_2 \arrow[ul, hook, "\sim" sloped] \arrow[ur, hook] \arrow[dr, "d_2"'] \arrow[u, hook] \arrow[rr] && s_0 && \\
    &&& X_1 \arrow[ur, hook] &&&
  \end{tikzcd}
\end{equation}
Here, the labels for objects and morphisms are abbreviated for simplicity.
Also note that this diagram includes the diagrams 
\labelcref{eq:lem-downstar-last-localiz-factor-exist:funct-welldef-3,%
eq:lem-downstar-last-localiz-factor-exist:degen-functorial-2}
from the proof of \cref{prop:downstar-last-localiz-factor-exist}.

The diagram \eqref{eq:diagram-dcp-2-simplex-concrete} is, itself, not a single $2$-simplex in
$\nerve(\Downstar(C))$. However, since all arrows labeled with $\sim$ are $\last$-weak equivalences,
if we consider these arrows as invertible, we can compute
the ``composition'' of this diagram to yield a valid $2$-simplex.

In order to describe the shape of the concrete diagram \eqref{eq:diagram-dcp-2-simplex-concrete}, 
we consider the following $2$-dimensional simplicial set, which is an incomplete prototype of $\mtsDcp\mtyndSpx\deltaBra{2}$:
\[
  \begin{tikzcd}[column sep=small]
    (0,\set{0}) \arrow[rr] \arrow[dr] \arrow[drr]
    && (0,\set{0,2}) \arrow[d, "\sim" sloped]
    && (0,\set{2}) \arrow[ll, "\sim"'] \arrow[dl, "\sim"' sloped] \arrow[dll, "\sim" sloped] \arrow[dr] \arrow[rr] \arrow[dd]
    &&[-4.5em] (2,\set{2}) \\
    & (0,\set{0,1}) \arrow[r] 
    & (0,\set{0,1,2}) 
    & (0,\set{1,2}) \arrow[l, "\sim"] \arrow[dr] && (1,\set{2}) \arrow[ur] \arrow[dl, "\sim" sloped] &\\
    && (0,\set{1}) \arrow[ul, "\sim" sloped] \arrow[ur] \arrow[dr] \arrow[u] \arrow[rr] 
    && (1,\set{1,2}) && \\
    &&& (1,\set{1}) \arrow[ur] &&&
  \end{tikzcd}
\]
Here, $\sim$ on an edge indicates that the corresponding morphism in \eqref{eq:diagram-dcp-2-simplex-concrete}
is a $\last$-weak equivalence, meaning it preserves the left coordinate and the maximum
of the right coordinate. If we were to formulate the exact shape of the diagram as $\mtsDcp\mtyndSpx\deltaBra{2}$,
we would have the following:
\begin{align*}
  \text{``}\mtpDcp\deltaBra{n}\text{''} &\overset{?}{\coloneqq} \set{(x,\alpha)}[x \in \deltaBra{n},
     \emptyset\neq\alpha\subseteq\deltaBra{n}, x\le\min\alpha]\text{ poset};\\
  \text{``}\mtsDcp\mtyndSpx\deltaBra{n}\text{''} &\overset{?}{\coloneqq} \nerve(\mtpDcp\deltaBra{n}).
\end{align*}
However, for a better interaction with degeneracies, we would like to consider as $\alpha$ general
$\deltaBra{n}$-valued morphisms in $\mtcSimpx$, not just the $\deltaBra{n}$-valued injectives; hence the construction
in \cref{def:dcp-dcpI}. This makes the simplicial set $\mtsDcp\mtyndSpx\deltaBra{2}$ higher dimensional than $2$,
and too visually complex to be depicted here, but we can more easily manipulate it in our proofs.

In each vertex $(x,\alpha)\in\Ob(\mtcDcp\deltaBra{n})$ of $\mtsDcp\mtyndSpx\deltaBra{n}$,
its right coordinate $\alpha$ par\-a\-metrizes the length of a chain in $C_{-}$, which is an object in
$\Downstar(C)$, and the left coordinate $x$ controls the Reedy factorization from which the chain is obtained.
More rigourously put, if $X_0 \to X_1 \to \cdots \to X_n$ is an $n$-simplex in $\nerve(C)$, then
the vertex $(x,\alpha\colon\deltaBra{k}\to\deltaBra{n})$ in $\mtsDcp(\mtyndSpx\deltaBra{n})$
corresponds to the chain $Z_0 \twoheadrightarrow \cdots \twoheadrightarrow Z_k$ below in $C_{-}$,
representing an object in $\Downstar(C)$:
\[
\begin{tikzcd}
  X_x \arrow[r] \arrow[dr, two heads]
  & X_{\alpha(0)} \arrow[r]
  & X_{\alpha(1)} \arrow[r]
  & \cdots \arrow[r]
  & X_{\alpha(k)} \\
  & Z_0 \arrow[r, two heads] \arrow[u, tail]
  & Z_1 \arrow[r, two heads] \arrow[u, tail]
  & \cdots \arrow[r, two heads]
  & Z_k \arrow[u, tail]
\end{tikzcd}
\]
This glues together to form a simplicial map
$\mtsDcp(\nerve(C))\to\nerve(\Downstar(C))$, as shall be demonstrated in \cref{lem:down-and-orig-dcp-esdp-maps}.

In order to compose simplices forming diagrams like \eqref{eq:diagram-dcp-2-simplex-concrete},
we need to properly treat the reverse-direction simplices. For that purpose,
we simply collapse the arrows with $\sim$ and simplices containing them into degeneracies.
If we do so in $\mtsDcp(\mtyndSpx\deltaBra{2})$, then we obtain $\mtsESd(\mtyndSpx\deltaBra{2})$,
which has the following shape: 
\begin{equation}\label{eq:esd-2-simplex-example}
  \begin{tikzcd}
    (0, 0) \arrow[rr] \arrow[dr]
    && (0, 2) \arrow[rr]\arrow[dr]
    && (2, 2) \\
    & (0, 1)\arrow[ur] \arrow[rr]\arrow[dr]
    && (1, 2) \arrow[ur]
    &\\
    && (1, 1) \arrow[ur]
    &&
  \end{tikzcd}
\end{equation}
Here, a vertex $(x,\alpha)$ in $\mtsDcp(\mtyndSpx\deltaBra{2})$ is collapsed into the vertex
$(x,\max\alpha)$. This collapse works as intended. This generalizes to higher dimensions and glues together
to form a natural simplicial map $\mtsDcp X \to \mtsESd X$ for any simplicial set $X$. This natural map is a
localization map, as we will show in \cref{prop:dcp-esd-connecting-surj-univ-loc}.

We are left only with forward-direction simplices in $\mtsESd$, but the composition of simplices in
$\mtsESd(\mtyndSpx\deltaBra{n})$ is not specified in the simplicial set: for example,
note that the non-degenerate simplices
in $\mtsESd(\mtyndSpx\deltaBra{2})$ are the vertices, the arrows, and the four small triangle explicitly
depicted in the diagram \eqref{eq:esd-2-simplex-example}.
Therefore, even if we have a map with domain $\mtsESd(\nerve(C))$, the values of simplices
in $\nerve(C)$ is not determined. We need an extension
that contains the ``composition'' of these simplices. This is the job $\mtsESdp$ does. The simplicial set
$\mtsESdp(\mtyndSpx\deltaBra{n}) = \nerve(\functCat{\deltaBra{1}}{\deltaBra{n}})$ is obtained by considering
a poset that is obtained by all lacking compositions in $\mtsESd(\mtyndSpx\deltaBra{n})$. This makes
simplicial set $\mtsESdp(\mtyndSpx\deltaBra{n})$ a quasi-category for each $n$. Most desirably,
for each $X$, we have natural inclusions $X \hookrightarrow \mtsESdp X$ and $\mtsESd X \hookrightarrow \mtsESdp X$,
the latter of which is, as shown in \cref{prop:esd-esdp-conn-inner-anod}, inner anodyne.

The three endofunctors $\mtsDcp$, $\mtsESd$, and $\mtsESdp$ are used to describe the shapes of the
diagram used to construct a factorizing functor $\nerve(C) \to Q$ from a functor $\nerve(\Down(C)) \to Q$.
The construction of the natural transformation that ensures the functor is a factorization,
requires the other three endofunctors: $\mtsDcpI$, $\mtsESdI$, and $\mtsESdpI$.
These endofunctors with ``I'' evaluated at a simplicial set $X$ serve as a cylinder that connects
the value at $X$ of the version without ``I'' and the original simplicial set $X$.
For example, $\mtsESdI(\mtyndSpx\deltaBra{1})$ has the following shape:
\[
  \begin{tikzcd}
    (0, (0,0)) \arrow[r] \arrow[d] \arrow[rrd]
    & (0, (0,1)) \arrow[rd] \arrow[r]
    & (0, (1,1)) \arrow[d] \\
    (1, 0) \arrow[rr]
    && (1, 1)
  \end{tikzcd}
\]

To conclude this subsection, we shall check an important property of the functors we have defined:

\begin{lemma}\label{lem:dcp-esd-esdp-preserves-mono}
  The functors $\mtsDcp$, $\mtsDcpI$, $\mtsESd$, $\mtsESdI$, $\mtsESdp$,
  and $\mtsESdpI$ preserve monomorphisms.
\end{lemma}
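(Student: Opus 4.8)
The plan is to dispatch $\mtsESd$ at once and to treat the remaining five functors uniformly by means of \cref{cor:unaug-cospx-kan-ext-inj}. For $\mtsESd$ there is nothing to do beyond the observation already recorded just before \cref{def:esdi}: $\mtsESd$ is precomposition with $\dualCat{F}\colon\dualCat{\mtcSimpx}\to\dualCat{\mtcSimpx}$, hence it has both a left and a right adjoint (the Kan extensions along $\dualCat{F}$), so it preserves all limits and colimits, and in particular monomorphisms.

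Each of $\mtsDcp$, $\mtsDcpI$, $\mtsESdI$, $\mtsESdp$, $\mtsESdpI$ is, by definition, the left Kan extension along the Yoneda embedding $\mtyndSpx\colon\mtcSimpx\to\mtcSSet$ of a cosimplicial simplicial set $\Psi\colon\mtcSimpx\to\mtcSSet$ --- in order, ${\nerve}\compos{\mtcDcp}$, ${\nerve}\compos{\mtcDcpI}$, $\mtsESdIc$, and the two nerves of the cosimplicial posets ${\mtpESdp}$ and ${\mtpESdpI}$. Taking $\dualCat{\mtcSimpx}$ as the small category in \cref{cor:unaug-cospx-kan-ext-inj} (so that $\functCat{\dualCat{\mtcSimpx}}{\mtcSet}$ is exactly $\mtcSSet$), it suffices to check that each such $\Psi$ is unaugmentable, i.e.\ that the pullback of $\Psi(\delta^1_0),\Psi(\delta^1_1)\colon\Psi\deltaBra{0}\rightrightarrows\Psi\deltaBra{1}$ is the initial simplicial set $\emptyset$.

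To handle all five at once I will use that unaugmentability is inherited along natural transformations: if $\Psi\Rightarrow\Psi'$ is a morphism of cosimplicial simplicial sets with $\Psi'$ unaugmentable, then functoriality of pullbacks gives a map $\Psi\deltaBra{0}\times_{\Psi\deltaBra{1}}\Psi\deltaBra{0}\to\Psi'\deltaBra{0}\times_{\Psi'\deltaBra{1}}\Psi'\deltaBra{0}=\emptyset$, forcing the source to be $\emptyset$ as well. As the common target I take the tautological cosimplicial simplicial set $\mtyndSpx\colon\mtcSimpx\to\mtcSSet$, which is unaugmentable since $\mtyndSpx(\delta^1_0)$ and $\mtyndSpx(\delta^1_1)$ pick out the two distinct vertices of $\mtyndSpx\deltaBra{1}$. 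It then remains to write down a natural transformation to $\mtyndSpx$ from each of the five $\Psi$'s: for ${\nerve}\compos{\mtcDcp}$ and ${\nerve}\compos{\mtcDcpI}$ these are the nerves of the projections $\mtcDcp\deltaBra{n}\to\deltaBra{n}$, $(x,\alpha)\mapsto x$, and $\mtcDcpI\deltaBra{n}\to\deltaBra{n}$, $(0,(x,\alpha))\mapsto x$, $(1,y)\mapsto y$ (well defined on morphisms because $x\le\alpha(0)\le\alpha(k)\le y$); for the nerves of ${\mtpESdp}$ and ${\mtpESdpI}$ they are induced by the evident monotone maps to $\deltaBra{n}$ (evaluation at the top vertex, together with the identity on the $\deltaBra{n}$-part of ${\mtpESdpI}\deltaBra{n}$); and for $\mtsESdIc$ it is the operation sending a poset morphism $\phi\colon I\star I\star J\to\deltaBra{n}$ to its restriction along the canonical order embedding $\deltaBra{k}\cong I\star J\hookrightarrow I\star I\star J$ onto the first copy of $I$ together with $J$.

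The one step that genuinely needs care is checking that this last restriction is a morphism of cosimplicial objects, i.e.\ that it is compatible with the simplicial operators implicit in \cref{def:esdi}. Concretely, an operator $\gamma\colon\deltaBra{k}\to\deltaBra{l}$ acts on $\mtsESdIc\deltaBra{n}$ by pulling a partition $I'\sqcup J'=\deltaBra{l}$ back to $I=\gamma^{-1}(I')$, $J=\gamma^{-1}(J')$ and precomposing $\phi$ with the induced monotone map $I\star I\star J\to I'\star I'\star J'$; one checks directly that under this map the embedded copy $I\star J\hookrightarrow I\star I\star J$ is carried into $I'\star J'\hookrightarrow I'\star I'\star J'$ compatibly with $\gamma$, so the restriction commutes with $\gamma$. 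All the other natural transformations above are visibly cosimplicial, so this completes the argument; I expect this $\mtsESdI$ compatibility to be the only real obstacle.
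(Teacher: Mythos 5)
Your proof is correct and follows the same route as the paper, namely an appeal to \cref{cor:unaug-cospx-kan-ext-inj} for the five Kan-extension functors and the limit/colimit-preservation observation for $\mtsESd$. The paper's own proof is terser --- it simply asserts that unaugmentability ``follows from the construction'' without checking it --- whereas you supply that missing verification through a clean observation: unaugmentability is inherited along a natural transformation into an unaugmentable target, and each relevant cosimplicial simplicial set admits an evident projection to the unaugmentable $\mtyndSpx$. The details you check (including the compatibility of the restriction $I\star J\hookrightarrow I\star I\star J$ with the simplicial operators for $\mtsESdIc$) are all sound.
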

\begin{proof}
  By \cref{cor:unaug-cospx-kan-ext-inj}, the claim follows from the construction
  of the functors in question.
\end{proof}

\subsection{Some natural transformations between the endofunctors}
\label{subsec:down-last-infty-loc-shape-transf}

We shall now construct some natural transformations between the endofunctors
we have defined. They should fit into the following commutative diagram
(see \cref{lem:dcp-esd-esdp-i-comm}):
\begin{equation} \label{eq:dcp-esd-esdp-i-comm}
  \begin{tikzcd}
    \mtsDcp X\times\set{0} 
    \arrow[r, two heads, "\labelcref{def:dcp-esd-connecting-surj}"]
    \arrow[d, hook, "\labelcref{def:dcp-esd-esdp-incl-to-i}"']
    & \mtsESd X\times\set{0} \arrow[r, hook, "\labelcref{def:esd-esdp-connecting-inj}"]
    \arrow[d, hook, "\labelcref{def:dcp-esd-esdp-incl-to-i}"']
    & \mtsESdp X\times\set{0} \arrow[d, hook, "\labelcref{def:dcp-esd-esdp-incl-to-i}"']
    & X\times\set{0} \arrow[l, hook', "\labelcref{def:id-esdp-connecting-inj}"'] \arrow[d, equal] \\
    \mtsDcpI X \arrow[r, two heads, "\labelcref{def:dcp-esd-connecting-surj}"]
    & \mtsESdI X \arrow[r, hook, "\labelcref{def:esd-esdp-connecting-inj}"]
    & \mtsESdpI X
    & X \times \set{0} \arrow[d, hook]\\
    X\times\set{1} \arrow[u, hook, "\labelcref{def:id-times-1-incl-to-dcpi-esdi}"] \arrow[r, equal]
    & X\times\set{1} \arrow[u, hook, "\labelcref{def:id-times-1-incl-to-dcpi-esdi}"] \arrow[r, equal]
    & X\times \set{1} \arrow[r, hook]
    & X \times \mtyndSpx\deltaBra{1} \arrow[lu, hook', "\labelcref{def:id-esdp-connecting-inj}"']
  \end{tikzcd}
\end{equation}
Here, the numbers stand for the definition where the natural transformation is constructed.

\begin{definition}\label{def:dcp-esd-connecting-surj}
  Let $X\in \mtcSSet$ be a simplicial set. Below we shall construct surjective
  simplicial maps $\mtsDcp X \twoheadrightarrow \mtsESd X$ and
  $\mtsDcpI X \twoheadrightarrow \mtsESdI X$, which are natural in $X$.
  We shall consider these natural maps as the canonical connecting maps between
  those simplicial sets.
\end{definition}
\begin{proof}[Construction]
  Since the functors in question are all colimit-preserving and
  colimits preserve surjections, we may confine
  our attention to the case where $X$ is a representable simplicial set
  $\mtyndSpx\deltaBra{n}$. We first construct
  $\mtsDcp\mtyndSpx\deltaBra{n}\twoheadrightarrow\mtsESd\mtyndSpx\deltaBra{n}$.
  Let $\varphi\colon\deltaBra{n}\to\mtcDcp\deltaBra{n}$ be a $k$-simplex in
  $\mtsDcp\mtyndSpx\deltaBra{n}=N(\mtcDcp\deltaBra{n})$. Write:
  \begin{alignat*}{2}
    (x_i,\alpha_i\colon\deltaBra{m_i}\to\deltaBra{n}) &\coloneqq \varphi(i)
    &\quad&\text{for }0\le i\le k;\\
    (\uniqmor_{x_i,x_j},\beta_{i,j}\colon\alpha_i\to\alpha_j) &\coloneqq \varphi(\uniqmor_{i,j})
    &\quad&\text{for }0\le i \le j \le k.
  \end{alignat*}
  Since $\beta_{i,j}$ above is a morphism in $\overcat{\mtcSimpx}{\deltaBra{n}}$, we deduce:
  \begin{gather*}
    x_k \le \alpha_k(0) \le \alpha_k(\beta_{0,k}(0)) = \alpha_0(0) \le \alpha_0(m_0); \\
    \alpha_{i}(m_{i}) = \alpha_{j}(\beta_{i,j}(m_{i})) \le \alpha_j(m_j) \quad\text{for }1\le i\le j\le k.
  \end{gather*}
  Therefore, the following chain of inequalities holds:
  \[
    x_0 \le x_1 \le \cdots \le x_k
    \le \alpha_0(m_0) \le \alpha_1(m_1) \le \cdots \le \alpha_k(m_k).
  \]
  The above $(2k+2)$-chain in $\deltaBra{n}$ represents a $k$-simplex in
  $\mtsESd\mtyndSpx\deltaBra{n}$. This defines a
  simplicial map $\mtsDcp\mtyndSpx\deltaBra{n}\to\mtsESd\mtyndSpx\deltaBra{n}$,
  natural in $\deltaBra{n}$, which is surjective by construction.

  Next, we construct
  $\mtsDcpI\mtyndSpx\deltaBra{n}\twoheadrightarrow\mtsESdI\mtyndSpx\deltaBra{n}$.
  Let $\varphi\colon \deltaBra{k}\to\mtcDcpI\deltaBra{n}$ be a $k$-simplex in
  $\mtsDcpI\mtyndSpx\deltaBra{n} = N(\mtcDcpI\deltaBra{n})$.
  We wish to assign to $\varphi$ an element of
  \[
    \mtsESdIc\deltaBra{n}_k = \coprod_{\substack{I\sqcup J = \deltaBra{k}\\I < J}}
    \HomOf[\mtcPoset]{I\star I\star J}{\deltaBra{n}}.
  \]
  Let the subposet $I=\set{0,1,\ldots,l-1}$ and $J=\set{l,l+1,\ldots,k}$
  of $\deltaBra{k}$, respectively, be the inverse images of $\set{0}\times\Ob\mtcDcp\deltaBra{n}$
  and $\set{1}\times\deltaBra{n}$ under $\varphi$. Write:
  \begin{alignat*}{2}
    (0,(x_i,\alpha_i\colon\deltaBra{m_i}\to\deltaBra{n})) &\coloneqq \varphi(i)
    &\quad&\text{for }i \in I;\\
    (1,y_i) &\coloneqq \varphi(i) &\quad&\text{for }i \in J.
  \end{alignat*}
  From the same reasoning as above, we deduce that the following chain of inequalities holds,
  yielding an order-preserving map $I\star I\star J\to\deltaBra{n}$:
  \begin{multline*}
    x_0 \le x_1 \le \cdots \le x_{l-1} \\
    \le \max\alpha_0 \le \max\alpha_1 \le \cdots \le \max\alpha_{l-1} \\
    \le y_l \le y_{l+1} \le \cdots \le y_k.
  \end{multline*}
  This constitutes an element of $\mtsESdI\mtyndSpx\deltaBra{n}_k = \mtsESdIc\deltaBra{n}_k$
  above, and tying these corresponding elements together gives a simplicial map
  $\mtsDcpI\mtyndSpx\deltaBra{n}\to\mtsESdI\mtyndSpx\deltaBra{n}$, naturally in $\deltaBra{n}$,
  which is surjective by construction.
\end{proof}

\begin{definition}\label{def:esd-esdp-connecting-inj}
  Let $X\in \mtcSSet$ be a simplicial set. In the subsequent 
  Construction, we shall give injective
  simplicial maps $\mtsESd X \hookrightarrow \mtsESdp X$ and
  $\mtsESdI X \hookrightarrow \mtsESdpI X$, which are natural in $X$.
  We shall think of these natural transformations as the canonical
  ones between the involved simplicial sets.
\end{definition}
\begin{proof}[Construction]
  Again, we may confine our attention to the case where $X$ is a representable
  simplicial set $\mtyndSpx\deltaBra{n}$. We first construct
  $\mtsESd\mtyndSpx\deltaBra{n}\hookrightarrow\mtsESdp\mtyndSpx\deltaBra{n}$.
  For any $\deltaBra{k} \in \mtcSimpx$, consider the following order-preserving bijection
  (which is not an isomorphism of posets):
  \[
    u_{\deltaBra{k}}\colon \deltaBra{1}
    \times \deltaBra{k} \to \deltaBra{k} \star \deltaBra{k}=\deltaBra{2k+1};
    \,\, (i, x) \mapsto i*(k+1) + x.
  \]
  This is natural in $\deltaBra{k}$. Let an order-preserving map
  $f\colon \deltaBra{k}\star \deltaBra{k}\to\deltaBra{n}$ represent a $k$-simplex
  in $\mtsESd\mtyndSpx\deltaBra{n}$. The composite
  $f\compos u_{\deltaBra{k}}\colon \deltaBra{1}\times\deltaBra{k}\to\deltaBra{n}$
  has the adjunct map $\deltaBra{k}\to\functCat{\deltaBra{1}}{\deltaBra{n}}$,
  which is an element of
  $\mtsESdp\mtyndSpx\deltaBra{n}_k=\nerve(\mtpESdp\deltaBra{n})_k$.
  By corresponding this adjunct to the $k$-simplex $f$, we obtain a simplicial map
  $\mtsESd\mtyndSpx\deltaBra{n}\hookrightarrow\mtsESdp\mtyndSpx\deltaBra{n}$,
  naturally in $\deltaBra{n}$. This family of maps induces a natural transformation
  ${\mtsESd}\Rightarrow\mtsESdp$.
  By \cref{cor:unaug-cospx-kan-ext-nat-inj},
  the injectivity of the induced transformation follows from that of each
  $\mtsESd\mtyndSpx\deltaBra{n}\hookrightarrow\mtsESdp\mtyndSpx\deltaBra{n}$,
  which is clear.

  Next, we construct $\mtsESdI\mtyndSpx\deltaBra{n}\hookrightarrow\mtsESdpI\mtyndSpx\deltaBra{n}$.
  In order to arbitrarily take a $k$-simplex in $\mtsESdI\mtyndSpx\deltaBra{n}$,
  let $\deltaBra{k}=I\sqcup J$ be a partition with $I < J$, and let
  $f\colon I\star I\star J\to\deltaBra{n}$ be a map. By applying the above argument to
  $\left. f\right|_{I\star I}\colon I \star I \to \deltaBra{n}$, we may obtain a map
  $I \to \functCat{\deltaBra{1}}{\deltaBra{n}}$. Bundling this with the map
  $\left. f\right|_{J}\colon J \to \deltaBra{n}$ gives a map
  \[
    \deltaBra{k} = I \sqcup J = I \star J
    \to \set{0} \times \functCat{\deltaBra{1}}{\deltaBra{n}} \cup \set{1} \times \deltaBra{n}
    = \mtpESdpI\deltaBra{n}.
  \]
  This gives a $k$-simplex in $\mtsESdpI\mtyndSpx\deltaBra{n}=N(\mtpESdpI\deltaBra{n})$.
  We have thus constructed a simplicial map
  $\mtsESdI\mtyndSpx\deltaBra{n}\hookrightarrow\mtsESdpI\mtyndSpx\deltaBra{n}$,
  naturally in $\deltaBra{n}$.
  
  The injectivity of $\mtsESdI X \to \mtsESdpI X$ for any simplicial set $X$ may be shown
  by \cref{cor:unaug-cospx-kan-ext-nat-inj}.
\end{proof}

\begin{definition}\label{def:id-esdp-connecting-inj}
  For a simplicial set $X \in \mtcSSet$, we will demonstrate the construction of
  canonical injective simplicial maps $X \hookrightarrow \mtsESdp X$ and
  $X\times\mtyndSpx\deltaBra{1} \hookrightarrow \mtsESdpI X$,
  which are natural in $X$. These constructions will be referred to as the canonical
  connecting maps between these simplicial sets.
\end{definition}
\begin{proof}[Construction]
  For the third time, we may consider only the case where $X$ is a representable
  simplicial set $\mtyndSpx\deltaBra{n}$. The map
  $\mtyndSpx\deltaBra{n}\hookrightarrow\mtsESdp\mtyndSpx\deltaBra{n}$
  may be obtained by sending the constant inclusion
  \[ 
    \mathrm{const}\colon\deltaBra{n} \hookrightarrow \deltaBra{n}^{\deltaBra{1}};\,\,
    x \mapsto \mathrm{const}_{x} = (x,x),
  \]
  which is natural in $\deltaBra{n}$,
  with the nerve functor $\nerve\colon \mtcPoset\to\mtcCat\to\mtcSSet$.

  In order to construct the map
  $\mtyndSpx\deltaBra{n}\times\mtyndSpx\deltaBra{1}\hookrightarrow\mtsESdpI\mtyndSpx\deltaBra{n}$,
  condider the following inclusion of posets:
  \begin{align*}
    \deltaBra{n} \times \deltaBra{1} &\hookrightarrow \mtpESdpI\deltaBra{n}
    = \set{0} \times \functCat{\deltaBra{1}}{\deltaBra{n}} \cup \set{1} \times \deltaBra{n};\\
    (x,0) &\mapsto (0,\mathrm{const}_x),\\
    (x,1) &\mapsto (1,x).
  \end{align*}
  This is natural in $\deltaBra{n}$.
  We may then apply the nerve functor to this inclusion to obtain the desired map.
  
  The injectivity of these maps follows from \cref{cor:unaug-cospx-kan-ext-nat-inj}, as before.
\end{proof}

\begin{definition}\label{def:dcp-esd-esdp-incl-to-i}
  Given a simplicial set $X\in \mtcSSet$, we aim to construct canonical injective
  simplicial maps $\mtsDcp X \hookrightarrow \mtsDcpI X$,
  $\mtsESd X \hookrightarrow \mtsESdI X$, and
  $\mtsESdp X \hookrightarrow \mtsESdpI X$.
  These constructions, detailed below, are natural in $X$ and
  will serve as the canonical natural transformation between the involved functors.
\end{definition}
\begin{proof}[Construction]
  For the fourth time, we may confine our attention to the case where $X$ is a representable
  simplicial set $\mtyndSpx\deltaBra{n}$.
  $\mtcDcpI\deltaBra{n}$ and $\mtpESdpI\deltaBra{n}$ are defined in such a way that
  there is an obvious inclusions $\mtcDcp\deltaBra{n}\hookrightarrow\mtcDcpI\deltaBra{n}$
  and $\mtpESdp\deltaBra{n}\hookrightarrow\mtpESdpI\deltaBra{n}$, which induce the
  desired simplicial maps $\mtsDcp\mtyndSpx\deltaBra{n}\hookrightarrow\mtsDcpI\mtyndSpx\deltaBra{n}$
  and $\mtsESdp\mtyndSpx\deltaBra{n}\hookrightarrow\mtsESdpI\mtyndSpx\deltaBra{n}$.
  This leaves us with the construction of
  $\mtsESd\mtyndSpx\deltaBra{n}\hookrightarrow\mtsESdI\mtyndSpx\deltaBra{n}$.
  Unwinding the definition:
  \begin{align*}
    \mtsESd\mtyndSpx\deltaBra{n}_k
    &\cong
    \HomOf[\mtcPoset]{\deltaBra{k}\star \deltaBra{k}}{\deltaBra{n}};\\
    \mtsESdI\mtyndSpx\deltaBra{n}_k
    &= \coprod_{\substack{I\sqcup J = \deltaBra{k}\\I < J}}
    \HomOf[\mtcPoset]{I\star I\star J}{\deltaBra{n}}.
  \end{align*}
  Therefore, the set $\mtsESd\mtyndSpx\deltaBra{n}_k$ embeds isomorphically to
  the component of $\mtsESdI\mtyndSpx\deltaBra{n}_k$ indexed by the partition
  $I=\deltaBra{k}$ and $J=\emptyset$. This gives the desired simplicial map.

  The naturality of these inclusions is clear from the definitions.
  The injectivity of these maps may be shown by using \cref{cor:unaug-cospx-kan-ext-nat-inj}.
\end{proof}

\begin{definition}\label{def:id-times-1-incl-to-dcpi-esdi}
  Let $X$ be a simplicial set. The injective simplicial maps
  $X \hookrightarrow \mtsDcpI X$ and $X \hookrightarrow \mtsESdI X$, natural
  in $X$ and constructed below, will be considered as the canonical ones between the
  involved simplicial sets.
\end{definition}
\begin{proof}[Construction]
  For the fifth time, we may confine our attention to the case where $X$ is a representable
  simplicial set $\mtyndSpx\deltaBra{n}$. The first of the desired maps:
  \[
    \nerve\deltaBra{n} = \mtyndSpx\deltaBra{n}
    \hookrightarrow\mtsDcpI\mtyndSpx\deltaBra{n} = N(\mtcDcpI\deltaBra{n})
  \]
  may be obtained by applying the nerve functor to the inclusion of the full subcategory:
  \[
    \deltaBra{n} = \set{1} \times \deltaBra{n} \hookrightarrow 
    \mtcDcpI\deltaBra{n};\,\,
    x \mapsto (1,x).
  \]
  Note that this is natural in $\deltaBra{n}$.

  For the construction of the other map $\mtyndSpx\deltaBra{n}\hookrightarrow\mtsESdI\mtyndSpx\deltaBra{n}$,
  remember:
  \begin{align*}
    \mtyndSpx\deltaBra{n}_k
    &= \HomOf[\mtcPoset]{\deltaBra{k}}{\deltaBra{n}};\\
    \mtsESdI\mtyndSpx\deltaBra{n}_k = \mtsESdIc\deltaBra{n}_k &= \coprod_{\substack{I\sqcup J = \deltaBra{k}\\I < J}}
    \HomOf[\mtcPoset]{I\star I\star J}{\deltaBra{n}}.
  \end{align*}
  Therefore,
  \[
    \HomOf[\mtcPoset]{\deltaBra{k}}{\deltaBra{n}}
    \cong \HomOf[\mtcPoset]{\emptyset\star\emptyset\star\deltaBra{k}}{\deltaBra{n}}
    \hookrightarrow \coprod_{\substack{I\sqcup J = \deltaBra{k}\\I < J}} \HomOf[\mtcPoset]{I\star I\star J}{\deltaBra{n}}
  \]
  gives a map between the sets of $k$-simplices.
  Since this is natural in $\deltaBra{k} \in \dualCat{\mtcSimpx}$ and $\deltaBra{n} \in \mtcSimpx$,
  we obtain what we want.

  To show the injectivity of these maps, it suffices to remember \cref{cor:unaug-cospx-kan-ext-nat-inj}.
\end{proof}

Finally, we remember the diagram that we presented at the beginning of this subsection:

\begin{lemma}\label{lem:dcp-esd-esdp-i-comm}
  The diagram \eqref{eq:dcp-esd-esdp-i-comm} at the beginning of this subsection,
  which we shall reproduce here for the reader's convenience, commutes:
  \[
    \begin{tikzcd}
      \mtsDcp X\times\set{0} 
      \arrow[r, two heads, "\labelcref{def:dcp-esd-connecting-surj}"]
      \arrow[d, hook, "\labelcref{def:dcp-esd-esdp-incl-to-i}"']
      & \mtsESd X\times\set{0} \arrow[r, hook, "\labelcref{def:esd-esdp-connecting-inj}"]
      \arrow[d, hook, "\labelcref{def:dcp-esd-esdp-incl-to-i}"']
      & \mtsESdp X\times\set{0} \arrow[d, hook, "\labelcref{def:dcp-esd-esdp-incl-to-i}"']
      & X\times\set{0} \arrow[l, hook', "\labelcref{def:id-esdp-connecting-inj}"'] \arrow[d, equal] \\
      \mtsDcpI X \arrow[r, two heads, "\labelcref{def:dcp-esd-connecting-surj}"]
      & \mtsESdI X \arrow[r, hook, "\labelcref{def:esd-esdp-connecting-inj}"]
      & \mtsESdpI X
      & X \times \set{0} \arrow[d, hook]\\
      X\times\set{1} \arrow[u, hook, "\labelcref{def:id-times-1-incl-to-dcpi-esdi}"] \arrow[r, equal]
      & X\times\set{1} \arrow[u, hook, "\labelcref{def:id-times-1-incl-to-dcpi-esdi}"] \arrow[r, equal]
      & X\times \set{1} \arrow[r, hook]
      & X \times \mtyndSpx\deltaBra{1} \arrow[lu, hook', "\labelcref{def:id-esdp-connecting-inj}"']
    \end{tikzcd}
  \]
  Remember that the numbers stand for the definition where the natural transformation is constructed.
\end{lemma}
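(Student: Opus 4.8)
The plan is to reduce everything to representable simplicial sets and then to unwind the explicit combinatorial descriptions from \cref{def:dcp-esd-connecting-surj,def:esd-esdp-connecting-inj,def:id-esdp-connecting-inj,def:dcp-esd-esdp-incl-to-i,def:id-times-1-incl-to-dcpi-esdi}. The point is that all nine functors occurring in \eqref{eq:dcp-esd-esdp-i-comm} are colimit-preserving: $\mtsDcp$, $\mtsDcpI$, $\mtsESd$, $\mtsESdI$, $\mtsESdp$, $\mtsESdpI$ are by construction left Kan extensions along $\mtyndSpx\colon\mtcSimpx\to\mtcSSet$, and $X\mapsto X\times\set{0}$, $X\mapsto X\times\set{1}$, $X\mapsto X\times\mtyndSpx\deltaBra{1}$ preserve colimits as well. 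Moreover every edge of the diagram was defined as the natural transformation induced by a prescribed family of simplicial maps between the values of these functors on the representables $\mtyndSpx\deltaBra{n}$. Since a natural transformation between colimit-preserving endofunctors of $\mtcSSet$ is determined by its components on representables, and commutativity of a diagram of natural transformations is tested componentwise, it suffices to check that, for every $\deltaBra{n}\in\mtcSimpx$, the corresponding diagram of simplicial maps $\mtsDcp\mtyndSpx\deltaBra{n}\to\cdots$ commutes.

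After this reduction each map is given by an explicit formula on simplices, and each of the finitely many square- and triangle-shaped subdiagrams becomes a short unwinding. For the three squares in the top two rows — the naturality squares of the inclusions $\mtsDcp\Rightarrow\mtsDcpI$, $\mtsESd\Rightarrow\mtsESdI$, $\mtsESdp\Rightarrow\mtsESdpI$ of \cref{def:dcp-esd-esdp-incl-to-i} against the connecting maps of \cref{def:dcp-esd-connecting-surj,def:esd-esdp-connecting-inj} — one recalls that $\mtsDcp\mtyndSpx\deltaBra{n}\hookrightarrow\mtsDcpI\mtyndSpx\deltaBra{n}$ and $\mtsESdp\mtyndSpx\deltaBra{n}\hookrightarrow\mtsESdpI\mtyndSpx\deltaBra{n}$ come from the evident inclusions $\mtcDcp\deltaBra{n}\hookrightarrow\mtcDcpI\deltaBra{n}$ and $\mtpESdp\deltaBra{n}\hookrightarrow\mtpESdpI\deltaBra{n}$, while $\mtsESd\mtyndSpx\deltaBra{n}\hookrightarrow\mtsESdI\mtyndSpx\deltaBra{n}$ identifies $\mtsESd\mtyndSpx\deltaBra{n}_k=\HomOf[\mtcPoset]{\deltaBra{k}\star\deltaBra{k}}{\deltaBra{n}}$ with the summand of $\mtsESdI\mtyndSpx\deltaBra{n}_k=\coprod_{I\sqcup J=\deltaBra{k},\,I<J}\HomOf[\mtcPoset]{I\star I\star J}{\deltaBra{n}}$ indexed by $I=\deltaBra{k}$, $J=\emptyset$; one then checks that the surjection of \cref{def:dcp-esd-connecting-surj} and the injection of \cref{def:esd-esdp-connecting-inj} both preserve this ``$J=\emptyset$'' component and restrict on it to the same formula as their $X\times\set{0}$-versions. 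The square comparing $X\times\set{0}\hookrightarrow\mtsESdp X\times\set{0}$, $X\times\set{0}\hookrightarrow X\times\mtyndSpx\deltaBra{1}$ and the two maps into $\mtsESdpI X$ of \cref{def:id-esdp-connecting-inj} reduces to the observation that $\mathrm{const}\colon\deltaBra{n}\hookrightarrow\functCat{\deltaBra{1}}{\deltaBra{n}}$ composed with $\functCat{\deltaBra{1}}{\deltaBra{n}}\hookrightarrow\mtpESdpI\deltaBra{n}$ agrees with $x\mapsto(0,\mathrm{const}_x)$; the two bottom squares and the lower-right triangle are handled identically, using that the copy $\set{1}\times\deltaBra{n}$ inside $\mtcDcpI\deltaBra{n}$ and inside $\mtpESdpI\deltaBra{n}$ is transported by the identity under every map of \cref{def:dcp-esd-connecting-surj,def:esd-esdp-connecting-inj,def:id-times-1-incl-to-dcpi-esdi,def:id-esdp-connecting-inj}, together with the formula $(x,1)\mapsto(1,x)$.

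I do not expect a genuine obstacle: the statement is pure bookkeeping, and its only subtlety is notational. If any step deserves care, it is tracking, through the ``$\mathsf{I}$''-decorated functors, the partition-indexed disjoint-union structure of $\mtsESdI\mtyndSpx\deltaBra{n}_k$ and $\mtsESdpI\mtyndSpx\deltaBra{n}_k=N(\mtpESdpI\deltaBra{n})_k$ simultaneously with the order-preserving bijection $u_{\deltaBra{k}}\colon\deltaBra{1}\times\deltaBra{k}\xrightarrow{\sim}\deltaBra{k}\star\deltaBra{k}$ used in \cref{def:esd-esdp-connecting-inj}, so that the two routes around the top-middle square genuinely land on the same element of $N(\mtpESdpI\deltaBra{n})_k$. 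Once that single compatibility is spelled out, all remaining verifications are immediate, and the diagram commutes.
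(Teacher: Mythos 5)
Your proposal is correct and takes essentially the same approach the paper does — the paper's proof is the single line ``By construction,'' which is precisely your reduction to representables followed by unwinding each of \cref{def:dcp-esd-connecting-surj,def:esd-esdp-connecting-inj,def:id-esdp-connecting-inj,def:dcp-esd-esdp-incl-to-i,def:id-times-1-incl-to-dcpi-esdi} on $\mtyndSpx\deltaBra{n}$. You have merely made explicit the bookkeeping the paper leaves to the reader.
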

\begin{proof}
  By construction.
\end{proof}

\subsection{Properties of the transformations 1: two universal localizations}
\label{subsec:down-last-infty-loc-shape-uloc}

In \cref{def:dcp-esd-connecting-surj}, we have constructed the canonical natural transformations
${\mtsDcp} \Rightarrow {\mtsESd}$ and ${\mtsDcpI} \Rightarrow {\mtsESdI}$.
The purpose of this section is to show that the following three classes of natural
maps are universal localizations in the sense of
\cite[\href{https://kerodon.net/tag/02M0}{Definition 02M0}]{kerodon}:

\begin{itemize}
  \item the canonical map $\mtsDcp X \twoheadrightarrow \mtsESd X$ for each simplicial set $X$
  (\cref{prop:dcp-esd-connecting-surj-univ-loc});
  \item the canonical map $\mtsDcpI X \twoheadrightarrow \mtsESdI X$ for each simplicial set $X$
  (\cref{prop:dcpi-esdpi-connecting-surj-univ-loc});
  \item the map $\mtsDcpI X \cup_{\mtsDcp X} \mtsDcp Y \twoheadrightarrow \mtsESdI X \cup_{\mtsESd X} \mtsESd Y$
  for any simplicial map $X\to Y$, induced by the canonical transformations 
  (\cref{cor:dcp-esd-conn-surj-pushout-univ-loc}).
\end{itemize}

Using \href{https://kerodon.net/tag/02M9}{Proposition 02M9} and \href{https://kerodon.net/tag/02MA}{Proposition 02MA} 
from \cite{kerodon},  we can show that these three types of maps are universal localizations 
by decomposing both the domain and codomain into smaller parts, verifying universal localization on each part, 
and then assembling the results via colimits.  
\cref{lem:last-functor-overcat-n-univ-loc,lem:dcpi-esdpi-connecting-surj-delta-univ-loc-piece}
presented below provide the fundamental building blocks of these proofs.

\begin{notation}\label{not:last-functor-overcat-n}
  Let $\deltaBra{n}\in\Ob\mtcSimpxAug$.
  For the purpose of this subsection, we use the following symbol for a functor:
  \[
    \lambda_{\deltaBra{n}}\colon \overcat{\mtcSimpx}{\deltaBra{n}} \to \deltaBra{n};\;
    (\alpha\colon\deltaBra{m}\to\deltaBra{n})\mapsto\alpha(m).
  \]
  Here, note that $\overcat{\mtcSimpx}{\deltaBra{n}}$ may be extended to the case $n=-1$
  by regarding it as a full subcategory of $\overcat{\mtcSimpxAug}{\deltaBra{n}}$, i.e.,
  $\overcat{\mtcSimpx}{\deltaBra{-1}} = \emptyset$.
  Note that this is natural in $\deltaBra{n}\in\mtcSimpx$. We shall also write
  \[
    \lambda_{\deltaBra{n}}\coloneqq\nerve(\lambda_{\deltaBra{n}})\colon
    \nerve(\overcat{\mtcSimpx}{\deltaBra{n}})\to\mtyndSpx\deltaBra{n}.
  \]
  Remember our notation: $\mtyndSpx\deltaBra{-1} = \nerve\deltaBra{-1} = \emptyset$.
\end{notation}

\begin{lemma}\label{lem:last-functor-overcat-n-univ-loc}
  For any $\deltaBra{n}\in\mtcSimpxAug$, the functor
  $\lambda_{\deltaBra{n}}\colon\nerve(\overcat{\mtcSimpx}{\deltaBra{n}})\to\mtyndSpx\deltaBra{n}$
  in \cref{not:last-functor-overcat-n} is a universal localization in the sense of
  \cite[\href{https://kerodon.net/tag/02M0}{Definition 02M0}]{kerodon}.
\end{lemma}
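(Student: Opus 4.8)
The plan is to exhibit $\lambda_{\deltaBra{n}}$ as a localization map directly by identifying a right adjoint and then invoking the fact that an adjunction between $\infty$-categories is a localization after inverting the appropriate edges. First I would note that $\overcat{\mtcSimpx}{\deltaBra{n}}$ has a terminal object, namely $\idmor[\deltaBra{n}]\colon\deltaBra{n}\to\deltaBra{n}$, and that $\lambda_{\deltaBra{n}}$ admits a fully faithful right adjoint $\rho_{\deltaBra{n}}\colon\deltaBra{n}\to\overcat{\mtcSimpx}{\deltaBra{n}}$ sending $x\in\deltaBra{n}$ to the inclusion $\iota^n_{\set{0,1,\dotsc,x}}\colon\deltaBra{x}\rightarrowtail\deltaBra{n}$ (the ``initial segment up to $x$''); indeed, a morphism in $\overcat{\mtcSimpx}{\deltaBra{n}}$ from $(\alpha\colon\deltaBra{m}\to\deltaBra{n})$ to $\rho_{\deltaBra{n}}(x)$ exists iff $\alpha(m)\le x$, i.e., iff $\lambda_{\deltaBra{n}}(\alpha)\le x$, which is exactly the hom-set in $\deltaBra{n}$. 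So $\lambda_{\deltaBra{n}}\dashv\rho_{\deltaBra{n}}$, with counit the identity (since $\lambda_{\deltaBra{n}}\compos\rho_{\deltaBra{n}}=\idmor[\deltaBra{n}]$) and unit $\eta_{(\alpha\colon\deltaBra{m}\to\deltaBra{n})}\colon \alpha\to\iota^n_{\set{0,\dotsc,\alpha(m)}}$ the canonical surjection onto the initial segment.

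Next, I would recall the standard fact (available in \cite{kerodon}; it follows from, e.g., the characterization of localizations via adjunctions) that if a functor $L\colon\mathcal{C}\to\mathcal{D}$ of quasicategories has a fully faithful right adjoint $R$, then $L$ exhibits $\mathcal{D}$ as a localization of $\mathcal{C}$ at the class $W$ of morphisms $w$ such that $L(w)$ is an equivalence; and moreover, since the counit is an isomorphism, one can take $W$ to be generated by the unit maps. Applying this to $L=\lambda_{\deltaBra{n}}$ gives that $\lambda_{\deltaBra{n}}$ is a localization map at the edges it sends to degeneracies of $\deltaBra{n}$. To upgrade this to a \emph{universal} localization in the sense of \cite[\href{https://kerodon.net/tag/02M0}{Definition 02M0}]{kerodon}, I would use the first of the two equivalent conditions in its definition: for any simplex $\varphi\colon\mtyndSpx\deltaBra{k}\to\mtyndSpx\deltaBra{n}$, the pullback $\nerve(\overcat{\mtcSimpx}{\deltaBra{n}})\times_{\mtyndSpx\deltaBra{n}}\mtyndSpx\deltaBra{k}\to\mtyndSpx\deltaBra{k}$ must be a localization map. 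Here I would observe that $\varphi$ corresponds to a map $\deltaBra{k}\to\deltaBra{n}$ in $\mtcSimpx$, and that the fibre product over a simplex of the nerve of a poset is again the nerve of the comma-type category $\overcat{\mtcSimpx}{\deltaBra{n}}\times_{\deltaBra{n}}\deltaBra{k}$, whose objects are pairs $(\alpha\colon\deltaBra{m}\to\deltaBra{n},\; x\in\deltaBra{k})$ with $\alpha(m)=\varphi(x)$; one checks this category again has, over each $x\in\deltaBra{k}$, a reflective fibre with terminal object, so the same adjunction argument produces a fully faithful right adjoint and the pullback map is again a localization. Alternatively — and this is probably cleaner to write — I would decompose $\nerve(\overcat{\mtcSimpx}{\deltaBra{n}})$ itself and apply the closure of universal localizations under the colimits referenced just before the lemma (\href{https://kerodon.net/tag/02M9}{02M9}, \href{https://kerodon.net/tag/02MA}{02MA}): write $\overcat{\mtcSimpx}{\deltaBra{n}}$ as built up from $\overcat{\mtcSimpx}{\deltaBra{n-1}}$ by adjoining the objects $\alpha$ with $\alpha(m)=n$, organized via the pushout/filtered-colimit presentation, and run an induction on $n$ with base case $n=-1$ where $\lambda_{\deltaBra{-1}}\colon\emptyset\to\emptyset$ is trivially a universal localization.

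The main obstacle I anticipate is getting the induction bookkeeping exactly right: the subcategory of $\overcat{\mtcSimpx}{\deltaBra{n}}$ spanned by $\alpha$ with $\alpha(m)=n$ is itself equivalent to $\overcat{\mtcSimpx}{\deltaBra{n}}$ in a shifted way (every such $\alpha$ factors uniquely through $\delta^n_n$ after removing the top vertex), and I need to glue the inductive hypothesis for $\deltaBra{n-1}$ to this piece along $\mtyndSpx\deltaBra{n-1}\hookrightarrow\mtyndSpx\deltaBra{n}$ in a way compatible with $\lambda$ on each piece, so that \href{https://kerodon.net/tag/02MA}{Proposition 02MA} applies. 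If that gluing turns out to be fiddly, the adjunction-plus-pullback route of the previous paragraph is the safer fallback, since the key structural input — existence of a terminal object in each relevant fibre — is robust and is exactly what makes both $\lambda_{\deltaBra{n}}$ and all its base-changes along simplices into localizations. Either way, the genuinely new content is the identification of the right adjoint $\rho_{\deltaBra{n}}$ and the verification that base-change preserves this structure; the rest is invoking the cited closure properties of universal localizations.
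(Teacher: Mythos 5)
Your identification of the fully faithful right adjoint $\rho_{\deltaBra{n}}$ (sending $x$ to the inclusion of the initial segment $\set{0,\dotsc,x}$) and of the unit of the adjunction is exactly the structure the paper uses: the paper's section $u$ equals your $\rho_{\deltaBra{n}}$, and the paper's natural transformation $\theta\colon\idmor\Rightarrow u\compos\lambda_{\deltaBra{n}}$ is precisely your unit $\eta$. The only place you diverge is in how to conclude \emph{universality}. The paper invokes \cite[\href{https://kerodon.net/tag/04JT}{Proposition 04JT}]{kerodon}, which says precisely that a simplicial map admitting a section together with a fibrewise homotopy from the identity to (section)${}\compos{}$(map) is a universal localization — i.e., the deformation-retract data you already extracted from the adjunction is directly enough, with no further work. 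Your fallback route (a), verifying by hand that pullbacks along simplices inherit a fully faithful right adjoint, is a correct argument and essentially reproves the relevant case of 04JT from scratch; it would work, just at greater length. Your route (b), the inductive decomposition via \href{https://kerodon.net/tag/02M9}{02M9}/\href{https://kerodon.net/tag/02MA}{02MA}, would also succeed but is strictly more bookkeeping than is warranted here — the paper reserves that technique for the downstream results (\cref{lem:dcp-esd-connecting-surj-delta-univ-loc,lem:dcpi-esdpi-connecting-surj-delta-univ-loc}) where there is no single global adjunction and the decomposition is genuinely needed. One small terminological slip: the unit $\eta_\alpha\colon\alpha\to\iota^n_{\set{0,\dotsc,\alpha(m)}}$ is the corestriction of $\alpha$ to $\deltaBra{\alpha(m)}$, which need not be a surjection; this does not affect the argument.
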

\begin{proof}
  According to \cite[\href{https://kerodon.net/tag/04JT}{Proposition 04JT}]{kerodon}, to show our claim,
  it suffices to construct a section $u\colon\mtyndSpx\deltaBra{n}\to\nerve(\overcat{\mtcSimpx}{\deltaBra{n}})$
  to $\lambda_{\deltaBra{n}}$, and to check that the composition
  $u\compos\lambda_{\deltaBra{n}}$ and the identity $\idmor[\mtyndSpx\deltaBra{n}]$
  belong to the same connected component of the mapping space
  $\operatorname{Fun}_{/{\mtyndSpx\deltaBra{n}}}(\lambda_{\deltaBra{n}},\lambda_{\deltaBra{n}})$
  over $\mtyndSpx\deltaBra{n}$ (see \cite[\href{https://kerodon.net/tag/01AB}{Construction 01AB}]{kerodon} for the notation).
  In terms of 1-categories, it is enough to construct a functor $u\colon\deltaBra{n}\to\overcat{\mtcSimpx}{\deltaBra{n}}$
  strictly satisfying $\lambda_{\deltaBra{n}}\compos u = \idmor[\deltaBra{n}]$, and a natural transformation
  $\theta\colon \idmor[\overcat{\mtcSimpx}{\deltaBra{n}}]\Rightarrow u\compos\lambda_{\deltaBra{n}}$ satisfying
  $\lambda_{\deltaBra{n}}\whiskl\theta = \idmor[\lambda_{\deltaBra{n}}]$.

  We define $u$ as follows. For each object $x\in\deltaBra{n}$, we set
  $u(x) \coloneqq \iota^n_{\deltaBra{x}}$. 
  Remember, for any non-empty subset $S=\set{x_0<\dotsb<x_k}\subseteq\deltaBra{n}$,
  the injective order-preserving map $\iota^n_S\colon\deltaBra{k}\to\deltaBra{n}$ has been defined by
  $\iota^n_S(i) = x_i$, and the morphism $\iota^n_{\deltaBra{x}}\colon \deltaBra{x} \to \deltaBra{n}$
  in $\mtcSimpx$ gives an object of $\overcat{\mtcSimpx}{\deltaBra{n}}$. If $x \le y$ in $\deltaBra{n}$,
  the morphism 
  \[u(\uniqmor_{x,y})\colon u(x) = \iota^n_{\deltaBra{x}} \to \iota^n_{\deltaBra{y}} = u(y)\]
  is given by $u(\uniqmor_{x,y}) \coloneqq \iota^y_{\deltaBra{x}}\colon\deltaBra{x}\to\deltaBra{y}$,
  which evidently satisfies the condition for a morphism in $\mtcSimpx$ to be a morphism in $\overcat{\mtcSimpx}{\deltaBra{n}}$.
  This defines a functor $u\colon\deltaBra{n}\to\overcat{\mtcSimpx}{\deltaBra{n}}$, and we can easily check:
  \[ 
    (\lambda_{\deltaBra{n}}\compos u)(x) = \iota^n_{\deltaBra{x}}(x) = x = \idmor[\deltaBra{n}](x).
  \]

  We proceed to construct the natural transformation $\theta$. For each object in $\overcat{\mtcSimpx}{\deltaBra{n}}$,
  i.e, a morphism $\alpha\colon\deltaBra{m}\to\deltaBra{n}$ in $\mtcSimpx$, we need to define a morphism
  \[
    \theta_{\alpha}\colon\alpha=\idmor[\overcat{\mtcSimpx}{\deltaBra{n}}](\alpha)
    \to u(\lambda_{\deltaBra{n}}(\alpha)) = \iota^n_{\deltaBra{\alpha(m)}}
  \]
  in $\overcat{\mtcSimpx}{\deltaBra{n}}$. This is given by the morphism
  $\deltaBra{m}\to\deltaBra{\alpha(m)};\; x\mapsto\alpha(x)$ in $\mtcSimpx$, which is indeed a morphism
  $\alpha\to\iota^n_{\deltaBra{\alpha(m)}}$ in $\overcat{\mtcSimpx}{\deltaBra{n}}$. The naturality of
  the family $\theta_\alpha$ in $\alpha$ may be simply checked. Indeed, let the following commutative
  triangle in $\mtcSimpx$ present an arbitrary morphism $\beta$ in $\overcat{\mtcSimpx}{\deltaBra{n}}$:
  \[
  \begin{tikzcd}
    \deltaBra{m} \ar[rr, "\beta"] \ar[dr, "\alpha"']
    && \deltaBra{m'} \ar[dl, "\alpha'"]\\
    & \deltaBra{n} &
  \end{tikzcd}
  \]
  Then, for any $x\in\deltaBra{m}$, we compute:
  \[
    (\theta_{\alpha'}\compos\idmor[\deltaBra{m}](\beta))(x)
    = \alpha'(\beta(x))
    = \alpha(x)
    = \iota^{\alpha'(m')}_{\deltaBra{\alpha(m)}}(\alpha(x))
    = ((u\compos\lambda_{\deltaBra{n}})(\beta) \compos \theta_{\alpha})(x),
  \]
  which is the desired naturality. Since $\deltaBra{n}$ is a poset,
  the equation $\lambda_{\deltaBra{n}}\whiskl\theta = \idmor[\lambda_{\deltaBra{n}}]$ trivially holds.
  This completes the proof.
\end{proof}

\begin{corollary}\label{lem:dcpi-esdpi-connecting-surj-delta-univ-loc-piece}
  Let $\deltaBra{m},\deltaBra{n}\in\mtcSimpxAug$ be objects of
  the augmented simplex category.
  Using the notation from \cref{not:last-functor-overcat-n}, consider the following simplicial map:
  \[
    \idmor[\mtyndSpx\deltaBra{m}]\times\lambda_{\deltaBra{n}}
    \colon \mtyndSpx\deltaBra{m}\times\nerve(\overcat{\mtcSimpx}{\deltaBra{n}})
    \to \mtyndSpx\deltaBra{m}\times\mtyndSpx\deltaBra{n}.
  \]
  Then this map is a universal localization in the sense of
  \cite[\href{https://kerodon.net/tag/02M0}{Definition 02M0}]{kerodon}.
\end{corollary}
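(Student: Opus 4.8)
The plan is to obtain this as an immediate corollary of the preceding \cref{lem:last-functor-overcat-n-univ-loc} together with the closure of universal localizations under products, \cref{lem:univ-loc-closure-product}. First I would invoke \cref{lem:last-functor-overcat-n-univ-loc}, which tells us that $\lambda_{\deltaBra{n}}\colon\nerve(\overcat{\mtcSimpx}{\deltaBra{n}})\to\mtyndSpx\deltaBra{n}$ is a universal localization; the edge case $n=-1$ is vacuous, since then both the source and the target are empty. Next I would apply \cref{lem:univ-loc-closure-product} with $f\coloneqq\lambda_{\deltaBra{n}}$ and $K\coloneqq\mtyndSpx\deltaBra{m}$, obtaining that the map $\lambda_{\deltaBra{n}}\times\idmor[\mtyndSpx\deltaBra{m}]$ from $\nerve(\overcat{\mtcSimpx}{\deltaBra{n}})\times\mtyndSpx\deltaBra{m}$ to $\mtyndSpx\deltaBra{n}\times\mtyndSpx\deltaBra{m}$ is a universal localization.

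Finally, the map in the statement differs from this one only by conjugation with the symmetry isomorphisms of the cartesian product on both the source and the target. Concretely, $\idmor[\mtyndSpx\deltaBra{m}]\times\lambda_{\deltaBra{n}}$ and $\lambda_{\deltaBra{n}}\times\idmor[\mtyndSpx\deltaBra{m}]$ fit into a commutative square whose horizontal edges are the swap isomorphisms; any commutative square with isomorphisms as its horizontal edges is a pullback square, so $\idmor[\mtyndSpx\deltaBra{m}]\times\lambda_{\deltaBra{n}}$ is a pullback of $\lambda_{\deltaBra{n}}\times\idmor[\mtyndSpx\deltaBra{m}]$ along an isomorphism. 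Since universal localizations are stable under base change---the same pasting-of-pullbacks observation used in the proof of \cref{lem:univ-loc-closure-product}, with the preimage of degeneracy edges matching up correctly because the base-change map is an isomorphism---this shows that $\idmor[\mtyndSpx\deltaBra{m}]\times\lambda_{\deltaBra{n}}$ is a universal localization. I expect no real obstacle here: the statement is a routine consequence of \cref{lem:last-functor-overcat-n-univ-loc}, and the only thing worth spelling out is the harmless reshuffling of product factors.
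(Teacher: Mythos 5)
Your proposal is correct and follows exactly the paper's argument: invoke \cref{lem:last-functor-overcat-n-univ-loc} and then close under products via \cref{lem:univ-loc-closure-product}. The extra discussion of the swap isomorphism is harmless but unnecessary, and the paper simply treats it as implicit.
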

\begin{proof}
  Since the product preserves universal localizations (\cref{lem:univ-loc-closure-product}),
  this follows from \cref{lem:last-functor-overcat-n-univ-loc}.
\end{proof}

Building on the minimal components of the domain and codomain of our maps, treated in
\cref{lem:last-functor-overcat-n-univ-loc,lem:dcpi-esdpi-connecting-surj-delta-univ-loc-piece},
we now proceed to slightly larger parts:

\begin{lemma}\label{lem:dcp-esd-connecting-surj-delta-univ-loc}
  Let $\deltaBra{n}\in\mtcSimpxAug$ be an object of the augmented simplex category.
  Then the canonical map $\mtsDcp\mtyndSpx\deltaBra{n}\to\mtsESd\mtyndSpx\deltaBra{n}$
  from \cref{def:dcp-esd-connecting-surj} is a universal localization in the sense of
  \cite[\href{https://kerodon.net/tag/02M0}{Definition 02M0}]{kerodon}.
\end{lemma}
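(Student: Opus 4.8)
The plan is to verify the universal-localization property through the base-change criterion \cite[\href{https://kerodon.net/tag/02M1}{Proposition 02M1}]{kerodon}: it suffices to show that for every $k$-simplex $\varphi\colon\mtyndSpx\deltaBra{k}\to\mtsESd\mtyndSpx\deltaBra{n}$ the pullback $\mtsDcp\mtyndSpx\deltaBra{n}\times_{\mtsESd\mtyndSpx\deltaBra{n}}\mtyndSpx\deltaBra{k}\to\mtyndSpx\deltaBra{k}$ is a localization map at the preimages of the degeneracy edges of $\mtyndSpx\deltaBra{k}$. First I would unwind the data: since $(\mtsESd\mtyndSpx\deltaBra{n})_k=\HomOf[\mtcPoset]{\deltaBra{k}\star\deltaBra{k}}{\deltaBra{n}}$, such a $\varphi$ amounts to a pair of monotone maps $a,b\colon\deltaBra{k}\to\deltaBra{n}$ with $a(k)\le b(0)$. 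Chasing the definitions of $\mtcDcp\deltaBra{n}$ and of the connecting map in \cref{def:dcp-esd-connecting-surj}, the pullback is then naturally identified with the nerve of an explicit category $\mathcal{P}_{a,b}$: an object is a pair $(t,\gamma)$ with $t\in\deltaBra{k}$ and $\gamma\colon\deltaBra{m}\to\deltaBra{n}$ an object of $\overcat{\mtcSimpx}{\deltaBra{n}}$ subject to $a(t)\le\gamma(0)$ and $\gamma(m)=b(t)$, and a morphism $(t,\gamma)\to(t',\gamma')$ is the datum of $t\le t'$ together with a morphism $\gamma\to\gamma'$ in $\overcat{\mtcSimpx}{\deltaBra{n}}$; the structure map to $\mtyndSpx\deltaBra{k}=\nerve\deltaBra{k}$ is $(t,\gamma)\mapsto t$. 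The degenerate cases $n=-1,0$ are immediate: for $n=0$ the map in question is precisely $\lambda_{\deltaBra{0}}$, a universal localization by \cref{lem:last-functor-overcat-n-univ-loc}.

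The core of the argument is then to exhibit $\nerve(\mathcal{P}_{a,b})\to\nerve\deltaBra{k}$ as a localization map by decomposing $\mathcal{P}_{a,b}$ compatibly over $\deltaBra{k}$ into the building blocks of \cref{lem:dcpi-esdpi-connecting-surj-delta-univ-loc-piece}. Concretely, I would filter $\mathcal{P}_{a,b}$ by the value $\gamma(0)\in\deltaBra{n}$ --- that is, by the full subcategories on those objects with $\gamma(0)\ge r$, for decreasing $r$. The successive quotients of this filtration, once the forced bottom vertex is trimmed away, are products of the shape $\mtyndSpx\deltaBra{m'}\times\nerve(\overcat{\mtcSimpx}{\deltaBra{\ell}})$ whose projection, under the above identifications, is exactly the map $\idmor[\mtyndSpx\deltaBra{m'}]\times\lambda_{\deltaBra{\ell}}$ shown to be a universal localization in \cref{lem:dcpi-esdpi-connecting-surj-delta-univ-loc-piece}. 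Reassembling these pieces along the filtration --- using that (universal) localizations are stable under the relevant filtered colimits and under pushouts in $\functCat{\deltaBra{1}}{\mtcSSet}$, by \cite[Propositions \href{https://kerodon.net/tag/02M9}{02M9} and \href{https://kerodon.net/tag/02MA}{02MA}]{kerodon} --- yields that $\nerve(\mathcal{P}_{a,b})\to\nerve\deltaBra{k}$ is a localization map, and hence, by \cite[\href{https://kerodon.net/tag/02M1}{Proposition 02M1}]{kerodon}, that the connecting map $\mtsDcp\mtyndSpx\deltaBra{n}\to\mtsESd\mtyndSpx\deltaBra{n}$ is a universal localization.

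The hard part will be the combinatorial bookkeeping in the middle step. Unlike the situation of \cref{lem:last-functor-overcat-n-univ-loc}, the connecting map $\mtsDcp\mtyndSpx\deltaBra{n}\to\mtsESd\mtyndSpx\deltaBra{n}$ admits no section whatsoever --- morphisms of $\overcat{\mtcSimpx}{\deltaBra{n}}$ force containment of images, which is irreconcilable with the bottom vertex $\gamma(0)$ being allowed to vary in the base direction --- so one genuinely cannot imitate the ``section plus fibrewise deformation'' argument used there, and must instead make the filtration of $\mathcal{P}_{a,b}$ precise enough that each stratum and each gluing square is literally of the product form covered by \cref{lem:dcpi-esdpi-connecting-surj-delta-univ-loc-piece}. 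Identifying the graded pieces of the $\gamma(0)$-filtration with the requisite products and checking that the connecting map restricts to $\idmor\times\lambda_{\deltaBra{\ell}}$ on each --- while carefully tracking the empty and augmented degenerate ends that arise when $\gamma$ is forced to be constant or to have length one --- is where essentially all of the real work lies.
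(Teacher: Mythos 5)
Your first two moves are sound: reducing via \cite[\href{https://kerodon.net/tag/02M1}{Proposition 02M1}]{kerodon} to the pullback along each simplex, and identifying that pullback with $\nerve(\mathcal{P}_{a,b})\to\mtyndSpx\deltaBra{k}$ (the pullback of nerves over the non-nerve $\mtsESd\mtyndSpx\deltaBra{n}$ is indeed still a nerve here, since the constraints on simplices are imposed vertexwise). The gap is in the filtration. Passing to the fiber forces the \emph{two} constraints $x=a(t)$ and $\gamma(\max)=b(t)$ simultaneously; the first disappears (being absorbed into the choice of $t$), but the second ties the endpoint of $\gamma$ to $t$. Your filtration by $\gamma(0)\ge r$ has no effect on this coupling: within the stratum $\gamma(0)=r$, the objects are still pairs $(t,\gamma)$ with $\gamma$ required to end at the $t$-dependent value $b(t)$, so the stratum is not a product of an interval of $\deltaBra{k}$ with a copy of $\overcat{\mtcSimpx}{\deltaBra{\ell}}$. "Trimming the forced bottom vertex'' removes $\gamma(0)=r$ but leaves $\gamma(\max)=b(t)$ untouched, so the decoupling you invoke to match \cref{lem:dcpi-esdpi-connecting-surj-delta-univ-loc-piece} does not occur. (You also do not specify the parallel filtration of $\mtyndSpx\deltaBra{k}$ needed for the cube argument of \cite[\href{https://kerodon.net/tag/02MA}{Proposition 02MA}]{kerodon}; the stratum $\gamma(0)=r$ maps onto only the interval $\set{t}[a(t)\le r\le b(t)]$ of $\deltaBra{k}$, not onto $\mtyndSpx\deltaBra{k}$.)

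The paper's proof avoids exactly this difficulty by \emph{not} passing to fibers: it filters the global map $\mtsDcp\mtyndSpx\deltaBra{n}\to\mtsESd\mtyndSpx\deltaBra{n}$ by the left coordinate $x\le l$ on the source and by $f(k)\le l$ on the target. Before base change the only constraint in $\mtcDcp\deltaBra{n}$ is $x\le\alpha(0)$, which genuinely decouples once one restricts to $x\le l-1$ and $\alpha(0)\ge l$, so the pieces $F_{l-1}\cap F_l\cong\mtyndSpx\deltaBra{l-1}\times\nerve\bigl(\overcat{\mtcSimpx}{\deltaBra{n-l}}\bigr)$ and $G_{l-1}\cap G_l\cong\mtyndSpx\deltaBra{l-1}\times\mtyndSpx\deltaBra{n-l}$ really are products and the restricted map is $\idmor\times\lambda_{\deltaBra{n-l}}$. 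In short, the global filtration is not an optional convenience: it is what lets the constraint split. If you want to stay in the fiber $\mathcal{P}_{a,b}$, you would need a two-variable decomposition tracking both $\gamma(0)$ and $b(t)$ compatibly with a nontrivial filtration of $\mtyndSpx\deltaBra{k}$, and that has not been exhibited.
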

\begin{proof}
  The case $n=-1$ is trivial, so we may assume $n\ge 0$.

  For each $-1\le l\le n$, let $C^n_{\le l}$ and $C^n_l$ denote the full subcategories of
  $\mtcDcp\deltaBra{n}$ determined by:
  \begin{align*}
    \Ob(C^n_{\le l}) &\coloneqq
     \set{(x,\alpha)\in\Ob(\mtcDcp\deltaBra{n})}[x\le l];\\
    \Ob(C^n_{l}) &\coloneqq
     \set{(x,\alpha)\in\Ob(\mtcDcp\deltaBra{n})}[x\le l\le\alpha(0)].
  \end{align*}
  We set $F_{\le l}\coloneqq \nerve(C^n_{\le l}),\, F_l\coloneqq \nerve(C^n_l) 
  \subseteq \mtsDcp\mtyndSpx\deltaBra{n}$. Then we have the following filtration of simplicial sets:
  \[
    \emptyset = F_{\le -1} \subseteq F_{\le 0} \subseteq F_{\le 1}
    \subseteq \dotsb \subseteq F_{\le n} = \mtsDcp\mtyndSpx\deltaBra{n}.
  \]
  We shall also consider the following simplicial subsets of $\mtsESd\mtyndSpx\deltaBra{n}$:
  for each $-1\le l \le n$:
  \begin{align*}
    (G_{\le l})_k &\coloneqq \set{f\colon \deltaBra{2k+1} = 
      \deltaBra{k}\star\deltaBra{k}\to\deltaBra{n}}[f(k) \le l];\\
    (G_{l})_k &\coloneqq \set{f\colon \deltaBra{2k+1} =
      \deltaBra{k}\star\deltaBra{k}\to\deltaBra{n}}[f(k) \le l \le f(k+1)].
  \end{align*}
  Then we again obtain the following filtration of simplicial sets:
  \[
    \emptyset = G_{\le -1} \subseteq G_{\le 0} \subseteq G_{\le 1}
    \subseteq \dotsb \subseteq G_{\le n} = \mtsESd\mtyndSpx\deltaBra{n}.
  \]
  Notice that the canonical map $\mtsDcp\mtyndSpx\deltaBra{n}\to\mtsESd\mtyndSpx\deltaBra{n}$
  restricts to $F_{\le l}\to G_{\le l}$ and $F_{l}\to G_{l}$ for each $-1\le l \le m$.

  By induction, in the increasing order of $l$, we shall show that
  the map $F_{\le l}\to G_{\le l}$ is a universal localization, which will
  imply our desired lemma. The base case $l=-1$ is trivial. Let $0\le l\le m$ and
  suppose that $F_{\le {l-1}}\to G_{\le {l-1}}$ is a universal localization;
  we need to show that $F_{\le l}\to G_{\le l}$ is a universal localization.
  Consider the following commutative cube, consisting of inclusions of simplicial subsets
  ($\hookrightarrow$) and the restrictions of the canonical map $\mtsDcp\mtyndSpx\deltaBra{n}\to\mtsESd\mtyndSpx\deltaBra{n}$
  ($\rightarrow$):
  \[
    \begin{tikzcd}[row sep=small]
      & G_{l-1} \cap G_{l} \arrow[dd, hook] \arrow[rr, hook]
      && G_l \arrow[dd, hook] \\
      F_{l-1}\cap F_l \arrow[ru] \arrow[dd, hook] \arrow[rr, hook, crossing over]
      && F_l \arrow[ru]
      &\\
      & G_{\le l-1} \arrow[rr, hook]
      && G_{\le l} \\
      F_{\le l-1} \arrow[ru] \arrow[rr, hook]
      && F_{\le l} \arrow[from=uu, hook, crossing over] \arrow[ru]
      &
    \end{tikzcd}
  \]
  In this cube, the front and the back faces are pushout squares by construction,
  and the hooked arrows are injective. Therefore,
  by \cite[\href{https://kerodon.net/tag/02MA}{Proposition 02MA}]{kerodon},
  in order to show that $F_{\le l}\to G_{\le l}$ is a universal localization, it suffices
  to check that the three maps $F_{l-1}\cap F_l\to G_{l-1}\cap G_l$,
  $F_l \to G_l$, and $F_{\le l-1}\to G_{\le l-1}$ are universal localizations.
  Of those three,
  the third map $F_{\le l-1}\to G_{\le l-1}$ is a universal localization by the induction hypothesis.
  Now, using the notation from \cref{not:last-functor-overcat-n}, we observe that
  the first map $F_{l-1}\cap F_l\to G_{l-1}\cap G_l$ is isomorphic to: 
  \[
    \idmor[\mtyndSpx\deltaBra{l-1}] \times \lambda_{\deltaBra{n-l}}
    \colon\mtyndSpx\deltaBra{l-1}\times\nerve(\overcat{\mtcSimpx}{\deltaBra{n-l}})
    \to\mtyndSpx\deltaBra{l-1}\times\mtyndSpx\deltaBra{n-l},
  \]
  and that the second map $F_l\to G_l$ is isomorphic to the map:
  \[
    \idmor[\mtyndSpx\deltaBra{l}] \times \lambda_{\deltaBra{n-l}}
    \colon\mtyndSpx\deltaBra{l}\times\nerve(\overcat{\mtcSimpx}{\deltaBra{n-l}})
    \to\mtyndSpx\deltaBra{l}\times\mtyndSpx\deltaBra{n-l}.
  \]
  These two maps are universal localizations by
  \cref{lem:dcpi-esdpi-connecting-surj-delta-univ-loc-piece}.
  Thus we have proved the sufficient condition for the map
  $F_{\le l}\to G_{\le l}$ to be a universal localization.
  This completes the induction, and we have obtained the desired claim.
\end{proof}

\begin{corollary} \label{lem:dcpi-esdpi-connecting-surj-delta-univ-loc-piece2}
  Let $\deltaBra{m},\deltaBra{n}\in\mtcSimpx$ be objects. Then the following join of
  the canonical simplicial map from \cref{def:dcp-esd-connecting-surj} and the identity map
  is a universal localization in the sense of
  \cite[\href{https://kerodon.net/tag/02M0}{Definition 02M0}]{kerodon}:
  \[
    \mtsDcp\mtyndSpx\deltaBra{m}\star\mtyndSpx\deltaBra{n}
    \to\mtsESd\mtyndSpx\deltaBra{m}\star\mtyndSpx\deltaBra{n}.
  \]
\end{corollary}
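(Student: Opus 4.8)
The plan is to recognize the map in question as a join of the canonical map treated in \cref{lem:dcp-esd-connecting-surj-delta-univ-loc} with an identity map, and then invoke the closure of universal localizations under joins.

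Concretely, first I would note that the simplicial map
\[
  \mtsDcp\mtyndSpx\deltaBra{m}\star\mtyndSpx\deltaBra{n}
  \to\mtsESd\mtyndSpx\deltaBra{m}\star\mtyndSpx\deltaBra{n}
\]
is, by construction, precisely the join $f\star\idmor[\mtyndSpx\deltaBra{n}]$, where
$f\colon\mtsDcp\mtyndSpx\deltaBra{m}\to\mtsESd\mtyndSpx\deltaBra{m}$ is the canonical connecting map from \cref{def:dcp-esd-connecting-surj}. Next, by \cref{lem:dcp-esd-connecting-surj-delta-univ-loc}, this map $f$ is a universal localization in the sense of \cite[\href{https://kerodon.net/tag/02M0}{Definition 02M0}]{kerodon}. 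Then I would apply \cref{cor:univ-loc-closure-join}, the closure of universal localizations under joins, with the universally localizing map $f$ and the simplicial set $K\coloneqq\mtyndSpx\deltaBra{n}$, to conclude that $f\star\idmor[K]$ is a universal localization. This is exactly the claim.

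There is essentially no obstacle here: the entire content of the corollary has been front-loaded into \cref{lem:dcp-esd-connecting-surj-delta-univ-loc} (the base-case computation via the section-and-homotopy criterion) and \cref{cor:univ-loc-closure-join} (the join-closure property, itself reduced to \cref{lem:inf-loc-closure-join} and the pullback-stability of universal localizations). The only point requiring a sentence of care is the identification of the displayed map with $f\star\idmor[\mtyndSpx\deltaBra{n}]$, which is immediate from how the join of simplicial maps is defined and from the fact that the join appears here only in the second (passive) variable.
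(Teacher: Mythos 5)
Your proposal is correct and follows exactly the paper's argument: identify the map as the join $f\star\idmor[\mtyndSpx\deltaBra{n}]$, invoke \cref{lem:dcp-esd-connecting-surj-delta-univ-loc} for $f$, and apply \cref{cor:univ-loc-closure-join}. No differences worth noting.
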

\begin{proof}
  By using the fact that the join ${}\star X$ of a simplicial set $X$ is preserves universal localizations
  (\cref{cor:univ-loc-closure-join}), this follows from \cref{lem:dcp-esd-connecting-surj-delta-univ-loc}.
\end{proof}

Now, we are ready to prove one of the main results of this section:

\begin{proposition}\label{prop:dcp-esd-connecting-surj-univ-loc}
  Let $X\in\mtcSSet$ be a simplicial set. Then the canonical simplicial map
  $\mtsDcp X\to\mtsESd X$ in \cref{def:dcp-esd-connecting-surj}
  is a universal localization in the sense of
  \cite[\href{https://kerodon.net/tag/02M0}{Definition 02M0}]{kerodon}.
\end{proposition}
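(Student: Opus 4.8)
The plan is to bootstrap from the representable case, \cref{lem:dcp-esd-connecting-surj-delta-univ-loc}, to an arbitrary simplicial set $X$ via the skeletal filtration, using that universal localizations are stable under filtered colimits (\cite[\href{https://kerodon.net/tag/02M9}{Proposition 02M9}]{kerodon}) and under the relevant pushouts (\cite[\href{https://kerodon.net/tag/02MA}{Proposition 02MA}]{kerodon}), together with the facts — already recorded in the excerpt — that $\mtsDcp$ and $\mtsESd$ preserve colimits and preserve monomorphisms (the latter by \cref{lem:dcp-esd-esdp-preserves-mono}). I would also use the elementary observation that a coproduct of universal localizations is a universal localization: any simplex $\mtyndSpx\deltaBra{k} \to \coprod_j Y_j$ lands in a single summand since $\mtyndSpx\deltaBra{k}$ is connected, so verifying condition (1) of the definition of universal localization for $\coprod_j (Z_j \to Y_j)$ reduces to verifying it for each $Z_j \to Y_j$.

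Concretely, I would prove by induction on $n \ge -1$ the statement that $\mtsDcp Y \to \mtsESd Y$ is a universal localization for every $n$-skeletal simplicial set $Y$; the base case $n = -1$ (the empty simplicial set) is trivial. Granting this, for a general $X$ I would write $X = \mtColim_n \mtsSk{n} X$, a sequential colimit along the monomorphisms $\mtsSk{n-1} X \hookrightarrow \mtsSk{n} X$. Since $\mtsDcp$ and $\mtsESd$ preserve colimits and monomorphisms, the map $\mtsDcp X \to \mtsESd X$ is then a sequential colimit of universal localizations along monomorphisms, hence a universal localization by \cite[\href{https://kerodon.net/tag/02M9}{Proposition 02M9}]{kerodon}.

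For the inductive step, I would fix an $n$-skeletal $Y$ and a set of representatives $\set{\sigma}$ of its non-degenerate $n$-simplices, and use the standard pushout square with corners $\coprod_\sigma \mtBdrySpx\deltaBra{n}$, $\coprod_\sigma \mtyndSpx\deltaBra{n}$, $\mtsSk{n-1} Y$, and $Y$ that exhibits $Y$ as obtained from its $(n-1)$-skeleton by attaching the non-degenerate $n$-cells. Applying $\mtsDcp$ and $\mtsESd$ and the canonical natural transformation between them produces a commutative cube whose front and back faces (the images of this pushout) are again pushouts, and whose two "attaching" legs are again monomorphisms. Then \cite[\href{https://kerodon.net/tag/02MA}{Proposition 02MA}]{kerodon} reduces the claim that $\mtsDcp Y \to \mtsESd Y$ is a universal localization to the claim that the three connecting maps $\mtsDcp\left(\coprod_\sigma \mtBdrySpx\deltaBra{n}\right) \to \mtsESd\left(\coprod_\sigma \mtBdrySpx\deltaBra{n}\right)$, $\mtsDcp\left(\coprod_\sigma \mtyndSpx\deltaBra{n}\right) \to \mtsESd\left(\coprod_\sigma \mtyndSpx\deltaBra{n}\right)$, and $\mtsDcp \mtsSk{n-1} Y \to \mtsESd \mtsSk{n-1} Y$ are universal localizations. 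The third is the inductive hypothesis (the $(n-1)$-skeleton is $(n-1)$-skeletal); the first is the inductive hypothesis applied to the $(n-1)$-skeletal simplicial set $\coprod_\sigma \mtBdrySpx\deltaBra{n}$ together with the coproduct observation; and the second is \cref{lem:dcp-esd-connecting-surj-delta-univ-loc} together with the coproduct observation.

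I do not expect a genuine obstacle here: the substantive content lies in \cref{lem:dcp-esd-connecting-surj-delta-univ-loc}, and the present proposition is a formal consequence of the closure properties of universal localizations. The only points demanding care are checking that the cube has exactly the shape required by \cite[\href{https://kerodon.net/tag/02MA}{Proposition 02MA}]{kerodon} — which is where preservation of colimits and of monomorphisms by $\mtsDcp$ and $\mtsESd$ enters — and the coproduct observation, both of which are routine.
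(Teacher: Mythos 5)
Your argument is correct and uses exactly the same three ingredients as the paper: closure of universal localizations under filtered colimits (Kerodon 02M9), closure under the pushout cube (Kerodon 02MA), and the representable case (\cref{lem:dcp-esd-connecting-surj-delta-univ-loc}), together with colimit- and monomorphism-preservation of $\mtsDcp$ and $\mtsESd$. The only difference is organizational. The paper first uses 02M9 to reduce to \emph{finite} $X$, then performs a finitary induction on (dimension, number of top-dimensional non-degenerate simplices), attaching a single top-dimensional cell per inductive step via the pushout of $\mtBdrySpx\deltaBra{n}\hookrightarrow\mtyndSpx\deltaBra{n}$. You instead induct over the skeletal filtration, proving the claim for every $n$-skeletal simplicial set and attaching \emph{all} non-degenerate $n$-cells at once with the coproduct $\coprod_\sigma\mtBdrySpx\deltaBra{n}\hookrightarrow\coprod_\sigma\mtyndSpx\deltaBra{n}$, and then recover the general case from $X=\mtColim_n\mtsSk{n}X$ by another application of 02M9. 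Your route costs one extra observation --- that coproducts of universal localizations are universal localizations --- which your connectedness argument handles correctly (and which is in any case a special case of 02M9 plus 02MA if one prefers not to argue directly); the paper's route avoids coproducts entirely at the price of the preliminary reduction to finite $X$. One tiny redundancy: you invoke the coproduct observation for $\coprod_\sigma\mtBdrySpx\deltaBra{n}$, but since that simplicial set is $(n-1)$-skeletal the inductive hypothesis already covers it directly; the coproduct observation is genuinely needed only for $\coprod_\sigma\mtyndSpx\deltaBra{n}$, where \cref{lem:dcp-esd-connecting-surj-delta-univ-loc} gives only the single-representable case.
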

\begin{proof}
  As \cite[\href{https://kerodon.net/tag/02M9}{Proposition 02M9}]{kerodon}
  states, the class of universal localizations is closed under filtered colimits in
  $\functCat{\deltaBra{1}}{\mtcSSet}$. Therefore, we may assume that $X$ is a
  finite simplicial set. We shall show the claim by induction on the dimension
  and the number of the highest-dimensional non-degenerate simplices of $X$.
  If $X$ is empty, the claim is trivial. If $X$ is non-empty, let
  $\sigma\colon \mtyndSpx\deltaBra{n}\to X$ be one of the highest-dimensional
  non-degenerate simplices of $X$. There is the following pushout square,
  with $X'$ a subcomplex of $X$ having fewer dimension-$n$ non-degenerate simplices:
  \[
    \begin{tikzcd}
      \mtBdrySpx\deltaBra{n} \arrow[r, hook] \arrow[d] &
      \mtyndSpx\deltaBra{n} \arrow[d] \\
      X' \arrow[r, hook] & X
    \end{tikzcd}
  \]
  By sending this square under $\mtsDcp$ and $\mtsESd$, we obtain the following
  commutative cube:
  \[
    \begin{tikzcd}[row sep=small, column sep=small]
      & \mtsESd \mtBdrySpx\deltaBra{n} \arrow[dd] \arrow[rr, hook]
      && \mtsESd \mtyndSpx\deltaBra{n} \arrow[dd] \\
      \mtsDcp\mtBdrySpx\deltaBra{n} \arrow[ru] \arrow[dd] \arrow[rr, hook, crossing over]
      && \mtsDcp \mtyndSpx\deltaBra{n} \arrow[ru] \arrow[dd]
      &\\
      & \mtsESd X' \arrow[rr, hook]
      && \mtsESd X \\
      \mtsDcp X' \arrow[ru] \arrow[rr, hook]
      && \mtsDcp X \arrow[from=uu, crossing over] \arrow[ru]
      &
    \end{tikzcd}
  \]
  In this cube, the front and the back faces are pushout squares by construction,
  and the hooked arrows are injective. The maps
  $\mtsDcp \mtBdrySpx\deltaBra{n}\to\mtsESd \mtBdrySpx\deltaBra{n}$
  and $\mtsDcp X'\to\mtsESd X'$ are universal localizations by the induction hypothesis.
  The map $\mtsDcp \mtyndSpx\deltaBra{n}\to\mtsESd \mtyndSpx\deltaBra{n}$ is a universal
  localization by \cref{lem:dcp-esd-connecting-surj-delta-univ-loc}.
  Therefore, by \cite[\href{https://kerodon.net/tag/02MA}{Proposition 02MA}]{kerodon},
  the map $\mtsDcp X\to\mtsESd X$ is a universal localization.
  This completes the induction, and we have obtained the desired claim.
\end{proof}

The following lemma is the largest component of the remaining main universal localization result:

\begin{lemma}\label{lem:dcpi-esdpi-connecting-surj-delta-univ-loc}
  Let $\deltaBra{n}\in\mtcSimpx$ be a simplex. Then the canonical simplicial
  map $\mtsDcpI\mtyndSpx\deltaBra{n}\to\mtsESdI\mtyndSpx\deltaBra{n}$ is a
  universal localization in the sense of 
  \cite[\href{https://kerodon.net/tag/02M0}{Definition 02M0}]{kerodon}.
\end{lemma}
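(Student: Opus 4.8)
The plan is to mimic the structure of the proof of \cref{prop:dcp-esd-connecting-surj-univ-loc}, but carried out relative to $\mtyndSpx\deltaBra{n}$ being fixed: we decompose $\mtsDcpI\mtyndSpx\deltaBra{n}$ and $\mtsESdI\mtyndSpx\deltaBra{n}$ by filtering according to the ``$J$-part'' of a simplex (the portion that lands in the copy of $\deltaBra{n}$ adjoined in the definition of $\mtcDcpI\deltaBra{n}$, respectively in the index-$1$ summand $\set{1}\times\deltaBra{n}$ of $\mtsESdIc\deltaBra{n}$). Concretely, for each $0 \le j \le n$ and each vertex value, one introduces subcomplexes of $\mtsDcpI\mtyndSpx\deltaBra{n}$ built from the full subcategory of $\mtcDcpI\deltaBra{n}$ on objects $(0,(x,\alpha))$ with the $x$-coordinate bounded, together with the adjoined objects $(1,y)$ with $y$ bounded; likewise for $\mtsESdI$. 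The bottom filtration stage — where the $J$-part is empty — is exactly $\mtsDcp\mtyndSpx\deltaBra{n} \to \mtsESd\mtyndSpx\deltaBra{n}$, which is a universal localization by \cref{lem:dcp-esd-connecting-surj-delta-univ-loc}. So the induction starts there and moves up by adding one more ``column'' of adjoined vertices at a time.

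The key steps, in order, are as follows. First I would set up the filtration explicitly: let $F_{\le l}$ (resp.\ $G_{\le l}$) be the subcomplex of $\mtsDcpI\mtyndSpx\deltaBra{n}$ (resp.\ $\mtsESdI\mtyndSpx\deltaBra{n}$) consisting of simplices all of whose index-$1$ vertices have value $\le l$ in $\deltaBra{n}$, with $F_{\le -1} = \mtsDcp\mtyndSpx\deltaBra{n}$ and $F_{\le n} = \mtsDcpI\mtyndSpx\deltaBra{n}$, and similarly for $G$. Second, I would identify the ``new'' pieces $F_l$ and $G_l$ — the subcomplexes where the index-$1$ vertices range in the closed interval whose top is exactly $l$ — and check that the squares $(F_{l-1}\cap F_l \to F_l, \; F_{l-1}\cap F_l \to F_{\le l-1})$ push out to $F_{\le l}$, and likewise on the $G$ side. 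Third, I would recognize each of the three maps feeding \cite[\href{https://kerodon.net/tag/02MA}{Proposition 02MA}]{kerodon} as either an instance of the induction hypothesis ($F_{\le l-1}\to G_{\le l-1}$) or, for $F_l \to G_l$ and $F_{l-1}\cap F_l \to G_{l-1}\cap G_l$, a map of the form
\[
  Z \star (\text{something from the index-}1\text{ part}) \to Z' \star (\text{same}),
\]
where the base $Z \to Z'$ is, after unwinding, either $\mtsDcp\mtyndSpx\deltaBra{k}\to\mtsESd\mtyndSpx\deltaBra{k}$ or a product $\idmor\times\lambda_{\deltaBra{m}}$ of the kind appearing in \cref{lem:dcpi-esdpi-connecting-surj-delta-univ-loc-piece,lem:dcpi-esdpi-connecting-surj-delta-univ-loc-piece2}, joined with $\mtyndSpx\deltaBra{j}$ or $\mtyndSpx\deltaBra{j-1}$. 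By \cref{cor:univ-loc-closure-join} and the already-proven lemmas, these are universal localizations, so \cite[\href{https://kerodon.net/tag/02MA}{Proposition 02MA}]{kerodon} delivers the inductive step. Finally, chaining $F_{\le -1}\to G_{\le -1}$ up to $F_{\le n}\to G_{\le n}$ gives the result.

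The main obstacle I expect is the bookkeeping in the second and third steps: correctly identifying, after the filtration, exactly which join-and-product the ``new column'' pieces $F_l\to G_l$ and $F_{l-1}\cap F_l\to G_{l-1}\cap G_l$ are isomorphic to. The index-$1$ part of $\mtcDcpI\deltaBra{n}$ is just (the nerve of) an interval in $\deltaBra{n}$, and a vertex $(0,(x,\alpha))$ is connected to an index-$1$ vertex $(1,y)$ precisely when $\alpha(k)\le y$; so the slice over a fixed top index-$1$ value $l$ should factor as a join of the $\mtsDcp$/$\mtsESd$-piece built from $\set{0,\dots,l}$ with the nerve of the interval above $l$, and one must verify that the $\mtsDcpI\to\mtsESdI$ map respects this decomposition compatibly with the surjection $\mtsDcp\to\mtsESd$ from \cref{def:dcp-esd-connecting-surj}. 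Once that identification is pinned down, everything else is a mechanical application of the pushout-stability (\cite[\href{https://kerodon.net/tag/02MA}{Proposition 02MA}]{kerodon}) and join-stability (\cref{cor:univ-loc-closure-join}) of universal localizations, exactly paralleling the two preceding proofs.
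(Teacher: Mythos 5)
Your filtration runs in the opposite direction from the paper's, and that reversal is where the argument breaks. The paper filters on the \emph{$I$-side}: its $F_{\le l}$ retains the objects $(0,(x,\alpha))$ with $\max\alpha\le l$ together with \emph{all} of the $(1,y)$ objects, so the base case $F_{\le -1}$ is the copy of $\mtyndSpx\deltaBra{n}$ (trivially a universal localization), and the column introduced at stage $l$ has $\max\alpha\le l$ on the $(0,\cdot)$ side and $y\ge l$ on the $(1,\cdot)$ side. Because $\max\alpha\le l\le y$ throughout that column, every $(0,\cdot)$ vertex lies below every $(1,\cdot)$ vertex, and this is precisely what makes $F_l\cong\mtsDcp\mtyndSpx\deltaBra{l}\star\mtyndSpx\deltaBra{n-l}$ and $F_{l-1}\cap F_l\cong\mtsDcp\mtyndSpx\deltaBra{l-1}\star\mtyndSpx\deltaBra{n-l}$ genuine joins, so that \cref{lem:dcpi-esdpi-connecting-surj-delta-univ-loc-piece2} and \cref{cor:univ-loc-closure-join} apply.

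You instead filter on the \emph{$J$-side} (index-$1$ vertices $\le l$), with $F_{\le -1}=\mtsDcp\mtyndSpx\deltaBra{n}$, and here the column pieces are not joins of the kind your third step requires. The material introduced at stage $l$ is the cone below $(1,l)$: its $(0,\cdot)$ vertices have $\max\alpha\le l$ but its $(1,\cdot)$ vertices have $y\le l$, and a vertex $(0,(x,\alpha))$ with $\max\alpha=l$ is \emph{not} below $(1,y)$ when $y<l$, so no $Z\star(\text{index-}1\text{ part})$ decomposition exists. Concretely, the candidate column is $\mtsDcpI\mtyndSpx\deltaBra{l}$ and its overlap with $F_{\le l-1}$ is $\nerve\bigl(\mtcDcpI\deltaBra{l}\setminus\set{(1,l)}\bigr)$, neither of which is covered by the lemmas of \cref{subsec:down-last-infty-loc-shape-uloc}. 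Note also the tension with your own final paragraph: the join you describe there, ``the $\mtsDcp$-piece built from $\set{0,\dotsc,l}$ joined with the nerve of the interval above $l$,'' is indeed the correct column, but it only arises when the column's $(1,\cdot)$ vertices range over $[l,n]$, which is the paper's filtration, not yours (where they range over $[0,l]$). The fix is to flip the filtration so that it bounds $\max\alpha$, take $F_{\le -1}$ to be the $\mtyndSpx\deltaBra{n}$ part, and then \cref{lem:dcp-esd-connecting-surj-delta-univ-loc} is not the base case but enters via the join-closure lemma to dispatch the columns, exactly as in \cref{prop:dcp-esd-connecting-surj-univ-loc}.
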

\begin{proof}
  For each $-1\le l\le n$, let $C^n_{\le l}$ and $C^n_l$ denote the full subcategories of
  $\mtcDcpI\deltaBra{n}$ determined by:
  \begin{align*}
    \Ob(C^n_{\le l}) &\coloneqq
    (\set{0}\times\set{(x,\alpha)\in\Ob(\mtcDcp\deltaBra{n})}[\max\alpha \le l])
    \cup (\set{1}\times\deltaBra{n});\\
    \Ob(C^n_{l}) &\coloneqq
    (\set{0}\times\set{(x,\alpha)\in\Ob(\mtcDcp\deltaBra{n})}[\max\alpha \le l])
    \cup \set{(1,x)}[l \le x \le n].
  \end{align*}
  We set $F_{\le l}\coloneqq \nerve(C^n_{\le l}),\, F_l\coloneqq \nerve(C^n_l)\subseteq \mtsDcpI\mtyndSpx\deltaBra{n}$.
  Then we have the following filtration of simplicial sets:
  \[
    \set{1}\times\mtyndSpx\deltaBra{n} \cong F_{\le -1} \subseteq F_{\le 0} \subseteq F_{\le 1}
    \subseteq \dotsb \subseteq F_{\le n} = \mtsDcpI\mtyndSpx\deltaBra{n}.
  \]

  We also need a filtration of the codomain. For notational ease, regard:
  \[
    \mtsESdI\mtyndSpx\deltaBra{n}_k = \mtsESdIc\deltaBra{n}_k
    = \coprod_{\substack{I\sqcup J = \deltaBra{k}\\I < J}}
    \HomOf[\mtcPoset]{I\star I\star J}{\deltaBra{n}},
  \]
  as the set of triplets of the form $(I,J,f)$, where $I\sqcup J = \deltaBra{k}$, $I < J$ is a partition,
  and $f\colon I\star I\star J\to\deltaBra{n}$ is an order-preserving function.
  For each $-1\le l \le n$, we define the following simplicial subsets of $\mtsESdI\mtyndSpx\deltaBra{n}$:
  \begin{align*}
    (G_{\le l})_k &\coloneqq \set{(I,J,f)}[\max \mleft.f\mright|_{I\star I} \le l];\\
    (G_{l})_k &\coloneqq \set{(I,J,f)}[\max \mleft.f\mright|_{I\star I} \le l \le \min \mleft.f\mright|_{J}].
  \end{align*}
  Here, we set $\max\mleft.f\mright|_{I\star I} = -1$ if $I=\emptyset$, and $\min\mleft.f\mright|_{J} = n$
  if $J=\emptyset$.
  Then we obtain the following filtration of simplicial sets:
  \[
    \mtyndSpx\deltaBra{n} \cong G_{\le -1} \subseteq G_{\le 0} \subseteq G_{\le 1}
    \subseteq \dotsb \subseteq G_{\le n} = \mtsESdI\mtyndSpx\deltaBra{n}.
  \]
  Notice that the canonical map $\mtsDcpI\mtyndSpx\deltaBra{n}\to\mtsESdI\mtyndSpx\deltaBra{n}$
  restricts to $F_{\le l}\to G_{\le l}$ and $F_{l}\to G_{l}$ for each $-1\le l \le n$.

  Now, we repeat an inductive argument similar to that in the proof of \cref{lem:dcp-esd-connecting-surj-delta-univ-loc}.
  We shall show that $F_{\le l}\to G_{\le l}$ is a universal localization for each $-1\le l \le n$,
  which is enough for the estabilishment of the lemma.
  For the base case $l=-1$, the map $F_{\le -1}\to G_{\le -1}$ is the identity map, which is a universal localization.

  For an inductive case, let $0\le l\le m$ and suppose that $F_{\le {l-1}}\to G_{\le {l-1}}$ is a universal localization;
  we need to show that $F_{\le l}\to G_{\le l}$ is a universal localization.
  We again wish to use \cite[\href{https://kerodon.net/tag/02MA}{Proposition 02MA}]{kerodon}. Consider the
  following commutative cube, where $\hookrightarrow$ denotes inclusions of simplicial subsets,
  and $\rightarrow$ denotes restrictions of the canonical map $\mtsDcpI\mtyndSpx\deltaBra{n}\to\mtsESdI\mtyndSpx\deltaBra{n}$:
  \[
    \begin{tikzcd}[row sep=small]
      & G_{l-1} \cap G_{l} \arrow[dd, hook] \arrow[rr, hook]
      && G_l \arrow[dd, hook] \\
      F_{l-1}\cap F_l \arrow[ru] \arrow[dd, hook] \arrow[rr, hook, crossing over]
      && F_l \arrow[ru]
      &\\
      & G_{\le l-1} \arrow[rr, hook]
      && G_{\le l} \\
      F_{\le l-1} \arrow[ru] \arrow[rr, hook]
      && F_{\le l} \arrow[from=uu, hook, crossing over] \arrow[ru]
      &
    \end{tikzcd}
  \]
  Since the front and the back faces are pushout squares by construction, and the hooked arrows are injective,
  \cite[\href{https://kerodon.net/tag/02MA}{Proposition 02MA}]{kerodon} applies: to show that $F_{\le l}\to G_{\le l}$
  is a universal localization, it suffices to check that the three maps $F_{l-1}\cap F_l\to G_{l-1}\cap G_l$,
  $F_l \to G_l$, and $F_{\le l-1}\to G_{\le l-1}$ are universal localizations.

  The first map $F_{l-1}\cap F_l\to G_{l-1}\cap G_l$ is isomorphic to:
  \[ \mtsDcp\mtyndSpx\deltaBra{l-1}\star\mtyndSpx\deltaBra{n-l}
  \to\mtsESd\mtyndSpx\deltaBra{l-1}\star\mtyndSpx\deltaBra{n-l}.
  \]
  The second map $F_l\to G_l$ is isomorphic to:
  \[ \mtsDcp\mtyndSpx\deltaBra{l}\star\mtyndSpx\deltaBra{n-l}
     \to\mtsESd\mtyndSpx\deltaBra{l}\star\mtyndSpx\deltaBra{n-l}.
  \]
  Therefore these two maps are universal localizations by \cref{lem:dcpi-esdpi-connecting-surj-delta-univ-loc-piece2}.
  The third map $F_{\le l-1}\to G_{\le l-1}$ is a universal localization by the induction hypothesis; thus
  we have shown that $F_{\le l}\to G_{\le l}$ is a universal localization. This completes the induction,
  and we have obtained the desired lemma.
\end{proof}

Now, it only remains to prove the main results of this subsection:

\begin{proposition}\label{prop:dcpi-esdpi-connecting-surj-univ-loc}
  Let $X\in\mtcSSet$ be a simplicial set. Then the canonical simplicial map
  $\mtsDcpI X\to\mtsESdI X$ in \cref{def:dcp-esd-connecting-surj}
  is a universal localization in the sense of 
  \cite[\href{https://kerodon.net/tag/02M0}{Definition 02M0}]{kerodon}.
\end{proposition}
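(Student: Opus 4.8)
The plan is to run the same skeletal induction as in the proof of \cref{prop:dcp-esd-connecting-surj-univ-loc}, now feeding in the representable case \cref{lem:dcpi-esdpi-connecting-surj-delta-univ-loc} in place of \cref{lem:dcp-esd-connecting-surj-delta-univ-loc}. Since $\mtsDcpI$ and $\mtsESdI$ are left Kan extensions along the Yoneda embedding, they preserve all colimits; combined with the closure of universal localizations under filtered colimits in $\functCat{\deltaBra{1}}{\mtcSSet}$ (\cite[\href{https://kerodon.net/tag/02M9}{Proposition 02M9}]{kerodon}), this reduces the statement to the case where $X$ is a finite simplicial set. For such $X$ I would argue by induction on the dimension of $X$ and the number of its non-degenerate simplices in top dimension.

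For the base case $X = \emptyset$, colimit-preservation gives $\mtsDcpI\emptyset = \emptyset = \mtsESdI\emptyset$, so the canonical map is the identity, which is a universal localization. For the inductive step, choose a top-dimensional non-degenerate simplex $\sigma\colon\mtyndSpx\deltaBra{n}\to X$, yielding the usual pushout square with legs $\mtBdrySpx\deltaBra{n}\hookrightarrow\mtyndSpx\deltaBra{n}$ and $\mtBdrySpx\deltaBra{n}\to X'$, where the subcomplex $X'\hookrightarrow X$ has strictly fewer top-dimensional non-degenerate simplices. Applying $\mtsDcpI$, $\mtsESdI$, and the canonical natural transformation between them produces a commutative cube whose front and back faces are pushouts (again by colimit-preservation) and whose maps out of the $\mtBdrySpx\deltaBra{n}$- and $X'$-terms are monomorphisms (by \cref{lem:dcp-esd-esdp-preserves-mono}). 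The maps $\mtsDcpI\mtBdrySpx\deltaBra{n}\to\mtsESdI\mtBdrySpx\deltaBra{n}$ and $\mtsDcpI X'\to\mtsESdI X'$ are universal localizations by the induction hypothesis (note $\mtBdrySpx\deltaBra{n}$ has dimension $n-1$), and $\mtsDcpI\mtyndSpx\deltaBra{n}\to\mtsESdI\mtyndSpx\deltaBra{n}$ is one by \cref{lem:dcpi-esdpi-connecting-surj-delta-univ-loc}. Hence \cite[\href{https://kerodon.net/tag/02MA}{Proposition 02MA}]{kerodon} applies and shows $\mtsDcpI X\to\mtsESdI X$ is a universal localization, completing the induction.

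I do not expect a substantial obstacle here: the genuinely hard combinatorial work was already carried out in \cref{lem:dcpi-esdpi-connecting-surj-delta-univ-loc}, and this proposition is essentially a formal bootstrap from representables to all simplicial sets. The only points needing a moment's care are verifying that $\mtsDcpI$ preserves the initial object (so the base case is truly trivial) and checking that the cube meets the hypotheses of \cite[\href{https://kerodon.net/tag/02MA}{Proposition 02MA}]{kerodon}, both of which follow immediately from the colimit- and monomorphism-preservation already recorded for these functors.
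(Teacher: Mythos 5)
Your argument is exactly what the paper does: the paper's own proof of this proposition simply remarks that, given \cref{lem:dcpi-esdpi-connecting-surj-delta-univ-loc}, the proof goes through just as in \cref{prop:dcp-esd-connecting-surj-univ-loc}, which is the filtered-colimit reduction followed by the skeletal induction you spelled out. Your proposal is correct and matches the paper's approach.
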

\begin{proof}
  By virtue of \cref{lem:dcpi-esdpi-connecting-surj-delta-univ-loc},
  the proof goes exactly in the same way as that
  of \cref{prop:dcp-esd-connecting-surj-univ-loc}.
\end{proof}

\begin{corollary}\label{cor:dcp-esd-conn-surj-pushout-univ-loc}
  Let $f\colon X\to Y$ be a simplicial map. Consider the following
  morphism of spans of simplicial sets:
  \[
    \begin{tikzcd}
      \mtsDcp Y \arrow[d, two heads]
      & \mtsDcp X \arrow[d, two heads] \arrow[l, "f"'] \arrow[r, hook]
      & \mtsDcpI X \arrow[d, two heads] \\
      \mtsESd Y
      & \mtsESd X \arrow[l, "f"] \arrow[r, hook]
      & \mtsESdI X
    \end{tikzcd}
  \]
  Then the following induced map of pushouts is a universal localization
  in the sense of \cite[\href{https://kerodon.net/tag/02M0}{Definition 02M0}]{kerodon}:
  \[
    (\mtsDcp Y) \cup_{\mtsDcp X} (\mtsDcpI X) \to (\mtsESd Y) \cup_{\mtsESd X} (\mtsESdI X).
  \]
\end{corollary}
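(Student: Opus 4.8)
The plan is to derive this directly from \cite[\href{https://kerodon.net/tag/02MA}{Proposition 02MA}]{kerodon}, in exactly the same manner as in the inductive steps of \cref{lem:dcp-esd-connecting-surj-delta-univ-loc,prop:dcp-esd-connecting-surj-univ-loc}. That result, applied to a morphism of spans of simplicial sets whose two legs in one fixed direction are monomorphisms and whose three component maps are universal localizations, asserts that the induced map between the two pushouts is again a universal localization. I would invoke it with the two monomorphism legs being $\mtsDcp X\hookrightarrow\mtsDcpI X$ and $\mtsESd X\hookrightarrow\mtsESdI X$, and with the three component maps being the vertical arrows of the diagram in the statement.

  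First I would check that the displayed diagram genuinely is a commutative morphism of spans. The square whose horizontal arrows are labelled $f$ is the naturality square of the canonical surjective transformation ${\mtsDcp}\Rightarrow{\mtsESd}$ of \cref{def:dcp-esd-connecting-surj} applied to $f\colon X\to Y$; the square whose horizontal arrows are the hooked inclusions is, up to the harmless factor $\times\set{0}$, the upper-left square of \eqref{eq:dcp-esd-esdp-i-comm}, which commutes by \cref{lem:dcp-esd-esdp-i-comm}. Next I would note that $\mtsDcp X\hookrightarrow\mtsDcpI X$ and $\mtsESd X\hookrightarrow\mtsESdI X$ are monomorphisms: they are the canonical inclusions of \cref{def:dcp-esd-esdp-incl-to-i}, whose injectivity is established there, and which also follows from \cref{lem:dcp-esd-esdp-preserves-mono}.

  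Finally I would verify the universal-localization hypothesis on the three vertical maps: $\mtsDcp Y\to\mtsESd Y$ and $\mtsDcp X\to\mtsESd X$ are universal localizations by \cref{prop:dcp-esd-connecting-surj-univ-loc}, and $\mtsDcpI X\to\mtsESdI X$ is one by \cref{prop:dcpi-esdpi-connecting-surj-univ-loc}. With all hypotheses of \cite[\href{https://kerodon.net/tag/02MA}{Proposition 02MA}]{kerodon} in place, the induced map $(\mtsDcp Y)\cup_{\mtsDcp X}(\mtsDcpI X)\to(\mtsESd Y)\cup_{\mtsESd X}(\mtsESdI X)$ is a universal localization, as claimed. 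I do not anticipate any real obstacle here: the only thing needing a little care is matching the commutativity and monomorphism bookkeeping to what \cref{sec:down-last-infty-loc-shape} has already supplied, all of which is routine.
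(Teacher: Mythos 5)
Your proposal is correct and takes exactly the same route as the paper: the paper's own proof is a one-liner citing \cref{prop:dcp-esd-connecting-surj-univ-loc,prop:dcpi-esdpi-connecting-surj-univ-loc} together with Kerodon Proposition 02MA, and you supply the same three ingredients with the bookkeeping spelled out. One small nit: your remark that the injectivity of $\mtsDcp X\hookrightarrow\mtsDcpI X$ and $\mtsESd X\hookrightarrow\mtsESdI X$ ``also follows from \cref{lem:dcp-esd-esdp-preserves-mono}'' is slightly off — that lemma says each of the six functors \emph{preserves} monomorphisms, not that the natural transformations between them are pointwise monic; but this doesn't matter, since your primary citation to \cref{def:dcp-esd-esdp-incl-to-i} (where the injectivity of those transformations is actually established) is the correct one.
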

\begin{proof}
  By \cref{prop:dcp-esd-connecting-surj-univ-loc,prop:dcpi-esdpi-connecting-surj-univ-loc},
  the claim follows from \cite[\href{https://kerodon.net/tag/02MA}{Proposition 02MA}]{kerodon}.
\end{proof}

\subsection{Properties of the transformations 2: an inner anodyne map}
\label{subsec:down-last-infty-loc-shape-inner-anod}

The goal of this subsection is the following:

\begin{proposition}\label{prop:esd-esdp-conn-inner-anod}
  Let $X\in\mtcSSet$ be a simplicial set. Then the canonical simplicial map
  $\mtsESd X\to\mtsESdp X$ in \cref{def:esd-esdp-connecting-inj}
  is inner anodyne.
\end{proposition}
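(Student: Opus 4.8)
The plan is to deduce the statement from \cref{lem:cospx-kan-ext-reedy-nat}, taking for $I$ the class of inner anodyne maps in $\mtcSSet$. Put $F\deltaBra{n}\coloneqq\mtsESd\mtyndSpx\deltaBra{n}$ and $G\deltaBra{n}\coloneqq\mtsESdp\mtyndSpx\deltaBra{n}=\nerve(\functCat{\deltaBra{1}}{\deltaBra{n}})$, regarded as functors $\mtcSimpx\to\mtcSSet$, and let $\theta\colon F\Rightarrow G$ be given componentwise by the injections $\mtsESd\mtyndSpx\deltaBra{n}\hookrightarrow\mtsESdp\mtyndSpx\deltaBra{n}$ of \cref{def:esd-esdp-connecting-inj}. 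Since $\mtsESd$ preserves colimits (as remarked after \cref{def:esdi}) and $\mtsESdp$ is by definition the left Kan extension of $\nerve\compos\mtpESdp$ along $\mtyndSpx$, the functors denoted $\hat F,\hat G$ in \cref{lem:cospx-kan-ext-reedy-nat} are exactly $\mtsESd$ and $\mtsESdp$, and the induced $\hat\theta$ is the natural transformation $\mtsESd\Rightarrow\mtsESdp$ of \cref{def:esd-esdp-connecting-inj}. Hence, applying the conclusion of \cref{lem:cospx-kan-ext-reedy-nat} to the monomorphism $\emptyset\hookrightarrow X$, it will suffice to check the two hypotheses of that lemma: that $I$ is closed under transfinite composition and pushouts in the stated sense, and that $\theta$ is Reedy in $I$.

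The first hypothesis is a standard fact: the inner anodyne maps form the left class of the weak factorization system on $\mtcSSet$ whose right class is the inner fibrations, equivalently the smallest weakly saturated class containing the inner horn inclusions $\mtsHorn{m}{k}\hookrightarrow\mtyndSpx\deltaBra{m}$ for $0<k<m$; such a class is closed under pushouts and transfinite composition, and the colimit-preservation clause in \cref{lem:cospx-kan-ext-reedy-nat} is automatic. (Monomorphy of the maps involved is already ensured by \cref{lem:dcp-esd-esdp-preserves-mono} together with the injectivity asserted in \cref{def:esd-esdp-connecting-inj}.) So everything comes down to the Reedy condition: for each $\deltaBra{n}\in\mtcSimpx$, the relative latching map
\[
  \ell_n\colon\ \mtsESd\mtyndSpx\deltaBra{n}\cup_{\mtsESd\mtBdrySpx\deltaBra{n}}\mtsESdp\mtBdrySpx\deltaBra{n}\ \longrightarrow\ \nerve(\functCat{\deltaBra{1}}{\deltaBra{n}})
\]
is inner anodyne.

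Proving this for every $n$ is the step I expect to be the main obstacle. The target $\nerve(\functCat{\deltaBra{1}}{\deltaBra{n}})$ has as nondegenerate $k$-simplices the strictly increasing chains $(a_0,b_0)<(a_1,b_1)<\dots<(a_k,b_k)$ of intervals $[a_j,b_j]\subseteq\deltaBra{n}$, ordered coordinatewise. Such a chain already lies in $\mtsESd\mtyndSpx\deltaBra{n}$ precisely when it has no ``crossing'', i.e.\ $a_k\le b_0$, and it lies in $\mtsESdp\mtBdrySpx\deltaBra{n}$ precisely when the set $\{a_0,\dots,a_k\}\cup\{b_0,\dots,b_k\}$ omits some element of $\deltaBra{n}$. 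The plan is to attach the remaining chains --- those which cross and which are surjective in the above sense --- to the domain of $\ell_n$ one at a time, in an order organized by a numerical measure of the ``amount of crossing'' of the chain refined by a secondary lexicographic parameter, and to show that each such chain is filled by pushing out a single inner horn $\mtsHorn{k}{i}\hookrightarrow\mtyndSpx\deltaBra{k}$ along a pivot index $i$ taken essentially to be the first position at which the crossing condition $a_i>b_0$ holds. The crux of this argument is the bookkeeping: one must check that $0<i<k$ always, so that the horn is inner; that the $i$-th face of the chain is the simplex currently being attached; and that all other faces are degenerate or already present in the union of the previously attached chains with $\mtsESd\mtyndSpx\deltaBra{n}\cup\mtsESdp\mtBdrySpx\deltaBra{n}$. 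Assembling the resulting transfinite tower of pushouts of inner horn inclusions shows $\ell_n$ is inner anodyne, and with this the hypotheses of \cref{lem:cospx-kan-ext-reedy-nat} are in place.

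An alternative to the direct filtration would be an induction on $n$, using \cref{lem:wfs-pushout-left-class} for the weak factorization system consisting of the inner anodyne maps and the inner fibrations: one decomposes the interval poset $\functCat{\deltaBra{1}}{\deltaBra{n}}$ via an evident pushout relating it to the $n-1$ case together with lower-dimensional pieces coming from the intervals meeting the top vertex $n$, and checks that the three span-maps feeding \cref{lem:wfs-pushout-left-class} lie in the left class; the crossing-index combinatorics then resurfaces as the verification of the latter, so this route trades one form of bookkeeping for another rather than avoiding it.
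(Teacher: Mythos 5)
Your overall strategy is the same as the paper's: reduce via \cref{lem:cospx-kan-ext-reedy-nat} to showing each relative latching map $\ell_n$ is inner anodyne, characterize the nondegenerate simplices of $\mtsESdp\mtyndSpx\deltaBra{n}$ missing from the domain of $\ell_n$ (the surjective ``crossing'' chains), and attach them through a filtration of inner horn fillings. The gap lies exactly in the horn-filling combinatorics that you yourself flag as the main obstacle, and it is not merely bookkeeping to be filled in.

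First, the proposed pivot is incorrect: for the surjective crossing chain $(0,1)<(0,2)<(0,3)<(3,3)$ in $\functCat{\deltaBra{1}}{\deltaBra{3}}$, the first $i$ with $a_i>b_0=1$ is $i=3=k$, so $\mtsHorn{k}{i}$ is an outer horn. The pivot has to be read off from the right endpoints against $a_k$ rather than the left endpoints against $b_0$: one must take $i$ to be the first index with $b_i\ge a_k$, namely $i=2$ here. Second, and more fundamentally, attaching the missing chains ``one at a time,'' each as the top simplex of its own inner horn, cannot work at all: there exist surjective crossing chains of dimension one (e.g.\ $(0,1)<(2,2)$ for $n=2$), and no inner horn $\mtsHorn{1}{i}$ exists. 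The essential idea your sketch is missing is that each inner horn filling attaches a matched \emph{pair} of new nondegenerate simplices at once --- a ``core'' $\sigma$ (the top simplex) together with its ``periphery'' $d_i\sigma$ (the one missing face of the horn) --- and only cores ever serve as top simplices. One must therefore show that the outsiders partition into such pairs (with cores distinguished from peripheries by whether $a_{i-1}=a_i$ and $b_i=a_k$ hold at the correct pivot $i$), and then well-order the pairs so that every non-pivot face of each core is already available. This pairing and its well-order are the real content of the paper's argument (\cref{lem:esd-esdp-conn-inner-anod-horn-core-unique,lem:esd-esdp-conn-inner-anod-horn-periph-unique,lem:esd-esdp-conn-inner-anod-horn-pair-order}), and your sketch does not reach it.
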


For the proof of this proposition, the following lemma is crucial:

\begin{lemma}\label{lem:esd-esdp-conn-inner-anod}
  Let $\deltaBra{n}\in\mtcSimpx$ be a simplex. Consider the following 
  commutative square:
  \[
    \begin{tikzcd}
      \mtsESd\mtBdrySpx\deltaBra{n}
      \arrow[r, hookrightarrow] \arrow[d, hookrightarrow] &
      \mtsESd\mtyndSpx\deltaBra{n} \arrow[d, hookrightarrow] \\
      \mtsESdp\mtBdrySpx\deltaBra{n} \arrow[r, hookrightarrow] &
      \mtsESdp\mtyndSpx\deltaBra{n}
    \end{tikzcd}
  \]
  Here, the vertical maps are from \cref{def:esd-esdp-connecting-inj},
  and the horizontal maps are induced by the canonical inclusions.
  Then the corresponding simplicial map
  \[
    (\mtsESd\mtyndSpx\deltaBra{n})
    \cup_{\mtsESd\mtBdrySpx\deltaBra{n}}
    (\mtsESdp\mtBdrySpx\deltaBra{n})
    \to \mtsESdp\mtyndSpx\deltaBra{n}
  \]
  out of the pushout is inner anodyne.
\end{lemma}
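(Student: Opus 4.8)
The plan is first to identify the pushout-corner map with an inclusion of subcomplexes of $\mtsESdp\mtyndSpx\deltaBra n=\nerve(\functCat{\deltaBra 1}{\deltaBra n})$, and then to realize that inclusion as a finite composite of pushouts of inner horn inclusions via an explicit filtration. Since $\mtsESd$ and $\mtsESdp$ preserve monomorphisms (\cref{lem:dcp-esd-esdp-preserves-mono}) and $\mtcSSet$ is a presheaf category, all four simplicial sets in the square are subcomplexes of $\mtsESdp\mtyndSpx\deltaBra n$; a direct inspection of simplices shows $\mtsESd\mtyndSpx\deltaBra n\cap\mtsESdp\mtBdrySpx\deltaBra n=\mtsESd\mtBdrySpx\deltaBra n$ inside $\mtsESdp\mtyndSpx\deltaBra n$, so the canonical map out of the pushout is a monomorphism onto the subcomplex $K:=\mtsESd\mtyndSpx\deltaBra n\cup\mtsESdp\mtBdrySpx\deltaBra n$, and it suffices to prove $K\hookrightarrow\mtsESdp\mtyndSpx\deltaBra n$ is inner anodyne.

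\textbf{Explicit description of the pieces.}
Next I would unwind the constructions so that $\mtsESdp\mtyndSpx\deltaBra n$ has as its nondegenerate $k$-simplices the strictly increasing chains $(a_0,b_0)<(a_1,b_1)<\dots<(a_k,b_k)$ of intervals $a_i\le b_i$ in $\deltaBra n$; that $\mtsESd\mtyndSpx\deltaBra n$ (via the bijection $u_{\deltaBra k}$ of \cref{def:esd-esdp-connecting-inj}) is the subcomplex of those chains with $a_k\le b_0$; and that $\mtsESdp\mtBdrySpx\deltaBra n$ is the subcomplex of chains for which some $i\in\deltaBra n$ is avoided by every $a_j$ and every $b_j$. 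Consequently the nondegenerate simplices of $\mtsESdp\mtyndSpx\deltaBra n$ not in $K$ are exactly the chains $\sigma$ with $a_k>b_0$ and $\bigcup_j\{a_j,b_j\}=\deltaBra n$ (the latter forcing $a_0=0$ and $b_k=n$). Call this set $\Sigma$; note it is finite, and for $n=1$ it consists of the single edge $(0,0)<(1,1)$, so the statement reduces there to $\mtsHorn{2}{1}\hookrightarrow\mtyndSpx\deltaBra 2$, a useful sanity check.

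\textbf{The filtration.}
The heart of the proof is to organize $\Sigma$ into an inner-anodyne filtration $K=K_{-1}\subseteq K_0\subseteq\dots\subseteq K_N=\mtsESdp\mtyndSpx\deltaBra n$. I would put a complexity order on $\Sigma$ --- first by dimension, then by a secondary statistic recording how far the left-endpoint chain overshoots $b_0$ (e.g.\ $a_k$, then $a_{k-1}$, \dots) --- and attach to each $\sigma\in\Sigma$ a canonically determined inner index $0<i_\sigma<\dim\sigma$ (the position of the vertex whose removal strictly lowers this statistic) such that: the face of $\sigma$ opposite the $l$-th vertex lies in the previously constructed stage for every $l\ne i_\sigma$; the face opposite the $i_\sigma$-th vertex again lies in $\Sigma$ and is paired with $\sigma$ by no other simplex; and distinct simplices of the same complexity do not interfere. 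Granting this, each stage $K_{j-1}\hookrightarrow K_j$ is a pushout of a coproduct of inner horn inclusions $\mtsHorn{\dim\sigma}{i_\sigma}\hookrightarrow\mtyndSpx\deltaBra{\dim\sigma}$, so the composite $K\hookrightarrow\mtsESdp\mtyndSpx\deltaBra n$ is inner anodyne, as desired.

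\textbf{Main obstacle.}
I expect the only real difficulty to be this last combinatorial bookkeeping: choosing the correct secondary statistic and verifying, uniformly in $n$, the ``lower faces already present'' and non-interference conditions that make each stage a genuine pushout of inner horns. The subcomplex identifications, the concrete descriptions of $\Sigma$, and the assembly of the pushouts into a finite composite are all routine. (As a remark, this Lemma is precisely the Reedy-in-$I$ hypothesis needed to deduce \cref{prop:esd-esdp-conn-inner-anod} from \cref{lem:cospx-kan-ext-reedy-nat} with $I$ the class of inner anodyne maps, so the hands-on argument here cannot be avoided.)
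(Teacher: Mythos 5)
Your proposal follows essentially the same route as the paper: reduce the pushout-corner map to a subcomplex inclusion $K\hookrightarrow\mtsESdp\mtyndSpx\deltaBra n$, identify the non-degenerate simplices outside $K$ as the increasing interval-chains with $a_k>b_0$ and surjective union (the paper's ``non-degenerate outsiders''), and add them in pairs via inner-horn fillings, ordered first by dimension and then by a secondary statistic whose leading term is $a_k=u_\sigma(\dim\sigma,0)$ (the paper uses $(\dim\sigma,\,u_\sigma(\dim\sigma,0),\,i)$). The paper makes this precise by introducing \emph{horn pairs} $(\sigma,\tau)$ (core/periphery) and proving, across \crefrange{lem:esd-esdp-conn-inner-anod-outsider-cond}{lem:esd-esdp-conn-inner-anod-horn-pair-order}, exactly the ``earlier faces already present'' and non-interference conditions you defer.

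One small but real imprecision to watch: you attach an inner index $0<i_\sigma<\dim\sigma$ to \emph{every} $\sigma\in\Sigma$. That bound holds only for the cores. For a general non-degenerate $k$-outsider the anticipated horn position satisfies only $0<i\le k$, and for a periphery it can equal $k$; the strict inequality $i<k$ is derived in \cref{lem:esd-esdp-conn-inner-anod-horn-core-unique} precisely as the condition that forces a simplex to be a core. You should therefore index your horn fillings by the cores alone (equivalently by the horn pairs), with each filling contributing both the core and its $i$-th facet; the peripheries do not get their own fillings, and indeed cannot, since their anticipated horn position may lie at the top of the range. With that adjustment, your filtration agrees with the paper's.
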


With this lemma, the proof of \cref{prop:esd-esdp-conn-inner-anod} is straightforward.

\begin{proof}[Proof of \cref{prop:esd-esdp-conn-inner-anod} from \cref{lem:esd-esdp-conn-inner-anod}]
  \Cref{lem:esd-esdp-conn-inner-anod} says that the canonical natural transformation
  ${\mtsESd}\compos{\mtyndSpx} \Rightarrow {\mtsESdp}\compos{\mtyndSpx}\colon \mtcSimpx \to \mtcSSet$
  is Reedy inner anodyne.
  Therefore from \cref{lem:cospx-kan-ext-reedy-nat} follows that
  the map in question, obtained by the left Kan extension of this natural transformation
  along the Yoneda embedding, is inner anodyne.
\end{proof}

The rest of this subsection is devoted to the proof of \cref{lem:esd-esdp-conn-inner-anod}.
The proof of the lemma is combinatorial and straightforward: since inner horn inclusion
$\mtsHorn{n}{k}\hookrightarrow\mtyndSpx\deltaBra{n}$ adds two non-degenerate simplices in pair,
we just need to add non-degenerate simplices two-by-two in an appropriate order.
However, making such pairs (called \emph{horn pairs}) gracefully takes a bit of care and a long argument.
To enhance readability, we shall break the proof down into several other lemmas.
The concluding proof of \cref{lem:esd-esdp-conn-inner-anod} is given in
\cpageref{proof:lem:esd-esdp-conn-inner-anod},
after \cref{lem:esd-esdp-conn-inner-anod-horn-pair-order}.
We shall first introduce some notation and terminology used for the rest of this subsection:

\begin{notation}\label{not:esd-esdp-conn-inner-anod}
  Here we list the terminology and notation used in the proof of \cref{lem:esd-esdp-conn-inner-anod}.
  The notation is used only throughout this subsection.
  \begin{itemize}
    \item We fix a simplex $\deltaBra{n}\in\mtcSimpx$.
    \item We shall write $f\colon X\hookrightarrow Y$ for the map in focus. Specifically,
      $f$ is the canonical injective map between:
      \begin{align*}
        X &\coloneqq (\mtsESd\mtyndSpx\deltaBra{n})
        \cup_{\mtsESd\mtBdrySpx\deltaBra{n}}
        (\mtsESdp\mtBdrySpx\deltaBra{n});\\
        Y &\coloneqq \mtsESdp\mtyndSpx\deltaBra{n}.
      \end{align*}
    \item We define an \emph{outsider} to be a simplex of $Y$ that is not in the image of $f$.
      If an outsider is a $k$-simplex, we shall say that it is a $k$-outsider.
    \item We shall denote the set of \emph{non-degenerate} outsiders by $\mathcal{O}$. The set of non-degenerate
      $k$-outsiders is denoted by $\mathcal{O}_k$.
    \item If $\sigma$ is a $k$-simplex of $Y$, it is represented by an order-preserving map
      $\deltaBra{k}\to\functCat{\deltaBra{1}}{\deltaBra{n}}$. We shall denote the adjunct of this map
      by $u_\sigma\colon\deltaBra{k}\times\deltaBra{1}\to\deltaBra{n}$.
    \item A \emph{horn pair} is an ordered pair $(\sigma, \tau)$ of non-degenerate outsiders 
      $\sigma$ and $\tau$ that satisfy the following conditions for some $0<i<k\coloneqq\dim\sigma$:
      \begin{enumerate}
        \item $u_\sigma(i-1,0) = u_\sigma(i,0)$;
        \item $u_\sigma(i-1,1) < u_\sigma(i,1) = u_\sigma(k,0)$;
        \item the simplex $\tau$ is the $i$-th facet of $\sigma$.
      \end{enumerate}
    \item The \emph{horn position} of a horn pair $(\sigma, \tau)$ is the integer $i$ in the definition above.
      Note that the integer $i$ is unique, since we have $u_\sigma(i-1,1) < u_\sigma(i,1) = u_\sigma(k,0)$
      and $u_\sigma$ is order-preserving.
    \item The \emph{anticipated horn position} of a non-degenerate $k$-outsider $\sigma$ is the number of
      $0\le j\le k$ such that $u_\sigma(j,1) < u_\sigma(k,0)$.
    \item We denote the set of horn pairs by $\mathcal{H}$.
    \item If $(\sigma, \tau)$ is a horn pair, we shall say that $\sigma$ is the \emph{(horn) core} of
      the pair, and $\tau$ is the \emph{(horn) periphery} of the pair. We shall also simply say that
      $\sigma$ is a \emph{horn core} to mean that, for some $\tau$, the pair $(\sigma, \tau)$ is a horn pair.
      A \emph{horn periphery} is defined similarly.
  \end{itemize}
\end{notation}

In order to prove our desired \cref{lem:esd-esdp-conn-inner-anod}, it suffices to show that non-degenerate
outliers are divided into mutually disjoint horn pairs, and well-order these horn pairs appropriately.
We shall first see the equivalent conditions for a simplex to be a non-degenerate outsider:

\begin{lemma}\label{lem:esd-esdp-conn-inner-anod-outsider-cond}
  Let $\sigma$ be any $k$-simplex in $Y$.
  Then we have the following:
  \begin{enumerate}
    \item The simplex $\sigma$ is non-degenerate if and only if, 
      for each $0\le i < k$, we have $(u_\sigma(i,0), u_\sigma(i,1)) < (u_\sigma(i+1,0), u_\sigma(i+1,1))$
      with respect to the product order.
      \label{item:lem-esd-esdp-conn-inner-anod-outsider-cond:nd}
    \item The simplex $\sigma$ does not belong to the injective image of $\mtsESdp\mtBdrySpx\deltaBra{n}$
      if and only if $u_\sigma$ is surjective.
    \item The simplex $\sigma$ does not belong to the injective image of $\mtsESd\mtyndSpx\deltaBra{n}$
      if and only if $u_\sigma(k,0) > u_\sigma(0,1)$.
    \item The simplex $\sigma$ is an outsider if and only if
      $u_\sigma(k,0) > u_\sigma(0,1)$ and $u_\sigma$ is surjective.
      \label{item:lem-esd-esdp-conn-inner-anod-outsider-cond:out}
  \end{enumerate}
\end{lemma}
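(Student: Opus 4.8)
The plan is to work throughout with the identification $Y=\mtsESdp\mtyndSpx\deltaBra{n}=\nerve(\functCat{\deltaBra{1}}{\deltaBra{n}})$ coming from \cref{def:esdp-esdpi}, under which a $k$-simplex $\sigma$ of $Y$ is precisely an order-preserving map $u_\sigma\colon\deltaBra{k}\times\deltaBra{1}\to\deltaBra{n}$, and to translate each of the four conditions into an elementary statement about $u_\sigma$. I will freely use that $\mtsESd$ and $\mtsESdp$ preserve colimits ($\mtsESd$ is a precomposition and $\mtsESdp$ is a left Kan extension along $\mtyndSpx$) and that $\mtsESdp$ preserves monomorphisms by \cref{lem:dcp-esd-esdp-preserves-mono}, so that the two ``injective images'' in the statement are honest simplicial subsets of $Y$.

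For \cref{item:lem-esd-esdp-conn-inner-anod-outsider-cond:nd} I would invoke the standard fact that a $k$-simplex of the nerve of a poset is non-degenerate exactly when the corresponding chain is strictly increasing. The $i$-th vertex of $\sigma$ is the function $j\mapsto u_\sigma(i,j)$ in $\functCat{\deltaBra{1}}{\deltaBra{n}}$; consecutive vertices are comparable by the very definition of a simplex of $Y$, and two of them coincide precisely when $(u_\sigma(i,0),u_\sigma(i,1))=(u_\sigma(i+1,0),u_\sigma(i+1,1))$, which yields the claim immediately.

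For the second assertion I would write $\mtBdrySpx\deltaBra{n}$ as the union of its faces $\delta^n_j$; since $\mtsESdp$ preserves colimits and monomorphisms, $\mtsESdp\mtBdrySpx\deltaBra{n}$ is the union inside $Y$ of the images of the post-composition maps $\mtsESdp\mtyndSpx\deltaBra{n-1}\to\mtsESdp\mtyndSpx\deltaBra{n}$ induced by the $\delta^n_j$. A chain in $\functCat{\deltaBra{1}}{\deltaBra{n}}$ factors through $\functCat{\deltaBra{1}}{\deltaBra{n-1}}$ along $\delta^n_j$ iff none of its vertices attains the value $j$, i.e.\ iff $j\notin\operatorname{Im}u_\sigma$; hence $\sigma$ lies in $\mtsESdp\mtBdrySpx\deltaBra{n}$ iff $u_\sigma$ misses some value, and the assertion is the contrapositive. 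For the third assertion I would unwind \cref{def:esd-esdp-connecting-inj}: the inclusion $\mtsESd\mtyndSpx\deltaBra{n}\hookrightarrow Y$ sends an order-preserving $f\colon\deltaBra{k}\star\deltaBra{k}=\deltaBra{2k+1}\to\deltaBra{n}$ to the simplex $\sigma$ with $u_\sigma(x,0)=f(x)$ and $u_\sigma(x,1)=f(k+1+x)$ for $0\le x\le k$. Conversely, for an arbitrary $k$-simplex $\sigma$ of $Y$ the only possible preimage is the function $f$ prescribed by these two formulas; it is automatically order-preserving on each of the blocks $\{0,\dots,k\}$ and $\{k+1,\dots,2k+1\}$ since $u_\sigma$ is order-preserving in its first variable, and the one remaining inequality is $f(k)\le f(k+1)$, i.e.\ $u_\sigma(k,0)\le u_\sigma(0,1)$. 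Thus $\sigma$ lies in the image of $\mtsESd\mtyndSpx\deltaBra{n}$ iff $u_\sigma(k,0)\le u_\sigma(0,1)$, and the assertion is again the contrapositive.

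Part \cref{item:lem-esd-esdp-conn-inner-anod-outsider-cond:out} is then purely formal: $X$ is the pushout of $\mtsESd\mtyndSpx\deltaBra{n}$ and $\mtsESdp\mtBdrySpx\deltaBra{n}$ over $\mtsESd\mtBdrySpx\deltaBra{n}$, so the image of $f\colon X\to Y$ on $k$-simplices is the union of the images of $(\mtsESd\mtyndSpx\deltaBra{n})_k$ and $(\mtsESdp\mtBdrySpx\deltaBra{n})_k$ in $Y_k$; hence $\sigma$ is an outsider iff it lies in neither, which by the second and third assertions means exactly that $u_\sigma$ is surjective and $u_\sigma(k,0)>u_\sigma(0,1)$. (As a byproduct one can read off from the second and third assertions that the defining square of $X$ is a pullback, so $f$ is itself a monomorphism, but this is not needed here.) I expect the only genuine bookkeeping to lie in the third assertion — tracking the seam between $\{0,\dots,k\}$ and $\{k+1,\dots,2k+1\}$ through the adjunction defining $\mtsESd X\hookrightarrow\mtsESdp X$ — while the remaining three parts are direct translations of definitions.
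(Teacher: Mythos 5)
Your proposal is correct and coincides with what the paper means by its one-line proof ``By direct calculation'': each clause is translated through the identification $Y=\nerve(\functCat{\deltaBra{1}}{\deltaBra{n}})$ into an elementary statement about $u_\sigma$, using that $\mtsESdp$ preserves colimits and monomorphisms for (2), and unwinding the adjunction of \cref{def:esd-esdp-connecting-inj} for (3). The only point worth recording is the seam inequality $f(k)\le f(k+1) \Leftrightarrow u_\sigma(k,0)\le u_\sigma(0,1)$ in (3), which you handle correctly.
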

\begin{proof}
  By direct calculation.
\end{proof}

A number of lemmas below are needed to show that non-degenerate outsiders are divided into horn pairs:

\begin{lemma}\label{lem:esd-esdp-conn-inner-anod-horn-pos}
  Let $\sigma\in\mathcal{O}_k$ be any non-degenerate $k$-outsider.
  Let $0 \le i \le k+1$ be the anticipated horn position of $\sigma$.
  Then we have the following:
  \begin{enumerate}
    \item It holds that $0 < i \le k$.
      \label{item:lem-esd-esdp-conn-inner-anod-horn-pos:range}
    \item If $\sigma$ belongs to a horn pair $p\in\mathcal{H}$, either as its core or its periphery,
      then the horn position of $p$ is $i$.
      \label{item:lem-esd-esdp-conn-inner-anod-horn-pos:horn-pos}
  \end{enumerate}
\end{lemma}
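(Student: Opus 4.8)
The plan is to establish both statements by an elementary inspection of the adjunct map $u_\sigma\colon\deltaBra{k}\times\deltaBra{1}\to\deltaBra{n}$, using throughout that $u_\sigma$ is order-preserving; in particular $u_\sigma(0,1)\le u_\sigma(1,1)\le\dotsb\le u_\sigma(k,1)$ and $u_\sigma(j,0)\le u_\sigma(j,1)$ for each $0\le j\le k$. The characterization of (non-degenerate) outsiders in \cref{lem:esd-esdp-conn-inner-anod-outsider-cond} will also be invoked.

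For \eqref{item:lem-esd-esdp-conn-inner-anod-horn-pos:range}: since $\sigma$ is an outsider, \cref{lem:esd-esdp-conn-inner-anod-outsider-cond}\eqref{item:lem-esd-esdp-conn-inner-anod-outsider-cond:out} gives $u_\sigma(0,1)<u_\sigma(k,0)$, so the index $j=0$ is among those counted by the anticipated horn position, whence $i\ge 1$. For the upper bound, $u_\sigma(k,0)\le u_\sigma(k,1)$ shows that $j=k$ is not counted, so among the $k+1$ indices $0,\dotsc,k$ at most $k$ are counted, i.e.\ $i\le k$.

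For \eqref{item:lem-esd-esdp-conn-inner-anod-horn-pos:horn-pos} I would split into the two ways $\sigma$ can occur in a horn pair $p$. If $\sigma$ is the core of $p$ with horn position $i'$, then by definition $u_\sigma(i'-1,1)<u_\sigma(i',1)=u_\sigma(k,0)$, and monotonicity then yields $u_\sigma(j,1)<u_\sigma(k,0)$ for every $j\le i'-1$ and $u_\sigma(j,1)\ge u_\sigma(k,0)$ for every $j\ge i'$; hence exactly the $i'$ indices $0,\dotsc,i'-1$ are counted and the anticipated horn position of $\sigma$ is $i'$. If instead $\sigma$ is the periphery of $p=(\rho,\sigma)$, then $\rho$ has dimension $k+1$, $\sigma$ is its $i'$-th face, and $u_\sigma(j,\epsilon)=u_\rho(\delta^{k+1}_{i'}(j),\epsilon)$, so that $u_\sigma(j,\epsilon)=u_\rho(j,\epsilon)$ for $j<i'$ and $u_\sigma(j,\epsilon)=u_\rho(j+1,\epsilon)$ for $j\ge i'$; since $i'\le k$ we get $u_\sigma(k,0)=u_\rho(k+1,0)=u_\rho(i',1)$, and combining this with the horn-pair inequality $u_\rho(i'-1,1)<u_\rho(i',1)$ and monotonicity, the same bookkeeping shows that precisely the indices $0,\dotsc,i'-1$ satisfy $u_\sigma(j,1)<u_\sigma(k,0)$, so again the anticipated horn position of $\sigma$ equals $i'$.

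I expect the only delicate point to be the periphery case: one must keep the reindexing by $\delta^{k+1}_{i'}$ straight and verify that the index $j=i'$ of $u_\rho$ — the one at which $u_\rho(\cdot,1)$ first reaches $u_\rho(k+1,0)$ — is exactly the place that becomes the boundary between counted and uncounted indices after passing to the face $u_\sigma$. Beyond this the argument is a routine monotonicity computation, and I do not anticipate any genuine obstacle.
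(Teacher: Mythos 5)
Your proof is correct and follows essentially the same route as the paper's: part (1) from the outsider characterization and monotonicity of $u_\sigma$, and part (2) by unwinding the defining inequalities for the horn position, splitting into the core and periphery cases (the periphery case being exactly the reindexing through the face map that the paper also carries out). No gaps.
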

\begin{proof}
  \eqref{item:lem-esd-esdp-conn-inner-anod-horn-pos:range}:
  Since $u_\sigma$ is order-preserving, we have $i \le k$. The other inequality $i > 0$ follows from
  the assumption that $\sigma$ is an outsider;
  see \cref{lem:esd-esdp-conn-inner-anod-outsider-cond}~\eqref{item:lem-esd-esdp-conn-inner-anod-outsider-cond:out}.

  \eqref{item:lem-esd-esdp-conn-inner-anod-horn-pos:horn-pos}:
  If $\sigma$ is the core of $p$, the claim is immediate from the definition of a horn pair
  and the order preservation of $u_\sigma$.
  Assume that $p$ is of the form $(\tau, \sigma)$, and let us write $i$ for the horn position of $p$.
  Since $\sigma$ is the $i$-th face of $\tau$, we have, for each $0\le j< i$:
  \[ 
    u_\sigma(j,1) = u_\tau(j,1) \le u_\tau(i-1,1) < u_\tau(i,1) = u_\tau(k+1,0) = u_\sigma(k,0).
  \]
  Here we used $i<k+1$. We also have, for each $i \le j \le k$:
  \[
    u_\sigma(j,1) = u_\tau(j+1,1) \ge u_\tau(i,1) = u_\tau(k+1,0) = u_\sigma(k,0).
  \]
  The claim should be clear from these inequalities.
\end{proof}

\begin{lemma}\label{lem:esd-esdp-conn-inner-anod-core-not-periphery}
  A horn core is not a horn periphery.
\end{lemma}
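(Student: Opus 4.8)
The plan is to argue by contradiction: suppose some non-degenerate outsider $\sigma$ is simultaneously a horn core and a horn periphery. Writing $k \coloneqq \dim\sigma$, this gives a horn pair $(\sigma,\tau) \in \mathcal{H}$ with $\sigma$ as its core and a horn pair $(\rho,\sigma)\in\mathcal{H}$ with $\sigma$ as its periphery, so that $\dim\rho = k+1$. The first thing I would do is pin down a single integer governing both pairs: by \cref{lem:esd-esdp-conn-inner-anod-horn-pos}~\eqref{item:lem-esd-esdp-conn-inner-anod-horn-pos:horn-pos}, the horn position of $(\sigma,\tau)$ and the horn position of $(\rho,\sigma)$ both equal the anticipated horn position of $\sigma$; call it $a$. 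Reading off the range constraints on horn positions ($0<a<k$ from the first pair, $0<a<k+1$ from the second) gives $0<a<k$, which guarantees that $a-1$ and $a+1$ are legitimate indices in both $\deltaBra{k}$ and $\deltaBra{k+1}$ — this is exactly what makes the following computation go through.

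Next I would exploit the fact that $\sigma$ is the $a$-th facet of $\rho$ to transport everything to $u_\rho$. Concretely, $u_\sigma(j,\epsilon) = u_\rho(j,\epsilon)$ for $j<a$ and $u_\sigma(j,\epsilon) = u_\rho(j+1,\epsilon)$ for $a\le j\le k$, so in particular $u_\sigma(a-1,\epsilon) = u_\rho(a-1,\epsilon)$, $u_\sigma(a,\epsilon) = u_\rho(a+1,\epsilon)$ and $u_\sigma(k,\epsilon) = u_\rho(k+1,\epsilon)$. Now I apply the two defining conditions (1) and (2) of a horn pair (\cref{not:esd-esdp-conn-inner-anod}) twice: once to $\sigma$ as the core of $(\sigma,\tau)$ at position $a$, and once to $\rho$ as the core of $(\rho,\sigma)$ at position $a$. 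The first use, after translation, yields $u_\rho(a-1,0) = u_\rho(a+1,0)$ together with $u_\rho(a+1,1) = u_\rho(k+1,0)$; the second use yields $u_\rho(a-1,0) = u_\rho(a,0)$ together with $u_\rho(a,1) = u_\rho(k+1,0)$. Combining the left-hand equalities with monotonicity of $u_\rho$ forces $u_\rho(a,0) = u_\rho(a+1,0)$, and combining the right-hand equalities forces $u_\rho(a,1) = u_\rho(a+1,1)$.

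Hence $(u_\rho(a,0),u_\rho(a,1)) = (u_\rho(a+1,0),u_\rho(a+1,1))$, whereas $\rho$ is a non-degenerate simplex of $Y$ with $0\le a<k+1=\dim\rho$, so \cref{lem:esd-esdp-conn-inner-anod-outsider-cond}~\eqref{item:lem-esd-esdp-conn-inner-anod-outsider-cond:nd} requires this pair of consecutive values to be strictly increasing in the product order — a contradiction. This completes the argument. The proof presents no real conceptual difficulty; the only point that needs care is the index bookkeeping, in particular verifying that $0<a<k$ (so that the relevant faces and vertices exist) and correctly matching the vertices of $\sigma$ with those of $\rho$ across the face map, which is what I would present most carefully.
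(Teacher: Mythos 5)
Your argument is correct and is essentially the paper's own proof, just with different variable names ($\sigma,\tau,\rho$ for the paper's $\sigma_1,\sigma_2,\sigma_0$): both reduce to a common horn position via \cref{lem:esd-esdp-conn-inner-anod-horn-pos}~\eqref{item:lem-esd-esdp-conn-inner-anod-horn-pos:horn-pos}, transport the two horn-pair conditions through the face relation, and conclude $(u_\rho(a,0),u_\rho(a,1))=(u_\rho(a+1,0),u_\rho(a+1,1))$, contradicting non-degeneracy via \cref{lem:esd-esdp-conn-inner-anod-outsider-cond}~\eqref{item:lem-esd-esdp-conn-inner-anod-outsider-cond:nd}. (One small simplification: you invoke monotonicity of $u_\rho$ when stitching the two left-hand equalities, but plain transitivity through $u_\rho(a-1,0)$ already does it, just as in the paper's chain of equalities.)
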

\begin{proof}
  For contradition, suppose that there were to be two horn pairs $(\sigma_0, \sigma_1)$ and $(\sigma_1, \sigma_2)$.
  Let $0<i<k$ be the horn position of the second pair, where $k=\dim\sigma_1$. By applying
  \cref{lem:esd-esdp-conn-inner-anod-horn-pos}~\eqref{item:lem-esd-esdp-conn-inner-anod-horn-pos:horn-pos}
  to $\sigma_1$, we see that $i$ is also the horn position of
  the first pair $(\sigma_0, \sigma_1)$. We may now deduce:
  \begin{gather*}
    u_{\sigma_0}(i,0) = u_{\sigma_0}(i-1,0) = u_{\sigma_1}(i-1,0)= u_{\sigma_1}(i,0) = u_{\sigma_0}(i+1,0);\\
    u_{\sigma_0}(i,1) = u_{\sigma_0}(k+1,0) = u_{\sigma_1}(k,0) = u_{\sigma_1}(i,1) = u_{\sigma_0}(i+1,1).
  \end{gather*}
  These equations and \cref{lem:esd-esdp-conn-inner-anod-outsider-cond}~\eqref{item:lem-esd-esdp-conn-inner-anod-outsider-cond:nd}
  imply that $\sigma_0$ is a degenerate simplex, which contradicts the definition of a horn pair.
\end{proof}

\begin{lemma}\label{lem:esd-esdp-conn-inner-anod-horn-core-unique}
  Let $\sigma$ be
  a non-degenerate $k$-outsider, and let $0<i\le k$ be the anticipated horn position of $\sigma$;
  see \cref{lem:esd-esdp-conn-inner-anod-horn-pos}~\eqref{item:lem-esd-esdp-conn-inner-anod-horn-pos:range}.
  If we have $u_\sigma(i-1,0) = u_\sigma(i,0)$ and $u_\sigma(i,1) = u_\sigma(k,0)$,
  then $\sigma$ is the core of a unique horn pair.
\end{lemma}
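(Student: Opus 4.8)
The plan is to produce the horn pair explicitly — it must be $(\sigma,\tau)$ with $\tau$ the $i$-th facet of $\sigma$ — and then read off uniqueness from \cref{lem:esd-esdp-conn-inner-anod-horn-pos}. So set $\tau$ to be the $i$-th facet of $\sigma$, a $(k-1)$-simplex of $Y$. The first thing to establish is that the bound $0<i\le k$ of \cref{lem:esd-esdp-conn-inner-anod-horn-pos}~\eqref{item:lem-esd-esdp-conn-inner-anod-horn-pos:range} is in fact strict, i.e.\ $0<i<k$. If $i=k$, the two hypotheses become $u_\sigma(k-1,0)=u_\sigma(k,0)$ and $u_\sigma(k,1)=u_\sigma(k,0)$; since $u_\sigma$ is order-preserving, the former also forces $u_\sigma(k-1,1)=u_\sigma(k,0)=u_\sigma(k,1)$, whence $(u_\sigma(k-1,0),u_\sigma(k-1,1))=(u_\sigma(k,0),u_\sigma(k,1))$, and \cref{lem:esd-esdp-conn-inner-anod-outsider-cond}~\eqref{item:lem-esd-esdp-conn-inner-anod-outsider-cond:nd} then makes $\sigma$ degenerate, contradicting $\sigma\in\mathcal{O}_k$.

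Next I would verify that $\tau$ is a non-degenerate outsider, which together with what is already known makes $(\sigma,\tau)$ a horn pair of horn position $i$: in the definition of a horn pair (\cref{not:esd-esdp-conn-inner-anod}), condition~(1) is the first hypothesis, the equality in~(2) is the second hypothesis, the inequality $u_\sigma(i-1,1)<u_\sigma(i,1)$ in~(2) holds because $i$ is the anticipated horn position, so that $u_\sigma(i-1,1)<u_\sigma(k,0)=u_\sigma(i,1)$, and~(3) is the definition of $\tau$. For non-degeneracy of $\tau$, by \cref{lem:esd-esdp-conn-inner-anod-outsider-cond}~\eqref{item:lem-esd-esdp-conn-inner-anod-outsider-cond:nd} the only consecutive comparison that is not already one in $\sigma$ is $(u_\sigma(i-1,0),u_\sigma(i-1,1))<(u_\sigma(i+1,0),u_\sigma(i+1,1))$, and this follows by transitivity from the two strict inequalities of $\sigma$ at $i-1$ and $i$ (legitimate since $i<k$, so that the indices $i-1$, $i$, $i+1$ all lie between $0$ and $k$). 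For the outsider condition, \cref{lem:esd-esdp-conn-inner-anod-outsider-cond}~\eqref{item:lem-esd-esdp-conn-inner-anod-outsider-cond:out} asks for $u_\tau(k-1,0)>u_\tau(0,1)$ and surjectivity of $u_\tau$; the inequality reads $u_\sigma(k,0)>u_\sigma(0,1)$, true because $\sigma$ is an outsider, and $u_\tau$ is surjective since $u_\sigma$ is while the two values $u_\sigma(i,0)=u_\sigma(i-1,0)$ and $u_\sigma(i,1)=u_\sigma(k,0)$ lost by passing to $\tau$ are still attained at the surviving indices $(i-1,0)$ and $(k,0)$, both different from $i$ because $0<i<k$.

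Finally, uniqueness is immediate from \cref{lem:esd-esdp-conn-inner-anod-horn-pos}~\eqref{item:lem-esd-esdp-conn-inner-anod-horn-pos:horn-pos}: any horn pair with $\sigma$ as its core has horn position equal to the anticipated horn position $i$ of $\sigma$, and therefore its periphery is forced to be the $i$-th facet of $\sigma$, namely $\tau$. I do not expect a genuine obstacle here; the one point that calls for a moment's attention is the surjectivity of $u_\tau$, but it reduces precisely to the observation that the two entries deleted when passing from $\sigma$ to its $i$-th facet recur at other indices — which is exactly the content of the two hypotheses on $\sigma$.
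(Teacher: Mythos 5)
Your proposal is correct and follows essentially the same route as the paper: take $\tau$ to be the $i$-th facet, read off uniqueness from \cref{lem:esd-esdp-conn-inner-anod-horn-pos}~\eqref{item:lem-esd-esdp-conn-inner-anod-horn-pos:horn-pos}, and verify that $(\sigma,\tau)$ is a horn pair by checking $i<k$, the strict inequality, that $\tau$ is non-degenerate, and that $u_\tau$ is surjective with $u_\tau(k-1,0)>u_\tau(0,1)$. The two minor variations from the paper's writeup — you check non-degeneracy of $\tau$ directly via transitivity instead of invoking that $Y$ is the nerve of a poset, and you derive $i<k$ via the degeneracy criterion rather than by exhibiting a circular chain of inequalities — are both valid and equivalent in substance.
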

\begin{proof}
  Consider the $i$-th face $\tau$ of $\sigma$. By the definition of a horn position and
  \cref{lem:esd-esdp-conn-inner-anod-horn-pos}~\eqref{item:lem-esd-esdp-conn-inner-anod-horn-pos:horn-pos},
  the uniqueness is clear: if $\sigma$ is the core of a horn pair, its periphery can only be $\tau$.

  It remains to demonstrate that $p=(\sigma, \tau)$ is a horn pair, for which it suffices
  to show that $i<k$, that $u_\sigma(i-1,1) < u_\sigma(i,1)$, and that $\tau\in\mathcal{O}$.
  Of these three the second is immediate from the definition of $i$. The first claim $i<k$ follows from the fact that
  $u_\sigma$ is order-preserving: if we were to have $i=k$, then the following contradictory
  inequalities would hold:
  \[
    u_\sigma(i-1,1) < u_\sigma(i,1) = u_\sigma(k,0) = u_\sigma(i,0) = u_\sigma(i-1,0) \leq u_\sigma(i-1,1).
  \]

  We shall now check that $\tau$ is a non-degenerate outsider. The non-degeneracy of $\tau$ is trivial,
  since $\tau$ is a face of the non-degenerate $\sigma$ and $Y$ is the nerve of a poset.
  To show that $\tau$ is an $(k-1)$-outsider, we remember
  \cref{lem:esd-esdp-conn-inner-anod-outsider-cond}~\eqref{item:lem-esd-esdp-conn-inner-anod-outsider-cond:out}.
  Since $0 < i < k$, we have the following inequality:
  \[
    u_\tau(k-1,0) = u_\sigma(k,0) > u_\sigma(0,1) = u_\tau(0,1).
  \]
  Also, $u_\tau$ is surjective, since we have:
  \[
    \operatorname{Im} u_\tau 
    = \operatorname{Im} \mleft(\mleft. u_\sigma
      \mright|_{(\deltaBra{k}\times\deltaBra{1})\setminus(\set{i}\times\deltaBra{1})}
      \mright)
    = \operatorname{Im} u_\sigma
    = \deltaBra{n}.
  \]
  Here, the second equality follows from $u_\sigma(i-1,0) = u_\sigma(i,0)$ and $u_\sigma(i,1) = u_\sigma(k,0)$.
  Therefore, $\tau$ is a non-degenerate $(k-1)$-outsider, and the claim has now been proven.
\end{proof}

\begin{lemma}\label{lem:esd-esdp-conn-inner-anod-horn-periph-unique}
  We apply \cref{not:esd-esdp-conn-inner-anod} here. Let $\tau$ be
  a non-degenerate $k$-outsider, and let $0<i\le k$ be the anticipated horn position of $\tau$.
  Assume that we have either $u_\tau(i-1,0) < u_\tau(i,0)$ or $u_\tau(i,1) > u_\tau(k,0)$.
  Then $\tau$ is the periphery of a unique horn pair.
\end{lemma}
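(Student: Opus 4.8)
The plan is to treat uniqueness first, which is essentially immediate, and then to produce the horn pair by an explicit ``row insertion.'' Write $k\coloneqq\dim\tau$, and recall that a $(k+1)$-simplex $\sigma$ of $Y$ whose $i$-th facet is $\tau$ satisfies $u_\sigma(j,\epsilon)=u_\tau(j,\epsilon)$ for $j<i$ and $u_\sigma(j,\epsilon)=u_\tau(j-1,\epsilon)$ for $j>i$, so in particular $u_\sigma(k+1,0)=u_\tau(k,0)$.

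For uniqueness, suppose $(\sigma,\tau)$ is a horn pair with periphery $\tau$. By \cref{lem:esd-esdp-conn-inner-anod-horn-pos}~\eqref{item:lem-esd-esdp-conn-inner-anod-horn-pos:horn-pos} its horn position equals the anticipated horn position $i$ of $\tau$, hence $\tau$ is the $i$-th facet of $\sigma$; then conditions~(1) and~(2) in the definition of a horn pair force the $i$-th row of $u_\sigma$, namely $u_\sigma(i,0)=u_\sigma(i-1,0)=u_\tau(i-1,0)$ and $u_\sigma(i,1)=u_\sigma(k+1,0)=u_\tau(k,0)$. Thus $\sigma$ is uniquely determined by $\tau$.

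For existence I would define $u_\sigma\colon\deltaBra{k+1}\times\deltaBra{1}\to\deltaBra{n}$ by the above reindexing formulas together with $u_\sigma(i,0)\coloneqq u_\tau(i-1,0)$ and $u_\sigma(i,1)\coloneqq u_\tau(k,0)$, and let $\sigma$ be the corresponding simplex of $Y$. Order-preservation reduces to the comparisons through the inserted $i$-th row: $u_\tau(i-1,0)\le u_\tau(i,0)$, $u_\tau(i-1,0)\le u_\tau(k,0)$, and $u_\tau(k,0)\le u_\tau(i,1)$ are all instances of monotonicity of $u_\tau$ (using $i-1<k$ and that $i$ lies outside the down-set below), while $u_\tau(i-1,1)<u_\tau(k,0)$ holds because $j\mapsto u_\tau(j,1)$ is non-decreasing, so $\{\,j : u_\tau(j,1)<u_\tau(k,0)\,\}$ is the down-set $\{0,\dots,i-1\}$ of size $i$; this also yields the strict step $u_\tau(i-1,1)<u_\tau(k,0)\le u_\tau(i,1)$, reused below.

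It then remains to check that $\sigma$ is a non-degenerate outsider and that $(\sigma,\tau)$ is a horn pair. Surjectivity of $u_\sigma$ is clear since $\operatorname{Im}u_\sigma=\operatorname{Im}u_\tau=\deltaBra{n}$ (the new values already lie in $\operatorname{Im}u_\tau$, and $\tau$ is an outsider), and $u_\sigma(k+1,0)=u_\tau(k,0)>u_\tau(0,1)=u_\sigma(0,1)$ again because $\tau$ is an outsider; so by \cref{lem:esd-esdp-conn-inner-anod-outsider-cond}~\eqref{item:lem-esd-esdp-conn-inner-anod-outsider-cond:out} only non-degeneracy is left, for which I would use \cref{lem:esd-esdp-conn-inner-anod-outsider-cond}~\eqref{item:lem-esd-esdp-conn-inner-anod-outsider-cond:nd}. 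At every consecutive pair of rows other than $(i,i+1)$, the required strict inequality is inherited from $\tau$ together with the step $(u_\sigma(i-1,0),u_\sigma(i-1,1))<(u_\sigma(i,0),u_\sigma(i,1))$ noted above; at the pair $(i,i+1)$ one needs $(u_\tau(i-1,0),u_\tau(k,0))<(u_\tau(i,0),u_\tau(i,1))$ strictly in the product order, which is exactly the standing hypothesis $u_\tau(i-1,0)<u_\tau(i,0)$ or $u_\tau(i,1)>u_\tau(k,0)$. Conditions~(1)--(3) of a horn pair then hold for $(\sigma,\tau)$ by construction, with $0<i<k+1$. The one delicate point, and the bulk of the work, is the boundary case $i=k$, where the ``$(i{+}1)$-th'' row of $\sigma$ is its last row, coming from the $k$-th row of $\tau$ rather than from an earlier row; the same inequalities go through verbatim, so I expect routine index bookkeeping — not any conceptual obstacle — to be the main cost of this proof.
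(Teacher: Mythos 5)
Your proof is correct and mirrors the paper's argument: uniqueness via the forced horn position, existence via row-insertion at position $i$, with order-preservation from the anticipated-horn-position bound $u_\tau(i,1)\ge u_\tau(k,0)$ and non-degeneracy at the pair $(i,i+1)$ supplied exactly by the standing hypothesis. The boundary case $i=k$ you flag as delicate is actually handled uniformly by the same reindexing, as the paper's proof implicitly relies on, so no extra bookkeeping is needed.
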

\begin{proof}
  We begin with the uniqueness. Let $\sigma$ be any non-degenerate $(k+1)$-outsider such that
  $p=(\sigma, \tau)$ is a horn pair. The horn position of $(\sigma, \tau)$ is equal to $i$ by
  \cref{lem:esd-esdp-conn-inner-anod-horn-pos}~\eqref{item:lem-esd-esdp-conn-inner-anod-horn-pos:horn-pos}.
  Since $\tau$ is the $i$-th face of $\sigma$, we have:
  \begin{alignat}{2}
    u_\sigma(j,l) &= u_\tau(j,l) &\quad&\text{for } 0\le j < i, 0\le l\le 1;
    \label{eq:lem-esd-esdp-conn-inner-anod-horn-periph-unique:1}\\
    u_\sigma(j,l) &= u_\tau(j-1,l) &\quad&\text{for } i< j \le k+1, 0\le l\le 1.
    \label{eq:lem-esd-esdp-conn-inner-anod-horn-periph-unique:2}
  \end{alignat}
  By the definition of a horn pair, we also have:
  \begin{alignat}{2}
    u_\sigma(i,0) &= u_\sigma(i-1,0) &&= u_\tau(i-1,0);
    \label{eq:lem-esd-esdp-conn-inner-anod-horn-periph-unique:3}\\
    u_\sigma(i,1) &= u_\sigma(k+1,0) &&= u_\tau(k,0).
    \label{eq:lem-esd-esdp-conn-inner-anod-horn-periph-unique:4}
  \end{alignat}
  \Cref{eq:lem-esd-esdp-conn-inner-anod-horn-periph-unique:1,eq:lem-esd-esdp-conn-inner-anod-horn-periph-unique:2,%
  eq:lem-esd-esdp-conn-inner-anod-horn-periph-unique:3,eq:lem-esd-esdp-conn-inner-anod-horn-periph-unique:4}
  together uniquely determine the map $u_\sigma$, and hence the simplex $\sigma$. This is the desired uniqueness.

  We proceed to the existence of the horn pair.
  By the definition of the anticipated horn position, we have
  $u_\tau(i,1) \ge u_\tau(k,0)$. Therefore,
  \cref{eq:lem-esd-esdp-conn-inner-anod-horn-periph-unique:1,eq:lem-esd-esdp-conn-inner-anod-horn-periph-unique:2,%
  eq:lem-esd-esdp-conn-inner-anod-horn-periph-unique:3,eq:lem-esd-esdp-conn-inner-anod-horn-periph-unique:4}
  well-define an order-preserving map $u_\sigma\colon\deltaBra{k+1}\times\deltaBra{1}\to\deltaBra{n}$,
  and hence a $(k+1)$-simplex $\sigma$ of $Y$. It suffices to show that $p=(\sigma, \tau)$ is a horn pair.
  By the definition of $i$ and $\sigma$, we see that $u_\sigma(i-1,0) = u_\sigma(i,0)$;
  that $u_\sigma(i-1,1)<u_\sigma(i,1) = u_\sigma(k+1,0)$; and that $\tau$ is the $i$-th face of $\sigma$.

  The only thing left to check is that $\sigma$ is a non-degenerate outsider. We should be reminded of
  \labelcref{item:lem-esd-esdp-conn-inner-anod-outsider-cond:nd,item:lem-esd-esdp-conn-inner-anod-outsider-cond:out}
  from \cref{lem:esd-esdp-conn-inner-anod-outsider-cond}. The surjectivity of $u_\sigma$ is immediate
  from that of $u_\tau$. The inequality $u_\sigma(0,1) < u_\sigma(k+1,0)$ is also clear from the
  corresponding inequality $u_\tau(0,1) < u_\tau(k,0)$. Since $\tau$ is non-degenerate and we have
  $u_\sigma(i-1,1) < u_\sigma(i,1)$, the only non-trivial inequality needed for the non-degeneracy
  of $\sigma$ is $(u_\sigma(i,0), u_\sigma(i,1)) < (u_\sigma(i+1,0), u_\sigma(i+1,1))$ with respect
  to the product order.
  There are two cases to consider: $u_\tau(i-1,0) < u_\tau(i,0)$ and $u_\tau(i,1) > u_\tau(k,0)$.
  In the former case, we compute:
  \[
    u_\sigma(i,0) = u_\tau(i-1,0) < u_\tau(i,0) = u_\sigma(i+1,0).
  \]
  In the latter case, we deduce:
  \[
    u_\sigma(i,1) = u_\tau(k,0) < u_\tau(i,1) = u_\sigma(i+1,1).
  \]
  Either way, $\sigma$ is a non-degenerate outsider, and the claim has been proven.
\end{proof}

\begin{corollary}\label{cor:esd-esdp-conn-inner-anod-horn-pair-unique}
  Every non-degenerate outsider is contained in a unique horn pair,
  either as its core or its periphery.
\end{corollary}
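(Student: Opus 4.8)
The plan is to run a simple dichotomy on a single non-degenerate outsider, governed by its anticipated horn position, and to reduce both the existence and the uniqueness of the containing horn pair to the three lemmas just established.

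First I would fix a non-degenerate $k$-outsider $\sigma$ and let $i$ be its anticipated horn position; by \cref{lem:esd-esdp-conn-inner-anod-horn-pos}~\eqref{item:lem-esd-esdp-conn-inner-anod-horn-pos:range} we have $0<i\le k$. Since $u_\sigma$ is order-preserving in its first variable, the indices $j$ with $u_\sigma(j,1)<u_\sigma(k,0)$ are precisely $0,1,\dotsc,i-1$; in particular $u_\sigma(i-1,0)\le u_\sigma(i,0)$ holds trivially and $u_\sigma(i,1)\ge u_\sigma(k,0)$ holds by the very definition of $i$. Hence there is a clean dichotomy: either (a) $u_\sigma(i-1,0)=u_\sigma(i,0)$ and $u_\sigma(i,1)=u_\sigma(k,0)$, or else (b) $u_\sigma(i-1,0)<u_\sigma(i,0)$ or $u_\sigma(i,1)>u_\sigma(k,0)$. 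In case (a), \cref{lem:esd-esdp-conn-inner-anod-horn-core-unique} shows that $\sigma$ is the core of a unique horn pair; in case (b), \cref{lem:esd-esdp-conn-inner-anod-horn-periph-unique}, applied with $\tau\coloneqq\sigma$, shows that $\sigma$ is the periphery of a unique horn pair. This already yields the existence part of the corollary, together with uniqueness within each of the two cases.

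For uniqueness across the core/periphery distinction, I would invoke \cref{lem:esd-esdp-conn-inner-anod-core-not-periphery}: a horn core is never a horn periphery, so $\sigma$ cannot be simultaneously the core of one horn pair and the periphery of another. Combining this with the two per-case uniqueness statements above, the horn pair containing $\sigma$ is unique, which completes the proof.

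I do not expect any real obstacle here, as essentially all of the work has been front-loaded into the preceding lemmas. The only point that deserves a line of explicit care is checking that alternatives (a) and (b) are genuinely exhaustive, which rests precisely on the order-preservation of $u_\sigma$ together with the definition of the anticipated horn position as the cardinality of $\set{0\le j\le k}[u_\sigma(j,1)<u_\sigma(k,0)]$.
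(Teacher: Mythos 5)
Your proof is correct and follows essentially the same route as the paper's: the case analysis on whether both equalities $u_\sigma(i-1,0)=u_\sigma(i,0)$ and $u_\sigma(i,1)=u_\sigma(k,0)$ hold, dispatching the two alternatives to \cref{lem:esd-esdp-conn-inner-anod-horn-core-unique} and \cref{lem:esd-esdp-conn-inner-anod-horn-periph-unique} respectively, and using \cref{lem:esd-esdp-conn-inner-anod-core-not-periphery} for cross-case uniqueness. Your explicit check that the two alternatives are exhaustive (via the order-preservation of $u_\sigma$ and the definition of anticipated horn position) is a correct and helpful elaboration of what the paper states tersely.
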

\begin{proof}
  This is a direct consequence of
  \cref{lem:esd-esdp-conn-inner-anod-core-not-periphery,lem:esd-esdp-conn-inner-anod-horn-core-unique,%
  lem:esd-esdp-conn-inner-anod-horn-periph-unique}. If $\sigma$ is a non-degenerate outsider and $0<i\le k$
  is its anticipated horn position, we only need to perform a case analysis on whether or not we have both
  $u_\sigma(i-1,0) = u_\sigma(i,0)$ and $u_\sigma(i,1) = u_\sigma(k,0)$.
\end{proof}

The final lemma before the proof of \cref{lem:esd-esdp-conn-inner-anod} is the following;
we shall define a well-order on the set of horn pairs:

{
\newcommand{\property}{($\star$)}
\begin{lemma}\label{lem:esd-esdp-conn-inner-anod-horn-pair-order}
  There is an irreflexive well-order
  $\prec$ on the finite set $\mathcal{H}$ of horn pairs that satisfies the following property \property:
\begin{itemize}
	\item[\property]  Let $\sigma$ be a non-degenerate outsider, and $\tau$ be a proper 
  face of $\sigma$ of any dimension. Let $p$ be the horn pair
  containing $\sigma$ (see \cref{cor:esd-esdp-conn-inner-anod-horn-pair-unique}).
  Then exactly one of the following holds:
  \begin{enumerate}
    \item\label{item:lem-esd-esdp-conn-inner-anod-horn-pair-order:eq}%
    we have $p=(\sigma, \tau)$;
    \item\label{item:lem-esd-esdp-conn-inner-anod-horn-pair-order:lt}%
    the simplex $\tau$ is contained in a horn pair $q\in\mathcal{H}$ such that $q\prec p$;
    \item\label{item:lem-esd-esdp-conn-inner-anod-horn-pair-order:img}%
    the simplex $\tau$ is in the image of $f$.
  \end{enumerate}
\end{itemize}
\end{lemma}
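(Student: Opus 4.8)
The plan is to produce an explicit irreflexive total order on $\mathcal{H}$ and then check $(\star)$ by hand. Finiteness of $\mathcal{H}$ is immediate, because $\mtsESdp\mtyndSpx\deltaBra{n}=\nerve(\functCat{\deltaBra{1}}{\deltaBra{n}})$ is the nerve of a finite poset and hence has only finitely many non-degenerate simplices; consequently \emph{any} irreflexive total order on $\mathcal{H}$ is a well-order, and only the property $(\star)$ requires thought. I would also note at the outset that the ``exactly one'' clause reduces to ``at least one'': horn cores and horn peripheries are non-degenerate outsiders, hence lie outside the image of $f$, so the third alternative is incompatible with each of the first two; and by \cref{cor:esd-esdp-conn-inner-anod-horn-pair-unique} the horn pair containing $\tau$ is unique, so --- $\prec$ being irreflexive --- if the first alternative $p=(\sigma,\tau)$ holds then no $q\prec p$ contains $\tau$, ruling out the second.

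For the order, I would rank a horn pair by the pair consisting of (the dimension of its core) and (the tuple of first coordinates of the vertices of its core, compared lexicographically). Concretely, writing $u_\sigma\colon\deltaBra{k}\times\deltaBra{1}\to\deltaBra{n}$ for the adjunct of a horn core $\sigma$ of dimension $k$, I set $\mathrm{rk}(\sigma)\coloneqq\bigl(k;\,u_\sigma(0,0),u_\sigma(1,0),\dotsc,u_\sigma(k,0)\bigr)$ and declare one horn pair to strictly precede another when the core of the first has strictly smaller dimension, or the two dimensions agree and the first-coordinate tuple of the first core is lexicographically strictly smaller. Since each horn pair has a unique core and each horn core belongs to a unique horn pair (by the structural results just proved), this is a strict partial order on $\mathcal{H}$; I then refine it to an irreflexive total order $\prec$, automatically a well-order.

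To verify $(\star)$, I would fix a horn pair $p$ containing a non-degenerate outsider $\sigma$ and a proper face $\tau'$ of $\sigma$, and argue by cases. If $\tau'$ is not an outsider, then --- using that every face of a non-degenerate simplex of the nerve of a poset is non-degenerate, and that a non-degenerate simplex of $\mtsESdp\mtyndSpx\deltaBra{n}$ which is not an outsider lies in the image of $f$ by \cref{lem:esd-esdp-conn-inner-anod-outsider-cond} --- we are in the third alternative. Otherwise $\tau'$ is a non-degenerate outsider, lying in a unique horn pair $q$ by \cref{cor:esd-esdp-conn-inner-anod-horn-pair-unique}, and the task is to show that either $q=p$ with $\tau'$ the periphery of $p$, or $q\prec p$. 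If $\dim\tau'\le\dim(\text{core of }p)-2$ --- which in particular happens whenever $\sigma$ is the periphery of $p$ --- then the core of $q$ has dimension at most $\dim\tau'+1<\dim(\text{core of }p)$, so $q\prec p$ by the dimensional part of $\mathrm{rk}$. The remaining case is that $\sigma$ is the core of $p$, of dimension $k$ and with horn position $i$, and $\tau'$ is a facet $d_j\sigma$; if $j=i$ this is the periphery of $p$ and we are in the first alternative, so assume $j\neq i$. Here I would use the explicit formulas from \cref{lem:esd-esdp-conn-inner-anod-horn-core-unique,lem:esd-esdp-conn-inner-anod-horn-periph-unique}: computing the anticipated horn position of $d_j\sigma$ and using the horn-pair identities $u_\sigma(i-1,0)=u_\sigma(i,0)$ and $u_\sigma(i,1)=u_\sigma(k,0)$, one finds that $d_j\sigma$ is itself a horn core for every $j\notin\{i-1,k\}$ (so the core of $q$ has dimension $k-1<k$, and $q\prec p$), that $d_{i-1}\sigma$ is a horn periphery precisely when $u_\sigma(i-2,0)<u_\sigma(i-1,0)$, and that $d_k\sigma$ is a horn periphery precisely when $u_\sigma(k-1,0)<u_\sigma(k,0)$; in each of these last two cases the core of $q$ is obtained from $\sigma$ by an explicit local edit (replacing the $(i-1)$-st vertex by one of strictly smaller first coordinate, respectively deleting the last vertex and reinserting one with first coordinate $u_\sigma(k-1,0)<u_\sigma(k,0)$), so the core of $q$ has dimension $k$ but a lexicographically strictly smaller first-coordinate tuple, and again $q\prec p$.

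The step I expect to be the main obstacle is exactly this facet analysis in the last case: it is a routine but lengthy bookkeeping over the position of $j$ relative to $i$ and the formulas for $u_{d_j\sigma}$, for the anticipated horn position, and for the expansion of a periphery into its core. The crucial point is to pin down that the \emph{only} codimension-one facets of a horn core which can be peripheries are $d_{i-1}\sigma$ and $d_k\sigma$ (together with the precise conditions under which they are), since that is the only place where the lexicographic secondary component of $\mathrm{rk}$ is needed; everywhere else the dimensional component already suffices.
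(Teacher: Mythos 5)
Your proposal is correct, and it follows the same overall strategy as the paper: assign each horn pair a complexity rank via its core, pull back a well-founded order along that rank to obtain a strict partial order on $\mathcal{H}$, and then observe that any linear extension of a finite strict partial order is an irreflexive well-order and that property $(\star)$ survives extension. The only genuine difference is the choice of rank. The paper sends a horn pair $p$ with core $\sigma$ of dimension $k$ and horn position $i$ to the triple $(k,\,u_\sigma(k,0),\,i)\in\metanats^3$ with the lexicographic order, so its secondary invariants are a single number and the horn position; you instead take $(k;\,u_\sigma(0,0),\dots,u_\sigma(k,0))$, i.e.\ the full first-coordinate tuple compared lexicographically. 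Both verifications hinge on the same facet classification of a horn core $\sigma$: for $j\notin\{i-1,i,k\}$ the facet $d_j\sigma$, if it is an outsider, is again a horn core of strictly smaller dimension; $d_i\sigma$ is the periphery of $p$; and $d_{i-1}\sigma$ (resp.\ $d_k\sigma$) is a periphery only when $u_\sigma(i-2,0)<u_\sigma(i-1,0)$ (resp.\ $u_\sigma(k-1,0)<u_\sigma(k,0)$), in which case the core of $q$ is $\sigma$ with a single entry of the first-coordinate tuple replaced by a strictly smaller one (at position $i-1$, or by shifting the tail left) — so your lexicographic comparison succeeds, just as the paper's comparison of $u_\sigma(k,0)$ and of horn positions does. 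I checked the bookkeeping you flag as the likely obstacle, and it goes through; the paper's $3$-coordinate rank is marginally leaner in the $j=i-1$ and $j=k$ cases (it compares one number rather than scanning a tuple), but your rank is equally valid. One small point of hygiene: since distinct horn pairs can have cores with identical first-coordinate tuples, your relation is a strict partial order but not total, exactly as in the paper; you already note this and finish by linear extension, which is the right move.
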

\begin{proof}
  As $Y$ is finite, it is automatic that $\mathcal{H}$ is finite.
  Note that the required property \property\ 
  is preserved under extension of the partial order $\prec$.
  Since any order may be linearly extended and any finite linear order is well-ordered,
  it suffices to define an irreflexive \emph{partial} order $\prec$
  on $\mathcal{H}$ that satisfies the property \property.
  
  Consider the set $\metanats^3=\metanats$ of ordered triples of natural numbers, and
  equip this set with the (irreflexive) lexicographic order $<$, where each coordinate
  $\metanats$ is ordered in the usual way. We define a map $\phi\colon\mathcal{H}\to\metanats^3$
  by sending each horn pair $p=(\sigma, \tau)$ of horn position $i$ to the triple
  $(\dim\sigma,\, u_\sigma(\dim\sigma,0),\, i)$.
  We define the irreflexive partial order $\prec$ on $\mathcal{H}$
  by pulling back the lexicographic order $<$ on $\metanats^3$ under $\phi$.
  
  We shall now show that $\prec$ satisfies the property \property.
  Let $\sigma$ be a non-degenerate outsider, and $\tau$ be a proper face of $\sigma$.
  We first remind the reader that $Y$ is nerve of a poset, so that $\tau$, being a
  face of non-degenerate $\sigma$, is also non-degenerate.
  Also, notice that the conditions
  \labelcref{item:lem-esd-esdp-conn-inner-anod-horn-pair-order:eq,%
  item:lem-esd-esdp-conn-inner-anod-horn-pair-order:lt,%
  item:lem-esd-esdp-conn-inner-anod-horn-pair-order:img}
  in the statement are mutually exclusive; we only need to show that $\sigma$ and $\tau$
  enjoy at least one of the three conditions
  \labelcref{item:lem-esd-esdp-conn-inner-anod-horn-pair-order:eq,%
  item:lem-esd-esdp-conn-inner-anod-horn-pair-order:lt,%
  item:lem-esd-esdp-conn-inner-anod-horn-pair-order:img}.
  
  If $\tau$ is in the image of $f$, then there is nothing to prove;
  hence we may assume that $\tau$ is an outsider.
  Let $p$ and $q$ be the horn pairs containing $\sigma$ and $\tau$, respectively.
  Write $p=(\sigma_0, \sigma_1)$ and $q=(\tau_0, \tau_1)$. If either $\sigma=\sigma_1$ or $\tau=\tau_0$
  holds, then $\dim \tau < \dim \sigma$, $\dim\sigma_0 = \dim\sigma_1 + 1$, and $\dim\tau_0 = \dim\tau_1 + 1$
  imply that $\dim\tau_0 < \dim\sigma_0$, and hence that
  $q\prec p$. Therefore, we may assume that $\sigma=\sigma_0$, i.e., $\sigma$ is the core of $p$,
  and that $\tau=\tau_1$, i.e., $\tau$ is the periphery of $q$.
  
  Let $k=\dim\sigma$, and let $0<i<k$ be the horn position of $p$. If $\dim\tau<k-1$, then
  we have $\dim\tau_0 < \dim\sigma_0$, and hence $q\prec p$. Therefore, we may assume that
  $\dim\tau=k-1$, which gives $\dim \tau_0 = k = \dim\sigma_0$.
  Say that the $\tau$ is the $j$-th facet of $\sigma$. Since $\tau$ is assumed to
  be a horn periphery,
  by \cref{lem:esd-esdp-conn-inner-anod-horn-core-unique,lem:esd-esdp-conn-inner-anod-core-not-periphery},
  we only have three cases to consider: $j=i-1$, $j=i$, and $j=k$.
  If $j=i$, then we have $p=q=(\sigma, \tau)$, and we are done.
  
  Next, consider the case $j=i-1$. In this case, we have:
  \[
  u_{\tau_0}(k,0) = u_{\tau}(k-1,0) = u_{\sigma}(k,0).
  \]
  The horn position of $q$, equal to the anticipated horn position $i-1$ of $\tau$,
  is smaller than $i$. Therefore we have $q\prec p$.
  
  Finally, consider the case $j=k$. In order for $\tau$ to be a periphery,
  $u_{\tau_0}(k,0) = u_\tau(k-1,0) = u_\sigma(k-1,0)$ must be strictly less than
  $u_\sigma(k,0)$. In this case, we have $q\prec p$ by definition.
  
  We have analyzed all possible cases, and the claim has been proven.
\end{proof}
}

Now we have paired the simplices out of the image of $f$ so that they may be added
via inner horn fillings, so we may now proceed to the proof of the lemma in question.

\begin{proof}[Proof of \cref{lem:esd-esdp-conn-inner-anod}]
  \label{proof:lem:esd-esdp-conn-inner-anod}
  We employ \cref{not:esd-esdp-conn-inner-anod} throughout this proof.
  We shall show that the map $f\colon X\hookrightarrow Y$ is inner anodyne by
  decomposing $f$ into the composition of a finite sequence of inner horn fillings.
  Here, an inner horn filling stands for the pushout along any simplicial map of any
  inner horn inclusion: $\mtsHorn{k}{l} \hookrightarrow \deltaBra{k}$ with
  $0<l<k$.

  Let $\prec$ be an irreflexive well-order on the finite set $\mathcal{H}$ 
  that has been asserted to exist in
  \cref{lem:esd-esdp-conn-inner-anod-horn-pair-order}. Write $\mathcal{H}$ as
  \[
    \mathcal{H} = \set{p_0 \prec p_1 \prec \dotsb \prec p_{m-1}}.
  \]
  Write $p_i=(\sigma_i, \tau_i)$. For any $0\le i \le m$, we define $X_i \subseteq Y$
  as the smallest subcomplex of $Y$ that contains the image of $f$ and the simplices
  $\sigma_j$ for $j < i$. Then there is the following filtration:
  \[
    \operatorname{Im} f 
    = X_0 \subseteq X_1 \subseteq \dotsb \subseteq X_{m-1} \subseteq X_m
  \]
  Notice that $X_m = Y$, because every outsider is a degeneracy of a non-degenerate outsider, and
  $X_m$ contains all non-degenerate outsiders.

  It suffices to show that the inclusion $X_{j-1}\hookrightarrow X_j$ is an inner horn filling
  for every $0 < j \le m$. Write $p_{j-1} = (\sigma_{j-1}, \tau_{j-1})$. Let $k=\dim\sigma_{j-1}$, and let
  $i$ be the horn position of $p_{j-1}$. Then, by the property of the well-order $\prec$ that
  \cref{lem:esd-esdp-conn-inner-anod-horn-pair-order} guarantees, we have the following pushout
  square:
  \[
    \begin{tikzcd}
      \mtsHorn{k}{i} \arrow[d, hook] \arrow[r, hook]
      & \mtyndSpx\deltaBra{k} \arrow[d, hook, "\sigma_{j-1}"] \\
      X_{j-1} \arrow[r, hook] & X_j
    \end{tikzcd}
  \]
  This is exactly what we wanted to show.
\end{proof}

\subsection{Properties of the transformations 3: another inner anodyne map}
\label{subsec:down-last-infty-loc-shape-inner-anod2}

There is another natural transformation that we need to show are pointwise inner anodyne:

\begin{proposition}\label{prop:esdi-esdpi-conn-inner-anod}
  Let $X\in\mtcSSet$ be a simplicial set. Consider the following commutative
  diagram:
  \[
    \begin{tikzcd}
      \mtsESd X \arrow[r, hookrightarrow] \arrow[d, hookrightarrow] &
      \mtsESdp X \arrow[d, hookrightarrow] \\
      \mtsESdI X \arrow[r, hookrightarrow] &
      \mtsESdpI X
    \end{tikzcd}
  \]
  Here, the vertical maps are from \cref{def:dcp-esd-esdp-incl-to-i}, and the
  horizontal maps are from \cref{def:esd-esdp-connecting-inj}.
  Then the corresponding simplicial map
  \[
    (\mtsESdp X) \cup_{\mtsESd X} (\mtsESdI X) \to \mtsESdpI X
  \]
  out of the pushout is inner anodyne.
\end{proposition}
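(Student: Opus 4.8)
The plan is to imitate, step for step, the strategy of \cref{subsec:down-last-infty-loc-shape-inner-anod}: repackage the square as a single natural transformation, reduce to representables via \cref{lem:cospx-kan-ext-reedy-nat}, and then run an explicit horn-pair decomposition. Concretely, let $H\colon\mtcSimpx\to\mtcSSet$ be the cosimplicial simplicial set
\[
H(\deltaBra{n}) \coloneqq \mtsESdp\mtyndSpx\deltaBra{n}\cup_{\mtsESd\mtyndSpx\deltaBra{n}}\mtsESdI\mtyndSpx\deltaBra{n},
\]
the pushout of the restrictions along $\mtyndSpx$ of the maps from \cref{def:esd-esdp-connecting-inj,def:dcp-esd-esdp-incl-to-i}, and let $\hat H\colon\mtcSSet\to\mtcSSet$ be its left Kan extension along $\mtyndSpx$. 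Since left Kan extension along the Yoneda embedding preserves colimits, $\hat H(X) = \mtsESdp X\cup_{\mtsESd X}\mtsESdI X$ for every $X$, and likewise $\hat H(\mtBdrySpx\deltaBra{n}) = \mtsESdp\mtBdrySpx\deltaBra{n}\cup_{\mtsESd\mtBdrySpx\deltaBra{n}}\mtsESdI\mtBdrySpx\deltaBra{n}$; also $\widehat{\mtsESdpI\circ\mtyndSpx}=\mtsESdpI$ by construction. Thus the map in the Proposition is $\hat\theta_X$ for the natural transformation $\theta\colon H\Rightarrow\mtsESdpI\circ\mtyndSpx$. Taking $I$ to be the class of inner anodyne maps (a saturated class, hence closed under pushouts and transfinite compositions), \cref{lem:cospx-kan-ext-reedy-nat} applied to the monomorphism $\emptyset\hookrightarrow X$ shows that it suffices to prove $\theta$ is Reedy in $I$, i.e.\ that for each $\deltaBra{n}$ the relative latching map
\[
H(\deltaBra{n})\cup_{\hat H(\mtBdrySpx\deltaBra{n})}\mtsESdpI\mtBdrySpx\deltaBra{n}\to\mtsESdpI\mtyndSpx\deltaBra{n}
\]
is inner anodyne. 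This is the exact analogue of \cref{lem:esd-esdp-conn-inner-anod}, and I would isolate it as a lemma and prove it combinatorially.

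For the combinatorial lemma: $\mtsESdpI\mtyndSpx\deltaBra{n}=\nerve(\mtpESdpI\deltaBra{n})$ is the nerve of a poset, so its non-degenerate $k$-simplices are the strictly increasing chains $(0,f_0)<\dots<(0,f_{l-1})<(1,x_l)<\dots<(1,x_k)$ in $\mtpESdpI\deltaBra{n}$, subject to $\max f_{l-1}\le x_l$. Unwinding \cref{def:esd-esdp-connecting-inj,def:dcp-esd-esdp-incl-to-i}, a routine computation (the analogue of \cref{lem:esd-esdp-conn-inner-anod-outsider-cond}) identifies the simplices in the domain of the relative latching map as those chains that lie entirely in the $\set{0}$-part, or whose $\set{0}$-part satisfies the décalage constraint $f_{l-1}(0)\le f_0(1)$, or whose support $\bigcup_i\set{f_i(0),f_i(1)}\cup\set{x_j}$ is a proper subset of $\deltaBra{n}$; hence the ``outsiders'' to be added are precisely the strictly increasing chains with non-empty $\set{0}$-part, non-empty $\set{1}$-part, \emph{violated} constraint $f_{l-1}(0)>f_0(1)$, and full support. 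I would then define horn pairs $(\sigma,\tau)$ among outsiders — $\tau$ an inner facet of $\sigma$ at a canonically determined ``anticipated horn position'' $i$, which now lives inside the $\set{0}$-part and is the number of $j\in\set{0,\dots,l-1}$ with $f_j(1)<f_{l-1}(0)$, the $\set{1}$-part being carried along unchanged — and reprove, mutatis mutandis, the structural results from \cref{lem:esd-esdp-conn-inner-anod-horn-pos} through \cref{cor:esd-esdp-conn-inner-anod-horn-pair-unique}: that the anticipated horn position is an inner index of $\sigma$, that a horn core is never a periphery, and that every outsider lies in exactly one horn pair. Then, as in \cref{lem:esd-esdp-conn-inner-anod-horn-pair-order}, I would well-order the horn pairs by the triple $(\dim\sigma,\ f_{l-1}(0),\ i)$ lexicographically and verify the property $(\star)$, and finally, as on \cpageref{proof:lem:esd-esdp-conn-inner-anod}, express the relative latching map as a finite composite of pushouts of inner horn inclusions, one per horn pair in increasing order.

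The main obstacle is the horn-pair bookkeeping in this last step: the outsiders here carry both a $\set{0}$-part living in $\functCat{\deltaBra{1}}{\deltaBra{n}}$, which behaves like the outsiders of \cref{subsec:down-last-infty-loc-shape-inner-anod}, and an extra $\set{1}$-part living in $\deltaBra{n}$, so one must verify that the $\set{1}$-part genuinely ``comes along for the ride'' — that the anticipated horn position always lands strictly inside the $\set{0}$-part, that the core/periphery dichotomy (now keyed on whether $f_{i-1}(0)=f_i(0)$ and $f_i(1)=f_{l-1}(0)$, using the \emph{last $\set{0}$-vertex} $f_{l-1}$ rather than the last vertex $x_k$ of the whole chain) still partitions the outsiders cleanly, and that full support is preserved when passing between a core and its periphery. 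I expect this to be longer than \cref{subsec:down-last-infty-loc-shape-inner-anod} but to introduce no new conceptual difficulty. The only plausible shortcut worth checking first is whether $\mtsESdpI X$ is naturally isomorphic to the simplicial mapping cylinder $\mtsESdp X\times\mtyndSpx\deltaBra{1}\cup_{\mtsESdp X\times\set{1}}X$ of the canonical ``$\max$'' map $\mtsESdp X\to X$, in which case the Proposition would follow from \cref{prop:esd-esdp-conn-inner-anod} by a Leibniz-product argument against $\set{1}\hookrightarrow\mtyndSpx\deltaBra{1}$; but this reduction hinges on the nerve functor preserving the categorical pushout that defines that cylinder, which is not obviously available since $\max\colon\functCat{\deltaBra{1}}{\deltaBra{n}}\to\deltaBra{n}$ is not injective on objects, so I would not rely on it.
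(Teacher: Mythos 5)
Your proposal is correct and takes essentially the same route as the paper: reduce via \cref{lem:cospx-kan-ext-reedy-nat} to a statement about representables, isolate a lemma about the colimit of the cube over $\deltaBra{n}$ (this is the paper's \cref{lem:esdi-esdpi-conn-inner-anod}), and prove that lemma by pairing the missing non-degenerate simplices into ``horn pairs'' and filling inner horns in a well-order. Your description of the outsiders (non-empty $\set{0}$-part, non-empty $\set{1}$-part, the décalage violation $f_{l-1}(0)>f_0(1)$, full support) matches the paper's characterization once one checks that ``full support'' together with non-degeneracy is equivalent to the paper's ``$u_{\sigma_0}$ surjective and the first part straight.'' One notable difference: the paper does not re-prove the combinatorial lemmas ``mutatis mutandis'' but instead defines an I-horn pair in terms of the ordinary horn pair of the zeroth parts, so that the uniqueness of horn pairs follows directly from \cref{cor:esd-esdp-conn-inner-anod-horn-pair-unique}; this reduction is cleaner than redoing the case analysis. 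Correspondingly, the paper's well-order is $(\dim\sigma,\, p_0)$ with the ordinal sum $\mathcal{H}^0\star\cdots\star\mathcal{H}^n$ on the second coordinate, which keeps the switch value $n'$ in the ordering and makes the assignment $\psi$ manifestly injective. Your coarser triple $(\dim\sigma,\, f_{l-1}(0),\, i)$ is not injective on I-horn pairs, so the verification of $(\star)$ requires more care in the sub-case where the $k'$-th facet has lower switch value but $f_{l-2}(0)=f_{l-1}(0)$; that sub-case, however, forces the facet's zeroth part to again be a horn core, so its horn pair drops in dimension and $(\star)$ still holds. You are also right to distrust the mapping-cylinder shortcut: the paper does not use it, and since $\max\colon\functCat{\deltaBra{1}}{\deltaBra{n}}\to\deltaBra{n}$ is not injective, the nerve would not carry that poset pushout to a pushout of simplicial sets. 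One small point not mentioned in the proposal: the paper separately verifies (\cref{lem:esdi-esdpi-conn-inner-anod-pushout-inj}) that the colimit of the cube maps monomorphically into $\mtsESdpI\mtyndSpx\deltaBra{n}$, which is needed before speaking of ``outsiders''; this should be added, though it is routine.
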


Before providing the proof of this proposition, we state a simple corollary:

\begin{corollary}\label{cor:esd-esdp-conn-pushout-inner-anod}
  Let $f\colon X\to Y$ be any map of simplicial sets. 
  Consider the following morphism of spans of simplicial sets:
  \[
  \begin{tikzcd}
    \mtsESd Y \arrow[d, hook]
    & \mtsESd X \arrow[d, hook] \arrow[l, "f"'] \arrow[r, hook]
    & \mtsESdI X \arrow[d, hook] \\
    \mtsESdp Y
    & \mtsESdp X \arrow[l, "f"] \arrow[r, hook]
    & \mtsESdpI X
  \end{tikzcd}
  \]
  Then, the following induced map of pushouts is inner anodyne:
  \[
  (\mtsESd Y)\cup_{\mtsESd X}(\mtsESdI X)
  \to (\mtsESdp Y) \cup_{\mtsESdp X} (\mtsESdpI X).
  \]
\end{corollary}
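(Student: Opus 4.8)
The plan is to invoke \cref{lem:wfs-pushout-left-class} for the weak factorization system $(L,R)$ on the bicomplete category $\mtcSSet$ in which $L$ is the class of inner anodyne maps and $R$ the class of inner fibrations. That $(L,R)$ is indeed a weak factorization system on $\mtcSSet$ is well known: the inner horn inclusions $\mtsHorn{k}{l}\hookrightarrow\mtyndSpx\deltaBra{k}$ with $0<l<k$ are inner anodyne, so by the small object argument every simplicial map factors as an inner anodyne map followed by an inner fibration, and the lifting characterizations are immediate from the definitions of inner fibration and inner anodyne morphism. (This is the situation alluded to right after the definition of weak factorization systems in \cref{sec:cat-prel-simplicial-quasicat}.)

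I would then apply \cref{lem:wfs-pushout-left-class} to the commutative $2\times 3$ diagram displayed in the statement, taking, in the notation of that lemma, $a=\mtsESd Y$, $b=\mtsESd X$, $c=\mtsESdI X$, $x=\mtsESdp Y$, $y=\mtsESdp X$, and $z=\mtsESdpI X$. Commutativity of this diagram is exactly the content of the morphism of spans in the statement: the square involving $f$ is the naturality of the transformation ${\mtsESd}\Rightarrow{\mtsESdp}$ constructed in \cref{def:esd-esdp-connecting-inj}, and the remaining square is the one recorded in \cref{prop:esdi-esdpi-conn-inner-anod} (cf.\ also \cref{lem:dcp-esd-esdp-i-comm}). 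The three maps that the lemma requires to lie in $L$ are $a\to x$ and $b\to y$, namely $\mtsESd Y\to\mtsESdp Y$ and $\mtsESd X\to\mtsESdp X$, which are inner anodyne by \cref{prop:esd-esdp-conn-inner-anod}; and $y\cup_b c\to z$, namely $(\mtsESdp X)\cup_{\mtsESd X}(\mtsESdI X)\to\mtsESdpI X$, which is inner anodyne by \cref{prop:esdi-esdpi-conn-inner-anod}. Since $a\cup_b c=(\mtsESd Y)\cup_{\mtsESd X}(\mtsESdI X)$ and $x\cup_y z=(\mtsESdp Y)\cup_{\mtsESdp X}(\mtsESdpI X)$, the conclusion of \cref{lem:wfs-pushout-left-class} is precisely the assertion of the corollary.

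This argument is purely formal once \cref{prop:esd-esdp-conn-inner-anod,prop:esdi-esdpi-conn-inner-anod} are in hand; it is the inner-anodyne counterpart of \cref{cor:dcp-esd-conn-surj-pushout-univ-loc}, with \cref{lem:wfs-pushout-left-class} playing the role that \cite[\href{https://kerodon.net/tag/02MA}{Proposition 02MA}]{kerodon} plays there. I do not expect any genuine obstacle; the only point that deserves a line of care is confirming that the morphism of spans in the statement is literally an instance of the $2\times 3$ diagram of \cref{lem:wfs-pushout-left-class} and that the two pushouts formed there are the ones appearing in the statement, both of which are clear from the constructions of \cref{subsec:down-last-infty-loc-shape-transf}.
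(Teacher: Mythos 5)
Your proposal is correct and follows essentially the same route the paper takes: the paper's terse proof of \cref{cor:esd-esdp-conn-pushout-inner-anod} asserts that the morphism of spans is ``Reedy inner anodyne,'' which is exactly the hypothesis of \cref{lem:wfs-pushout-left-class} that you spell out (with the latching condition at the two outer objects supplied by \cref{prop:esd-esdp-conn-inner-anod} and the latching condition at the right-hand object by \cref{prop:esdi-esdpi-conn-inner-anod}). Your write-up is if anything slightly more careful than the paper's, which cites only \cref{prop:esdi-esdpi-conn-inner-anod} even though \cref{prop:esd-esdp-conn-inner-anod} is also needed for the Reedy condition.
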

\begin{proof}[Proof assuming \cref{prop:esdi-esdpi-conn-inner-anod}]
  The previous \cref{prop:esdi-esdpi-conn-inner-anod} states that the
  morphism of the spans in question is Reedy inner anodyne, which immediately implies
  the claim.
\end{proof}

The proof of \cref{prop:esdi-esdpi-conn-inner-anod} will be conducted similarly to that of
\cref{prop:esd-esdp-conn-inner-anod}; it will be simple with the presence of
the following lemma:

\begin{lemma}\label{lem:esdi-esdpi-conn-inner-anod}
  Let $\deltaBra{n}\in\mtcSimpx$ be a simplex. Consider the following commutative cube:
  \begin{equation}\label{eq:lem-esdi-esdpi-conn-inner-anod:cube}
    \begin{tikzcd}[column sep=small, row sep=small]
      & \mtsESdI\mtBdrySpx\deltaBra{n} \arrow[hook, rr] \arrow[hook, dd]
      && \mtsESdI\mtyndSpx\deltaBra{n} \arrow[hook, dd]
      \\
      \mtsESd\mtBdrySpx\deltaBra{n} \arrow[hook, dd] \arrow[hook, rr, crossing over] \arrow[hook, ru]
      && \mtsESd\mtyndSpx\deltaBra{n} \arrow[hook, ru]
      &\\
      & \mtsESdpI\mtBdrySpx\deltaBra{n} \arrow[hook, rr]
      && \mtsESdpI\mtyndSpx\deltaBra{n}
      \\
      \mtsESdp\mtBdrySpx\deltaBra{n} \arrow[hook, rr] \arrow[hook, ru]
      && \mtsESd\mtyndSpx\deltaBra{n} \arrow[hook, ru] \arrow[hook, from=uu, crossing over]
      &
    \end{tikzcd}
  \end{equation}
  Let $X$ denote the colimit of the cube except $Y\coloneqq\mtsESdpI\mtyndSpx\deltaBra{n}$.
  Then the canonical simplicial map $X\to Y$ induced by the cube is inner anodyne.
\end{lemma}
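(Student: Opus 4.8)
Here is how I would prove \cref{lem:esdi-esdpi-conn-inner-anod}.

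\medskip

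Write $A:=\mtyndSpx\deltaBra{n}$, $\partial A:=\mtBdrySpx\deltaBra{n}$ and $Y:=\mtsESdpI A=\nerve(\mtpESdpI\deltaBra{n})$. All four functors $\mtsESd,\mtsESdI,\mtsESdp,\mtsESdpI$ preserve monomorphisms (\cref{lem:dcp-esd-esdp-preserves-mono}), and a short check shows that each face of the cube \eqref{eq:lem-esdi-esdpi-conn-inner-anod:cube} is a pullback square in $\mtcSSet$; hence the three maps into $Y$ exhibit $\mtsESdp A$, $\mtsESdI A$ and $\mtsESdpI\partial A$ as subcomplexes of $Y$ whose pairwise and triple intersections are exactly the remaining four vertices of the cube, so that the colimit $X$ of the punctured cube is the subcomplex
\[
  X \;=\; \mtsESdp A \;\cup\; \mtsESdI A \;\cup\; \mtsESdpI\partial A \;\subseteq\; Y.
\]
The plan is to show $X\hookrightarrow Y$ is inner anodyne by a near-verbatim transcription of the argument that proves \cref{lem:esd-esdp-conn-inner-anod}: I will realize $X\hookrightarrow Y$ as a finite composite of inner horn fillings, by partitioning the nondegenerate simplices of $Y$ lying outside $X$ into \emph{horn pairs} and well-ordering those pairs.

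\medskip

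First I would make the simplices of $Y$ explicit. Since $\set{0}\times\functCat{\deltaBra{1}}{\deltaBra{n}}$ is a sieve and $\set{1}\times\deltaBra{n}$ a cosieve in $\mtpESdpI\deltaBra{n}$, every nondegenerate $k$-simplex of $Y$ is a strictly increasing chain
\[
  (0,g_0)<\dotsb<(0,g_{l-1})<(1,y_l)<\dotsb<(1,y_k),\qquad 0\le l\le k+1,
\]
with the $g_a\in\functCat{\deltaBra{1}}{\deltaBra{n}}$ strictly increasing in the product order, the $y_j$ strictly increasing, and $g_{l-1}(1)\le y_l$. Unwinding the definitions of the three pieces of $X$, such a simplex is an \emph{outsider} (not contained in $X$) precisely when: (i) $l\le k$, i.e.\@ there is at least one entry of the form $(1,\cdot)$, for otherwise it lies in $\mtsESdp A$; (ii) its support $\bigcup_{a<l}\operatorname{Im}(g_a)\cup\set{y_j}[l\le j\le k]$ is all of $\deltaBra{n}$, for otherwise it lies in $\mtsESdpI\partial A$; and (iii) $g_{l-1}(0)=\max_{a<l}g_a(0)>\min_{a<l}g_a(1)=g_0(1)$, for otherwise the doubled initial part $g_0<\dotsb<g_{l-1}$ is the d\'ecalage image of an order-preserving map $\deltaBra{l-1}\star\deltaBra{l-1}\to\deltaBra{n}$ and the simplex lies in $\mtsESdI A$. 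Condition (iii) forces $l\ge 2$, and --- crucially --- the entire obstruction to lying in $X$ is concentrated in the doubled $(0,\cdot)$-portion of the chain, exactly as for $\mtsESd\hookrightarrow\mtsESdp$ in \cref{lem:esd-esdp-conn-inner-anod}, with the $(1,\cdot)$-tail inert.

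\medskip

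Accordingly I would transcribe \cref{not:esd-esdp-conn-inner-anod} and \cref{lem:esd-esdp-conn-inner-anod-outsider-cond,lem:esd-esdp-conn-inner-anod-horn-pos,lem:esd-esdp-conn-inner-anod-core-not-periphery,lem:esd-esdp-conn-inner-anod-horn-core-unique,lem:esd-esdp-conn-inner-anod-horn-periph-unique,cor:esd-esdp-conn-inner-anod-horn-pair-unique,lem:esd-esdp-conn-inner-anod-horn-pair-order} into the present setting, with the role of $u_\sigma$ played by the data $(g_0<\dotsb<g_{l-1};\ y_l<\dotsb<y_k)$. In outline: the \emph{anticipated horn position} of an outsider is the number of $a$ with $0\le a<l$ and $g_a(1)<g_{l-1}(0)$, which lies in $\set{1,\dotsc,l-1}$ by (iii); a \emph{horn pair} is an ordered pair $(\sigma,\tau)$ with $\tau$ the $i$-th facet of $\sigma$ for $i$ the anticipated horn position, subject to $g_{i-1}(0)=g_i(0)$ and $g_i(1)=g_{l-1}(0)$; one shows that every outsider lies in a unique horn pair (as core iff these two equalities hold, as periphery otherwise), that $0<i<k$ always, so that $\mtsHorn{k}{i}\hookrightarrow\mtyndSpx\deltaBra{k}$ is an inner horn, and that the horn pairs can be well-ordered by refining the order pulled back along $(\sigma,\tau)\mapsto(\dim\sigma,\ g_{l-1}(0),\ i)$ from the lexicographic order on $\metanats^3$, so that the compatibility property of \cref{lem:esd-esdp-conn-inner-anod-horn-pair-order} holds. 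Granting these, the bookkeeping at the close of \cref{subsec:down-last-infty-loc-shape-inner-anod} presents $X\hookrightarrow Y$ as the composite of the inner horn fillings indexed by the horn pairs in this order, which finishes the proof.

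\medskip

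The hard part is the verification of these transcribed lemmas. The combinatorics of the doubled $(0,\cdot)$-part is identical to that of \cref{lem:esd-esdp-conn-inner-anod}, so the only new bookkeeping concerns (i) and (ii). Condition (i) is harmless: all deletions and insertions occur among the first $l-1<k$ entries, so a $(1,\cdot)$-entry is never created or destroyed, while deleting the final entry, which is always of the form $(1,y_k)$, lands that facet in $X$ (either its $(1,\cdot)$-tail becomes empty, placing it in $\mtsESdp A$, or else it loses $y_k$, the maximum of its support). Condition (ii) is the delicate one: passing to the $i$-th facet of a horn core must preserve full support. As in \cref{lem:esd-esdp-conn-inner-anod-horn-core-unique}, this follows from the defining equalities of a core: $g_{i-1}(0)=g_i(0)$ makes the first coordinate of the deleted column $g_i$ recur at $g_{i-1}$, and $g_i(1)=g_{l-1}(0)=\max_{a<l}g_a(0)$ makes its second coordinate recur at the surviving last $(0,\cdot)$-entry, so the support is unchanged; the counterpart for peripheries is that, conversely, failure of one of the two equalities supplies exactly the strict inequality needed for the inserted column to keep the resulting core non-degenerate. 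Once these observations are in place, the remaining case analysis for the well-order is a routine re-run of \cref{lem:esd-esdp-conn-inner-anod-horn-pair-order}.
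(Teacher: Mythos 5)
Your proof is correct and follows essentially the same strategy as the paper's own: realize $X\hookrightarrow Y$ as a union of subcomplexes of $\mtsESdpI\mtyndSpx\deltaBra{n}$, partition the non-degenerate simplices lying outside $X$ into inner horn pairs, and well-order those pairs so that a sequence of inner horn fillings exhibits $X\hookrightarrow Y$ as inner anodyne. The paper packages the combinatorics by \emph{reducing} to \cref{lem:esd-esdp-conn-inner-anod} — its ``I-horn pairs'' are defined by requiring the zeroth parts to constitute a horn pair in that earlier sense, and its well-order $(\dim\sigma,\, n',\, p_0)$ reuses the earlier well-order as a black box — whereas you unwind the same data directly and use the coarser lexicographic invariant $(\dim\sigma,\ g_{l-1}(0),\ i)$; this still satisfies the required descent property, since in the only case where the switch value $n'$ drops across a facet (deleting the last $(0,\cdot)$-column), one checks that $g_{l-1}(0)$ must drop strictly as well — were $g_{l-2}(0)=g_{l-1}(0)$, the anticipated horn position and both core-defining equalities would carry over to the facet's zeroth part, making it a horn core rather than the periphery needed.
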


\begin{proof}[Proof of \cref{prop:esdi-esdpi-conn-inner-anod} from \cref{lem:esdi-esdpi-conn-inner-anod}]
  By \cref{lem:cospx-kan-ext-reedy-nat}, it suffices to prove that the natural transformation
  \[
    (\mtsESdp \mtyndSpx\deltaBra{n}) \cup_{\mtsESd \mtyndSpx\deltaBra{n}} (\mtsESdI \mtyndSpx\deltaBra{n})
    \to \mtsESdpI \mtyndSpx\deltaBra{n}
  \]
  in $\deltaBra{n}\in\mtcSimpx$ is Reedy inner anodyne, which is proved in
  \cref{lem:esdi-esdpi-conn-inner-anod}.
\end{proof}

The rest of this section will be devoted to the proof of \cref{lem:esdi-esdpi-conn-inner-anod}.
As with the previous subsection, we shall first establish some preliminary lemmas. The concluding proof
will appear in \cpageref{proof:lem:esdi-esdpi-conn-inner-anod} 
after \cref{lem:esdi-esdpi-conn-inner-anod-horn-pair-order}.
We again start with some notations:

\begin{notation}\label{not:esdi-esdpi-conn-inner-anod}
  The notations listed here will be used solely for the proof of \cref{lem:esdi-esdpi-conn-inner-anod}.
  This applies to the rest of this subsection.
  \begin{itemize}
    \item We use some notations from \cref{not:esd-esdp-conn-inner-anod} with varying values of $n$.
      Since \cref{not:esd-esdp-conn-inner-anod} fixes $n$, we need to rename some terminologies and notations
      for each case where $n=m$:
      \begin{itemize}
        \item A horn pair shall be called a $m$-horn pair. The set of all $m$-horn pairs
          will be denoted by $\mathcal{H}^{m}$.
        \item An outsider shall be called a $(m,\bullet)$-outsider; a $k$-outsider a $(m,k)$-outsider.
          The set of all non-degenerate $(m,k)$-outsiders will be denoted by $\mathcal{O}^{m}_k$;
          the set of all non-degenerate $(m,\bullet)$-outsiders will be denoted by $\mathcal{O}^{m}$.
        \item The notation $u_\sigma\colon \deltaBra{k}\times\deltaBra{1}\to\deltaBra{m}$, 
          where $\sigma$ is a simplex of $\mtsESdp\mtyndSpx\deltaBra{m}$,
          will be used and does not need to be renamed.
        \item The terms \emph{horn position}, \emph{core}, and \emph{periphery} of an $m$-horn pair
          will be used as is.
        \item Other notations and terminologies will not be used; in particular, $X$, $Y$, and
          $f$ are reserved for definitions of this \namecref{not:esdi-esdpi-conn-inner-anod}'s own.
      \end{itemize}
    \item Apart from $m$, which varies for the use of \cref{not:esd-esdp-conn-inner-anod},
      we shall fix a simplex $\deltaBra{n}\in\mtcSimpx$.
    \item The map in question, which we aim to show is inner anodyne,
      will be denoted by $f\colon X \to Y$.
    \item A $k$-simplex $\sigma$ of $Y\cong\nerve(\mtpESdpI\deltaBra{n})$
     may be considered as a map of posets
      \[
        \sigma\colon\deltaBra{k}\to \mtpESdpI\deltaBra{n}
        = (\set{0}\times \functCat{\deltaBra{1}}{\deltaBra{n}}) \cup
        (\set{1}\times \deltaBra{n}),
      \]
      the order of whose codomain is given in \cref{def:esdp-esdpi}.
      Let $\deltaBra{k'} \subseteq \deltaBra{k}$ in $\mtcSimpxAug\subseteq\mtcPoset$
      be the preimage of $\set{0}\times \functCat{\deltaBra{1}}{\deltaBra{n}}$ under $\sigma$;
      let $-1 \le n' \le n$ be the smallest integer such that
      $\sigma(\deltaBra{k'}) \subseteq \set{0}\times\functCat{\deltaBra{1}}{\deltaBra{n'}}$.
      Then:
      \begin{itemize}
        \item The integer $-1 \le k' \le k$ is called the \emph{switch position} of $\sigma$;
          the integer $-1 \le n' \le n$ is called the \emph{switch value} of $\sigma$.
        \item The \emph{zeroth part} of $\sigma$ is the map
        \[
          \sigma_0\colon\deltaBra{k'} \to \set{0}\times\functCat{\deltaBra{1}}{\deltaBra{n'}}
          \cong \functCat{\deltaBra{1}}{\deltaBra{n'}},
        \]
        where the left arrow is the restriction of $\sigma$. Note that, if $k'\ge 0$, then
        $n'\ge 0$ and we may consider $\sigma_0$ as a $k'$-simplex
        of $\mtsESdp\mtyndSpx\deltaBra{n'}$. 
        \item The \emph{first part} of $\sigma$ is the map
        \[
          \hspace{5em}\sigma_1\colon\deltaBra{k}\setminus\deltaBra{k'} \to
          \set{1}\times\set{n', n'+1, \dotsc, n}
          \cong \set{n', n'+1, \dotsc, n},
        \]
        where the left arrow is the restriction of $\sigma$.
        \item We say that $\sigma$ \emph{has a non-empty zeroth part} if $k'\ge 0$;
          \emph{has an empty zeroth part} if $k'=-1$;
          \emph{has a non-empty first part} if $k' < k$;
          \emph{has an empty first part} if $k' = k$.
        \item We say that $\sigma$ \emph{has a straight first part} if
          $\sigma_1$ is an order embedding, and its image either
          is the full codomain $\set{n', n'+1, \dotsc, n}$ or misses only the smallest
          number: $\set{n'+1, n'+2, \dotsc, n}$.
      \end{itemize}
    \item An \emph{I-outsider} is a simplex of $Y$ that is not in the image of the map $f$.
      A $k$-\emph{I-outsider} is an I-outsider of dimension $k$.
    \item The set of all non-degenerate $k$-I-outsiders will be denoted by
      $\mathcal{O}^{\mathrm{I}}_k$; that of all non-degenerate I-outsiders by $\mathcal{O}^{\mathrm{I}}$.
    \item An \emph{I-horn pair} is a pair $(\sigma, \tau)$ of simplices of $Y$ satisfying the followings:
      \begin{itemize}
        \item The simplex $\tau$ is some facet of $\sigma$.
        \item The simplex $\sigma$ has a non-empty and straight first part.
        \item The switch value $n'$ of $\sigma$ and $\tau$ are equal and non-negative, implying that
          the two simplices have non-empty zeroth parts.
        \item Let $\sigma_0$ and $\tau_0$ denote the zeroth parts of $\sigma$ and $\tau$, respectively.
          Then $(\sigma_0, \tau_0)$ is an $n'$-horn pair.
      \end{itemize}
    \item The set of all I-horn pairs will be denoted by $\mathcal{H}^{\mathrm{I}}$.
    \item If $p=(\sigma, \tau)$ is an I-horn pair, then $\sigma$ is called the \emph{core} of $p$,
      and $\tau$ the \emph{periphery} of $p$.
  \end{itemize}
\end{notation}

Except that we separately prove that $f$ is injective,
the proof of \cref{lem:esdi-esdpi-conn-inner-anod} will be conducted in a similar manner to that of
\cref{lem:esd-esdp-conn-inner-anod}: we show that every I-outsider is contained in a unique I-horn pair,
and construct a well-order on the set of I-horn pairs to determin the order in which the
horns are to be filled.

\begin{lemma}\label{lem:esdi-esdpi-conn-inner-anod-pushout-inj}
  We invoke \cref{not:esdi-esdpi-conn-inner-anod}. The map $f\colon X\to Y$ is injective.
\end{lemma}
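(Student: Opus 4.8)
The plan is to prove the injectivity of $f\colon X\to Y=\mtsESdpI\mtyndSpx\deltaBra{n}$ dimension by dimension, by realising all seven non-terminal vertices of the cube \eqref{eq:lem-esdi-esdpi-conn-inner-anod:cube} as subcomplexes of $Y$ and checking that the colimit computing $X$ is their union inside $Y$.

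First I would note that every edge of the cube \eqref{eq:lem-esdi-esdpi-conn-inner-anod:cube} is a monomorphism: the six edges induced by $\mtBdrySpx\deltaBra{n}\hookrightarrow\mtyndSpx\deltaBra{n}$ are monic because $\mtsESd$, $\mtsESdI$, $\mtsESdp$, and $\mtsESdpI$ preserve monomorphisms (\cref{lem:dcp-esd-esdp-preserves-mono}), and the other six, which are instances of the connecting transformations ${\mtsESd}\Rightarrow{\mtsESdp}$, ${\mtsESd}\Rightarrow{\mtsESdI}$, ${\mtsESdp}\Rightarrow{\mtsESdpI}$, and ${\mtsESdI}\Rightarrow{\mtsESdpI}$, are monic by the injectivity assertions made in \cref{def:esd-esdp-connecting-inj,def:dcp-esd-esdp-incl-to-i}. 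As the cube commutes, each non-terminal vertex $V$ receives a well-defined monomorphism $V\hookrightarrow Y$, independent of the edge-path chosen to reach $Y$, and for every edge $V\to V'$ of the punctured cube the resulting triangle over $Y$ commutes. Hence, with these maps to $Y$, the punctured cube is a diagram of subcomplexes of $Y$ whose edges are inclusions.

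Since colimits in $\mtcSSet$ are computed dimensionwise, for every $k$ the set $X_k$ is the colimit over the punctured $3$-cube of the corresponding finite subsets of $Y_k$. The punctured $3$-cube is a finite poset closed under binary meets; consequently the canonical map $X_k\to Y_k$ is injective as soon as, inside $Y_k$, one has $V_k\cap V'_{k}=(V\wedge V')_{k}$ for every pair of vertices $V,V'$, the meet being formed in the punctured cube. It suffices in fact to verify this for the four pairs $\{\mtsESdp\mtyndSpx\deltaBra{n},\mtsESdI\mtyndSpx\deltaBra{n}\}$, $\{\mtsESdp\mtyndSpx\deltaBra{n},\mtsESdpI\mtBdrySpx\deltaBra{n}\}$, $\{\mtsESdI\mtyndSpx\deltaBra{n},\mtsESdpI\mtBdrySpx\deltaBra{n}\}$, and $\{\mtsESdp\mtBdrySpx\deltaBra{n},\mtsESdI\mtBdrySpx\deltaBra{n}\}$: the remaining identities follow by elementary lattice manipulations together with the evident inclusions among the seven vertices. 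Equivalently one may write $X$ as the two-step pushout of $\mtsESdp\mtyndSpx\deltaBra{n}\cup_{\mtsESd\mtyndSpx\deltaBra{n}}\mtsESdI\mtyndSpx\deltaBra{n}$ with $\mtsESdpI\mtBdrySpx\deltaBra{n}$ over $\mtsESdp\mtBdrySpx\deltaBra{n}\cup_{\mtsESd\mtBdrySpx\deltaBra{n}}\mtsESdI\mtBdrySpx\deltaBra{n}$, and invoke the same intersection identities to see each pushout embeds into $Y$.

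It remains to check those identities from the explicit descriptions, which is where the real work lies. A $k$-simplex of $Y$ is a chain in the poset $\mtpESdpI\deltaBra{n}=(\set{0}\times\functCat{\deltaBra{1}}{\deltaBra{n}})\cup(\set{1}\times\deltaBra{n})$, and, using the zeroth-part/first-part bookkeeping of \cref{not:esdi-esdpi-conn-inner-anod} and unwinding the constructions of \cref{def:esd-esdp-connecting-inj,def:dcp-esd-esdp-incl-to-i}, one identifies, inside $Y$: $\mtsESdp\mtyndSpx\deltaBra{n}$ with the chains lying entirely in the $\set{0}$-part; $\mtsESd\mtyndSpx\deltaBra{n}$ with those such chains $g_{0}\le\dotsb\le g_{k}$ that additionally satisfy $g_{k}(0)\le g_{0}(1)$; $\mtsESdI\mtyndSpx\deltaBra{n}$ with the chains whose zeroth part $g_{0}\le\dotsb\le g_{k'-1}$ (when of length at least $2$) satisfies $g_{k'-1}(0)\le g_{0}(1)$; and $\mtsESdpI\mtBdrySpx\deltaBra{n}$ with the chains whose total image in $\deltaBra{n}$ is a proper subset (and similarly for the $\mtBdrySpx$-variants of the first three). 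Once these descriptions are pinned down, each of the four intersection identities is immediate, and the injectivity of $f$ follows. I expect the bookkeeping in this last step—making the seven combinatorial descriptions explicit and mutually coherent—to be the main, though entirely routine, obstacle.
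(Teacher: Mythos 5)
Your proposal is correct and takes essentially the same approach as the paper: the paper's one-sentence proof rests on exactly the two observations you isolate (every edge of the cube is a monomorphism, and each face is a pullback, i.e.\ each pairwise intersection equals the meet), and then silently concludes that the colimit of the punctured cube is the union of its vertices inside $Y$. You simply unpack the meet-semilattice argument and the reduction to a handful of intersection identities that the paper takes for granted; like the paper, you assert rather than fully verify those identities, so the two proofs sit at the same level of detail in the end.
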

\begin{proof}
  Consider the commutative cube \labelcref{eq:lem-esdi-esdpi-conn-inner-anod:cube}
  in the statement of \cref{lem:esdi-esdpi-conn-inner-anod}.
  The desired injectivity of $f$ follows from the injectivity of all arrows in the cube and
  the fact that every face of the cube is a pullback square.
\end{proof}

\begin{lemma}\label{lem:esdi-esdpi-conn-inner-anod-outsider-cond}
  Let $\sigma$ be a $k$-simplex of $Y$.
  Let $\sigma_0\colon \deltaBra{k'}\to\functCat{\deltaBra{1}}{\deltaBra{n'}}$ and
  $\sigma_1\colon \deltaBra{k}\setminus\deltaBra{k'}\to\set{n', \dotsc, n}$ be the zeroth and first parts,
  respectively, of $\sigma$.
  Then we have the following:
  \begin{enumerate}
    \item\label{item:lem-esdi-esdpi-conn-inner-anod-outsider-cond:nd}%
      The simplex $\sigma$ is non-degenerate if and only if both $\sigma_0$ and $\sigma_1$ are injective.
    \item\label{item:lem-esdi-esdpi-conn-inner-anod-outsider-cond:nd0}%
      Given that $\sigma$ has a non-empty zeroth part, the zeroth part $\sigma_0$
      is injective if and only if it is a non-degenerate simplex in $\mtsESdp\mtyndSpx\deltaBra{n'}$.
    \item\label{item:lem-esdi-esdpi-conn-inner-anod-outsider-cond:esdp}%
      The simplex $\sigma$ is not in the injective image of $\mtsESdp\mtyndSpx\deltaBra{n}$
      if and only if it has a non-empty first part.
    \item\label{item:lem-esdi-esdpi-conn-inner-anod-outsider-cond:esdi}%
      The simplex $\sigma$ is not in the injective image of $\mtsESdI\mtyndSpx\deltaBra{n}$
      if and only if it has a non-empty zeroth part and the zeroth part
      $\sigma_0$ satisfies $u_{\sigma_0}(0,1) < u_{\sigma_0}(k',0)$.
    \item\label{item:lem-esdi-esdpi-conn-inner-anod-outsider-cond:bdry}%
      The simplex $\sigma$ is not in the injective image of $\mtsESdpI\mtBdrySpx\deltaBra{n}$
      if and only if the adjunct $u_{\sigma_0}\colon \deltaBra{k'}\times\deltaBra{1}\to\deltaBra{n'}$
      of the zeroth part is surjective and the image of $\sigma_1$ includes
      $\set{n'+1, n'+2, \dotsc, n}$.
    \item\label{item:lem-esdi-esdpi-conn-inner-anod-outsider-cond:ndout}%
      The simplex $\sigma$ is a non-degenerate I-outsider if and only if it has
      a non-empty and straight first part and a non-empty zeroth part $\sigma_0$
      that is a non-degenerate $(n',k')$-outsider.
  \end{enumerate}
\end{lemma}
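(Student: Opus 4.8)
The plan is to prove all six assertions by directly unwinding the combinatorial definitions of the four endofunctors and of the canonical natural transformations among them, so that every statement becomes a bookkeeping fact about order-preserving maps into $\deltaBra{n}$. The common starting point is that $Y\cong\nerve(\mtpESdpI\deltaBra{n})$ is the nerve of a poset, so that a $k$-simplex of $Y$ is the same thing as an order-preserving map $\sigma\colon\deltaBra{k}\to\mtpESdpI\deltaBra{n}$, and such a simplex is non-degenerate precisely when this map is injective. Since the underlying set of $\mtpESdpI\deltaBra{n}$ is the disjoint union $(\set{0}\times\functCat{\deltaBra{1}}{\deltaBra{n}})\sqcup(\set{1}\times\deltaBra{n})$, every $\sigma$ decomposes uniquely into its zeroth part $\sigma_0$ and first part $\sigma_1$ as in \cref{not:esdi-esdpi-conn-inner-anod}, and injectivity of $\sigma$ is equivalent to injectivity of both $\sigma_0$ and $\sigma_1$ (their images are disjoint); this gives \eqref{item:lem-esdi-esdpi-conn-inner-anod-outsider-cond:nd}, and \eqref{item:lem-esdi-esdpi-conn-inner-anod-outsider-cond:nd0} is the same observation applied to $\mtsESdp\mtyndSpx\deltaBra{n'}=\nerve(\functCat{\deltaBra{1}}{\deltaBra{n'}})$, again a nerve of a poset.

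For \eqref{item:lem-esdi-esdpi-conn-inner-anod-outsider-cond:esdp} I would recall from \cref{def:dcp-esd-esdp-incl-to-i} that $\mtsESdp X\hookrightarrow\mtsESdpI X$ is induced by the inclusion of $\mtpESdp\deltaBra{n}=\set{0}\times\functCat{\deltaBra{1}}{\deltaBra{n}}$ as a full subposet of $\mtpESdpI\deltaBra{n}$, so that $\sigma$ lies in the image iff it factors through that subposet, iff $\sigma$ has an empty first part. For \eqref{item:lem-esdi-esdpi-conn-inner-anod-outsider-cond:esdi} I would unwind the construction of $\mtsESdI X\hookrightarrow\mtsESdpI X$ from \cref{def:esd-esdp-connecting-inj}: a simplex of $Y$ lies in this image iff, whenever its zeroth part is non-empty, that zeroth part lies in the image of the inclusion $\mtsESd\hookrightarrow\mtsESdp$ (a simplex with empty zeroth part, corresponding to the index $I=\emptyset$, always occurs); combining this with the characterization of that image by the inequality $u_{\sigma_0}(k',0)\le u_{\sigma_0}(0,1)$ from \cref{lem:esd-esdp-conn-inner-anod-outsider-cond} yields the stated condition. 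For \eqref{item:lem-esdi-esdpi-conn-inner-anod-outsider-cond:bdry} I would use that $\mtsESdpI$ preserves monomorphisms (\cref{lem:dcp-esd-esdp-preserves-mono}) and that $\mtBdrySpx\deltaBra{n}$ is the union of its faces, so that $\mtsESdpI\mtBdrySpx\deltaBra{n}$ is the union, over $0\le j\le n$, of the images of $\mtsESdpI$ applied to the $j$-th face inclusion; since $\mtpESdpI$ carries that face inclusion to the inclusion of $(\set{0}\times\set{f}[j\notin\operatorname{Im}f])\cup(\set{1}\times(\deltaBra{n}\setminus\set{j}))$, the simplex $\sigma$ lies in $\mtsESdpI\mtBdrySpx\deltaBra{n}$ iff some index $j$ is avoided both by $u_{\sigma_0}$ and by $\sigma_1$; a short case analysis on $j$, using that the switch value $n'$ is always hit by $u_{\sigma_0}$, turns this into ``$u_{\sigma_0}$ is not surjective onto $\deltaBra{n'}$, or $\operatorname{Im}\sigma_1\not\supseteq\set{n'+1,\dotsc,n}$'', whose negation is the claim.

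Finally, \eqref{item:lem-esdi-esdpi-conn-inner-anod-outsider-cond:ndout} is obtained by assembling the earlier items. Since the cube defining $X$ in \cref{lem:esdi-esdpi-conn-inner-anod} consists of monomorphisms all of whose faces are pullback squares (the content of \cref{lem:esdi-esdpi-conn-inner-anod-pushout-inj}), the image of $f$ in $Y$ is the union of the images of $\mtsESdp\mtyndSpx\deltaBra{n}$, $\mtsESdI\mtyndSpx\deltaBra{n}$, and $\mtsESdpI\mtBdrySpx\deltaBra{n}$ — the three corners adjacent to $Y$ — so $\sigma$ is an I-outsider iff it avoids all three; intersecting this with non-degeneracy and substituting the characterizations from \eqref{item:lem-esdi-esdpi-conn-inner-anod-outsider-cond:nd}, \eqref{item:lem-esdi-esdpi-conn-inner-anod-outsider-cond:esdp}, \eqref{item:lem-esdi-esdpi-conn-inner-anod-outsider-cond:esdi}, \eqref{item:lem-esdi-esdpi-conn-inner-anod-outsider-cond:bdry} produces a list of conditions that, after recognizing ``$\sigma_1$ injective with $\operatorname{Im}\sigma_1\supseteq\set{n'+1,\dotsc,n}$'' as precisely ``$\sigma_1$ straight'' and recognizing the zeroth-part conditions as ``$\sigma_0$ is a non-degenerate $(n',k')$-outsider'' via \eqref{item:lem-esdi-esdpi-conn-inner-anod-outsider-cond:nd0} and \cref{lem:esd-esdp-conn-inner-anod-outsider-cond}, is exactly the asserted characterization. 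The proof involves nothing conceptually deep; the bulk of the work — and the only real obstacle — is handling the degenerate edge cases cleanly, namely an empty zeroth part ($k'=-1$, so $n'=-1$ by convention) and an empty first part ($k'=k$, $n'=n$), and checking that the three separate obstructions are genuinely independent and combine without overlap, which is exactly where the observation that $n'$ always lies in $\operatorname{Im}u_{\sigma_0}$ is used.
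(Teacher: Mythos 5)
Your proposal is correct and follows the same route as the paper, which dispenses with the first five claims by saying ``follows from construction'' and derives the sixth from the first five together with \cref{lem:esd-esdp-conn-inner-anod-outsider-cond}; your writeup is a faithful and carefully-checked expansion of that one-line argument, including the edge cases for empty zeroth/first parts and the reduction of the boundary condition via the observation that $n'$ always lies in the image of $u_{\sigma_0}$.
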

\begin{proof}
  The first five claims follows from constrution.
  The last claim is a direct consequence of the first five: see
  \cref{lem:esd-esdp-conn-inner-anod-outsider-cond}.
\end{proof}

\begin{lemma}\label{lem:esdi-esdpi-conn-inner-anod-ihorn-pair-propert}
  Let $(\sigma, \tau)$ be an I-horn pair.
  Let $k$ be the dimension of $\sigma$;
  $n'$ be the switch value of $\sigma$ and $\tau$;
  $0\le k' < k$ be the switch position of $\sigma$;
  $\sigma_i$ and $\tau_i$ be the $i$-th part of $\sigma$ and $\tau$, respectively, for $i=0,1$.
  Then we have the following:
  \begin{enumerate}
    \item\label{item:lem-esdi-esdpi-conn-inner-anod-ihorn-pair-propert:swp}%
      The switch position of $\tau$ is $k'-1\ge 0$.
    \item\label{item:lem-esdi-esdpi-conn-inner-anod-ihorn-pair-propert:pos}%
      The simplex $\tau$ is the $i$-th facet of $\sigma$ for some unique
      $0\le i\le k$, which satisfies $0<i<k'$.
    \item\label{item:lem-esdi-esdpi-conn-inner-anod-ihorn-pair-propert:1st-eq}%
      Let $q$ denote the unique order isomorphism 
      \[
        q\colon\deltaBra{k-1}\setminus\deltaBra{k'-1}\to\deltaBra{k}\setminus\deltaBra{k'};
        \quad i\mapsto i+1.
      \]
      Then the following triangle is commutative:
      \[
        \begin{tikzcd}[column sep=0]
          \deltaBra{k-1}\setminus\deltaBra{k'-1} \arrow[rr, "q"', "\sim"] \arrow[rd, "\tau_1"']
          && \deltaBra{k}\setminus\deltaBra{k'} \arrow[ld, "\sigma_1"] \\
          & \set{n', n'+1, \dotsc, n} &
        \end{tikzcd}
      \]
    \item\label{item:lem-esdi-esdpi-conn-inner-anod-ihorn-pair-propert:1st-p}%
      The simplex $\tau$ has a non-empty and straight first part.
    \item\label{item:lem-esdi-esdpi-conn-inner-anod-ihorn-pair-propert:ndout}%
      The simplices $\sigma$ and $\tau$ are non-degenerate I-outsiders.
  \end{enumerate}
\end{lemma}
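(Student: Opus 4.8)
The plan is to reduce every assertion to the combinatorics of the poset map $\sigma\colon\deltaBra{k}\to\mtpESdpI\deltaBra{n}$ together with the face operator witnessing that $\tau$ is the $i$-th facet of $\sigma$, i.e.\ $\tau=\sigma\compos\delta^k_i$, so that $\tau(\ell)=\sigma(\ell)$ for $\ell<i$ and $\tau(\ell)=\sigma(\ell+1)$ for $\ell\ge i$. Throughout I will use that $Y=\nerve(\mtpESdpI\deltaBra{n})$ is the nerve of a poset, so that every face of a non-degenerate simplex is non-degenerate and two distinct facets of a non-degenerate simplex of positive dimension are distinct simplices. From the definition of an $n'$-horn pair in \cref{not:esd-esdp-conn-inner-anod} I will use: $\sigma_0$ and $\tau_0$ are non-degenerate $n'$-outsiders; $\sigma_0$ is a $k'$-simplex and $\tau_0$ is the $i_0$-th facet of $\sigma_0$, where $i_0$ is the horn position; and $0<i_0<k'$ by \cref{lem:esd-esdp-conn-inner-anod-horn-pos}, which in particular forces $k'\ge 2$. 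I also record at the outset that $\sigma$ and $\tau$ are non-degenerate: both have an order-embedding first part (straightness) and a non-degenerate zeroth part ($\sigma_0$, resp.\ $\tau_0$, is non-degenerate as part of a horn pair), so \cref{lem:esdi-esdpi-conn-inner-anod-outsider-cond} applies. I would prove the five claims in the order: facet position, switch position, first-part compatibility, straightness, non-degenerate I-outsiders.

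First I would pin down $i$. Since $\sigma$ has a non-empty first part, its zeroth-region vertices are $0,\dots,k'$ and its first-region vertices are $k'+1,\dots,k$; restricting $\tau=\sigma\compos\delta^k_i$ to its own zeroth region then describes $\tau_0$ in terms of $\sigma_0$ according to where $i$ lies. If $i>k'$, the zeroth region is untouched and $\tau_0=\sigma_0$, which is impossible as $\dim\tau_0=k'-1<k'=\dim\sigma_0$. If $i=k'$ then $\tau_0=\sigma_0\compos\delta^{k'}_{k'}$, the last facet of $\sigma_0$, and if $i=0$ then $\tau_0=\sigma_0\compos\delta^{k'}_{0}$; in either case, since $\sigma_0$ is non-degenerate of positive dimension, distinctness of facets contradicts $\tau_0=\sigma_0\compos\delta^{k'}_{i_0}$ with $0<i_0<k'$. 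Hence $0<i<k'$, and for such $i$ one has $\tau_0=\sigma_0\compos\delta^{k'}_i$; comparing with $\tau_0=\sigma_0\compos\delta^{k'}_{i_0}$ and using non-degeneracy of $\sigma_0$ gives $i=i_0$. Uniqueness of $i$ is immediate from non-degeneracy of $\sigma$. This settles the facet-position claim.

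With $0<i<k'$ in hand, the switch-position and first-part compatibility claims become index bookkeeping. Deleting the vertex $i\le k'-1$ from the block $0,\dots,k'$ of zeroth-region vertices of $\sigma$ leaves the block $0,\dots,k'-1$ of zeroth-region vertices of $\tau$, so the switch position of $\tau$ is $k'-1$, which is $\ge 1>0$ since $k'\ge 2$; its switch value is $n'$ by hypothesis. The first-region vertices $k'+1,\dots,k$ of $\sigma$ become the vertices $k',\dots,k-1$ of $\tau$ under the shift $\ell\mapsto\ell-1$, which is precisely the commuting triangle $\tau_1=\sigma_1\compos q$. In particular $\tau_1$ has the same order-image as $\sigma_1$ and, being the restriction of $\sigma_1$ along the order isomorphism $q$, is an order embedding because $\sigma_1$ is (as $\sigma$ has a straight first part and $k>k'$); together with the switch value $n'$ this is exactly the straightness claim for $\tau$.

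The non-degenerate-I-outsider claim is then assembled from \cref{lem:esdi-esdpi-conn-inner-anod-outsider-cond} and \cref{lem:esd-esdp-conn-inner-anod-outsider-cond}: $\sigma$ has a non-empty straight first part by hypothesis and a non-empty zeroth part $\sigma_0$ which is a non-degenerate $(n',k')$-outsider by the horn-pair definition, so $\sigma$ is a non-degenerate I-outsider; likewise $\tau$ has a non-empty straight first part by the previous paragraph and a non-empty zeroth part $\tau_0=\sigma_0\compos\delta^{k'}_i$, the periphery of the $n'$-horn pair $(\sigma_0,\tau_0)$, hence a non-degenerate $(n',k'-1)$-outsider, so $\tau$ is a non-degenerate I-outsider. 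The step I expect to be the main obstacle is pinning down $i$: translating ``the $i$-th facet of $\sigma$'' into a statement about $\sigma_0$ for each position of $i$ relative to the switch position, keeping the off-by-one shifts straight, and making sure the identification $i=i_0$ genuinely rests on the non-degeneracy of $\sigma_0$ and the bound $0<i_0<k'$. Everything downstream is routine index arithmetic together with \cref{lem:esdi-esdpi-conn-inner-anod-outsider-cond}.
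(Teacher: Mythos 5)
Your proof is correct and takes essentially the same route as the paper's; the paper's version of item~(2) (``By inspecting each facet of $\sigma$, we see that $i$ must be the horn position of $(\sigma_0,\tau_0)$'') is exactly the case analysis you carry out, and the remaining items follow in the same way. One small organizational wrinkle: in your opening paragraph you record $\tau$ non-degenerate by citing that its first part is an order embedding, but that is item~(4), which you prove later; the cleaner (and sufficient) justification you also mention is that $\tau$ is a face of the non-degenerate $\sigma$ in the nerve of a poset.
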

\begin{proof}
  \eqref{item:lem-esdi-esdpi-conn-inner-anod-ihorn-pair-propert:swp}:
  By definition, we have $(\sigma_0,\tau_0)\in\mathcal{H}^{n'}$;
  therefore, $\tau_0$ must be a facet of $\sigma_0$,
  which proves the claim.

  \eqref{item:lem-esdi-esdpi-conn-inner-anod-ihorn-pair-propert:pos}:
  By inspecting each facet of $\sigma$, we see that $i$ must be the horn position of
  $(\sigma_0, \tau_0)$.

  \eqref{item:lem-esdi-esdpi-conn-inner-anod-ihorn-pair-propert:1st-eq}:
  Follows from \eqref{item:lem-esdi-esdpi-conn-inner-anod-ihorn-pair-propert:pos}.

  \eqref{item:lem-esdi-esdpi-conn-inner-anod-ihorn-pair-propert:1st-p}:
  Follows from \eqref{item:lem-esdi-esdpi-conn-inner-anod-ihorn-pair-propert:1st-eq} and
  the fact that $\sigma$ satisfies the same property.

  \eqref{item:lem-esdi-esdpi-conn-inner-anod-ihorn-pair-propert:ndout}:
  We apply \cref{lem:esdi-esdpi-conn-inner-anod-outsider-cond}~%
  \eqref{item:lem-esdi-esdpi-conn-inner-anod-outsider-cond:ndout} to $\sigma$ and $\tau$.
  Since $(\sigma_0, \tau_0)$ is an $n'$-horn pair,
  we see that $\sigma$ and $\tau$ has non-empty zeroth parts,
  that $\sigma_0\in\mathcal{O}^{n'}_{k'}$, and that $\tau_0\in\mathcal{O}^{n'}_{k'-1}$.
  By \eqref{item:lem-esdi-esdpi-conn-inner-anod-ihorn-pair-propert:1st-p} and
  the definition of I-horn pairs, $\sigma$ and $\tau$ has non-empty and straight first parts.
  The claim follows from these observations.
\end{proof}

\begin{lemma}\label{lem:esdi-esdpi-conn-inner-anod-horn-pair-unique}
  Every non-degenerate I-outsider
  belongs to a unique I-horn pair.
\end{lemma}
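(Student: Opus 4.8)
The plan is to reduce the statement to its zeroth-part counterpart, \cref{cor:esd-esdp-conn-inner-anod-horn-pair-unique}, which has already been established for ordinary (non-I) horn pairs with an arbitrary ambient simplex. Let $\sigma$ be a non-degenerate I-outsider. By \cref{lem:esdi-esdpi-conn-inner-anod-outsider-cond}~\eqref{item:lem-esdi-esdpi-conn-inner-anod-outsider-cond:ndout}, the simplex $\sigma$ has a non-empty and straight first part, and its zeroth part $\sigma_0$ is a non-degenerate $(n',k')$-outsider, where $n'\ge 0$ is the switch value and $k'\ge 0$ the switch position of $\sigma$; all of $n'$, $k'$, $\sigma_0$, and the first part $\sigma_1$ are intrinsic data of $\sigma$. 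Applying \cref{cor:esd-esdp-conn-inner-anod-horn-pair-unique} to $\sigma_0$ (with the ambient simplex $\deltaBra{n'}$ in place of $\deltaBra{n}$), we obtain that $\sigma_0$ lies in a unique $n'$-horn pair, either as its core or as its periphery, and, by \cref{lem:esd-esdp-conn-inner-anod-core-not-periphery}, not both.

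The argument then splits into two cases. If $\sigma_0$ is the core of an $n'$-horn pair $(\sigma_0,\tau_0)$ with horn position $i$, then by \cref{lem:esdi-esdpi-conn-inner-anod-ihorn-pair-propert}~\eqref{item:lem-esdi-esdpi-conn-inner-anod-ihorn-pair-propert:pos} any I-horn pair with core $\sigma$ must have periphery the $i$-th facet $\tau$ of $\sigma$, and conversely one checks straight from \cref{not:esdi-esdpi-conn-inner-anod} that $(\sigma,\tau)$ is an I-horn pair, since its zeroth-part pair is exactly $(\sigma_0,\tau_0)$ and its straight first part is inherited from $\sigma$ (using also \cref{lem:esd-esdp-conn-inner-anod-horn-core-unique} to know $\tau_0$ is itself a non-degenerate outsider of the correct dimension). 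If instead $\sigma_0$ is the periphery of an $n'$-horn pair $(\rho_0,\sigma_0)$ with horn position $i$, then one constructs $\rho$ as the simplex of $Y$ whose zeroth part is $\rho_0$ and whose first part is obtained from $\sigma_1$ by the shift map of \cref{lem:esdi-esdpi-conn-inner-anod-ihorn-pair-propert}~\eqref{item:lem-esdi-esdpi-conn-inner-anod-ihorn-pair-propert:1st-eq}, arranged so that $\sigma$ is the $i$-th facet of $\rho$; \cref{lem:esd-esdp-conn-inner-anod-horn-periph-unique} applied to the zeroth part, together with \cref{lem:esdi-esdpi-conn-inner-anod-outsider-cond}~\eqref{item:lem-esdi-esdpi-conn-inner-anod-outsider-cond:ndout}, shows $\rho$ is a non-degenerate I-outsider and $(\rho,\sigma)$ an I-horn pair, unique by the same reasoning. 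Finally, $\sigma$ cannot be simultaneously the core of one I-horn pair and the periphery of another, for that would make $\sigma_0$ both a core and a periphery of $n'$-horn pairs, contradicting \cref{lem:esd-esdp-conn-inner-anod-core-not-periphery}.

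The main obstacle is the bookkeeping in the periphery case: one must exhibit $\rho$ explicitly and verify that it defines an order-preserving map $\deltaBra{k}\to\mtpESdpI\deltaBra{n}$ with the expected switch position and switch value, and that it is non-degenerate and an I-outsider. Almost all of this is already packaged in \cref{lem:esdi-esdpi-conn-inner-anod-outsider-cond,lem:esdi-esdpi-conn-inner-anod-ihorn-pair-propert}, so after invoking those lemmas the verification is short; the only genuinely new input is the combinatorial insertion of the vertex dictated by the horn position $i$ of the zeroth-part pair, which is the exact analogue of the construction carried out in \cref{lem:esd-esdp-conn-inner-anod-horn-periph-unique}.
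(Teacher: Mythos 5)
Your proof is correct and follows essentially the same route as the paper's: reduce to the zeroth part $\sigma_0$, apply \cref{cor:esd-esdp-conn-inner-anod-horn-pair-unique} to get a unique $n'$-horn pair, split on whether $\sigma_0$ is its core or periphery, explicitly build the remaining component by attaching a shifted first part, and argue uniqueness via \cref{lem:esdi-esdpi-conn-inner-anod-ihorn-pair-propert}. (One small slip: the simplex $\rho$ in your periphery case has dimension $k+1$, so its underlying map is $\deltaBra{k+1}\to\mtpESdpI\deltaBra{n}$, not $\deltaBra{k}\to\mtpESdpI\deltaBra{n}$; but your subsequent description of $\sigma$ being a facet of $\rho$ makes clear you intend the right thing.)
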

\begin{proof}
  Let $\sigma\in\mathcal{O}^{\mathrm{I}}_k$ be a non-degenerate $k$-I-outsider.
  By \cref{lem:esdi-esdpi-conn-inner-anod-outsider-cond}~%
  \eqref{item:lem-esdi-esdpi-conn-inner-anod-outsider-cond:ndout},
  $\sigma$ has a non-empty and straight first part and a non-empty zeroth part $\sigma_0$
  that is a non-degenerate $(n',k')$-outsider, which implies $k \le 2$ and $0 < k' < k$.
  Let $p$ be the $n'$-horn pair
  containing $\sigma_0$. We have two cases to consider: that where
  $\sigma_0$ is the core of $p$, and that where $\sigma_0$ is the periphery of $p$.

  For the first case, let us assume that $p=(\sigma_0, \tau_0)$ for some $\tau_0$.
  Define an order-preserving map
  $\tau\colon \deltaBra{k-1}\to \mtpESdpI\deltaBra{n}$ by:
  \[
    \tau(i) \coloneqq \begin{cases}
      (0,\tau_0(i)) & \text{if }0\le i < k'; \\
      (1,\sigma(i+1)) & \text{if }k' \le i<k.
    \end{cases}
  \]
  We consider $\tau$ as a $(k-1)$-simplex of $\mtpESdpI\deltaBra{n}$. Now, we may
  easily check that $(\sigma, \tau)$ is an I-horn pair containing $\sigma$:
  consult the definitions of I-horn pairs and $n'$-horn pairs.

  For the second case, let us assume that $p=(\tau_0, \sigma_0)$ for some $\tau_0$.
  Define an order-preserving map
  $\tau\colon \deltaBra{k+1}\to \mtpESdpI\deltaBra{n}$ by:
  \[
    \tau(i) \coloneqq \begin{cases}
      (0,\tau_0(i)) & \text{if }0\le i \le k' + 1; \\
      (1,\sigma(i)) & \text{if }k' + 1 < i \le k+1.
    \end{cases}
  \]
  We consider $\tau$ as a $(k+1)$-simplex of $\mtpESdpI\deltaBra{n}$. Now, by the
  definitions of I-horn pairs and $n'$-horn pairs, it is easy to see that
  $(\tau, \sigma)$ is an I-horn pair containing $\sigma$.

  We have shown that every non-degenerate I-outsider belongs to an I-horn pair.
  It only remains to show that the I-horn pair is unique. Assume that $\sigma$ is contained
  in I-horn pairs $p,p'$. Let $\tau$ and $\tau'$ be the other consituent of $p$ and $p'$,
  respectively. Write $\sigma_i, \tau_i, \tau'_i$ for the $i$-th part of $\sigma, \tau, \tau'$,
  respectively, for $i=0,1$. By definition, $\sigma_0$ and $\tau_0$, with an appropriate order,
  form an $n'$-horn pair $p_0$, and $\sigma_0$ and $\tau'_0$ form an $n'$-horn pair $p'_0$.
  By \cref{cor:esd-esdp-conn-inner-anod-horn-pair-unique}, we have either:
  \begin{align*}
    (\sigma_0,\tau_0) &= p_0 = p'_0 = (\sigma_0,\tau'_0), \text{ or} \\
    (\tau_0,\sigma_0) &= p_0 = p'_0 = (\tau'_0,\sigma_0).
  \end{align*}
  Either way we get $\tau_0 = \tau'_0$. This also implies that $\sigma$ is either the core of
  both $p$ and $p'$, or the periphery of the both. Therefore, in order to conclude the proof,
  it suffices to show that $\tau_1 = \tau'_1$, which is a direct consequence of
  \cref{lem:esdi-esdpi-conn-inner-anod-ihorn-pair-propert}~%
  \eqref{item:lem-esdi-esdpi-conn-inner-anod-ihorn-pair-propert:1st-eq}.
\end{proof}

{
\newcommand{\property}{($\star$)}
\begin{lemma}\label{lem:esdi-esdpi-conn-inner-anod-horn-pair-order}
  We employ \cref{not:esdi-esdpi-conn-inner-anod}. There is an irreflexive well-order
  $\prec^{\mathrm{I}}$ on the finite set $\mathcal{H}^{\mathrm{I}}$ of I-horn pairs that satisfies the following property
  \property:
  \begin{itemize}
    \item[\property] \label{item:lem-esdi-esdpi-conn-inner-anod-horn-pair-order:prop}%
    Let $\sigma$ be a non-degenerate I-outsider, and $\tau$ be a proper face of $\sigma$.
    Let $p$ be the I-horn pair containing $\sigma$ (see \cref{lem:esdi-esdpi-conn-inner-anod-horn-pair-unique}).
    Then exactly one of the following holds:
    \begin{enumerate}
      \item\label{item:lem-esdi-esdpi-conn-inner-anod-horn-pair-order:eq}%
      $p=(\sigma, \tau)$;
      \item\label{item:lem-esdi-esdpi-conn-inner-anod-horn-pair-order:lt}%
      the simplex $\tau$ is contained in a horn pair $q\in\mathcal{H}^{\mathrm{I}}$ such that $q\prec^{\mathrm{I}} p$;
      \item\label{item:lem-esdi-esdpi-conn-inner-anod-horn-pair-order:img}%
      the simplex $\tau$ is in the image of $f$.
    \end{enumerate}
  \end{itemize}
\end{lemma}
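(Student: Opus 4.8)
The plan is to mimic the structure of the proof of \cref{lem:esd-esdp-conn-inner-anod-horn-pair-order}. Just as in that case, the required property \property{} is stable under extension of the partial order, and any partial order on a finite set may be linearly extended to a well-order; hence it suffices to construct an irreflexive \emph{partial} order $\prec^{\mathrm{I}}$ on $\mathcal{H}^{\mathrm{I}}$ satisfying \property. I would build $\prec^{\mathrm{I}}$ by pulling back a lexicographic order on a product of copies of $\metanats$ along a suitably-chosen rank function $\phi^{\mathrm{I}}\colon\mathcal{H}^{\mathrm{I}}\to\metanats^r$. The key to choosing $\phi^{\mathrm{I}}$ is \cref{lem:esdi-esdpi-conn-inner-anod-ihorn-pair-propert}: an I-horn pair $p=(\sigma,\tau)$ is governed, on its zeroth part, by an $n'$-horn pair $(\sigma_0,\tau_0)$, for which we already have the rank $\phi(\sigma_0,\tau_0)=(\dim\sigma_0, u_{\sigma_0}(\dim\sigma_0,0), i)$ from the proof of \cref{lem:esd-esdp-conn-inner-anod-horn-pair-order}. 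So I would set, for an I-horn pair $p=(\sigma,\tau)$ with switch value $n'$ and zeroth-part $n'$-horn pair $p_0=(\sigma_0,\tau_0)$ (ordered so $\sigma_0$ is the core iff $\sigma$ is the core), something like
\[
  \phi^{\mathrm{I}}(p) \coloneqq \bigl(\dim\sigma,\; n',\; \phi(p_0)\bigr)\in\metanats\times\metanats\times\metanats^3,
\]
with the lexicographic order, and define $\prec^{\mathrm{I}}$ as the pullback of the lexicographic order along $\phi^{\mathrm{I}}$. (The precise tuple may need a minor adjustment once the case analysis below is carried out; the leading coordinate $\dim\sigma$ is surely needed, and $n'$ and $\phi(p_0)$ carry the rest of the information.)

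Next I would verify \property. Let $\sigma$ be a non-degenerate I-outsider and $\tau$ a proper face. If $\tau$ lies in the image of $f$, case \eqref{item:lem-esdi-esdpi-conn-inner-anod-horn-pair-order:img} holds and we are done, so assume $\tau$ is itself a (non-degenerate, automatically, since $Y$ is a nerve of a poset) I-outsider; let $p$ and $q$ be the I-horn pairs containing $\sigma$ and $\tau$ respectively, which exist and are unique by \cref{lem:esdi-esdpi-conn-inner-anod-horn-pair-unique}. Write $p=(\sigma_0',\sigma_1')$, $q=(\tau_0',\tau_1')$. If $\sigma$ is the periphery of $p$ or $\tau$ is the core of $q$, then $\dim\tau<\dim\sigma$ forces $\dim\tau_0'<\dim\sigma_0'$ (using $\dim(\text{core})=\dim(\text{periphery})+1$), hence $q\prec^{\mathrm{I}}p$ and case \eqref{item:lem-esdi-esdpi-conn-inner-anod-horn-pair-order:lt} holds. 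So we may assume $\sigma$ is the core of $p$ and $\tau$ is the periphery of $q$. If $\dim\tau<\dim\sigma-1$, again $q\prec^{\mathrm{I}}p$, so assume $\tau$ is a facet of $\sigma$. Now I would split on which facet $\tau$ is, using the zeroth/first-part decomposition. The facets of $\sigma$ that are I-outsiders come in three flavors relative to the switch position $k'$ of $\sigma$: those that delete an index $<k'$ (these are determined by a facet of the zeroth part $\sigma_0$), those that delete an index $>k'$ (determined by a facet of the first part), and the boundary cases around $k'$. For facets coming from the first part or from crossing the switch, the switch value typically drops or the first-part structure changes, and I expect to read off either $p=q$ (impossible here since $\tau$ is a periphery, $\sigma$ a core) or $q\prec^{\mathrm{I}}p$ directly. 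For facets coming from a facet of the zeroth part $\sigma_0$, I would invoke the property \property{} already established for the $n$-level well-order $\prec$ (\cref{lem:esd-esdp-conn-inner-anod-horn-pair-order}) applied to $\sigma_0$ and that facet: either the facet equals $\tau_0$ and $\sigma_0$'s facet is $\tau_0$ — giving $p=q$, again excluded — or the facet lies in an $n'$-horn pair $q_0\prec p_0$, whence $\phi(q_0)<\phi(p_0)$ lexicographically and (with $\dim$ and $n'$ unchanged) $q\prec^{\mathrm{I}}p$, or the facet lies in the image of the $n'$-level map, which I would translate back to $\tau$ lying in the image of $f$ — but that contradicts our standing assumption that $\tau$ is an I-outsider, so this sub-case does not actually arise. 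Mutual exclusivity of the three options in \property{} is immediate as in the earlier lemma.

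The main obstacle I anticipate is the bookkeeping of the switch position and switch value when passing to a facet: deleting a vertex of $\sigma$ at or near the switch position $k'$ can change $k'$, can change the switch value $n'$ (if the deleted vertex was the unique one realizing the minimum or maximum of the relevant range), and can turn a straight first part into a non-straight one — in which case the facet is not an I-outsider and lands in the image of $f$. Getting the list of cases exhaustive and correctly matching each to one of \eqref{item:lem-esdi-esdpi-conn-inner-anod-horn-pair-order:eq}, \eqref{item:lem-esdi-esdpi-conn-inner-anod-horn-pair-order:lt}, \eqref{item:lem-esdi-esdpi-conn-inner-anod-horn-pair-order:img} is the delicate part. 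A clean way to organize this is: first dispose of the case where $\tau$ has a different switch value than $\sigma$ (then either $\tau$ is in the image of $f$, or the first-part/switch structure pins down $q\prec^{\mathrm{I}}p$ via the $n'$-coordinate of $\phi^{\mathrm{I}}$), and only then, with the switch value fixed at $n'$, reduce the whole question to facets of the zeroth part and quote \cref{lem:esd-esdp-conn-inner-anod-horn-pair-order} essentially verbatim. The finiteness of $\mathcal{H}^{\mathrm{I}}$ is automatic from finiteness of $Y$, as before.
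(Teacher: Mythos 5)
Your proposal takes essentially the same route as the paper: rank I-horn pairs first by the dimension of the core, then by the switch value $n'$, then by the $n'$-level horn pair associated to the zeroth parts, and verify $(\star)$ by reducing the non-trivial case ($\sigma$ a core, $\tau$ a facet and periphery) to the $n'$-level lemma; the paper encodes the last two coordinates as the ordinal sum $\mathcal{H}^\bullet = \mathcal{H}^0 \star \dotsb \star \mathcal{H}^n$ and compares using the abstract well-orders $\prec^m$, which is cleaner than unwinding $\phi$ (note that $q_0 \prec^{n'} p_0$ does not literally give $\phi(q_0) < \phi(p_0)$, since $\prec^{n'}$ is a linear extension, so your third coordinate should be $p_0 \in \mathcal{H}^{n'}$ ordered by $\prec^{n'}$ rather than $\phi(p_0) \in \metanats^3$). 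Two small corrections to your verification sketch: the sub-case $p_0 = (\sigma_0,\tau_0)$ is not ``excluded'' but is precisely outcome~\eqref{item:lem-esdi-esdpi-conn-inner-anod-horn-pair-order:eq} ($p = (\sigma,\tau)$, via the injectivity of the rank map), and the case $i > k'$ of a first-part facet is ruled out not by it failing to be an outsider (deleting $(1,n')$ can keep $\tau$ an I-outsider) but because then $\tau_0 = \sigma_0$ would be simultaneously an $n'$-horn core and periphery, contradicting \cref{lem:esd-esdp-conn-inner-anod-core-not-periphery} --- though your dimension argument already disposes of that case once you notice $\tau$ is then forced to be a core rather than a periphery.
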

\begin{proof}
  The finiteness of $\mathcal{H}^{\mathrm{I}}$ follows from the finiteness of $Y$.

  In order to define an irreflexive well-order $\prec^{\mathrm{I}}$ on $\mathcal{H}^{\mathrm{I}}$,
  we need some auxiliary orders on other sets.
  For each $0\le m\le n$, take an irreflexive well-order $\prec^m$ on $\mathcal{H}^m$
  guaranteed to exist by \cref{lem:esd-esdp-conn-inner-anod-horn-pair-order}.
  Consider the following disjoint union of sets:
  \[
    \mathcal{H}^{\bullet} \coloneqq \bigcup_{0\le m\le n} \mathcal{H}^m 
    = \bigsqcup_{0\le m\le n} \mathcal{H}^m.
  \]
  Equip this with an irreflexive well-order $\prec^\bullet$ by considering this union as the join or
  the ordinal sum $H^0 \star H^1 \star \dotsb \star H^n$.
  That is, if $p\in\mathcal{H}^m$ and $q\in\mathcal{H}^{m'}$,
  then $p\prec^\bullet q$ precisely if either $m < m'$ or $m=m'$ and $p\prec^m q$.
  Consider the irreflexively well-ordered set $P \coloneqq \metanats \times \mathcal{H}^{\bullet}$,
  where the order is the lexicographic order.

  We define a map $\psi\colon \mathcal{H}^{\mathrm{I}}\to P$ as follows: for each I-horn pair
  $p=(\sigma, \tau)$ with the zeroth parts of the components forming an $n'$-horn pair $p_0$,
  we set $\psi(p) \coloneqq (\dim\sigma, p_0)$. We define the order $\prec^{\mathrm{I}}$ on $\mathcal{H}^{\mathrm{I}}$
  by $p\prec^{\mathrm{I}} q$ if and only if $\psi(p)\prec^\bullet\psi(q)$. 
  By \cref{lem:esdi-esdpi-conn-inner-anod-horn-pair-unique,%
  lem:esdi-esdpi-conn-inner-anod-ihorn-pair-propert,lem:esdi-esdpi-conn-inner-anod-outsider-cond},
  We see that $\psi$ is injective;
  hence $\prec^{\mathrm{I}}$ is an irreflexive well-order.

  We now show that $\prec^{\mathrm{I}}$ satisfies \property. Let $\sigma$ be a non-degenerate I-outsider,
  and $\tau$ be a proper face of $\sigma$. Since $Y$ is nerve of a poset, the simplex $\tau$
  is non-degenerate.
  By applying the same argument as in the proof of \cref{lem:esd-esdp-conn-inner-anod-horn-pair-order}
  to \cref{lem:esdi-esdpi-conn-inner-anod-horn-pair-unique,%
  lem:esdi-esdpi-conn-inner-anod-ihorn-pair-propert}, we see
  that the conditions \labelcref{item:lem-esdi-esdpi-conn-inner-anod-horn-pair-order:eq,%
  item:lem-esdi-esdpi-conn-inner-anod-horn-pair-order:lt,%
  item:lem-esdi-esdpi-conn-inner-anod-horn-pair-order:img} are mutually exclusive, and that
  the only non-trivial case to consider is when
  $\sigma$ is the core of some I-horn pair $p=(\sigma,\sigma')$ and $\tau$ is a facet of $\sigma$ and
  the periphery of another I-horn pair $q=(\tau',\tau)$.

  Let $k'$ be the switch position of $\sigma$ and $m$ be the switch value of $\sigma$.
  Say that $\tau$ is the $i$-th facet of $\sigma$. By the definition of I-horn pairs and
  \cref{lem:esd-esdp-conn-inner-anod-core-not-periphery}, in order that $\tau$ is the
  periphery of $q$, we must have $0 \le i \le k'$. If the switch value $m'$ of $\tau$ is not
  equal to $m$, then we have $i = k'$ and $m' < m$; therefore, $q\prec^{\mathrm{I}} p$.
  We consider the case $m' = m$. Let $p_0=(\sigma_0,\sigma'_0)$ and $q_0=(\tau'_0,\tau_0)$ be the
  $m$-horn pairs formed by the zeroth parts of the components of $p$ and $q$, respectively.
  Since $\tau_0$ is a facet of $\sigma_0$, the defining property of $\prec^m$ from
  \cref{lem:esd-esdp-conn-inner-anod-horn-pair-order} applies and we have exactly one of
  $p_0 = (\sigma_0, \tau_0) = q_0$ or $q_0\prec^m p_0$. In the former case, we have
  $p = (\sigma, \tau) = q$; in the latter, we have $q\prec^{\mathrm{I}} p$.
\end{proof}
}

Now we conclude this subsection by proving \cref{lem:esdi-esdpi-conn-inner-anod}:

\begin{proof}[Proof of \cref{lem:esdi-esdpi-conn-inner-anod}]\label{proof:lem:esdi-esdpi-conn-inner-anod}
  By \cref{lem:esdi-esdpi-conn-inner-anod-pushout-inj}, it suffices to prove that
  the inclusion of the image of $f$ into $Y$ is inner anodyne.
  By virtue of \cref{lem:esdi-esdpi-conn-inner-anod-horn-pair-unique,%
  lem:esdi-esdpi-conn-inner-anod-horn-pair-order}, the rest of the proof goes exactly
  the same as that of \cref{lem:esd-esdp-conn-inner-anod} in
  \cpageref{proof:lem:esd-esdp-conn-inner-anod}.
\end{proof}

\subsection{Main lemma for \texorpdfstring{$(\infty,1)$}{(∞,1)}-localization}
\label{subsec:down-last-infty-loc-shape-main-lem}

The purpose of this subsection is \cref{cor:dcpc-esdpc-loc-homotopy-extension}, which is
a summarized form of the results of this section. For the sake of conciseness, we begin with
a new notation:

\begin{definition}\label{def:dcp-dcpi-esdp-esdpi-pushout}
  Let $f\colon X\to Y$ be any map of simplicial sets. We shall define the simplicial sets
  $\mtsDcpC f$ and $\mtsESdpC f$ (C for ``(mapping) cylinder'')
  as those that fills in the following pushout squares:
  \[
    \begin{tikzcd}
      \mtsDcp X \arrow[d, "\mtsDcp(f)"'] \arrow[r, hook] & \mtsDcpI X \arrow[d] \\
      \mtsDcp Y \arrow[r, hook] & \mtsDcpC f
    \end{tikzcd}
    \quad\text{and}\quad
    \begin{tikzcd}
      \mtsESdp X \arrow[d, "\mtsESdp(f)"'] \arrow[r, hook] & \mtsESdpI X \arrow[d] \\
      \mtsESdp Y \arrow[r, hook] & \mtsESdpC f
    \end{tikzcd}
  \]
  Note that there is a canonical map $\mtsDcpC f\to\mtsESdpC f$. We shall write
  $W^C_{f}\subseteq (\mtsDcpC f)_1$ for the set of edges that are mapped to degenerate edges
  in $\mtsESdpC f$.
\end{definition}

\begin{proposition}\label{prop:dcp-esd-pushout-localization}
  Let $f\colon X\to Y$ be any map of simplicial sets. 
  Then the canonical map $\mtsDcpC f\to \mtsESdpC f$ exhibits $\mtsESdpC f$ as the
  localization of $\mtsDcpC f$ at $W^C_{f}$.
\end{proposition}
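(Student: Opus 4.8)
The plan is to factor the canonical map $\mtsDcpC f\to\mtsESdpC f$ through an intermediate ``$\mtsESd$-mapping cylinder'' and to identify the two halves with a universal localization and an inner anodyne map, respectively. Set $Z_f\coloneqq(\mtsESd Y)\cup_{\mtsESd X}(\mtsESdI X)$. Using the natural maps $\mtsDcp\Rightarrow\mtsESd$, $\mtsDcpI\Rightarrow\mtsESdI$ from \cref{def:dcp-esd-connecting-surj}, the maps $\mtsESd\Rightarrow\mtsESdp$, $\mtsESdI\Rightarrow\mtsESdpI$ from \cref{def:esd-esdp-connecting-inj}, and the commutativity recorded in \cref{lem:dcp-esd-esdp-i-comm}, one checks that the canonical map of \cref{def:dcp-dcpi-esdp-esdpi-pushout} factors as
\[
  \mtsDcpC f\xrightarrow{g}Z_f\xrightarrow{h}\mtsESdpC f,
\]
where $g$ is precisely the map appearing in \cref{cor:dcp-esd-conn-surj-pushout-univ-loc} and $h$ is precisely the map appearing in \cref{cor:esd-esdp-conn-pushout-inner-anod}.

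First I would extract what the two halves give. By \cref{cor:dcp-esd-conn-surj-pushout-univ-loc} the map $g$ is a universal localization; applying its defining property to the pullback along $\idmor[Z_f]$, this means $g$ exhibits $Z_f$ as the localization of $\mtsDcpC f$ at the set $W_1\coloneqq g^{-1}(\mathcal{D}_{Z_f})$, where $\mathcal{D}_{Z_f}$ denotes the set of degenerate edges of $Z_f$. By \cref{cor:esd-esdp-conn-pushout-inner-anod} the map $h$ is inner anodyne; hence $h$ is a monomorphism (inner anodyne maps are built from inner horn inclusions by pushout and transfinite composition, and monomorphisms are closed under these operations) and also a categorical equivalence.

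Next I would combine these observations. Since $h$ is a categorical equivalence, precomposition with $h$ induces an equivalence $h^{*}\colon\operatorname{Fun}(\mtsESdpC f,Q)\xrightarrow{\sim}\operatorname{Fun}(Z_f,Q)$ for every quasicategory $Q$; composing with the equivalence $g^{*}\colon\operatorname{Fun}(Z_f,Q)\xrightarrow{\sim}\operatorname{Fun}((\mtsDcpC f,W_1),Q^{\natural})$ from the previous paragraph shows that $(h\compos g)^{*}$ restricts to an equivalence $\operatorname{Fun}(\mtsESdpC f,Q)\xrightarrow{\sim}\operatorname{Fun}((\mtsDcpC f,W_1),Q^{\natural})$, i.e.\ $h\compos g$ exhibits $\mtsESdpC f$ as the localization of $\mtsDcpC f$ at $W_1$. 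It therefore only remains to prove $W_1=W^{C}_{f}$. Since the simplicial map $h$ carries degenerate edges to degenerate edges we get $W_1\subseteq(h\compos g)^{-1}(\mathcal{D}_{\mtsESdpC f})=W^{C}_{f}$ at once; for the opposite inclusion I would invoke that a monomorphism of simplicial sets sends non-degenerate simplices to non-degenerate simplices (if $m(\sigma)=Z(\pi)(\nu)$ with $\pi$ a non-identity surjection, then for any section $s$ of $\pi$ one has $A(\pi)(A(s)(\sigma))=\sigma$ by injectivity of $m$, so $\sigma$ is degenerate), applied to $h$: if $h(g(e))$ is degenerate then $g(e)$ is degenerate, whence $W^{C}_{f}\subseteq W_1$.

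The genuine content of the statement is already packaged in \cref{cor:dcp-esd-conn-surj-pushout-univ-loc,cor:esd-esdp-conn-pushout-inner-anod}; the only delicate point in the assembly is the last one, namely verifying that the set of edges at which the universal localization $g$ localizes (described in terms of $Z_f$) is exactly the prescribed set $W^{C}_{f}$ (described in terms of $\mtsESdpC f$). The supporting generalities used above — that inner anodyne maps are monomorphisms and categorical equivalences, and that postcomposition of a localization with a categorical equivalence is a localization at the same set of edges — are standard and could alternatively be cited from \cite{kerodon}.
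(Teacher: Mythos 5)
Your proposal is correct and follows essentially the same route as the paper: factor the canonical map through $Z_f = (\mtsESd Y)\cup_{\mtsESd X}(\mtsESdI X)$, invoke \cref{cor:dcp-esd-conn-surj-pushout-univ-loc} for the universal-localization half and \cref{cor:esd-esdp-conn-pushout-inner-anod} for the inner anodyne half, and then identify the marked edges. The paper's one-line justification of the edge identification appeals to the fact that $h$ is a monomorphism that is bijective on vertices; your argument observes that the monomorphism property alone (which already gives that $h$ reflects degeneracy of edges in its image) suffices, which is a slightly more economical way to finish the same calculation.
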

\begin{proof}
  Follows from \cref{cor:esd-esdp-conn-pushout-inner-anod,cor:dcp-esd-conn-surj-pushout-univ-loc};
  note that an edge in $\mtsESdpC f$ is degenerate precisely if it is the image of
  a degenerate edge in $\mtsESd Y \cup_{\mtsESd X} \mtsESdI X$, for the map between these
  two simplicial sets is a monomorphism and an isomorphism on the set of vertices.
  See \href{https://kerodon.net/tag/01MX}{Remark 01MX} and
  \href{https://kerodon.net/tag/02M1}{Proposition 02M1} in \cite{kerodon} for more details.
\end{proof}

\begin{corollary}\label{cor:dcpc-esdpc-loc-homotopy-extension}
  Let $u\colon X\to Y$ be any map of simplicial sets, and $A \subseteq B$ be
  an inclusion of simplicial sets. Set $K\coloneqq B\times\mtsDcpC u$ and $L\coloneqq B\times\mtsESdpC u$.
  Consider the following unions of simplicial subsets:
  \begin{alignat*}{3}
    K' &\coloneqq{}&(A\times\mtsDcpC u) &\cup (B\times (X\times\set{1})) &&\subseteq K, \\
    L' &\coloneqq{}&(A\times\mtsESdpC u) &\cup (B\times (X\times\set{1})) &&\subseteq L.
  \end{alignat*}
  Let $Q\in\mtcSSet$ be a quasi-category, and assume that there is a following commutative
  square of simplicial sets:
  \begin{equation} \label{eq:cor-dcpc-esdpc-loc-homotopy-extension:lkq}
    \begin{tikzcd}
      K' \arrow[d, hook] \arrow[r] & L' \arrow[d, "g"] \\
      K \arrow[r, "f"'] & Q
    \end{tikzcd}
  \end{equation}
  Here, unlabeled edges represent canonical maps (see
  \cref{def:dcp-dcpi-esdp-esdpi-pushout} for $K'\to L'$),
  and the labeled morphisms $f$ and $g$ are
  arbitrary. Suppose that $f$ sends the edges in the following subset
  to equivalences in $Q$:
  \[
  W_K\coloneqq\set{(\idmor[b], e)}[b \in B_0, e \in W^C_{u}]
  \subseteq B_1 \times (\mtsDcpC u)_1 = K_1.
  \]
  Then there exists an
  extension $h\colon L\to Q$ of $g$, such that the composite
  $K \to L \overset{h}{\to} Q$ is naturally equivalent to $f$ relatively to $K'$.
\end{corollary}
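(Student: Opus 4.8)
The plan is to deduce this from \cref{prop:dcp-esd-pushout-localization} together with the closure properties of localizations under products and pushouts that were recorded in \cref{sec:cat-prel-simplicial-quasicat}. First I would observe that, by \cref{prop:dcp-esd-pushout-localization}, the canonical map $\mtsDcpC u\to\mtsESdpC u$ exhibits $\mtsESdpC u$ as a localization of $\mtsDcpC u$ at $W^C_u$; hence by \cref{lem:inf-loc-closure-product}, the map $B\times\mtsDcpC u\to B\times\mtsESdpC u$, i.e.\ $K\to L$, exhibits $L$ as a localization of $K$ at $W_K = \set{(\idmor[b],e)}[b\in B_0,\, e\in W^C_u]$. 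The point is then that the square \eqref{eq:cor-dcpc-esdpc-loc-homotopy-extension:lkq} presents an instance of the homotopy extension problem that the definition of a localization map solves against the quasi-category $Q$.

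Concretely, the next step is to identify $K'\to L'$ as itself a localization map so that the diagram \eqref{eq:cor-dcpc-esdpc-loc-homotopy-extension:lkq} becomes a commutative square of the form ``localization map, arbitrary map to $Q$ inverting the marked edges'' stacked over ``localization map'', and then invoke the pushout-stability of localizations, \cite[\href{https://kerodon.net/tag/01N7}{Proposition 01N7}]{kerodon}. For this I would write $K' = (A\times\mtsDcpC u)\cup_{A\times(X\times\set{1})}(B\times(X\times\set{1}))$ and similarly for $L'$, using that $X\times\set{1}\hookrightarrow\mtsDcpI X$ and $X\times\set{1}\hookrightarrow\mtsDcp Y\cup_{\mtsDcp X}\mtsDcpI X$ are the inclusions from \cref{def:id-times-1-incl-to-dcpi-esdi}, so that $X\times\set{1}\hookrightarrow\mtsDcpC u$; and the corresponding edges of $\mtsDcpC u$ are mapped to degenerate edges in $\mtsESdpC u$, hence lie in $W^C_u$, so the map $X\times\set 1\hookrightarrow\mtsDcpC u$ is (trivially) the localization map that is the identity on the nose after composing into $\mtsESdpC u$. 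Then $A\times\mtsDcpC u\to A\times\mtsESdpC u$ is a localization at $W^C_u$ over $A_0$ by \cref{lem:inf-loc-closure-product}, and $B\times(X\times\set1)\to B\times(X\times\set1)$ is the identity; gluing these localization maps along the common leg via \cite[\href{https://kerodon.net/tag/01N7}{Proposition 01N7}]{kerodon} and \cite[\href{https://kerodon.net/tag/01F7}{Definition 01F7}]{kerodon} (checking the relevant squares are categorical pushouts, which holds since all the maps in sight are monomorphisms and isomorphisms on vertices) shows $K'\to L'$ is a localization at the image of $W_K$ inside $K'_1$.

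Having arranged that $K'\hookrightarrow K$ is a cofibration between simplicial sets both equipped with localization maps to $L'$ and $L$ respectively, and that the marked edges match up, the map $K\to L$ is a localization ``relative to $K'\to L'$'' in the precise sense that makes the lifting argument work: given $f\colon K\to Q$ inverting $W_K$ and $g\colon L'\to Q$ compatible with $f|_{K'}$, the universal property of the localization $K\to L$ (which by \cite[\href{https://kerodon.net/tag/01MP}{Definition 01MP}]{kerodon} gives an equivalence $\operatorname{Fun}(L,Q)\to\operatorname{Fun}((K,W_K),Q^\natural)$) provides, after restricting along $K'\hookrightarrow K$ and comparing with the equivalence for $K'\to L'$, a map $h\colon L\to Q$ restricting to $g$ on $L'$ together with a natural equivalence $h\circ(K\to L)\simeq f$; the compatibility over $K'$ makes this equivalence rel $K'$. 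Technically, one packages this by contemplating the square of mapping quasi-categories
\[
\begin{tikzcd}
\operatorname{Fun}(L,Q) \arrow[r] \arrow[d] & \operatorname{Fun}(L',Q) \arrow[d] \\
\operatorname{Fun}((K,W_K),Q^\natural) \arrow[r] & \operatorname{Fun}((K',W_{K'}),Q^\natural)
\end{tikzcd}
\]
whose vertical maps are equivalences and which is a (homotopy) pullback because $\operatorname{Fun}(-,Q)$ sends the pushout $K = K\cup_{K'}K'$... wait, rather because $L$ is the relevant pushout; the upshot is that the pair $(f,g)$, being a vertex of the pullback of the bottom-right cospan, lifts to a vertex of $\operatorname{Fun}(L,Q)$ over it, which is exactly the desired $h$ with its equivalence to $f$ rel $K'$.

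The main obstacle I expect is the last bookkeeping step: making precise the sense in which $K\to L$ is a localization \emph{relative to} the subobjects $K'\subseteq K$, $L'\subseteq L$, and extracting from the equivalence $\operatorname{Fun}(L,Q)\xrightarrow{\sim}\operatorname{Fun}((K,W_K),Q^\natural)$ not just an extension $h$ but one that restricts \emph{strictly} to $g$ on $L'$ (as opposed to up to equivalence). I would handle this by first using the equivalence to produce \emph{some} $h_0$ with $h_0\circ(K\to L)\simeq f$, then using that $g$ and $h_0|_{L'}$ become equivalent in $\operatorname{Fun}(L',Q)$ (because both are compatible with $f|_{K'}$ under the equivalence for $K'\to L'$), and finally correcting $h_0$ to a strictly-$g$-restricting $h$ by a homotopy-extension/lifting argument against the cofibration $L'\hookrightarrow L$ and the isofibration-like structure on $\operatorname{Fun}(L,Q)$ — this is where \cite[\href{https://kerodon.net/tag/01F7}{Definition 01F7}]{kerodon} and the Joyal model structure enter. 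Everything else is a formal assembly of \cref{prop:dcp-esd-pushout-localization}, \cref{lem:inf-loc-closure-product}, and the pushout-stability of localizations.
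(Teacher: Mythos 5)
Your overall plan matches the paper's: establish that $K\to L$ and $K'\to L'$ are localizations (via \cref{prop:dcp-esd-pushout-localization}, \cref{lem:inf-loc-closure-product}, and the pushout-stability \cite[\href{https://kerodon.net/tag/01N7}{Proposition 01N7}]{kerodon}), then pass to function complexes and extract $h$ from the equivalence $\operatorname{Fun}(L,Q)\simeq\operatorname{Fun}((K,W_K),Q^\natural)$. Two parts of your proposal, however, are off.

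First, your justification of the homotopy-pullback claim is garbled. You write ``wait, rather because $L$ is the relevant pushout,'' but $L$ is not a pushout of $K\leftarrow K'\to L'$ in any sense that matters here. The reason the square of function complexes is a homotopy pullback is much simpler: both vertical maps are (categorical) equivalences, and any commutative square whose two parallel sides are equivalences is homotopy cartesian. Invoking any pushout of $K$, $K'$, $L'$ is a red herring.

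Second — and this is the substantive gap — you correctly flag the strictness issue (you need $h|_{L'}=g$ on the nose), but your proposed fix tracks only that and not the \emph{rel $K'$} clause in the conclusion. After correcting $h_0$ to $h$ using the isofibration $\operatorname{Fun}(L,Q)\to\operatorname{Fun}(L',Q)$, you get $h\simeq h_0$ and hence $h\compos(K\to L)\simeq f$, but nothing in your sketch forces that composite equivalence to restrict to the identity equivalence over $K'$. The paper avoids this entirely by showing that \emph{all three} restriction maps (including $\operatorname{Fun}((K,W_K),Q^\natural)\to\operatorname{Fun}((K',W_{K'}),Q^\natural)$, via a pullback trick and \cite[\href{https://kerodon.net/tag/01H4}{Corollary 01H4}]{kerodon}) are isofibrations of quasi-categories, then taking the \emph{strict fibers} $F_g$ over $g$ and $F_f$ over $f|_{K'}$. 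These strict fibers are homotopy fibers because of the isofibration property, and the horizontal equivalences then restrict to an equivalence $F_g\xrightarrow{\sim}F_f$. Picking a vertex $h\in F_g$ equivalent to $f\in F_f$ gives $h|_{L'}=g$ strictly by definition of $F_g$, and the equivalence $h\compos(K\to L)\simeq f$ lives \emph{inside} $F_f$, hence is automatically rel $K'$. You would need to fill in this fiber argument (or something equivalent) to close the rel-$K'$ gap; the rest of your plan is sound and in the same spirit as the paper.
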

\begin{proof}
  By \cref{prop:dcp-esd-pushout-localization} and some simple computation,
  we see that the canonical $K\to L$ exhibits $L$ as the localization
  of $K$ at $W_K$. Moreover, we claim that its restriction $K'\to L'$ also exhibits $L'$ as the
  localization of $K'$ at:
  \[
  W_{K'}\coloneqq W_K \cap K' = (A\times W^C_{u}) \cup \set{\idmor[p]}[{p \in (B\times(X\times\set{1}))_0}].
  \]
  To see the claim, note that the subcomplexes $K'$ and $L'$ are the following
  pushouts, and that $K'\to L'$ is induced by the obvious transformation of the pushout-defining spans:
  \begin{align*}
    K' &= (A\times\mtsDcpC u) \cup_{A\times(X\times\set{1})} (B\times(X\times\set{1})), \\
    L' &= (A\times\mtsESdpC u) \cup_{A\times(X\times\set{1})} (B\times(X\times\set{1})).
  \end{align*}
  Each of the three consituent maps of the transformation, say $U \to V$, exhibits $V$ as the localization
  of $U$ at $W_K \cap U$. Since the two arrows in each of the two spans are inclusions,
  we derive from \cite[\href{https://kerodon.net/tag/01N7}{Proposition 01N7}]{kerodon}
  that the localization property is inherited by the pushout, as we have claimed.

  Apply $\operatorname{Fun}(\bullet,Q)$ to the canonical commutative square
  \eqref{eq:cor-dcpc-esdpc-loc-homotopy-extension:lk} below to get another one.
  The latter square, by the definition of localizations, admits the following factorization
  \eqref{eq:cor-dcpc-esdpc-loc-homotopy-extension:fun}
  through subcomplexes, where the arrows with $\sim$ are equivalences of quasi-categories:
  \begin{gather}
    \begin{tikzcd}[ampersand replacement=\&]
      L \arrow[from=r] \arrow[from=d, hook] \& K \arrow[from=d, hook] \\
      L' \arrow[from=r] \& K'
    \end{tikzcd}\label{eq:cor-dcpc-esdpc-loc-homotopy-extension:lk}\\
    \begin{tikzcd}[ampersand replacement=\&]
      \operatorname{Fun}(L,Q) \arrow[r, "\sim"] \arrow[d]
      \&\operatorname{Fun}((K, W_K), Q^\natural) \arrow[r, hook] \arrow[d]
      \&\operatorname{Fun}(K,Q) \arrow[d] \\
      \operatorname{Fun}(L',Q) \arrow[r, "\sim"]
      \&\operatorname{Fun}((K', W_{K'}), Q^\natural) \arrow[r, hook]
      \&\operatorname{Fun}(K',Q)
    \end{tikzcd}\label{eq:cor-dcpc-esdpc-loc-homotopy-extension:fun}
  \end{gather}

  From $L' \subseteq L$, $K' \subseteq K$, and $W_{K'} \subseteq W_K$, we see that the
  vertical maps in \eqref{eq:cor-dcpc-esdpc-loc-homotopy-extension:fun} are isofibrations of quasi-categories.
  To detail the derivation, for the left and the right vertical arrows, you may use
  \cite[\href{https://kerodon.net/tag/01F3}{Corollary 01F3}]{kerodon}, $L' \subseteq L$,
  and $K' \subseteq K$. Next, we derive from \cite[\href{https://kerodon.net/tag/01MN}{Remark 01MN}]{kerodon}
  that $\operatorname{Fun}((K', W_{K'}), Q^\natural) \hookrightarrow
  \operatorname{Fun}(K',Q)$ is an isofibration of quasi-categories. Now note that the outer rectangle in the
  following commutative diagram is a pullback:
  \[
  \begin{tikzcd}
    \operatorname{Fun}((K', W_{K'}), Q^\natural) \arrow[rr] \arrow[d, equal]
    && \operatorname{Fun}((K, W_K), Q^\natural) \arrow[d, hook] \\
    \operatorname{Fun}((K', W_{K'}), Q^\natural) \arrow[r, hook]
    & \operatorname{Fun}(K',Q) \arrow[r]
    & \operatorname{Fun}(K,Q)
  \end{tikzcd}
  \]
  Since the lower horizontal composite is an isofibration of quasi-categories, 
  \cite[\href{https://kerodon.net/tag/01H4}{Corollary 01H4}]{kerodon} implies that 
  $\operatorname{Fun}((K, W_K), Q^\natural) \to \operatorname{Fun}((K', W_{K'}), Q^\natural)$
  is an isofibration of quasi-categories.

  We have assumed in \eqref{eq:cor-dcpc-esdpc-loc-homotopy-extension:lkq} that
  $g\in\operatorname{Fun}(L',Q)$ goes to $\left. f \right|_{K'} \in \operatorname{Fun}((K', W_{K'}), Q^\natural)$
  in the diagram \eqref{eq:cor-dcpc-esdpc-loc-homotopy-extension:fun}. 
  Let us write $F_g$ for the fiber of $\operatorname{Fun}(L,Q)\to\operatorname{Fun}(L',Q)$ over $g$,
  and $F_f$ for the fiber of $\operatorname{Fun}((K, W_K), Q^\natural)\to\operatorname{Fun}((K', W_{K'}), Q^\natural)$
  over $\left. f \right|_{K'}$.
  It suffices to show that the restriction $F_g\to F_f$ of the middle vertical map
  in \eqref{eq:cor-dcpc-esdpc-loc-homotopy-extension:fun} is an equivalence of quasi-categories, for the map $h$ we seek
  is a vertex of $F_g$ that maps to an object equivalent to $f$ in $F_f$.

  Consider the following 1-categorical pullback diagrams in $\mtcSSet$ that define $F_g$ and $F_f$:
  \begin{align*}
    &
    \begin{tikzcd}[ampersand replacement=\&]
      F_g \arrow[d] \arrow[r, hook] \& \operatorname{Fun}(L,Q) \arrow[d] \\
      \mtyndSpx\deltaBra{0} \arrow[r, hook, "g"'] \& \operatorname{Fun}(L',Q)
    \end{tikzcd}
    &
    \begin{tikzcd}[ampersand replacement=\&]
      F_f \arrow[d] \arrow[r, hook] \& \operatorname{Fun}((K, W_K), Q^\natural) \arrow[d] \\
      \mtyndSpx\deltaBra{0} \arrow[r, hook, "\left. f \right|_{K'}"'] \& \operatorname{Fun}((K', W_{K'}), Q^\natural)
    \end{tikzcd}
  \end{align*}
  In these squares, the vertices are all quasi-categories, and as we have seen, the right vertical arrows are both
  isofibrations of quasi-categories. Therefore these pullbacks are in fact homotopy pullback squares with respect to
  the Joyal model structure on $\mtcSSet$, which are equivariant under categorical equivalences,
  which shows our sufficient claim for our corollary.
\end{proof}

\section{The proof of \texorpdfstring{$(\infty,1)$}{(∞,1)}-localization}
\label{sec:down-last-infty-loc}

Now, we are finally ready to show that $\last[\Gamma]$ is a localization
map, where $\Gamma$ is
one of the four categories $\int\catNerve(C)$, $\int\catNerve^{{-},{+}}(C)$, $\Downstar(C)$, and $\Down(C)$.
We begin with the easiest case, $\Gamma=\int\catNerve(C)$.

\begin{proposition}
  The simplicial map $\nerve(\last)\colon\nerve(\int\catNerve(C))\to\nerve(C)$ exhibits the
  domain $\nerve(\int\catNerve(C))$ as the localization of $\nerve(C)$ at $\last$-weak equivalences.
\end{proposition}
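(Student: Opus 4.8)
The plan is to obtain this statement as a formal consequence of the reflective structure already exhibited in \cref{lem:groth-tot-last-localiz}, transported through the nerve functor, and then to apply Lurie's criterion for universal localizations. Recall from the proof of that lemma that $\last\colon\int\catNerve(C)\to C$ has the fully faithful right adjoint
\[
  i\colon C \to \textstyle\int\catNerve(C);\quad x \mapsto (\deltaBra{0},x),\quad (f\colon x\to y)\mapsto(\idmor[\deltaBra{0}],f),
\]
and that $\last\compos i = \idmor[C]$ holds strictly. Write $\eta\colon\idmor[\int\catNerve(C)]\Rightarrow i\compos\last$ for the unit of this adjunction; unwinding \cref{def:down-fctr-last} one checks that $\eta_{(\deltaBra{n},X)}$ is the morphism $(\alpha,\theta)\colon(\deltaBra{n},X)\to(\deltaBra{0},X(n))$ with $\alpha\colon\deltaBra{n}\to\deltaBra{0}$ the unique such map and $\theta_i = X(\uniqmor_{i,n})$, so that $\last(\eta_{(\deltaBra{n},X)}) = \idmor[X(n)]$; in other words $\last\whiskl\eta = \idmor[\last]$.

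Now I would apply the nerve functor. Then $\nerve(i)$ is a section of $\nerve(\last)$, and $\nerve(\eta)$ is a natural transformation $\idmor\Rightarrow\nerve(i)\compos\nerve(\last)$ with $\nerve(\last)\whiskl\nerve(\eta) = \idmor[\nerve(\last)]$; hence $\nerve(\eta)$ determines an edge of the mapping space $\operatorname{Fun}_{/\nerve(C)}(\nerve(\last),\nerve(\last))$ (see \cite[\href{https://kerodon.net/tag/01AB}{Construction 01AB}]{kerodon} for the notation) joining $\idmor$ to $\nerve(i)\compos\nerve(\last)$, so that these two maps lie in the same connected component. By \cite[\href{https://kerodon.net/tag/04JT}{Proposition 04JT}]{kerodon} the simplicial map $\nerve(\last)$ is therefore a universal localization in the sense of \cite[\href{https://kerodon.net/tag/02M0}{Definition 02M0}]{kerodon}. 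Specializing condition (2) of that definition to the identity map $\nerve(C)\to\nerve(C)$, we conclude that $\nerve(\last)$ exhibits $\nerve(C)$ as an $(\infty,1)$-localization of $\nerve(\int\catNerve(C))$ at the preimage under $\nerve(\last)$ of the degenerate edges of $\nerve(C)$. Since a degenerate edge of $\nerve(C)$ is exactly an identity morphism of $C$, this preimage is precisely $\weqlast[\int\catNerve(C)]$ by \cref{def:down-weq-last}, which is the content of the proposition (the localization map being $\nerve(\last)$, whose codomain is $\nerve(C)$).

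I do not expect any genuine obstacle here: the whole argument is a routine transport of the $1$-categorical adjunction, and the only point requiring an actual computation is the identity $\last\whiskl\eta = \idmor[\last]$, which falls straight out of the explicit formula for $\last$ in \cref{def:down-fctr-last}. The reflective trick used here is, by contrast, unavailable for the other three choices of $\Gamma$, which is why those cases must be handled by the more elaborate machinery of \cref{sec:down-last-infty-loc-shape}.
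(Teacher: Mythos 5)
Your proof is correct and takes essentially the same route as the paper, which simply cites the reflective-localization structure established in the proof of \cref{lem:groth-tot-last-localiz}. You make explicit the step the paper leaves implicit — namely, how that $1$-categorical reflection transports along the nerve to an $(\infty,1)$-localization — by verifying $\last\circ i=\idmor$ and $\last\whiskl\eta=\idmor[\last]$ and then invoking Kerodon's \href{https://kerodon.net/tag/04JT}{Proposition 04JT}, which is the same criterion the paper itself uses in the proof of \cref{lem:last-functor-overcat-n-univ-loc}; the identification of the preimage of degenerate edges with $\weqlast[\int\catNerve(C)]$ is likewise immediate from \cref{def:down-weq-last}.
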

\begin{proof}
  This follows from the proof of \cref{lem:groth-tot-last-localiz}, which shows that
  ${\last}\colon\int\catNerve(C)\to C$ is a refective localization.
\end{proof}

We now treat the remaining cases, which are of our main interest. 

\begin{notation}
  In the following discussion, we set $S=\nerve(C)$, where $C$ is the Reedy category
  that was fixed throughout this paper, and $T=\nerve(\Gamma)$, where $\Gamma$ is
  any of the three categories $\int\catNerve^{{-},{+}}(C)$, $\Downstar(C)$,
  and $\Down(C)$. For the purpose of aesthetics,
  We shall abbreviate as $\lambda={\last}$
  the functor $\nerve(\last)\colon T\to S$ of quasi-categories corresponding to
  $\last\colon\Gamma\to C$ from \cref{def:down-fctr-last}.
\end{notation}

We begin with defining the $(\infty,1)$-diagrams used in the proof of our
$(infty,1)$-localization theorem.

\begin{lemma}\label{lem:down-and-orig-dcp-esdp-maps}
  There are simplicial maps
  $D\colon \mtsDcp S \to T$, $D^I\colon \mtsDcpI T \to T$,
  $E\colon \mtsESdp S \to S$, and $E^I\colon \mtsESdpI T \to S$
  that satisfy the following properties:
  \begin{enumerate}
    \item \label{item:lem-down-and-orig-dcp-esdp-maps:comm}%
    The following diagram commutes:
    \[
      \begin{tikzcd}[row sep=small, column sep=small]
        T \arrow[rrrr, "\lambda={\last}"] \arrow[dddd, equal]
        &&&& S \arrow[dddd, equal]\\
        & \mtsDcp S \arrow[ul, "D"]\arrow[r, twoheadrightarrow]
        & \mtsESd S \arrow[r, hook]
        & \mtsESdp S \arrow[ur, "E"']
        & \\
        & \mtsDcp T \arrow[u, "\lambda"'] \arrow[d, hook] \arrow[r, twoheadrightarrow]
        & \mtsESd T \arrow[u, "\lambda"'] \arrow[d, hook] \arrow[r, hook]
        & \mtsESdp T \arrow[u, "\lambda"'] \arrow[d, hook]
        & \\
        & \mtsDcpI T \arrow[ld, "D^I"'] \arrow[r, twoheadrightarrow]
        & \mtsESdI T \arrow[r, hook]
        & \mtsESdpI T \arrow[rd, "E^I"]
        & \\
        T \arrow[rrrr, "\lambda={\last}"']
        &&&& S
      \end{tikzcd}
    \]
    \item \label{item:lem-down-and-orig-dcp-esdp-maps:retr}%
    The maps $D^I$ and $E$ are retractions of the canonical
    injections.
    \item \label{item:lem-down-and-orig-dcp-esdp-maps:cm-last}%
    We have the following commutative square:
    \[
      \begin{tikzcd}
        T \times \mtyndSpx\deltaBra{1} \arrow[r, hook] \arrow[d, two heads, "\mathrm{proj}"']
        & \mtsESdpI T \arrow[d, "E^I"] \\
        T \arrow[r, "\lambda=\last"']
        & S
      \end{tikzcd}
    \]
    \item \label{item:lem-down-and-orig-dcp-esdp-maps:weq}%
    The map $D\colon \mtsDcp S \to T$ sends the inverse image
    of degenerate edges under $\mtsDcp S \twoheadrightarrow \mtsESd S$
    to $\last$-weak equivalences in $T$.
    \item \label{item:lem-down-and-orig-dcp-esdp-maps:weqI}%
    Let $e\in(\mtsDcpI T)_1$ be an edge.
    If the image of $e$ under $\mtsDcpI T \twoheadrightarrow \mtsESdI T$
    is either degenerate or 
    in the subcomplex $\mtsESdI(\mtsSk{0} T)\subseteq\mtsESdI T$,
    then the map $D^I$ sends $e$ to a $\last$-weak equivalence in $T$.
  \end{enumerate}
\end{lemma}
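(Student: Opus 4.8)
The plan is to construct the four maps $D$, $D^I$, $E$, $E^I$ by exhibiting them on representables and invoking the colimit-preserving property of the functors $\mtsDcp$, $\mtsDcpI$, $\mtsESdp$, $\mtsESdpI$, so that it suffices to define everything compatibly on the cosimplicial objects $\mtcDcp$, $\mtcDcpI$, $\mtpESdp$, $\mtpESdpI$. First I would deal with $E$ and $E^I$, which are the conceptually simpler pair: the poset $\mtpESdp\deltaBra{n}=\functCat{\deltaBra{1}}{\deltaBra{n}}$ maps to $\deltaBra{n}$ by evaluation at $1\in\deltaBra{1}$, i.e.\ $f\mapsto f(1)$, and this is a retraction of the constant inclusion from \cref{def:id-esdp-connecting-inj}; similarly $\mtpESdpI\deltaBra{n}\to\deltaBra{n}$ sends $(0,f)\mapsto f(1)$ and $(1,x)\mapsto x$. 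Applying $\nerve$ and left Kan extending gives $E\colon \mtsESdp S\to S$ and $E^I\colon\mtsESdpI S\to S$; but we need $E^I$ with domain $\mtsESdpI T$ and codomain $S=\nerve(C)$, so I would instead precompose with $\mtsESdpI(\lambda)\colon\mtsESdpI T\to\mtsESdpI S$. This construction makes \labelcref{item:lem-down-and-orig-dcp-esdp-maps:retr} immediate for $E$, and the commutative square in \labelcref{item:lem-down-and-orig-dcp-esdp-maps:cm-last} follows by unwinding: the inclusion $T\times\mtyndSpx\deltaBra{1}\hookrightarrow\mtsESdpI T$ from \cref{def:id-esdp-connecting-inj} sends $(t,0)\mapsto(0,\mathrm{const}_t)$ and $(t,1)\mapsto(1,t)$, both of which $E^I$ collapses to $\last(t)$.

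The harder pair is $D$ and $D^I$, because these must actually reproduce the combinatorial constructions from the proof of \cref{prop:downstar-last-localiz-factor-exist}. For $D\colon\mtsDcp S\to T$: a $k$-simplex of $\mtsDcp\nerve(C)$ amounts to a functor $\deltaBra{n}\to C$ — an $n$-chain $X_0\to\cdots\to X_n$ in $C$ — together with a $k$-simplex of $\nerve(\mtcDcp\deltaBra{n})$, i.e.\ a chain of objects $(x_i,\alpha_i\colon\deltaBra{m_i}\to\deltaBra{n})$ with $x_i\le\alpha_i(0)$. To each vertex $(x,\alpha\colon\deltaBra{m}\to\deltaBra{n})$ I would assign the object of $\Gamma$ obtained from the Reedy factorization of the composite $X_x\to X_{\alpha(0)}\to\cdots\to X_{\alpha(m)}$: more precisely, factoring $X_x\to X_{\alpha(j)}$ as $X_x\twoheadrightarrow Z_j\rightarrowtail X_{\alpha(j)}$ (by iterating the Reedy factorization along the chain) gives a chain $Z_0\twoheadrightarrow\cdots\twoheadrightarrow Z_m$ in $C_{-}$ — i.e.\ an object $(\deltaBra{m}, Z)\in\Ob\int\catNerve^{{-},{+}}(C)$, and one checks this actually lands in $\int\catNerve^{{--},{+}}_{+}(C)$ after passing to $\Downstar(C)$, as the explicit diagram \eqref{eq:diagram-dcp-2-simplex-concrete} suggests. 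The functoriality — that morphisms of $\mtcDcp\deltaBra{n}$ and morphisms of $\mtcSimpx$ both act compatibly, and that the various face and degeneracy maps match — is where the diagrams \eqref{eq:lem-downstar-last-localiz-factor-exist:degen-functorial-2} and \eqref{eq:lem-downstar-last-localiz-factor-exist:funct-welldef-3} from the earlier proof get reused; the key input is the uniqueness of Reedy factorizations in $C$, $\int\catNerve^{{-},{+}}(C)$, and $\mtcSimpx$. For $D^I\colon\mtsDcpI T\to T$: here the domain involves chains in $\mtcDcpI\deltaBra{n}$, whose two parts are objects $(0,(x,\alpha))$ and $(1,y)$; the $(0,-)$ part is handled as for $D$, and the $(1,-)$ part is mapped identically (using the inclusion $\mtyndSpx\deltaBra{n}\hookrightarrow\mtsDcpI\mtyndSpx\deltaBra{n}$ of \cref{def:id-times-1-incl-to-dcpi-esdi} as the identity-carrying part), with the Hom-sets $\HomOf[\mtcDcpI\deltaBra{n}]{(0,(x,\alpha))}{(1,y)}=\HomOf[\deltaBra{n}]{\alpha(k)}{y}$ giving exactly the $\last$-weak-equivalence-then-arrow bridge. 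This makes $D^I$ a retraction of the canonical injection $T\hookrightarrow\mtsDcpI T$, giving \labelcref{item:lem-down-and-orig-dcp-esdp-maps:retr} for $D^I$.

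For the commutativity claim \labelcref{item:lem-down-and-orig-dcp-esdp-maps:comm}, I would check each small square on representables: the four squares in the middle two rows built from the connecting maps of \cref{subsec:down-last-infty-loc-shape-transf} commute by \cref{lem:dcp-esd-esdp-i-comm}, and the outer triangles $\mtsDcp S\to T\to S$ vs.\ $\mtsDcp S\twoheadrightarrow\mtsESd S\hookrightarrow\mtsESdp S\to S$, as well as the analogous one with $I$-decorations, reduce to the observation that $\last$ applied to the object $(\deltaBra{m},Z)$ attached to $(x,\alpha)$ is $Z_m$, which sits over $X_{\alpha(m)}=X_{\max\alpha}$, matching the map $\mtsESd S\to\mtsESdp S\to S$ that records $\max\alpha$ — precisely the content illustrated by the collapse $(x,\alpha)\mapsto(x,\max\alpha)$ in \eqref{eq:esd-2-simplex-example}. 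Finally \labelcref{item:lem-down-and-orig-dcp-esdp-maps:weq,item:lem-down-and-orig-dcp-esdp-maps:weqI}: an edge of $\mtsDcp S$ lying over a degenerate edge of $\mtsESd S$ is, by construction of the surjection in \cref{def:dcp-esd-connecting-surj}, one whose two endpoints $(x_0,\alpha_0)$ and $(x_1,\alpha_1)$ have $x_0=x_1$ and $\max\alpha_0=\max\alpha_1$; chasing through the Reedy-factorization recipe shows $D$ sends it to a morphism of $\Gamma$ whose image under $\last$ is an identity — i.e.\ a $\last$-weak equivalence — and the same bookkeeping, together with the fact that the $(1,-)$-part is carried identically and edges into $\mtsESdI(\mtsSk0 T)$ correspond to bridging morphisms $\uniqmor\colon\alpha(k)\to y$ that $\last$ trivializes, handles \labelcref{item:lem-down-and-orig-dcp-esdp-maps:weqI}. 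The main obstacle I anticipate is not any single deep idea but the sheer bookkeeping in verifying that the vertex-assignment for $D$ extends to a genuine simplicial map — i.e.\ that it respects all the simplicial operators and the composition in $\mtcDcp\deltaBra{n}$ — where one must invoke uniqueness of the various factorizations repeatedly and keep careful track of which arrows become invertible in $\Downstar(C)$; this is exactly the higher-dimensional systematization of the $2$-simplex picture \eqref{eq:diagram-dcp-2-simplex-concrete}.
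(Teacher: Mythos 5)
Your constructions of $D$ and $D^I$ are essentially the paper's (the paper phrases the vertex-assignment via a ``midpoint'' functor $Q \coloneqq \backwardinduce{(\iota_1)}\compos\Phi \colon \functCat{\deltaBra{1}}{C}\to C$, where $\Phi$ is the functorial Reedy factorization, and your iterated-Reedy-factorization recipe gives the same thing), and your remarks about property~\labelcref{item:lem-down-and-orig-dcp-esdp-maps:weq,item:lem-down-and-orig-dcp-esdp-maps:weqI} are on the right track since they depend only on $D$ and $D^I$. But your construction of $E$ and $E^I$ is wrong, and in a way that breaks the central commutativity requirement~\labelcref{item:lem-down-and-orig-dcp-esdp-maps:comm}.

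You define $E$ on cosimplicial posets by evaluation at $1$, i.e.\ $\functCat{\deltaBra{1}}{\deltaBra{n}}\ni f \mapsto f(1)$, so that on a simplex $\varphi\colon\deltaBra{n}\to C$ the induced $E_\varphi$ sends $f$ to the \emph{target} $\varphi(f(1))$. That choice does satisfy the retraction condition~\labelcref{item:lem-down-and-orig-dcp-esdp-maps:retr} and the small square~\labelcref{item:lem-down-and-orig-dcp-esdp-maps:cm-last}, which is probably why it felt right — but now look at the upper triangle of~\labelcref{item:lem-down-and-orig-dcp-esdp-maps:comm}. Chasing a vertex $(x,\alpha\colon\deltaBra{m}\to\deltaBra{n})$ of $\mtsDcp\mtyndSpx\deltaBra{n}$ around the two paths: via $D$ and then $\last$ you get $Z_m = Q(\varphi(\uniqmor_{x,\alpha(m)}))$, the \emph{midpoint} of the Reedy factorization; via the collapse $\mtsDcp\twoheadrightarrow\mtsESd\hookrightarrow\mtsESdp$ followed by your $E$ you get $\varphi(\alpha(m)) = X_{\alpha(m)}$, the \emph{target}. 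These differ by the nontrivial $C_{+}$-arrow $Z_m\rightarrowtail X_{\alpha(m)}$ — a genuine morphism, not an equality — so the diagram does not strictly commute. You even hint at this yourself (``$Z_m$ \dots sits over $X_{\alpha(m)}$'') but then assert they ``match,'' which conflates a morphism with an identity. The same problem propagates to $E^I$. The fix, and what the paper actually does, is to set $E_\varphi = Q\compos\forwardinduce{\varphi}\colon \functCat{\deltaBra{1}}{\deltaBra{n}}\to C$ (and similarly for $E^I$ with a $\last$ post-composition inserted), so that $E$ and $\last\compos D$ both compute the Reedy midpoint. The use of the Reedy factorization in building $E$ is not optional: the whole point of the lemma is that both sides of the diagram record the same factorization data, and evaluation at the endpoint discards exactly the information that makes the construction work.
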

\begin{proof}
  Let $\Phi \colon \functCat{\deltaBra{1}}{C} \to \functCat{\deltaBra{2}}{C}$
  denote the functorial factorization that corresponds to the Reedy factorization of $C$.
  That is: $\Phi$ is a strict section of the composition functor
  $\backwardinduce{(\delta^2_1)}\colon \functCat{\deltaBra{2}}{C} \to \functCat{\deltaBra{1}}{C}$;
  for each $u\in\Ob\functCat{\deltaBra{1}}{C} = \Mor C$, we have
  $\backwardinduce{(\delta^2_2)}(\Phi(u)) \in C_{-}$
  and $\backwardinduce{(\delta^2_0)}(\Phi(u)) \in C_{+}$. Write
  $Q\coloneqq \backwardinduce{(\iota_1)}\compos \Phi \colon \functCat{\deltaBra{1}}{C} \to C$ for the
  ``midpoint'' functor of the factorization $\Phi$. The natural transformation that corresponds to
  $\backwardinduce*{\delta^2_0}\compos\Phi\colon \functCat{\deltaBra{1}}{C} \to \functCat{\deltaBra{1}}{C}$
  shall be denoted by $\eta\colon Q\Rightarrow \mathrm{cod}=\backwardinduce*{\iota^2_2}\colon
  \functCat{\deltaBra{1}}{C}\to C$: the ``second arrow.''
  If $(f,g)\colon u\to v$ is a morphism
  in $\functCat{\deltaBra{1}}{C}$, then $Q$ and $\eta$ fit into the following commutative
  diagram:
  \[
  \begin{tikzcd}
    w \arrow[r, "u"] \arrow[d, "f"']
    & x \arrow[d, "g"] \\
    y \arrow[r, "v"']
    & z
  \end{tikzcd}
  \quad \overset{\Phi}{\longmapsto} \quad
  \begin{tikzcd}
    w \arrow[rr, bend left, "u"] \arrow[r, two heads] \arrow[d, "f"']
    & Q(u) \arrow[r, tail, "{\eta_u}"'] \arrow[d, "{Q(f,g)}"]
    & x \arrow[d, "g"] \\
    y \arrow[rr, bend right, "v"'] \arrow[r, two heads]
    & Q(v) \arrow[r, tail, "{\eta_v}"]
    & z
  \end{tikzcd}
  \]
  Note that if $f \in C_{-}$, then $Q(f,g) \in C_{-}$, and if $g \in C_{+}$, then $Q(f,g) \in C_{+}$.

  We shall first construct $E$. With the following definitions in mind:
  \[ \mtsESdp S = \mtColim_{\mtyndSpx\deltaBra{n} \to S} \nerve(\mtpESdp\deltaBra{n}), \quad
  S = \nerve(C), \]
  we see that it suffices to compatibly define
  $E_\varphi\colon \mtpESdp\deltaBra{n}\to C$ for each $n$-simplex
  $\varphi\colon\deltaBra{n}\to C$ of $S=\nerve(C)$.
  Therefore we set $E_\varphi$ as the composite of:
  \[
  \begin{tikzcd}
    \mtpESdp\deltaBra{n} = \functCat{\deltaBra{1}}{\deltaBra{n}}
    \arrow[r, "\forwardinduce{\varphi}"]
    & \functCat{\deltaBra{1}}{C} \arrow[r, "Q"]
    & C,
  \end{tikzcd}
  \]
  which is compatible with the structure maps
  of the colimit-defining diagram for $\mtsESdp S$. 
  Combining, we obtain a simplicial map $E\colon\mtsESdp S\to S$.

  The construction of $E^I$ is similar; for each $n$-simplex $\varphi\colon\deltaBra{n}\to T$ of $T=\nerve(\Gamma)$,
  we set $E^I_\varphi$ as the composite of:
  \[
  \begin{tikzcd}[column sep=small]
    \mtpESdpI\deltaBra{n} \ar[r, hook] & \deltaBra{1} \times \functCat{\deltaBra{1}}{\deltaBra{n}} \ar[r, two heads]
    & \functCat{\deltaBra{1}}{\deltaBra{n}} \ar[rr, "\forwardinduce{\varphi}"]
    && \functCat{\deltaBra{1}}{\Gamma} \ar[rr, "\forwardinduce{\last}"]
    && \functCat{\deltaBra{1}}{C} \ar[r, "Q"]
    & C.
  \end{tikzcd}
  \]
  Here, the first inclusion is mentioned in the definition of $\mtpESdpI$ itself in \cref{def:esdp-esdpi}.
  The construction is natural in $\varphi$, so bundles together to yield a simplicial map $E^I\colon\mtsESdpI T\to S$.

  The construction of $D$ requires a little harder work. Remember $T = \nerve(\Gamma)$; we may focus
  on the case $\Gamma = \int\catNerve^{{-},{+}}(C)$, for the cases
  $\Gamma = \Downstar(C)$ and $\Gamma = \Down(C)$ are treated through the
  post-composition of the quotient functor $\int\catNerve^{{-},{+}}(C)\to\Downstar(C)$
  and the quasi-inverse of the equivalence $\Down(C)\hookrightarrow\Downstar(C)$.

  Let $\varphi\colon\deltaBra{n}\to C$ be an $n$-simplex of $S=\nerve(C)$. Similarly to above, 
  we wish to compatibly define a functor $D_\varphi\colon\mtcDcp\deltaBra{n}\to\Gamma$. 
  Let $(x,\alpha\colon\deltaBra{m}\to\deltaBra{n})\in\Ob(\mtcDcp\deltaBra{n})$. We set
  $D_\varphi(x,\alpha) \coloneqq (\deltaBra{m}, Q^\varphi_{(x,\alpha)})$, where
  $Q^\varphi_{(x,\alpha)}\colon\deltaBra{m}\to C_{-}$ is the following functor:
  \begin{align*}
    \deltaBra{m}\ni i &\longmapsto Q(\varphi(\uniqmor_{x,\alpha(i)}))\in\Ob C;\\
    (\uniqmor_{i,j}\colon i\le j) &\longmapsto Q(\idmor[\varphi(x)],\varphi(\uniqmor_{\alpha(i),\alpha(j)})).
  \end{align*}
  Let the following be an arbitary morphism in $\mtcDcp\deltaBra{n}$:
  \[
  (\uniqmor_{x, x'}, \beta)\colon (x, \alpha\colon\deltaBra{m}\to\deltaBra{n})
  \to (x', \alpha'\colon\deltaBra{m'}\to\deltaBra{n}).
  \]
  Remember, since $\beta\colon \alpha \to \alpha'$ is a morphism in $\overcat{\mtcSimpx}{\deltaBra{n}}$,
  it is a morphism $\beta\colon \deltaBra{m}\to\deltaBra{m'}$ in $\mtcSimpx$. We shall set:
  \[ 
   D_\varphi(\uniqmor_{x,x'},\beta)\coloneqq (\beta, \theta^{\varphi xx'\alpha})
   \colon (\deltaBra{m}, Q^\varphi_{(x,\alpha)}) \to
   (\deltaBra{m'}, Q^\varphi_{(x',\alpha')}).
  \]
  Here, the natural transformation
  \[
  \theta^{\varphi xx'\alpha}\colon Q^\varphi_{(x,\alpha)} \Rightarrow Q^\varphi_{(x',\alpha)} = Q^\varphi_{(x',\alpha')}\compos\beta
  \colon\deltaBra{m}\to C
  \]
  is defined by, for each $i\in\deltaBra{m}$:
  \[
  \theta^{\varphi xx'\alpha}_i \coloneqq Q(\varphi(\uniqmor_{x,x'}),\idmor[\varphi(\alpha(i))])
  \colon Q(\varphi(\uniqmor_{x,\alpha(i)})) 
  \to Q(\varphi(\uniqmor_{x',\alpha(i)})) = Q(\varphi(\uniqmor_{x',\alpha'(\beta(i))})),
  \]
  which belongs to $C_{+}$ by the property of $Q$. The naturality of $\theta^{\varphi xx'\alpha}$ follows
  from the structure of the domain $\functCat{\deltaBra{1}}{C}$ of $Q$ and the functoriality of $Q$;
  thus we have declared the mapping $D_\varphi$ on objects and morphisms. The functoriality of $D_\varphi$
  follows from that of $Q$ and $\varphi$,
  so we obtain a simplicial map $\nerve(D_\varphi)\colon\mtsDcp\mtyndSpx\deltaBra{n}\to T$.
  Since the naturality of $\nerve(D_\varphi)$ in $\varphi$ may be straightforwardly proven using the definition
  of the functor ${\mtcDcp}\colon \mtcSimpx\to\mtcCat$, we obtain a map
  $D\colon\mtsDcp S\to T$ of simplicial sets.

  We finally construct $D^I\colon\mtsDcpI T\to T$. We begin with the case $\Gamma=\int\catNerve^{{-},{+}}(C)$.
  Given an $n$-simplex $\varphi\colon\deltaBra{n} \to \Gamma$
  of $T=\nerve(\Gamma)$, we need to compatibly define a functor $D^I_{\varphi}\colon\mtcDcpI\deltaBra{n}\to\Gamma$.
  In this construction of $D^I_\varphi$, we will use the following symbols:
  \begin{alignat*}{2}
    (\deltaBra{m_x}, X_x) &\coloneqq \varphi(x)\in\Ob\Gamma
    &\quad&\text{for } x\in\deltaBra{n};\\
    (\beta_{xy}, \theta^{xy}) &\coloneqq \varphi(\uniqmor_{xy})\colon \varphi(x)\to\varphi(y)
    &\quad&\text{for } x\le y.
  \end{alignat*}

  First remember that $\mtcDcpI\deltaBra{n}$ has
  $\set{0}\times(\mtcDcp\deltaBra{n})$ and $\set{1}\times\deltaBra{n}$ as disjoint full subcategories.
  We set:
  \begin{align*}
    \left. D^I_{\varphi}\right|_{\set{0}\times(\mtcDcp\deltaBra{n})} 
    &\coloneqq D_{\lambda(\varphi)} = D_{{\last}\compos\varphi},\\
    \left. D^I_{\varphi}\right|_{\set{1}\times\deltaBra{n}} &\coloneqq \varphi.
  \end{align*}
  Since every object of $\mtcDcpI\deltaBra{n}$ belongs to one of these two full subcategories,
  it only remains to define how $D^I_{\varphi}$ acts on morphisms that connect objects in the distinct
  subcategories.
  
  By the definition of $\mtcDcpI\deltaBra{n}$, we only need to consider the unique
  morphism $\uniqmor_{(0,(x,\alpha)),(1,y)}\colon (0,(x,\alpha))\to(1,y)$, where 
  $(x,\alpha\colon\deltaBra{m}\to\deltaBra{n})\in\Ob(\mtcDcp\deltaBra{n})$ and $y\in\deltaBra{n}$
  are objects satisfying $\alpha(m) \le y$. We wish to construct
  \[
    D^I_{\varphi}(\uniqmor_{(0,(x,\alpha)),(1,y)})=(\beta, \theta)\colon
    (\deltaBra{m}, Q^{\lambda(\varphi)}_{(x,\alpha)}) \to \varphi(y)=(\deltaBra{m_y}, X_y).
  \]
  We would like to define $\beta\colon\deltaBra{m}\to\deltaBra{m_y}$ by, for each $i\in\deltaBra{m}$:
  \[
    \beta(i) \coloneqq \beta_{\alpha(i),y}(m_{\alpha(i)}) \in\deltaBra{m_y}.
  \]
  This is indeed a morphism in $\mtcSimpx$, as we have, for each pair $i\le j$ in $\deltaBra{m}$:
  \[
    \beta(i) = \beta_{\alpha(i),y}(m_{\alpha(i)}) = \beta_{\alpha(j),y}(\beta_{\alpha(i),\alpha(j)}(m_{\alpha(i)}))
      \le \beta_{\alpha(j),y}(m_{\alpha(j)}) = \beta(j).
  \]
  We next have to define a natural transformation
  $\theta\colon Q^{\lambda(\varphi)}_{(x,\alpha)}\Rightarrow X_y\compos\beta$.
  Let $i\in\deltaBra{m}$. We have the following morphisms in $C_{+}$:
  \begin{alignat*}{2}
    \eta_{\last(\varphi(\uniqmor_{x,\alpha(i)}))} 
    &\colon& Q^{\lambda(\varphi)}_{(x,\alpha)}(i) = Q(\last(\varphi(\uniqmor_{x,\alpha(i)})))
    &\rightarrowtail \last(\varphi(\alpha(i))),\\
    \theta^{\alpha(i)y}_{m_{\alpha(i)}} &\colon& \last(\varphi(\alpha(i))) = X_{\alpha(i)}(m_{\alpha(i)})
    &\rightarrowtail X_y(\beta_{\alpha(i)y}(m_{\alpha(i)})) = X_y(\beta(i)).  
  \end{alignat*}
  We set $\theta_i$ as the composite of these two morphisms, which are both natural in $i$.

  Now we have defined $D^I_{\varphi}$ on objects and morphisms, and it follows from the naturality of
  $\eta$ and the functoriality of $\varphi$ that $D^I_{\varphi}$ is a functor. The naturality in $\varphi$
  of the simplicial map $\nerve(D^I_{\varphi})\colon\mtsDcpI\mtyndSpx\deltaBra{n}\to T$ is
  straightforward to prove, so we obtain a simplicial map $D^I\colon\mtsDcpI T\to T$.

  Now we need to consider the cases $\Gamma=\Downstar(C)$ and $\Gamma=\Down(C)$, but we may obtain
  $D^I$ from the following commutative diagram:
  \[
  \begin{tikzcd}
    \mtsDcpI \nerve(\int\catNerve^{{-},{+}}(C)) \ar[d, "D_I"'] \ar[r, two heads]
    & \mtsDcpI\nerve(\Downstar(C)) \ar[d, dotted, "{}^{\exists!}D_I"]
    & \mtsDcpI\nerve(\Down(C)) \ar[d, dotted, "{}^{\exists!}D_I"] \ar[l, hook']\\
    \nerve(\int\catNerve^{{-},{+}}(C)) \ar[r, two heads]
    & \nerve(\Downstar(C)) \ar[r, two heads]
    & \nerve(\Down(C))
  \end{tikzcd}
  \]
  Here, the horizontal arrows are the canonical ones: two in the left are induced by the quotient functor 
  $\int\catNerve^{{-},{+}}(C)\twoheadrightarrow\Downstar(C)$, the other two in the right comes from the
  equivalence $\Down(C)\hookrightarrow\Downstar(C)$ of categories in \cref{lem:eqvce-down-c-downstar}.

  The required properties are now easy to verify, and we have obtained the desired lemma.
\end{proof}

We can now prove our theorem in the form of a lifting property, as follows:

\begin{lemma}\label{lem:down-last-infty-localiz-lift-ext-property}
  Let $A\subseteq B$ be a simplicial set and a simplicial subset, and let
  $Q$ be a quasi-category. Assume that we have the following (strictly) commutative
  square of simplicial sets, where $f$ and $g$ are arbitrary:
  \[
    \begin{tikzcd}[column sep=7.2em]
      A \times T \arrow[d, hook]
      \arrow[r, "{\idmor[A]\times\lambda=\idmor[A]\times{\last}}"]
      & A \times S \arrow[d, "g"] \\
      B \times T \arrow[r, "f"']
      & Q
    \end{tikzcd}
  \]
  Suppose that $f$ takes
  any edge of the form $(\idmor[b],t)\in B_1\times T_1=(B\times T)_1$,
  with $\idmor[b] \in B_1$ a degenerate edge and $t\in T_1$ a $\last$-weak equivalence, 
  to an equivalence in $Q$.
  Then, there exists a strict extension $h\colon B\times S \to Q$ of $g$
  such that its pre-composition $h\compos (\idmor[B]\times\lambda)\colon B\times T \to Q$ and $f$
  are naturally equivalent relative to $A\times T$.
\end{lemma}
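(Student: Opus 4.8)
The plan is to reduce this statement to \cref{cor:dcpc-esdpc-loc-homotopy-extension} from the previous section by feeding it the concrete simplicial maps produced in \cref{lem:down-and-orig-dcp-esdp-maps}. First I would take $u={\last}\colon\Gamma\to C$, or rather the induced $\lambda\colon T\to S$ of quasi-categories, so that $\mtsDcpC u$ and $\mtsESdpC u$ are the mapping cylinders of $\mtsDcp\lambda$ and $\mtsESdp\lambda$, and note that the maps $D$, $D^I$, $E$, $E^I$ of \cref{lem:down-and-orig-dcp-esdp-maps}, together with the commutative square in part \labelcref{item:lem-down-and-orig-dcp-esdp-maps:comm}, glue along $T$ and $S$ respectively to give simplicial maps $\mathcal{D}\colon \mtsDcpC\lambda \to T$ and $\mathcal{E}\colon \mtsESdpC\lambda \to S$. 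Here I must check compatibility with the pushout defining $\mtsDcpC\lambda$: the restriction of $\mathcal{D}$ to $\mtsDcp T$ should be $\lambda\compos D\colon \mtsDcp T \to \mtsDcp S$-wait, more carefully, $\mathcal D$ restricted to the $\mtsDcp S$-part is $D$, and restricted to the $\mtsDcpI T$-part is $D^I$, and these agree on the glued copy of $\mtsDcp T$ precisely by the commutativity of the middle and lower portions of the big diagram in \labelcref{item:lem-down-and-orig-dcp-esdp-maps:comm}. Likewise $\mathcal E$ restricted to $\mtsESdp S$ is $E$ and to $\mtsESdpI T$ is $E^I$, agreeing on $\mtsESdp T$ by the same diagram.

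Next I would set up the data for \cref{cor:dcpc-esdpc-loc-homotopy-extension}. Take the inclusion $A\subseteq B$ as given, and use the two canonical maps $X\times\set{1}=T\times\set{1}\hookrightarrow\mtsDcpC\lambda$ and $T\hookrightarrow\mtsESdpC\lambda$; the point of part \labelcref{item:lem-down-and-orig-dcp-esdp-maps:retr} of \cref{lem:down-and-orig-dcp-esdp-maps} is that $E$ and $D^I$ are retractions, so that $\mathcal D$ restricted to $T\times\set{1}\subseteq\mtsDcpC\lambda$ is $\idmor[T]$ and $\mathcal E$ restricted to $T\times\set{0}$-type pieces behaves as the identity, which is what lets the data in the hypothesis square of the present lemma be repackaged as a square of the shape \labelcref{eq:cor-dcpc-esdpc-loc-homotopy-extension:lkq}. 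Concretely: define $\hat f\colon K=B\times\mtsDcpC\lambda \to Q$ by $\hat f \coloneqq f\compos(\idmor[B]\times\mathcal D)$ — using that $\mathcal D\colon \mtsDcpC\lambda \to T$ — and define $\hat g\colon L'\to Q$ on the pieces $A\times\mtsESdpC\lambda$ and $B\times(T\times\set{1})$ by $g\compos(\idmor[A]\times\mathcal E)$ and by $f$ pulled back along $T\times\set{1}\hookrightarrow\mtsDcpC\lambda$, respectively, checking they agree on the overlap $A\times(T\times\set{1})$. The hypothesis of the present lemma that $f$ sends edges $(\idmor[b],t)$ with $t$ a $\last$-weak equivalence to equivalences translates, via parts \labelcref{item:lem-down-and-orig-dcp-esdp-maps:weq,item:lem-down-and-orig-dcp-esdp-maps:weqI} of \cref{lem:down-and-orig-dcp-esdp-maps}, into the hypothesis that $\hat f$ sends $W_K = \set{(\idmor[b],e)}[b\in B_0, e\in W^C_\lambda]$ to equivalences: indeed an edge of $W^C_\lambda$ is, by \cref{def:dcp-dcpi-esdp-esdpi-pushout}, one mapped to a degenerate edge in $\mtsESdpC\lambda$, and unravelling the pushout such an edge lies either in $\mtsDcp T$ mapping to a degeneracy of $\mtsESd T$ — handled by part \labelcref{item:lem-down-and-orig-dcp-esdp-maps:weq} — or in $\mtsDcpI T$ mapping to a degenerate/skeletal edge of $\mtsESdI T$ — handled by part \labelcref{item:lem-down-and-orig-dcp-esdp-maps:weqI}.

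Then \cref{cor:dcpc-esdpc-loc-homotopy-extension} yields an extension $\hat h\colon L=B\times\mtsESdpC\lambda \to Q$ of $\hat g$ with $\hat h\compos(\text{canonical } K\to L)$ naturally equivalent to $\hat f$ relative to $K'$. Finally I would produce the desired $h\colon B\times S\to Q$ by precomposing $\hat h$ with $\idmor[B]\times(S\hookrightarrow\mtsESdpC\lambda)$, where $S\hookrightarrow\mtsESdpC\lambda$ is the copy of $S=\mtsESdp$-target sitting inside the cylinder via the constant section; using part \labelcref{item:lem-down-and-orig-dcp-esdp-maps:cm-last} and the retraction property one checks $h$ extends $g$ and that $h\compos(\idmor[B]\times\lambda)$ is $\hat h$ restricted along $\idmor[B]\times(T\times\set{1}\hookrightarrow\mtsDcpC\lambda \text{ and its image in } L)$, which is naturally equivalent to $\hat f = f\compos(\idmor[B]\times\mathcal D)$ restricted the same way, i.e. to $f$ itself, the equivalence being relative to $A\times T$.

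The main obstacle I expect is bookkeeping rather than conceptual: precisely matching the five subcomplexes $K, K', L, L', W_K, W_{K'}$ of \cref{cor:dcpc-esdpc-loc-homotopy-extension} with the three subcomplexes $A\times T, B\times T, A\times S$ here, and verifying that the constant-section inclusions $S\hookrightarrow\mtsESdpC\lambda$ and $T\times\set{1}\hookrightarrow\mtsDcpC\lambda$ are genuinely compatible with $\mathcal D$, $\mathcal E$, $E^I$, $\mathrm{proj}$, so that the square in the hypothesis of the present lemma really does factor through the square \labelcref{eq:cor-dcpc-esdpc-loc-homotopy-extension:lkq}. A secondary care point is the translation of the ``$\last$-weak equivalence edge'' condition on $f$ into the ``$W^C_\lambda$ edge'' condition on $\hat f$; this is exactly where parts \labelcref{item:lem-down-and-orig-dcp-esdp-maps:weq,item:lem-down-and-orig-dcp-esdp-maps:weqI} of \cref{lem:down-and-orig-dcp-esdp-maps} are designed to be used, but one has to be honest that an edge of $W^C_\lambda$ sitting in the $\mtsDcpI T$ part might only map into $\mtsESdI(\mtsSk 0 T)$ rather than to a literal degeneracy, which is why part \labelcref{item:lem-down-and-orig-dcp-esdp-maps:weqI} allows that case.
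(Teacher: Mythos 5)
Your reduction to \cref{cor:dcpc-esdpc-loc-homotopy-extension} is the same as the paper's, and the first two-thirds of your plan tracks the paper's proof accurately: gluing $D, D^I, E, E^I$ into $\tilde{D}\colon\mtsDcpC\lambda\to T$ and $\tilde{E}\colon\mtsESdpC\lambda\to S$ (your $\mathcal D$, $\mathcal E$); defining $\tilde f$ and $\tilde g$ as you do $\hat f$ and $\hat g$; and translating the ``$\last$-weak-equivalence'' hypothesis into the ``$W^C_\lambda$'' hypothesis via parts \labelcref{item:lem-down-and-orig-dcp-esdp-maps:weq,item:lem-down-and-orig-dcp-esdp-maps:weqI} of \cref{lem:down-and-orig-dcp-esdp-maps}. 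Your observation that an edge of $W^C_\lambda$ in the $\mtsDcpI T$ part might only land in $\mtsESdI(\mtsSk 0 T)$ rather than in a literal degeneracy, and that this is exactly what part \labelcref{item:lem-down-and-orig-dcp-esdp-maps:weqI} anticipates, is also right. But the final step is both mis-stated and genuinely incomplete.

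First, the mis-statement: you claim that $h\compos(\idmor[B]\times\lambda)$ is $\hat h$ restricted along the inclusion $T\times\set{1}\hookrightarrow\mtsDcpC\lambda$ and its image in $L$. It is not. That restriction equals $f$ on the nose, since $\hat h$ extends $\hat g$, whose value on $B\times(T\times\set{1})$ you defined to be $f$. The map $h\compos(\idmor[B]\times\lambda)$ is instead the restriction of $\hat h$ along the composite $B\times T\xrightarrow{\idmor\times\lambda} B\times S\hookrightarrow B\times\mtsESdpC\lambda$, which passes through the $\mtsESdp S$-part of the cylinder, not the $T\times\set{1}$-part. These are different inclusions, and the paper's diagram~\eqref{eq:lem-down-last-infty-localiz-lift-ext-property:tildeDE} is there precisely to keep track of their relationship. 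The natural transformation between $h\compos(\idmor[B]\times\lambda)$ and $f$ has to be produced explicitly by restricting $\hat h$ along $B\times T\times\mtyndSpx\deltaBra{1}\to B\times\mtsESdpC\lambda$, the cylinder inclusion from \cref{def:id-esdp-connecting-inj}; restricting the corollary's equivalence $\hat h\compos(K\to L)\sim\hat f$ to $B\times(T\times\set{1})\subseteq K$ only yields the trivial equivalence $f\sim f$ and does not recover the homotopy you need.

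Second, and more seriously, once one does build the natural transformation $H$ from the cylinder, there is no guarantee a priori that it is a natural \emph{equivalence}: nothing in \cref{cor:dcpc-esdpc-loc-homotopy-extension} constrains the value of $\hat h$ on the time-direction edges of the cylinder. This is a real gap, not bookkeeping. The paper fills it by noting that the edge $e_p$ over a vertex $p\in(B\times T)_0$ already lives in the subcomplex $B\times\mtsESdpC\lambda_0$ for $\lambda_0\colon\mtsSk 0 T\to\mtsSk 0 S$, that $\mtsDcpC\lambda_0$ surjects onto $\mtsESdpC\lambda_0$, and that the restriction $\tilde f_0$ of $\tilde f$ to $\mtsSk 0 B\times\mtsDcpC\lambda_0$ sends \emph{every} edge to an equivalence in $Q$ (this is where part \labelcref{item:lem-down-and-orig-dcp-esdp-maps:weqI} of \cref{lem:down-and-orig-dcp-esdp-maps} and the hypothesis on $f$ are used a second time). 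Since $\hat h$ restricted to the image of $\mtsSk 0 B\times\mtsDcpC\lambda_0$ is naturally equivalent to $\tilde f_0$ by the corollary, the same conclusion holds there, and hence $H(e_p)$ is an equivalence. Without this invertibility check your $h$ has only a natural transformation to $f$, which is weaker than the stated conclusion.
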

\begin{proof}
  Take $D$, $D^I$, $E$, and $E^I$ as in \cref{lem:down-and-orig-dcp-esdp-maps}.
  The universality of pushouts and \cref{lem:down-and-orig-dcp-esdp-maps}~%
  \labelcref{item:lem-down-and-orig-dcp-esdp-maps:comm} allow us construct
  $\tilde{D}\colon \mtsDcpC\lambda\to T$ and
  $\tilde{E}\colon \mtsESdpC\lambda\to S$ from
  these four maps. In terms of $\tilde{D}$ and $\tilde{E}$,
  \cref{lem:dcp-esd-esdp-i-comm} and the properties
  \labelcref{item:lem-down-and-orig-dcp-esdp-maps:comm,item:lem-down-and-orig-dcp-esdp-maps:retr,%
  item:lem-down-and-orig-dcp-esdp-maps:cm-last} from \cref{lem:down-and-orig-dcp-esdp-maps}
  can be wrapped up in the following commutative diagram:
  \begin{equation}\label{eq:lem-down-last-infty-localiz-lift-ext-property:tildeDE}
    \begin{tikzcd}
      T \times \set{1}\arrow[dd, equal] \arrow[rrr, hook]\arrow[dr, hook]
      &&&[-3.6em] (S\times\set{0})\cup_{T\times\set{0}}(T\times\mtyndSpx\deltaBra{1})
      \arrow[d, "\mathrm{proj}", two heads] \arrow[dl, hook'] \\
      & \mtsDcpC\lambda\arrow[dl, "\tilde{D}"']\arrow[r]
      & \mtsESdpC\lambda\arrow[dr, "\tilde{E}"]
      & S\cup_T T\arrow[d, equal] \\
      T \arrow[rrr, "\lambda={\last}"']
      &&& S
    \end{tikzcd}
  \end{equation}
  
  We see that the following maps
  $\tilde{f}$ and $\tilde{g}$ are well-defined and satisfy the
  constraint of \cref{cor:dcpc-esdpc-loc-homotopy-extension},
  using our assumption, the diagram~\eqref{eq:lem-down-last-infty-localiz-lift-ext-property:tildeDE},
  and the properties~\labelcref{item:lem-down-and-orig-dcp-esdp-maps:weq,item:lem-down-and-orig-dcp-esdp-maps:weqI}
  from \cref{lem:down-and-orig-dcp-esdp-maps}: 
  \begin{align*}
    \tilde{f}&\colon\hspace{-0.45em}\begin{tikzcd}[ampersand replacement=\&]
      B\times\mtsDcpC\lambda \arrow[r, "{\idmor[B]\times\tilde{D}}"] \&[2.0em] B\times T \arrow[r, "f"] \& Q;
    \end{tikzcd}\\
    \hat{g}&\colon\hspace{-0.45em}\begin{tikzcd}[ampersand replacement=\&]
      A\times\mtsESdpC\lambda \arrow[r, "{\idmor[A]\times\tilde{E}}"] \&[2.0em] A\times S \arrow[r, "g"] \& Q;
    \end{tikzcd}\\
    \tilde{g}&\coloneqq \hat{g} \cup f\colon (A\times\mtsESdpC\lambda) \cup (B\times(T\times\set{1})) \to Q.
  \end{align*}
  Therefore, the map $\tilde{g}$ can be extended to $\tilde{h}\colon B\times \mtsESdpC\lambda\to Q$
  whose precomposition with the canonical $B\times\mtsDcpC\lambda\to B\times\mtsESdpC\lambda$
  is homotopic to $\tilde{f}$ relative to $A\times\mtsDcpC\lambda$.

  Let us define $h\colon B\times S\to Q$ as the following composite:
  \[
  \begin{tikzcd}[column sep=small]
    B\times (S\times\set{0}) \arrow[r, hook]
    & B\times ((S\times\set{0})\cup_{T\times\set{0}}(T\times\mtyndSpx\deltaBra{1})) \arrow[r, hook]
    & B\times \mtsESdpC\lambda \arrow[r, "\tilde{h}"] & Q.
  \end{tikzcd}
  \]
  If we write $u\subseteq v$ to mean a simplicial map $u$ is a restriction of $v$, we have: 
  \[
    g \subseteq \hat{g} \subseteq \tilde{g} \subseteq \tilde{h} \supseteq h,
  \]
  where the first $\subseteq$ is due to the commutativity 
  of the rightmost triangle in the diagram~\eqref{eq:lem-down-last-infty-localiz-lift-ext-property:tildeDE}.
  Since $\dom g = A \times S \times \set{0} \subseteq B\times S \times \set{0} = \dom h$, we see $g \subseteq h$.

  Now, define $H\colon (B\times T)\times\mtyndSpx\deltaBra{1}\to Q$ as the following composite:
  \[
  \begin{tikzcd}[column sep=small]
    B\times(T\times\mtyndSpx\deltaBra{1}) \arrow[r]
    & B\times((S\times\set{0})\cup_{T\times\set{0}}(T\times\mtyndSpx\deltaBra{1})) \arrow[r, hook]
    & B\times \mtsESdpC\lambda \arrow[r, "\tilde{h}"] & Q.
  \end{tikzcd}
  \]
  By definition, we see $\left. H \right|_{(B\times T) \times \set{0}} = h\compos(\idmor[B]\times\lambda)$.
  We also have $\left. H \right|_{(B\times T) \times \set{1}} = f$, because
  $f\subseteq\tilde{g}\subseteq\tilde{h}\supseteq \left. H \right|_{(B\times T) \times \set{1}}$.
  As $\hat{g}\subseteq\tilde{g}\subseteq\tilde{h}$, the map $H$ restricts to the
  $A\times T\times\mtyndSpx\deltaBra{1} \to Q$ in the following commutative diagram:
  \[
  \begin{tikzcd}[column sep=small]
    A\times (T\times\mtyndSpx\deltaBra{1}) \arrow[r] \arrow[d, two heads, "\mathrm{proj}"']
    & A\times((S\times\set{0})\cup_{T\times\set{0}}(T\times\mtyndSpx\deltaBra{1})) \arrow[r, hook]
      \arrow[d, two heads, "\mathrm{proj}"']
    & A\times \mtsESdpC\lambda \arrow[r, "\hat{g}"] \arrow[d, "{\idmor[A]\times\tilde{E}}"']
    & Q \arrow[d, equal] \\
    A\times T \arrow[r]
    & A\times (S\cup_T T) \arrow[r, equal]
    & A\times S \arrow[r, "g"]
    & Q
  \end{tikzcd}
  \]
  Here we have used the commutativity of the rightmost triangle in the 
  diagram~\eqref{eq:lem-down-last-infty-localiz-lift-ext-property:tildeDE} and the definition of $\hat{g}$.
  Thus we see that $H$ is a simplicial homotopy, or a natural transformation,
  from $h\compos(\idmor[B]\times\lambda)$ to $f$ relative to $A\times T$.

  It only remains to demonstrate that the natural transformation $H$ is a natural equivalence.
  We wish to prove, for each vertex $p\in (B\times T)_0$, that the edge
  \[
    e_p\coloneqq p\times\idmor[\mtyndSpx\deltaBra{1}]\colon\mtyndSpx\deltaBra{1}=
    \mtyndSpx\deltaBra{0}\times\mtyndSpx\deltaBra{1} \to
    (B\times T)\times\mtyndSpx\deltaBra{1}
  \]
  in $(B\times T)\times\mtyndSpx\deltaBra{1}$ is sent by $H$ to an equivalence in $Q$.
  If we write $\lambda_0\colon \mtsSk{0} T \to \mtsSk{0} S$ for the restriction of $\last$, then
  the image of $e_p$ in $B\times\mtsESdpC\lambda$ lies within the subcomplex
  $B\times\mtsESdpC\lambda_0\subseteq B\times\mtsESdpC\lambda$. 
  By simple inspection we see that the
  canoncial map from $\mtsDcpC\lambda_0$ surjects onto $\mtsESdpC\lambda_0$; therefore it reduces to show
  that the following composite sends every edge in the domain to an equivalence in $Q$:
  \[
  \begin{tikzcd}
    \mtsSk{0} B \times \mtsDcpC\lambda_0 \arrow[r, hook]
    & B \times \mtsDcpC\lambda \arrow[r]
    & B \times \mtsESdpC\lambda \arrow[r, "\tilde{h}"]
    & Q.
  \end{tikzcd}
  \]
  The restriction $\tilde{f}_0$ of $\tilde{f}$ to $\mtsSk{0} B\times\mtsDcpC\lambda_0$ is naturally equivalent to the
  composite above, by the construction of $\tilde{h}$. By
  \cref{lem:down-and-orig-dcp-esdp-maps}~\labelcref{item:lem-down-and-orig-dcp-esdp-maps:weqI}
  and our assumption on $f$, we see that
  $\tilde{f}_0$ sends every edge to an equivalence in $Q$; hence
  the same holds for the composite above, as desired.
\end{proof}

Now, our theorem is now simple to prove.

\begin{theorem}\label{thm:down-last-infty-localiz}
  The functor $\lambda=\last\colon T \to S$ exhibits $S$ as the localization of $T$ at the
  $\last$-weak equivalences.
\end{theorem}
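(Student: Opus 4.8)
The plan is to deduce the theorem from \cref{lem:down-last-infty-localiz-lift-ext-property} by transposing across the exponential adjunction $\operatorname{Fun}(B \times {-}, Q) \cong \operatorname{Fun}(B, \operatorname{Fun}({-}, Q))$ on $\mtcSSet$. Fix a quasi-category $Q$. Unwinding \cite[\href{https://kerodon.net/tag/01MP}{Definition 01MP}]{kerodon}, I must verify two things: that the precomposition functor $\backwardinduce{\lambda}\colon \operatorname{Fun}(S,Q) \to \operatorname{Fun}(T,Q)$ takes values in the full simplicial subset $\operatorname{Fun}((T,\weqlast[\Gamma]),Q^\natural)$, and that the corestricted map $\backwardinduce{\lambda}\colon \operatorname{Fun}(S,Q) \to \operatorname{Fun}((T,\weqlast[\Gamma]),Q^\natural)$ is an equivalence of quasi-categories. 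The first is immediate: by \cref{def:down-weq-last} the functor $\last$ sends every $\last$-weak equivalence of $\Gamma$ to an identity of $C$, so $\lambda = \nerve(\last)$ sends every edge in $\weqlast[\Gamma] \subseteq T_1$ to a degenerate, hence invertible, edge of $S = \nerve(C)$; consequently each vertex $S \to Q$ of $\operatorname{Fun}(S,Q)$, precomposed with $\lambda$, becomes a map $T \to Q$ inverting $\weqlast[\Gamma]$, and $\operatorname{Fun}((T,\weqlast[\Gamma]),Q^\natural)$ is by definition the full simplicial subset of $\operatorname{Fun}(T,Q)$ spanned by such vertices.

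For the second point, I would transpose \cref{lem:down-last-infty-localiz-lift-ext-property}. A strictly commutative square as in that lemma, for a monomorphism $A \hookrightarrow B$, corresponds to a strictly commutative square
\[
  \begin{tikzcd}
    A \arrow[r] \arrow[d, hook] & \operatorname{Fun}(S,Q) \arrow[d, "\backwardinduce{\lambda}"] \\
    B \arrow[r] & \operatorname{Fun}((T,\weqlast[\Gamma]),Q^\natural)\text{,}
  \end{tikzcd}
\]
where the hypothesis that $f$ inverts every edge $(\idmor[b],t)$ with $t \in \weqlast[\Gamma]$ is exactly what forces the bottom arrow into $\operatorname{Fun}((T,\weqlast[\Gamma]),Q^\natural)$ — a map from any simplicial set into that subset being nothing more than a map into $\operatorname{Fun}(T,Q)$ all of whose vertices invert $\weqlast[\Gamma]$. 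Dually, the conclusion of the lemma — an extension $h\colon B \times S \to Q$ of $g$ together with a natural equivalence $h \compos (\idmor[B] \times \lambda) \simeq f$ relative to $A \times T$ — transposes to a diagonal $B \to \operatorname{Fun}(S,Q)$ which commutes strictly with the map from $A$ and commutes with the map from $B$ up to a homotopy, relative to $A$; this homotopy, being the transpose of a natural equivalence, is a pointwise-invertible homotopy and again lands in $\operatorname{Fun}((T,\weqlast[\Gamma]),Q^\natural)$ because its two endpoints do.

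Thus $\backwardinduce{\lambda}\colon \operatorname{Fun}(S,Q) \to \operatorname{Fun}((T,\weqlast[\Gamma]),Q^\natural)$ is a map between quasi-categories — the target is a quasi-category by \cite[\href{https://kerodon.net/tag/01MN}{Remark 01MN}]{kerodon}, being a full simplicial subset of $\operatorname{Fun}(T,Q)$ — with the property that every lifting problem against a monomorphism of simplicial sets admits a solution which is strict over the source and holds up to homotopy, relative to the source, over the target. I would then conclude by the standard criterion that such a map is an equivalence of quasi-categories: taking $A = \emptyset$ produces a section up to homotopy, and feeding the boundary inclusion of a cylinder object into the lifting property upgrades this section to a two-sided homotopy inverse (here the relevant notion of homotopy is the natural-equivalence relation delivered by the lemma, which is the left-homotopy relation for the Joyal model structure on $\mtcSSet$); hence $\backwardinduce{\lambda}$ is a homotopy equivalence between objects that are fibrant (being quasi-categories) and cofibrant (monomorphisms being the cofibrations, every object is cofibrant), hence a weak equivalence. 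As $Q$ was arbitrary, this establishes the theorem.

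The analytically hard work is already behind us: the comparison maps $D$, $D^I$, $E$, $E^I$ and the homotopy-extension argument were constructed in \cref{lem:down-and-orig-dcp-esdp-maps,lem:down-last-infty-localiz-lift-ext-property}, so nothing deep remains. The one point that demands care — and which I regard as the only real obstacle in this final step — is the bookkeeping: checking that the adjoint transposes of both the hypotheses and the conclusion of \cref{lem:down-last-infty-localiz-lift-ext-property} genuinely remain inside the marked function complex $\operatorname{Fun}((T,\weqlast[\Gamma]),Q^\natural)$, and correctly matching the ``natural equivalence relative to $A \times T$'' produced by the lemma with the left-homotopy relation underlying the model-categorical criterion used to detect equivalences of quasi-categories.
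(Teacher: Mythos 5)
Your proof is correct and follows essentially the same route as the paper, which deduces the theorem from \cref{lem:down-last-infty-localiz-lift-ext-property} by citing \cite[\href{https://kerodon.net/tag/01MS}{Proposition 01MS}]{kerodon} --- precisely the criterion you reprove by transposition across the exponential adjunction followed by a Whitehead-type argument for the Joyal model structure. Be careful in the final step that the cylinder object fed into the lifting property is a Joyal cylinder (e.g.\@ $\operatorname{Fun}(S,Q)$ times the nerve of the free-living isomorphism) rather than $\operatorname{Fun}(S,Q)\times\mtyndSpx\deltaBra{1}$: without already knowing $\backwardinduce{\lambda}$ is conservative, one cannot conclude that a $\mtyndSpx\deltaBra{1}$-homotopy produced by the lift is itself a natural equivalence, whereas this is automatic for a map out of the nerve of a groupoid; your parenthetical aside about the left-homotopy relation in the Joyal model structure suggests you have this in mind, but it is the one place where the argument could silently fail.
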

\begin{proof}
  Follows from the previous \cref{lem:down-last-infty-localiz-lift-ext-property};
  see \cite[\href{https://kerodon.net/tag/01MS}{Proposition 01MS}]{kerodon}.
\end{proof}

\section{Conclusion: proof of the main theorem}
\label{sec:wrapping-up}

In this short section, we shall summarize the results of this paper into the theorem
stated in \cref{sec:introduction}.

\begin{theorem}[Restatement of \cref{thm:main}]
  Let $C$ be a small Reedy category. Then there is a concrete construction of
  a direct category $\Down(C)$, a set $\weqlast\subseteq\Mor\Down(C)$ of morphisms,
  and a functor ${\last}\colon \Down(C) \to C$
  that exhibits $C$ as the localization of $\Down(C)$ at $\weqlast$.
  Furthermore, if $C$ is finite, then $\Down(C)$ is also finite.
\end{theorem}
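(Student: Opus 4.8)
The plan is to assemble the final theorem directly from the results already proven throughout the paper; the proof in \cref{sec:wrapping-up} will be a short bookkeeping exercise rather than a new argument. First I would recall the main constructions: the direct category $\Down(C)$ from \cref{def:down-c}, the morphism class $\weqlast = \weqlast[\Down(C)]$ from \cref{def:down-weq-last}, and the functor $\last\colon\Down(C)\to C$ from \cref{def:down-fctr-last}. These are genuinely concrete combinatorial constructions, so the ``concrete construction'' clause of the statement is satisfied simply by pointing to these definitions.

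Next I would verify each asserted property by citing the relevant earlier result. Directness of $\Down(C)$ is exactly \cref{prop:down-c-direct}. Finiteness of $\Down(C)$ when $C$ is finite is \cref{lem:down-c-finite} (stated there as a corollary of \cref{lem:groth-cat-finite}). For the localization claim, there are two levels: the $1$-localization statement is the $\Gamma = \Down(C)$ case of \cref{thm:down-last-localiz}, and the $(\infty,1)$-localization statement — which is what the abstract and \cref{thm:main} actually promise, via the simplicial-set notion of localization in \cref{def:cat-1-localization}'s $(\infty,1)$ analogue — is the $\Gamma = \Down(C)$ case of \cref{thm:down-last-infty-localiz}. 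I would phrase the conclusion so that it matches whichever notion of ``localization'' the restated \cref{thm:main} is using: since the introduction speaks of an $(\infty,1)$-categorical localization, I would cite \cref{thm:down-last-infty-localiz}, noting that $\nerve(\last)\colon\nerve(\Down(C))\to\nerve(C)$ exhibits $\nerve(C)$ as the $(\infty,1)$-localization at (the image of) $\weqlast$, and remark that the strict $1$-localization statement of \cref{thm:down-last-localiz} is a compatible refinement.

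I would also take care of one small gap: the hypothesis of \cref{thm:main} says ``$C$ is a Reedy category,'' but most of the development fixes a Reedy category $C$ and works with small categories, so I would note explicitly that $\Down(C)$ is small (this follows since $\int\catNerve^{{--},{+}}_{+}(C)$ is small — its objects are pairs $(\deltaBra{n},X)$ with $X$ reflecting identities, hence $n$ bounded by combinatorial data depending on $C$, and $\Down(C)$ is a hom-quotient of it) and that $\weqlast$ is indeed a subset of $\Mor\Down(C)$ by \cref{def:down-weq-last}. None of this is hard; it is just a matter of stitching the citations together in the right order.

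Honestly, there is no real obstacle here — that is the point of the section. If I had to name the one place requiring the slightest care, it is making sure the \emph{statement} of the restated theorem uses exactly the flavour of localization that has been proven (weak $1$-localization in \cref{thm:down-last-localiz} versus $(\infty,1)$-localization in \cref{thm:down-last-infty-localiz}), and being explicit about which one ``the localization of $\Down(C)$ at $\weqlast$'' refers to, so that the reader is not left guessing. Beyond that, the proof is: ``Combine \cref{prop:down-c-direct}, \cref{lem:down-c-finite}, and \cref{thm:down-last-infty-localiz} (see also \cref{thm:down-last-localiz}).''
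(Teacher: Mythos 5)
Your proposal is correct and matches the paper's proof essentially verbatim, citing the same definitions and results (\cref{def:down-c}, \cref{def:down-fctr-last}, \cref{def:down-weq-last}, \cref{prop:down-c-direct}, \cref{thm:down-last-localiz}, \cref{thm:down-last-infty-localiz}, \cref{lem:down-c-finite}). One small slip: you describe \cref{thm:down-last-localiz} as ``the strict $1$-localization statement,'' but it in fact establishes \emph{weak} $1$-localization for $\Gamma=\Down(C)$; the strict version is proven only for the quotient functor $\int\catNerve^{{-},{+}}(C)\to\Downstar(C)$ in \cref{prop:downstar-strict-localization}.
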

\begin{proof}
  The category $\Down(C)$ is constructed in \cref{def:down-c}; the functor $\last$
  is constructed in \cref{def:down-fctr-last}; and the set $\weqlast$ is constructed
  in \cref{def:down-weq-last}. The directness of $\Down(C)$ is proven in
  \cref{prop:down-c-direct}. It is shown that $(\Down(C),\last)$ is a 1-localization
  of $C$ at $\weqlast$ in \cref{thm:down-last-localiz} constructively; and it is demonstrated
  that the pair is $(\infty, 1)$-categorical localization in \cref{thm:down-last-infty-localiz}.
  Finally, the finiteness of $\Down(C)$ is proven in \cref{lem:down-c-finite}.
\end{proof}

\printbibliography

\end{document}